\numberwithin{equation}{section}
\theoremstyle{plain}
\newtheorem{theorem}{Theorem}[section]
\newtheorem{lemma}[theorem]{Lemma}
\newtheorem{corollary}[theorem]{Corollary}
\newtheorem{proposition}[theorem]{Proposition}
\newtheorem{question}[theorem]{Question}
\theoremstyle{definition}
\newtheorem{definition}[theorem]{Definition}
\newtheorem{obs}[theorem]{Observation}
\theoremstyle{remark}
\newtheorem{remark}[theorem]{Remark}
\newtheorem{notation}[theorem]{Notation}
\newtheorem{claim}[theorem]{Claim}
\def\BBR {{\mathbb R}}
\newcommand*{\rom}[1]{\expandafter\@slowromancap\romannumeral #1@}
\newcommand{\tops}{\textrm{top}}
\newcommand{\BMO}{{\textrm{BMO}}}
\newcommand{\osc}{{\textrm osc}}
\newcommand{\kernel}{{\textrm Kernel}}
\def\BBD {{\mathbb D}}
\title{Biparameter $\BMO$ under the action of a rotation}
\author{Fr\'ed\'eric Bernicot}
\author{Yujia Zhai}
\address{CNRS-Universit\'e de Nantes, Laboratoire Jean Leray, 2, rue de la Houssini\`ere,44322 Nantes cedex 3, France}
\email{frederic.bernicot@univ-nantes.fr}
\email{yujia.zhai@univ-nantes.fr}
\thanks{Both authors are supported by ERC project FAnFArE no. 637510. Y. Zhai's research is also supported by the region Pays de la Loire.}
\begin{document}

\begin{abstract} In this work, we aim to study the action of composing by a rotation on the biparameter $\BMO$ space in $\BBR^2$. This $\BMO$ space is not preserved by a rotation since it relies on the structure of axis-parallel rectangles. We will quantify this fact by interpolation inequalities.  One straightforward application of the interpolation inequalities is a boundedness property of directional Hilbert transforms.
\end{abstract}

\maketitle

\section{introduction}

On the Euclidean space, the $\BMO$ spaces (for ``bounded mean oscillation'') are very important functional spaces and are of considerable interest in harmonic analysis. Indeed this concept is a very convenient substitute of the $L^\infty$ space (which is not easy to handle with when working on the frequency side). The $\BMO$ spaces are larger than the $L^\infty$ but not too much to constitute a suitable candidate as an end-point of the Lebesgue spaces $L^p$ when $p \to \infty$. In particular, we can cite the well-known interpolation results, boundedness of singular operators, duality with Hardy space, link with Carleson measures, John-Nirenberg properties, boundedness of commutators and so on.

\medskip 
Now on the product space $\BBR^2 = \BBR \times \BBR$, two kinds of $\BMO$ spaces can be considered, the one-parameter one (built on the collection of balls or squares) and the  biparameter one (built on the collection of rectangles). Let us first review some definitions, in order to introduce the $\BMO$ spaces and their dyadic versions.

\medskip

We first recall that a dyadic interval of $\BBR$ is an interval of the form $2^k[j,j+1)$ for some $j,k\in \mathbb{Z}$. Let $\BBD=\BBD^0$ denote the collection of all the dyadic intervals and we refer to it as the dyadic grid of $\BBR$. For some $\delta\in\BBR$, we denote by $\BBD^\delta$ the shifted grid with the parameter $\delta$:
$$ \BBD:=\{2^k[j,j+1),\ j,k\in\mathbb{Z}\} \qquad \textrm{and} \qquad \BBD^\delta:=\{\delta_I+I,\ I\in \BBD\},$$
where $\delta_I=\delta$ if $|I|\leq 1$, $\delta_I=\delta+ \frac{1}{3}(|I|-1)$ if $|I|=2^k$ with $k> 0$ even and
$\delta_I=\delta+ \frac{1}{3}(2|I|-1)$ if $|I|=2^k$ with $k> 0$ odd. The way of shifting (differently according to the scale) allows us to keep the nested property of $\BBD^\delta$: each interval is the exact disjoint union of two children.

\medskip

The one-parameter $\BMO(\BBR)$ on $\BBR$ is defined as follows: a function $f\in L^2_{loc}(\BBR)$ belongs to $\BMO(\BBR)$ if
\begin{equation} \|f\|_{\BMO(\BBR)}:= \sup_{I \textrm{ interval}} \osc_I(f) <\infty \label{eq:def} \end{equation}
where we take the supremum over all the bounded intervals $I\subset \BBR$ and for such interval $\osc_I$ denotes the $L^2$-oscillation defined by
$$ \osc_I(f) := \bigg(\frac{1}{|I|}\int_{I}\Big|f(y) - \fint_I f \Big|^2 dy\bigg)^{\frac{1}{2}}.$$
The quantity $\|\cdot \|_{BMO(\BBR)}$ is a pseudo-norm. In order to consider $\BMO(\BBR)$ as a normed space, one has to quotient it by the ``{\it kernel}'' (the functions $f$ for which the pseudo norm $\|f\|_{\BMO}$ equals to $0$) which is 
$$ \kernel(\BMO(\BBR)) = \{ \quad \textrm{constant functions} \quad \}.$$

\medskip

According to a dyadic grid ($\BBD^0$ or $\BBD^\delta$), we may also define the dyadic $\BMO^{0}$, $\BMO^{\delta}$ respectively with the following pseudo-norm
$$\|f\|_{\BMO^\delta(\BBR)}:= \sup_{I \in \BBD^\delta} \osc_I(f) $$
which is also characterized by a Carleson criterion involving the Haar wavelets
$$\|f\|_{\BMO^\delta(\BBR)} \simeq \sup_{I \in \BBD^\delta} \bigg(\frac{1}{|I|} \sum_{\substack{J\in \BBD^\delta \\ J\subset I}} |\langle f, h_J\rangle |^2 \bigg)^{1/2}.$$
We recall that for an interval $I=[a,b) \subset \BBR$, we define its two children
$$ I^{left}:=[a,\frac{a+b}{2}) \qquad \textrm{and} \qquad I^{right}:=[\frac{a+b}{2},b)$$
and its corresponding ($L^2$-normalized) Haar wavelet is given by
$$ h_I(x):= |I|^{-1/2} \big( \mathbbm{1}_{I^{left}}(x) - \mathbbm{1}_{I^{right}}(x) \big),$$
where $\mathbbm{1}_J$ is the characteristic function of $J \subseteq \mathbb{R}$.

\medskip

Then we know from \cite{Mei,LiPipherWard} that for some $\delta$, we have
$$ \BMO(\BBR) = \BMO^0(\BBR) \cap \BMO^\delta(\BBR).$$
Sometimes for some combinatorial argument, it is convenient to work and take advantage of a dyadic grid, sometimes it is more convenient to work with the whole collection of intervals. This results (and others in \cite[Page 197]{BookHNVW} with $\delta=1/3,2/3$) show that we can combine these two aspects.

\bigskip

Now if one looks at $\BBR^2$ as the Euclidean space of dimension $2$, we can automatically extend all this approach by considering oscillations and Haar wavelets on the collection of dyadic cubes and by that way, one considers the so-called one parameter $\BMO(\BBR^2)$ space and the ``one-parameter" highlights the fact that the elementary geometrical objects - dyadic cubes - have only $1$ scale.

\medskip

However, if we see $\BBR^2=\BBR \times \BBR$ as a product space, then we move to the notion and the setting of the biparameter world: the elementary geometrical object is now a product $I\times J$ of two dyadic intervals, thus a rectangle whose scale is given by two parameters. This so-called biparameter $\BMO(\BBR^2)$ space has been introduced by Chang and Fefferman in \cite{ChangFefferman} and appears for the study of biparameter singular operators. A few things are known about this space, including interpolation \cite{Lin}, link with Carleson measures \cite{Chang}, John-Nirenberg inequality \cite{ChangFefferman}, duality with Hardy spaces on product domains \cite{ChangFefferman} and boundedness of commutators \cite{HPW}. We refer the reader to \cite{Fefferman} for more details about the biparameter theory. In this context, there is the rectangular $\BMO$ space, whose definition seems to be a natural extension of \eqref{eq:def}, taking now the supremum of all the oscillations over the whole collection of rectangles. Unfortunately, the rectangular $\BMO$ space turns out not to be the best candidate for problems in time-frequency analysis, such as the failure of John-Nirenberg properties (see \cite{FergusonSadowski} for details). Hence we aim to focus on a subspace of the rectangular $\BMO$ space, which does not have a simple definition in terms of oscillation as in \eqref{eq:def} because one wants to consider some deeper cancellation. We call it biparameter $\BMO$ space, denoted by $\BMO(\mathbb{R}^2)$ and defined in terms of ``Carleson criterion'': a locally $L^2$ function $F$ of $\BBR^2$ is said to be in $\BMO(\BBR^2)$ if 
\begin{equation} \sup_{\Omega} \left( \frac{1}{|\Omega|} \iint_{T(\Omega)} |F \ast (\psi_{s} \otimes \psi_{t})(x,y)|^2 \, \frac{dsdt dxdy}{st} \right)^{1/2} <\infty,
\label{eq:defBMO}
\end{equation}
where we take the supremum over all open subset $\Omega$ with finite measure and where $\psi$ is a smooth function with a vanishing integral and $\psi_s$ is the $L^1$ dilated function, namely $s^{-1} \psi(s^{-1} \cdot)$, and $T(\Omega)$ is the Carleson tent
$$ T(\Omega):= \{(s,t,x,y),\ [x-s,x+s] \times [y-t,y+t] \subset \Omega\}.$$
Taking the supremum over all the open subsets, and not only the collection of rectangles, brings a richer structure on the space and also some technical difficulties since the open subsets can be arbitrary and so don't have any geometrical properties.
In this biparameter setting, we also have a kernel whose structure is more delicate:
$$ \kernel(\BMO(\BBR^2)) = \{ \quad F:=(x,y) \mapsto  \phi_x + \phi_y \quad \}$$
where
$\phi_x$ is a function depending only on $x$ and $\phi_y$ a function depending only on $y$.

The bridge between dyadic and non-dyadic $\BMO$ spaces has also been studied in the biparameter setting. We first define the dyadic $\BMO(\BBR^2)$ spaces as follows: for $a,b\in\{0,\delta\}$, we define the dyadic biparameter space $\BMO^{a,b}(\BBR^2)$ by 
\begin{equation*}
\BMO^{a,b}(\BBR^2) := \{F \in L^2_{loc}(\BBR^2): \|F\|_{\BMO^{a,b}(\BBR^2)} < \infty  \},
\end{equation*}
where
$$\|F\|_{\BMO^{a,b}(\BBR^2)} = \sup_{\Omega} \bigg(\frac{1}{|\Omega|} \sum_{\substack{I,J\in \BBD^a\times \BBD^b \\ I\times J\subset \Omega}} |\langle F, h_I \otimes h_J\rangle |^2 \bigg)^{1/2}$$
based on the collection of dyadic rectangles of $\BBD^{a,b}:=\BBD^a \times \BBD^b$ with the supremum over all the open subset $\Omega \subseteq \BBR^2$ of finite measure. Then we know from \cite{LiPipherWard} that for some $\delta\in(0,1/2)$, we have
$$ \BMO(\BBR^2) = \bigcap_{a,b\in\{0,\delta\}} \BMO^{a,b}(\BBR^2).$$
In all the sequel, we fix such a parameter $\delta$ and we can see the previous formulation as a definition of the full $\BMO(\BBR^2)$ space.
 
\medskip
 
With the understanding of the appropriate definitions of one-parameter and biparameter BMO norms, we would like to study some elementary properties of both norms, namely the behavior of BMO norms under compositions. In the one-parameter setting, it is very easy to see that bi-Lipschitz maps are ``almost preserving'' the structure and geometry of the collection of dyadic squares and so such maps are preserving the $\BMO$ by composition.  
Such question has indeed been studied and was motivated by studying the Euler equation with a $\BMO$-type vorticity in \cite{BernicotKeraani,BernicotKeraani2}. 
Keeping in mind this motivation, to study the dynamic of vortices in fluids mechanics, the Euler equation may involve some operators of type $R_1 \circ R_2$, where $R_i$ is the Riesz transform in $\BBR^2$ along the $i^{th}$ coordinate for $i=1,2$. These operators $R_i$ are also called directional Riesz (or Hilbert) transform along the canonical vector $e_i$. They also naturaly appear in the context of characterizing $\BMO$ with commutators \cite{FL,OPS}, or also for example if one wants to decompose a rough homogeneous singular integral operator through the method of rotations.  Because they are acting on only one direction of $\BBR^2$, they are not bounded in the one-parameter $\BMO$ space, which cannot take into account such anisotropic property. However because $R_1=H \otimes \textrm{Id}$ and $R_2=\textrm{Id}\otimes H$ are both the tensor product of the identity and the one-dimensional Hilbert transform $H$, it is easy to see that both $R_1$ and $R_2$ are bounded on the biparameter $\BMO$ space. It is then natural to ask what happens for a more general directional Riesz transform, given by a direction which is not one of the two canonical ones, but instead the composition of $R_1$ or $R_2$ through a rotation. This question would be closely related to the biparameter BMO norm under the compositions with measure-preserving maps and particularly rotations.

\medskip 

From the definition \eqref{eq:defBMO} of the $\BMO(\BBR^2)$ space, it is naturally invariant by translation and by biparameter dilations: for $\tau\in \BBR^2$ and $\delta_1,\delta_2>0$, if $F\in \BMO(\BBR^2)$ then
$$ (x,y) \mapsto F( (x,y) -\tau) \in \BMO(\BBR^2) \qquad \textrm{and} \qquad (x,y) \mapsto F(\delta_1 x, \delta_2 y) \in \BMO(\BBR^2) $$
and their biparameter $\BMO$ norm is equal to the biparameter $\BMO$ norm of $F$. After these two operations, one can ask a very natural question about how the biparameter BMO norm behaves under the composition by a generic measure-preserving map.

\medskip 

\begin{question} \label{conj_1}
For any measure-preserving map $\phi$ of $\BBR^2$, does there exist a constant $C(\phi)$ such that any $F \in \BMO(\BBR^2)$ satisfies the following inequality: 
\begin{equation} \label{conj_1_ineq}
\|F \circ \phi\|_{\BMO(\BBR^2)} \leq C(\phi) \|F\|_{\BMO(\BBR^2)} \ ?
\end{equation}
\end{question}

Instead of looking at a generic measure-preserving map, we consider a special class of measure-preserving maps - rotations. Different from translations and dilations, rotations do not preserve the kernel of $\BMO(\mathbb{R}^2)$, which provides a negative answer to Question \ref{conj_1}. A specific counterexample will be provided in Section \ref{subsec:counterexample1} to illustrate the behavior of the biparameter BMO norm under the action of rotations. 

Inspired by the observation on the effect of rotations on the biparameter BMO norm, we would like to understand whether it is possible to bound the left hand side of (\ref{conj_1_ineq}) by the biparameter $\BMO$ norm interpolated with another stronger norm.  
Our first attempt is to assume some $L^p$ integrability on $F$ with $p<\infty$, which excludes the possibility that $F$ lies in the kernel of $\BMO(\mathbb{R}^2)$ (see Remark \ref{haar_composition}). If we further impose some regularity on $F$ (see Section \ref{sec:motivation} for more motivations), we know that for $sp>2$, the Sobolev space $W^{s,p}(\BBR^2)$ is strictly and continuously included in $L^\infty$ and thus in $\BMO(\mathbb{R}^2)$. We thus formulate the second conjecture as follows.

\begin{question}\label{conj_2}
For any rotation map $\phi^{\theta}$ with the angle $\theta \in [0, 2\pi)$ and for any $p>2$, does there exist some constants $C(\phi)$ and $\epsilon(\phi,p)<1$ such that any $F \in \BMO(\BBR^2) \cap W^{1,p}(\BBR^2)$ satisfies the following inequality:
\begin{equation}
\|F \circ \phi\|_{\BMO(\BBR^2)} \leq C(\phi) \|F\|_{\BMO(\BBR^2)} + \epsilon(\phi,p)\|F\|_{W^{1,p}(\BBR^2)}\ ?
\end{equation}
\end{question}
In this question, we allow some small perturbation taking into account the regularity of the initial function $F$. From a naive point of view, since the identity map $\phi^{\theta}=\textrm{Id}$ or equivalently $\theta = 0$ preserves the $\BMO(\BBR^2)$ space, we would like to positively answer this question with the fact that the small constant satisfies
$\displaystyle \lim_{\phi\rightarrow \textrm{Id}} \epsilon(\phi,p) = 0$.
However, it turns out that Question \ref{conj_2} is too ideal to be true. We are going to focus on a counterexample to disprove such inequality in Section \ref{subsec:counterexample2}. 

The computations for the counterexample suggest that in order to have a small coefficient in front of the Sobolev norm, we cannot expect a uniformly bounded coefficient in front of the biparameter $\BMO$ norm.
We are going to prove the following main positive result.
\begin{theorem} \label{main_thm_pos}
Suppose that $\phi^{\theta}$ is a rotation of $\BBR^2$ and $p \in (2,\infty)$ with $s>\frac{2}{p}$. Then for every $\gamma \in (0, \delta)$ with $\delta:= \min(s- \frac{2}{p}, \frac{1}{p})>0$, there exists a constant $C$ such that for every $\epsilon\in(0,1)$ and every function $F \in \BMO(\BBR^2) \cap W^{s,p}(\BBR^2)$, we have
\begin{equation}
\|F \circ \phi\|_{\BMO(\BBR^2)} \leq C \epsilon^{-1}(1+ \log \epsilon^{-1}) \|F\|_{\BMO(\BBR^2)} + C \epsilon^{\frac{\gamma}{2}}\|F\|_{W^{s,p}(\BBR^2)}. \label{main_pos_bound}
\end{equation}
\end{theorem}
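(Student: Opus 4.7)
A natural strategy is to decompose $F$ at frequency scale $\epsilon^{-1}$ and treat the two pieces differently. Write $F=F_L+F_H$, where $F_L:=P_{\leq \epsilon^{-1}}F$ is the smooth Fourier cut-off of $F$ at scale $\epsilon^{-1}$ and $F_H:=F-F_L$. The target bound \eqref{main_pos_bound} then splits as
\begin{gather*}
\|F_L\circ\phi^\theta\|_{\BMO(\BBR^2)}\lesssim \epsilon^{-1}(1+\log\epsilon^{-1})\|F\|_{\BMO(\BBR^2)}, \\
\|F_H\circ\phi^\theta\|_{\BMO(\BBR^2)}\lesssim \epsilon^{\gamma/2}\|F\|_{W^{s,p}(\BBR^2)}.
\end{gather*}

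For $F_H$ I would exploit the Sobolev embedding $W^{s',p}(\BBR^2)\hookrightarrow L^\infty(\BBR^2)$, valid whenever $s'>2/p$. Choose $s':=s-\gamma/2$; the hypothesis $\gamma<\delta\leq s-2/p$ guarantees $s'>2/p$. Since $F_H$ has Fourier support in $\{|\xi|>\epsilon^{-1}\}$, a standard Mikhlin multiplier estimate gives $\|F_H\|_{W^{s',p}}\lesssim\epsilon^{s-s'}\|F\|_{W^{s,p}}=\epsilon^{\gamma/2}\|F\|_{W^{s,p}}$. Since rotations preserve $L^\infty$ and $\|G\|_{\BMO(\BBR^2)}\lesssim \|G\|_{L^\infty(\BBR^2)}$ directly from \eqref{eq:defBMO}, the high-frequency bound follows immediately.

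The heart of the theorem is the low-frequency estimate. I would work from the dyadic biparameter Haar characterization fixed in the paper: for an open set $\Omega$ of finite measure,
\[
\frac{1}{|\Omega|}\sum_{\substack{I\times J\in \BBD^{a,b}\\ I\times J\subset\Omega}}|\langle F_L\circ\phi^\theta,h_I\otimes h_J\rangle|^2 \;=\; \frac{1}{|\Omega|}\sum_{I\times J\subset\Omega}|\langle F_L,(h_I\otimes h_J)\circ\phi^{-\theta}\rangle|^2,
\]
and expand the tilted tensor-Haar wavelet in the standard biparameter Haar basis. Because $F_L$ is band-limited at scale $\epsilon^{-1}$, only Haar coefficients with $\min(|I|,|J|)\gtrsim\epsilon$, and hence aspect ratio bounded by $\sim\epsilon^{-1}$, give a non-negligible contribution. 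A tilted axis-parallel rectangle of aspect ratio $\lambda$ can be reconstructed from $O(1+\lambda|\sin\theta|)$ axis-parallel rectangles of comparable area, producing the $\epsilon^{-1}$ factor at the worst admissible aspect ratio; summing over the $O(\log\epsilon^{-1})$ dyadic aspect-ratio scales allowed by band-limitation yields the logarithmic factor. The estimate is then closed by Cauchy--Schwarz and the Carleson criterion applied to $\phi^{-\theta}(\Omega)$, which is again an open set of the same measure.

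The main obstacle is precisely this low-frequency estimate. Two features make it delicate. First, the supremum in the biparameter $\BMO$ norm ranges over \emph{all} open subsets of finite measure and not only rectangles; since the rectangular $\BMO$ is strictly larger than the true biparameter $\BMO$, a single-rectangle analysis would not suffice, and the covering/combinatorial argument must be compatible with arbitrary $\Omega$. Second, the anisotropy inherent to biparameter $\BMO$ means that very elongated rectangles are the ones most severely distorted by rotation, and one must carefully organize the contribution of each aspect-ratio scale so that the loss appears as a single universal constant $\epsilon^{-1}(1+\log\epsilon^{-1})$ in front of $\|F\|_{\BMO(\BBR^2)}$, with the threshold $1/p$ in the definition of $\delta$ presumably arising from an interpolation with an $L^p$-based estimate used at this combinatorial step.
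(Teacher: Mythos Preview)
Your high-frequency bound via Sobolev embedding is correct and clean. The low-frequency bound, however, cannot hold as you state it, and the mechanism you propose for it is wrong.

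First, the claim that band-limitation at scale $\epsilon^{-1}$ forces the relevant Haar rectangles to have aspect ratio $\lesssim\epsilon^{-1}$ is false. Band-limitation makes Haar coefficients on rectangles with a \emph{small} side ($\min(|I|,|J|)\ll\epsilon$) negligible, but imposes no upper bound on $\max(|I|,|J|)$; rectangles $I\times J$ with $|J|\sim\epsilon$ and $|I|$ arbitrarily large are perfectly admissible, so the ``worst aspect ratio'' in your covering count is not $\epsilon^{-1}$ but unbounded. More decisively, the target inequality $\|F_L\circ\phi^\theta\|_{\BMO}\lesssim\epsilon^{-1}(1+\log\epsilon^{-1})\|F\|_{\BMO}$ (with a constant depending only on $\theta$, as your argument would produce since it never invokes $p$ or $s$) is simply false: the radial Fourier cutoff $P_{\le\epsilon^{-1}}$ preserves the kernel of biparameter $\BMO$ (a function of $x$ alone stays a function of $x$ alone), while rotation does not. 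Thus functions near the kernel---precisely the family in Section~\ref{subsec:counterexample2}---have $\|F\|_{\BMO}$ arbitrarily small while $\|F_L\circ\phi^\theta\|_{\BMO}$ stays bounded below. Any valid argument must let the Sobolev norm absorb part of what you assign to the ``low-frequency'' piece.

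This is exactly what the paper does, and its decomposition is structurally different from yours. There is no frequency splitting of $F$. Instead the paper localizes the Carleson sum over $\Omega$ to maximal rectangles organized by Journ\'e's lemma into classes $\mathcal{K}_l$; it is the \emph{eccentricity of the testing rectangle} $K\subset\Omega$, not the frequency of $F$, that is truncated. Rectangles $K$ of extreme eccentricity go to the Sobolev side directly (Proposition~\ref{long_tall_pre}). For moderate eccentricity, $F$ is expanded in a Haar basis adapted to $\phi(K_{\max,i})$, and the split into the $\BMO$-controlled ``main'' term and Sobolev-controlled ``error'' term is by whether the source rectangle $R$ lies in the enlarged set $\widetilde{\widetilde{\phi(\Omega)}}$: the factor $\epsilon^{-1}(1+\log\epsilon^{-1})$ is the square root of $|\widetilde{\widetilde{\phi(\Omega)}}|/|\Omega|$, coming from the $L\log L\to L^{1,\infty}$ bound for the strong maximal function, not from any aspect-ratio count. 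The threshold $1/p$ in $\delta$ enters through Lemma~\ref{lemma:regularity}, used to control Haar coefficients of $F$ in the error term.
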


Let us first observe that by the Sobolev embedding $W^{s,p}(\BBR^2) \hookrightarrow L^\infty \hookrightarrow \BMO(\BBR^2)$ for $sp >2$, the estimate \eqref{main_pos_bound} is trivially true for $\epsilon \simeq 1$ whereas the case $\epsilon \ll 1$ is nontrivial and remains to be proved.

To the best our knowledge, there is no abstract results or black boxes which could help us to understand such phenomena. As a consequence, we need to prove such inequality by hands, trying to keep as much as we can the information about cancellation and orthogonality in our decompositions and localizations estimates.

\medskip

By optimizing in $\epsilon$, we deduce the following corollary of Theorem \ref{main_thm_pos}.
 
\begin{theorem} \label{thm:ff}
Suppose that $\phi^\theta$ is a rotation of $\BBR^2$ and $p\in(2,\infty)$ with $s>\frac{2}{p}$. Then for every $\alpha \in(0,\frac{\delta}{2+\delta}]$ with $\delta:= \min(s- \frac{2}{p}, \frac{1}{p})>0$, there is a constant $C$ such that for every function $F \in \BMO(\BBR^2) \cap W^{s,p}(\BBR^2)$, we have
\begin{equation*}
\|F \circ \phi\|_{\BMO(\BBR^2)} \leq C \|F\|_{\BMO(\BBR^2)}^\alpha \|F\|_{W^{s,p}(\BBR^2)}^{1-\alpha}. 
\end{equation*}
\end{theorem}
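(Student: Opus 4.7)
The plan is to derive the multiplicative inequality by optimizing the free parameter $\epsilon \in (0,1)$ in Theorem~\ref{main_thm_pos}. Set $A := \|F\|_{\BMO(\BBR^2)}$ and $B := \|F\|_{W^{s,p}(\BBR^2)}$. The Sobolev embedding $W^{s,p}(\BBR^2) \hookrightarrow L^\infty \hookrightarrow \BMO(\BBR^2)$ (valid for $sp > 2$) recalled immediately after Theorem~\ref{main_thm_pos} yields $A \lesssim B$, so I may assume the nontrivial regime $A \leq B$, the complementary range following directly from the Sobolev embedding.

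The logarithmic factor in \eqref{main_pos_bound} can be absorbed at the price of an arbitrarily small exponent loss via the elementary bound $1 + \log \epsilon^{-1} \leq C_\eta\, \epsilon^{-\eta}$, valid for every $\eta > 0$ uniformly on $(0,1)$. Fixing $\gamma \in (0,\delta)$ and combining this with Theorem~\ref{main_thm_pos} produces
\begin{equation*}
\|F \circ \phi\|_{\BMO(\BBR^2)} \leq C_\eta\, \epsilon^{-(1+\eta)}\, A + C\, \epsilon^{\gamma/2}\, B, \qquad \epsilon \in (0,1).
\end{equation*}
The two terms balance at $\epsilon_\star := (A/B)^{2/(2+2\eta+\gamma)} \in (0,1]$, and substituting this choice back yields
\begin{equation*}
\|F \circ \phi\|_{\BMO(\BBR^2)} \leq C\, A^{\alpha(\eta,\gamma)}\, B^{1-\alpha(\eta,\gamma)}, \qquad \alpha(\eta,\gamma) := \frac{\gamma}{2+2\eta+\gamma}.
\end{equation*}

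Given a target exponent $\alpha \in \bigl(0, \tfrac{\delta}{2+\delta}\bigr)$, I would then pick $\gamma \in (0,\delta)$ close enough to $\delta$ and $\eta>0$ small enough that $\alpha(\eta,\gamma) \geq \alpha$. Writing $A^{\alpha(\eta,\gamma)} B^{1-\alpha(\eta,\gamma)} = A^\alpha B^{1-\alpha}\,(A/B)^{\alpha(\eta,\gamma) - \alpha}$ and invoking $A \leq B$, the last factor is at most $1$, which transfers the estimate to the desired exponent $\alpha$ at no extra cost. The endpoint $\alpha = \delta/(2+\delta)$ lies exactly on the boundary of what the balance reaches and would require a separate limiting argument once the dependence of the constants on $(\eta, \gamma)$ is controlled; beyond this minor bookkeeping, the entire substance of the corollary is already contained in Theorem~\ref{main_thm_pos}, so no further analytic obstacle arises.
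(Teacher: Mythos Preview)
Your proposal is correct and follows exactly the route the paper indicates: the paper simply states that Theorem~\ref{thm:ff} is obtained ``by optimizing in $\epsilon$'' from Theorem~\ref{main_thm_pos}, and your computation makes that optimization explicit. Your observation that the endpoint $\alpha = \delta/(2+\delta)$ is not reached directly (since Theorem~\ref{main_thm_pos} only allows $\gamma<\delta$ strictly, and the logarithm costs an arbitrarily small power) is accurate; the paper does not address this point either, so your bookkeeping remark is a fair caveat rather than a gap in your argument.
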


\medskip

As an application, we obtain some inequalities for the directional Hilbert transform: let $v$ be a unit vector of $\BBR^2$, then the directional Hilbert transform along this direction is defined by
$$ H_{v}(F)(x,y) := p.v. \int_{-\infty}^\infty F((x,y)-t v) \, \frac{dt}{t}.$$
Moreover for a kernel $\Omega \in L^1({\mathbb S}^1)$ with vanishing integral, the rough homogeneous integral operator $T_\Omega$ is defined as
$$ T_\Omega(F)(x,y) := p.v. \int F((x,y)-u) \Omega(\frac{u}{|u|}) \, \frac{du}{|u|^2} = \frac{1}{2} \int_{{\mathbb S}^1} H_{v}(F)(x,y) dv$$
(see \cite[Section 5.2]{Grafakos} for example).

By observing that for the rotation $\phi$ such that $\phi(e_1)=v$ (where $e_1$ is the first canonical vector of $\BBR^2$), we have
$$ H_v(F) = H_{e_1}[F \circ \phi] \circ \phi^{-1},$$
and iterating twice Theorem \ref{thm:ff} (with the fact that $H_{e_1}$ is bounded on $\BMO(\mathbb{R}^2)$ and on Sobolev spaces), we deduce the following:
\begin{corollary} 
Fix $p\in(2,\infty)$ with $s>\frac{2}{p}$ and consider any $\alpha \in(0,\frac{\delta}{2+\delta}]$ with $\delta:= \min(s- \frac{2}{p}, \frac{1}{p})>0$, there is a constant $C$ such that for every function $F \in \BMO(\BBR^2) \cap W^{s,p}(\BBR^2)$ and every unit vector $v$, we have
\begin{equation*}
\|H_v(F) \|_{\BMO(\BBR^2)} \leq C \|F\|_{\BMO(\BBR^2)}^{\alpha^2} \|F\|_{W^{s,p}(\BBR^2)}^{1-\alpha^2}. 
\end{equation*}
For $\Omega \in L^1({\mathbb S}^1)$ with $\int \Omega(v) dv=0$, then
\begin{equation*}
\|T_{\Omega}(F) \|_{\BMO(\BBR^2)} \leq C \|\Omega\|_{L^1({\mathbb S}^1)}\|F\|_{\BMO(\BBR^2)}^{\alpha^2} \|F\|_{W^{s,p}(\BBR^2)}^{1-\alpha^2}. 
\end{equation*}
\end{corollary}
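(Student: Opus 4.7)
The plan is to apply Theorem \ref{thm:ff} twice, exploiting the identity $H_v(F) = H_{e_1}[F\circ\phi]\circ\phi^{-1}$ already recorded before the corollary, together with the fact that $H_{e_1}=H\otimes\mathrm{Id}$ is bounded on $\BMO(\BBR^2)$ and on $W^{s,p}(\BBR^2)$, and that rotations preserve the Sobolev norm.

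First I would apply Theorem \ref{thm:ff} to the function $G:=H_{e_1}[F\circ\phi]$ composed with the rotation $\phi^{-1}$, which gives
\begin{equation*}
\|H_v(F)\|_{\BMO(\BBR^2)} = \|G\circ\phi^{-1}\|_{\BMO(\BBR^2)} \leq C \|G\|_{\BMO(\BBR^2)}^{\alpha}\|G\|_{W^{s,p}(\BBR^2)}^{1-\alpha}.
\end{equation*}
Using the boundedness of $H_{e_1}$ on both spaces, I can replace $G$ by $F\circ\phi$ at the cost of a constant:
\begin{equation*}
\|H_v(F)\|_{\BMO(\BBR^2)} \leq C \|F\circ\phi\|_{\BMO(\BBR^2)}^{\alpha}\|F\circ\phi\|_{W^{s,p}(\BBR^2)}^{1-\alpha}.
\end{equation*}
Since $\phi$ is a rotation and the Sobolev norm is rotation-invariant, $\|F\circ\phi\|_{W^{s,p}(\BBR^2)} = \|F\|_{W^{s,p}(\BBR^2)}$. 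A second application of Theorem \ref{thm:ff} to $F\circ\phi$ yields $\|F\circ\phi\|_{\BMO(\BBR^2)} \leq C \|F\|_{\BMO(\BBR^2)}^{\alpha}\|F\|_{W^{s,p}(\BBR^2)}^{1-\alpha}$. Substituting these back and simplifying the exponent of the Sobolev norm as $\alpha(1-\alpha)+(1-\alpha) = 1-\alpha^2$ gives the claimed bound
\begin{equation*}
\|H_v(F)\|_{\BMO(\BBR^2)} \leq C \|F\|_{\BMO(\BBR^2)}^{\alpha^2}\|F\|_{W^{s,p}(\BBR^2)}^{1-\alpha^2}.
\end{equation*}

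For the second inequality concerning $T_\Omega$, I would start from the representation $T_\Omega(F)(x,y) = \tfrac{1}{2}\int_{\mathbb{S}^1} H_v(F)(x,y)\,\Omega(v)\,dv$ and use the fact that $\|\cdot\|_{\BMO(\BBR^2)}$ satisfies a triangle inequality, so that by Minkowski's inequality
\begin{equation*}
\|T_\Omega(F)\|_{\BMO(\BBR^2)} \leq \frac{1}{2}\int_{\mathbb{S}^1} |\Omega(v)|\,\|H_v(F)\|_{\BMO(\BBR^2)}\,dv.
\end{equation*}
Since the bound obtained in the first part is uniform in the unit vector $v$ (the constant $C$ in Theorem \ref{thm:ff} depends only on $p$, $s$, $\alpha$, not on the angle $\theta$ of the rotation), I can pull the $\BMO$--Sobolev product out of the integral and be left with $\|\Omega\|_{L^1(\mathbb{S}^1)}$, yielding the stated estimate.

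There is essentially no deep obstacle here, since the heavy lifting is done by Theorem \ref{thm:ff}; the argument is pure bookkeeping. The only point worth verifying carefully is that the constant in Theorem \ref{thm:ff} is indeed independent of the rotation angle (which follows from the proof of the theorem, where $\epsilon$ and $C$ are chosen uniformly), and that the bound on $H_{e_1}$ on $W^{s,p}$ is standard (it is a tensor product with the one-dimensional Hilbert transform, which is bounded on $W^{s,p}(\BBR)$ for $p\in(1,\infty)$, and hence extends to the mixed-norm/Sobolev setting on $\BBR^2$).
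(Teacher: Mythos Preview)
Your proposal is correct and follows exactly the approach sketched in the paper: use the identity $H_v(F)=H_{e_1}[F\circ\phi]\circ\phi^{-1}$, apply Theorem \ref{thm:ff} twice together with the boundedness of $H_{e_1}$ on $\BMO(\BBR^2)$ and on $W^{s,p}(\BBR^2)$ and the rotation-invariance of the Sobolev norm, and then handle $T_\Omega$ by Minkowski's inequality using the uniformity of the constant in the unit vector $v$. Your exponent bookkeeping $\alpha(1-\alpha)+(1-\alpha)=1-\alpha^2$ is exactly the reason for the $\alpha^2$ in the statement.
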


\medskip

The paper is organized as follows. After some preliminaries in Section \ref{sec:pre}, we first disprove Questions \ref{conj_1} and \ref{conj_2} in Section \ref{sec:questions}. Section \ref{sec:motivation} motivates the statement of Theorem \ref{main_thm_pos}, which is then proved in Sections \ref{sec:main0} and \ref{sec:main}.

\section{Preliminaries} \label{sec:pre}

\begin{notation} For $I$ an interval of center $c_I$ and radius $r_I$  and a factor $\lambda>0$, $\lambda I$ denotes the concentric interval of center $c_I$ and radius $\lambda r_I$. Similarly if $R=I \times J$ is a rectangle then $\lambda R$ represents the rectangle $\lambda I \times \lambda J$ and $|R|=|I||J|$ will denote its measure.

The strong maximal function will be denoted $MM$ (to emphasize the biparameter structure) and is defined for a function $f\in L^1_{loc}(\BBR^2)$ by
\begin{align*}
MM(f)(x,y) & := \sup_{R: (x,y) \in R} \ \frac{1}{|R|}\int_{R} |f(t,s)| \, dtds, 
\end{align*}
where we take the supremum of the averages of $f$, on every rectangle $R$ containing the point $(x,y)\in \BBR^2$.
\end{notation}

\begin{definition}[Eccentricity]\label{ecc}
For a rectangle $R$ with $|R| = R_1\times R_2$, where $R_1$ and $R_2$ denote the horizontal and vertical side lengths of $R$ respectively, we define the \textbf{eccentricity} of $R$ by $\varepsilon_{R}:=\frac{R_2}{R_1}$.
\end{definition}

\begin{lemma} \label{lemma:regularity} Uniformly in $\alpha\in\{0,\delta\}^2$, fix a function $F\in W^{s,p}(\BBR^2)$ for some $p\in(2,\infty)$ and $s\in (\frac{2}{p},1)$. Then for every integers $k_1,k_2$ and every exponents $\gamma_1,\gamma_2\in (-\delta,\delta)$ with $\delta:=\min\{\frac{1}{p}, s-\frac{2}{p}\}>0$ we have
$$ \left(\sum_{\substack{R\in \BBD^{\alpha} \\ |R|=2^{k_1} \times 2^{k_2}}} |\langle F, h_R\rangle |^p \right)^{1/p} \lesssim 2^{k_1 (\frac{1}{2}+\gamma_1)} 2^{k_2 (\frac{1}{2}+\gamma_2)} \|F\|_{W^{s,p}(\BBR^2)}.$$
\end{lemma}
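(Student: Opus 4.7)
The strategy is to rewrite each Haar coefficient as a mixed second-order difference of averages of $F$ and then exploit the fractional smoothness of $F$ in $W^{s,p}$. For $R=I\times J\in\BBD^{\alpha}$ at scale $2^{k_1}\times 2^{k_2}$, with its four equal sub-rectangles $R_1,\ldots,R_4$, one has the identity
$$
\langle F,h_R\rangle = \frac{|R|^{1/2}}{4}\Bigl(\fint_{R_1}F-\fint_{R_2}F-\fint_{R_3}F+\fint_{R_4}F\Bigr).
$$
By Jensen's inequality the $p$-th power of the right-hand side is dominated by a four-fold average of $|\Delta^{\mathrm{mix}}_{h_1,h_2}F(x,y)|^p$, where
$$
\Delta^{\mathrm{mix}}_{h_1,h_2}F(x,y):=F(x,y)-F(x+h_1,y)-F(x,y+h_2)+F(x+h_1,y+h_2)
$$
and $(h_1,h_2)$ are the displacements between corresponding points in the sub-rectangles. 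Summing over the pairwise disjoint rectangles $R$ at the fixed scale, and changing variables $(x_1,x_2,y_1,y_2)\mapsto (x,x+h_1,y,y+h_2)$, yields
$$
\sum_R |\langle F,h_R\rangle|^p \lesssim 2^{(k_1+k_2)(p/2-2)}\int_{\BBR^2}\int_0^{2^{k_1}}\int_0^{2^{k_2}}|\Delta^{\mathrm{mix}}_{h_1,h_2}F(x,y)|^p\,dh_2\,dh_1\,dx\,dy.
$$

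The key analytical input is the biparameter smoothness estimate
$$
\|\Delta^{\mathrm{mix}}_{h_1,h_2}F\|_{L^p(\BBR^2)}\lesssim h_1^{\alpha_1}h_2^{\alpha_2}\|F\|_{W^{s,p}(\BBR^2)},
$$
valid for $\alpha_1,\alpha_2\in[0,1]$ with $\alpha_1+\alpha_2\leq s$. I would establish this by Fourier analysis: the symbol $(e^{2\pi ih_1\xi_1}-1)(e^{2\pi ih_2\xi_2}-1)$ of the mixed-difference operator is controlled pointwise by $C|h_1\xi_1|^{\alpha_1}|h_2\xi_2|^{\alpha_2}$, using $|\sin t|\leq\min(|t|,1)\leq|t|^{\alpha}$ for $\alpha\in[0,1]$. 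After factoring out the Sobolev weight $(1+|\xi|^2)^{s/2}$, the residual multiplier $|\xi_1|^{\alpha_1}|\xi_2|^{\alpha_2}/(1+|\xi|^2)^{s/2}$ is pointwise bounded and, away from the coordinate axes, smooth of bi-homogeneous degree $\leq 0$; its $L^p$-boundedness for $p\in(1,\infty)$ follows from a Marcinkiewicz-type theorem for biparameter Fourier multipliers.

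Substituting the smoothness bound into the integral and taking $p$-th roots delivers
$$
\Bigl(\sum_R|\langle F,h_R\rangle|^p\Bigr)^{1/p}\lesssim 2^{k_1(1/2+\alpha_1-1/p)}2^{k_2(1/2+\alpha_2-1/p)}\|F\|_{W^{s,p}},
$$
which is the claimed bound with $\gamma_i=\alpha_i-\tfrac{1}{p}$. The constraint $\alpha_i\in[0,\min(s,1)]$ with $\alpha_1+\alpha_2\leq s$ produces $\gamma_i\in[-\tfrac{1}{p},s-\tfrac{1}{p}]$ subject to $\gamma_1+\gamma_2\leq s-\tfrac{2}{p}$; combining with the coarser bound $\bigl(\sum|\langle F,h_R\rangle|^p\bigr)^{1/p}\lesssim 2^{(k_1+k_2)(1/2-1/p)}\|F\|_{L^p}$ (a consequence of $L^p$-contractivity of the biparameter Haar projection onto the fixed scale) and interpolating in $(\gamma_1,\gamma_2)$ covers the full rectangle $(-\delta,\delta)^2$ with $\delta=\min(\tfrac{1}{p},s-\tfrac{2}{p})$. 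The main obstacle will be the biparameter Marcinkiewicz multiplier estimate: since the symbol $|\xi_1|^{\alpha_1}|\xi_2|^{\alpha_2}/(1+|\xi|^2)^{s/2}$ has singularities on the coordinate axes and is not isotropically smooth in the H\"ormander-Mikhlin sense, a dyadic Littlewood-Paley decomposition in each coordinate direction is needed, with the endpoint exponents $\alpha_i\in\{0,1\}$ handled separately.
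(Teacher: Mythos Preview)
Your route is genuinely different from the paper's: the paper interpolates the operator $F\mapsto(\langle F,h_R\rangle)_R$ between the endpoint spaces $L^\infty$, $W^{1,\infty}$ and $W^{1,2}$, whereas you estimate each Haar coefficient directly through the mixed second difference $\Delta^{\mathrm{mix}}_{h_1,h_2}F$. Your approach is more self-contained, and the ``main obstacle'' you flag --- a biparameter Marcinkiewicz bound for $|\xi_1|^{\alpha_1}|\xi_2|^{\alpha_2}(1+|\xi|^2)^{-s/2}$ --- is in fact unnecessary. Since $\Delta^{\mathrm{mix}}_{h_1,h_2}=\Delta^{(1)}_{h_1}\Delta^{(2)}_{h_2}$, the standard embedding $W^{s,p}\hookrightarrow B^s_{p,\infty}$ gives $\|\Delta^{\mathrm{mix}}_{h_1,h_2}F\|_p\leq 2\|\Delta^{(j)}_{h_j}F\|_p\lesssim|h_j|^{s}\|F\|_{W^{s,p}}$ for each $j=1,2$; together with the trivial bound $\|\Delta^{\mathrm{mix}}_{h_1,h_2}F\|_p\leq 4\|F\|_p$ this yields $\|\Delta^{\mathrm{mix}}_{h_1,h_2}F\|_p\lesssim\min(h_1^s,h_2^s,1)\|F\|_{W^{s,p}}$, and one checks by cases that $\min(h_1^s,h_2^s,1)\leq h_1^{\alpha_1}h_2^{\alpha_2}$ for every $\alpha_1,\alpha_2\in[0,1]$ with $\alpha_1+\alpha_2\leq s$. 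No multiplier theorem is needed.

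Your closing step, however --- interpolating to fill the full square $(-\delta,\delta)^2$ --- cannot work as written. The region you reach, $\{\gamma_i\in[-\tfrac{1}{p},1-\tfrac{1}{p}],\ \gamma_1+\gamma_2\leq s-\tfrac{2}{p}\}$, is already convex and already contains the $L^p$ point $(-\tfrac{1}{p},-\tfrac{1}{p})$, so taking convex combinations adds nothing. In fact the constraint $\gamma_1+\gamma_2\leq s-\tfrac{2}{p}$ is \emph{sharp}: testing against $F(x,y)=\psi(2^{-k}x)\psi(2^{-k}y)$ with $k\to-\infty$ gives left-hand side $\sim 2^{k}$ and right-hand side $\sim 2^{k(1+\gamma_1+\gamma_2-s+2/p)}$, which forces $\gamma_1+\gamma_2\leq s-\tfrac{2}{p}$. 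Thus for $s<\tfrac{4}{p}$ the full square claimed in the lemma is too optimistic; the paper's own endpoint assertion $\gamma_1=\gamma_2=\tfrac12$ at $(p,s)=(2,1)$ already fails this same test (double integration by parts there would need $\partial_1\partial_2 F$, not controlled by $W^{1,2}$). Fortunately every subsequent use of the lemma in the paper (Proposition~\ref{long_tall_pre} and the error-term estimates in Section~\ref{subsection:endproof}) only requires $(\gamma_1,\gamma_2)$ with $\gamma_1+\gamma_2$ near $0$, which lies safely inside your region.
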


\begin{proof}  This estimate can be obtained by interpolation. Aiming that, it suffices us to prove the three end-points $p=2$ or $\infty$ and $s=\frac{2}{p}$ or $s=1$.

\begin{itemize}
\item For $p=\infty,s=0$, this is direct with $\gamma_1=\gamma_2=0$ since for every rectangle $R$ of dimensions $2^{k_1}\times  2^{k_2}$ we have
$$ |\langle F, h_R\rangle | \lesssim \|F\|_\infty \|h_R\|_1 \lesssim \|F\|_{\infty} |R|^{1/2}.$$
\item For $p=\infty,s=1$, so $F$ is supposed to be Lipschitz. If $k_i\leq 0$, we perform an integration by parts along the coordinate $i$, and we obtain
$$ |\langle F, h_R\rangle | \lesssim \|F\|_{W^{1,\infty}} \|H_R\|_1 \lesssim \|F\|_{W^{1,\infty}} |R|^{1/2} 2^{k_1\gamma_1} 2^{k_2 \gamma_2},$$
where $H_R$ is the primitive function of $h_R$ along the coordinates for which $k_i\leq 0$ and where we chose $\gamma_i=1$ if $k_i\leq 0$ and else $\gamma_i=0$.
\item For $p=2,s=1$, this is direct by Plancherel inequality and integration by parts with $\gamma_1=\gamma_2=\frac{1}{2}$.
\end{itemize}
\end{proof}

\begin{lemma}[Shifted Dyadic Grids - {\cite[Proposition 2.1]{Mei}}] \label{lemma:grid} For some $\delta>0$, there exists a constant $c=c(\delta)$ such that for any interval $I\subset \BBR$, there exist $\alpha\in\{0,\delta\}$ and $I_0\in \BBD^\alpha$ such that
$$ I\subset I_0 \subset c I.$$
For any rectangle $R\subset \BBR^2$, there exist $\alpha\in\{0,\delta\}^2$ and $R_0 \in \BBD^\alpha$ such that
$$ R\subset R_0 \subset c R.$$
\end{lemma}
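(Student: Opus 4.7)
The plan is to reduce the two-dimensional statement to the one-dimensional one and then handle the one-dimensional case by a variant of the classical ``one-third trick,'' adapted to the paper's scale-dependent shifts. In $\BBR^2$ the grids are tensor products $\BBD^{\alpha_1} \times \BBD^{\alpha_2}$ and a rectangle $R = I \times J$ factors, so producing $I_0 \in \BBD^{\alpha_1}$ with $I \subset I_0 \subset c I$ and $J_0 \in \BBD^{\alpha_2}$ with $J \subset J_0 \subset c J$ at once yields $R \subset I_0 \times J_0 \subset c R$. Hence only the one-dimensional case requires genuine work.

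For the one-dimensional case, I would fix a large integer $N = N(\delta)$ (to be chosen by the separation constant below). Given an interval $I$ of length $\ell$, let $k \in \BBZ$ be minimal with $2^k \geq 2^N \ell$, so that $2^N \ell \leq 2^k < 2^{N+1} \ell$. Any dyadic interval $I_0$ of length $2^k$ that contains $I$ then satisfies $|I_0| \leq 2^{N+1} |I|$, yielding the desired constant $c = 2^{N+1}$. It remains to show that such an $I_0$ exists in at least one of the two grids $\BBD^0$ and $\BBD^\delta$.

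The combinatorial core is a separation estimate. At scale $2^k$, the endpoints of the length-$2^k$ intervals of $\BBD^0$ form the lattice $2^k \BBZ$, while those of $\BBD^\delta$ form a shifted lattice $2^k \BBZ + \delta_k$, where $\delta_k$ is the shift prescribed by the definition of $\delta_I$ for $|I| = 2^k$. I would verify, by a short case analysis on the three regimes ($k \leq 0$, $k > 0$ even, and $k > 0$ odd), that the residue $\delta_k \bmod 2^k$ lies in an interval of the form $[\kappa(\delta) \cdot 2^k,\, (1 - \kappa(\delta)) \cdot 2^k]$ for some $\kappa(\delta) > 0$ independent of $k$. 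For $k > 0$ the expressions $\delta + \tfrac{1}{3}(2^k - 1)$ and $\delta + \tfrac{1}{3}(2^{k+1} - 1)$ reduce modulo $2^k$ to quantities near $2^k/3$ and $2 \cdot 2^k/3$, reproducing the standard one-third separation. For $k \leq 0$ the shift is the constant $\delta$, and the natural choice $\delta = 1/3$ gives $\delta \bmod 2^k \in \{2^k/3,\, 2 \cdot 2^k/3\}$ according to the parity of $k$ (via the binary expansion $1/3 = 0.010101\ldots$), so the separation is again exactly $2^k/3$. With such a $\kappa(\delta)$ in hand, choosing $2^N > 1/\kappa(\delta)$ forces $\ell < \kappa(\delta) \cdot 2^k$; an interval of that length can straddle at most one of the two endpoint sets at scale $2^k$, and the grid whose endpoints it avoids produces the desired $I_0$.

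The main obstacle is the uniform separation estimate: one must check that the scale-dependent shift rule of the paper, designed to preserve the nesting of $\BBD^\delta$ across scales, remains compatible with a one-third-type separation at every scale simultaneously. The parity split for $k > 0$ and the transition at scale $|I| = 1$ make this a checkable but not entirely automatic verification, and it is also the step that pins down the admissible values of $\delta$. Once the separation is in place, the straddling argument and the passage to rectangles by tensorization are immediate.
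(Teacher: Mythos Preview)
The paper does not supply its own proof of this lemma; it is stated with a citation to \cite[Proposition 2.1]{Mei} and immediately followed only by a remark about choosing $c=2$ via a larger family of grids. So there is no in-paper argument to compare against.

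Your proposal is a correct outline of the standard proof underlying the cited result. The reduction to one dimension by tensorization is exactly right, and the one-dimensional argument via a uniform separation estimate (the ``one-third trick'') is the intended mechanism. Your verification sketch for the separation constant $\kappa(\delta)$ is accurate: for $\delta=1/3$ one checks that at every scale $2^k$ the shifted endpoints land at distance exactly $2^k/3$ or $2\cdot 2^k/3$ from the unshifted lattice, in each of the three regimes of the paper's scale-dependent shift rule, and the parity/transition checks you flag are precisely the places where this needs care. Once that is established, the straddling argument and the choice of $N$ go through as you describe. In short, your approach matches the standard one that the paper is citing; there is no alternative in-paper proof to contrast with.
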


For notational convenience, we choose the constant $c=2$, which is always possible by considering a larger collection of shifted dyadic grids. This numerical constant will mainly appear in \eqref{choice_grid} and does not play any important role.

\section{Counter-examples for Questions \ref{conj_1} and \ref{conj_2}. }
\label{sec:questions}

\subsection{Counter-example for Question \ref{conj_1}} 
\label{subsec:counterexample1}

We want here to explain why the answer of Question \ref{conj_1} for rotations is negative.

Consider the function $F(x,y):=\mathbbm{1}_{[0,1]}(x)$. Since $F$ is constant along the $y$-variable, it belongs to the kernel of $\BMO(\BBR^2)$. It suffices us to check that with $\phi:=\phi^\theta$ the rotation of angle $\theta\in(0,2\pi)\setminus\{\frac{\pi}{2},\pi,\frac{3\pi}{2}\}$ then $F\circ \phi$ is not vanishing in $\BMO(\BBR^2)$.

For simplicity, consider the case where $\theta\in(0,\frac{\pi}{2})$ and consider a dyadic rectangle $R:=I\times J$ of this type: the intervals $I=2^k(i,i+1)$ and $J=2^\ell(j,j+1)$ satisfying
\begin{itemize}
\item the eccentricity satisfies $\frac{2^\ell}{2^k} \simeq \tan(\theta)$ and both $2^k,2^\ell \ll 1$;
\item there exists a point $(x,y)$ such that $y=-x\tan(\theta) + \sin(\theta)^{-1}$ and
$$ 2^k(i+\frac{1}{4}) \leq x \leq 2^k(i+\frac{1}{2}) \qquad \textrm{and} \qquad 2^\ell(j+\frac{1}{4}) \leq y \leq 2^\ell(j+\frac{1}{2}).$$
\end{itemize}
The function $F\circ \phi$ is supported and equal to $1$ on the strip 
$$ S:=\{(x,y),\ -x \tan(\theta) \leq y \leq -x \tan(\theta) + \sin(\theta)^{-1}\}.$$
So by the picture (see Figure \ref{figure1}), one easily deduces that
\begin{equation} 2^k(i,i+1/4) \times 2^\ell (j,j+1/4) \subset S \cap R \subset 2^k(i,i+1/2) \times 2^\ell (j,j+1/2)  \label{ScapR}
\end{equation}
which is one quarter of the initial rectangle $R$.

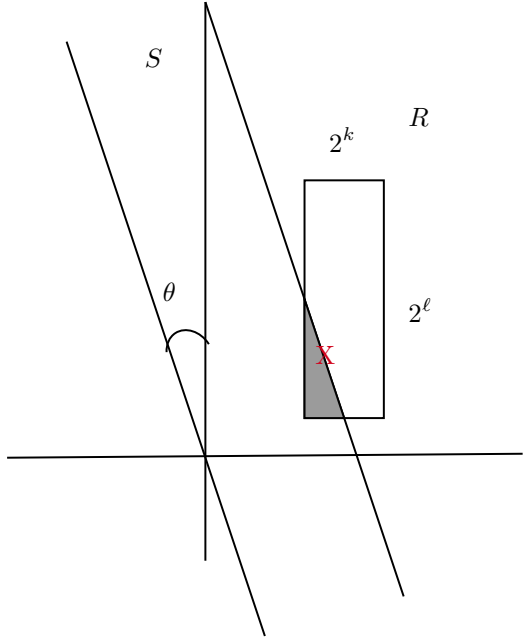
\begin{figure}
\centering

\tikzset{every picture/.style={line width=0.75pt}} 

\begin{tikzpicture}[x=0.75pt,y=0.75pt,yscale=-1,xscale=1]

\draw    (180,20) -- (180,302) ;
\draw    (80,250) -- (340,248) ;
\draw    (110,40) -- (210,340) ;
\draw    (180,20) -- (280,320) ;
\draw   (230,110) -- (270,110) -- (270,230) -- (230,230) -- cycle ;
\draw  [draw opacity=0] (160.42,196.72) .. controls (160.25,192.76) and (161.77,189.13) .. (164.85,187.11) .. controls (169.99,183.76) and (177.51,186.19) .. (181.65,192.53) .. controls (181.69,192.59) and (181.72,192.64) .. (181.75,192.69) -- (172.35,198.61) -- cycle ; \draw   (160.42,196.72) .. controls (160.25,192.76) and (161.77,189.13) .. (164.85,187.11) .. controls (169.99,183.76) and (177.51,186.19) .. (181.65,192.53) .. controls (181.69,192.59) and (181.72,192.64) .. (181.75,192.69) ;
\draw  [fill={rgb, 255:red, 155; green, 155; blue, 155 }  ,fill opacity=1 ] (230,170) -- (250,230) -- (230,230) -- cycle ;

\draw (148,42) node [anchor=north west][inner sep=0.75pt]   [align=left] {$\displaystyle S$};
\draw (157,160) node [anchor=north west][inner sep=0.75pt]   [align=left] {$\displaystyle \theta $};
\draw (281,72) node [anchor=north west][inner sep=0.75pt]   [align=left] {$\displaystyle R$};
\draw (281,168) node [anchor=north west][inner sep=0.75pt]   [align=left] {$\displaystyle 2^{\ell }$};
\draw (241,82) node [anchor=north west][inner sep=0.75pt]   [align=left] {$\displaystyle 2^{k}$};
\draw (234,192) node [anchor=north west][inner sep=0.75pt]  [color={rgb, 255:red, 208; green, 2; blue, 27 }  ,opacity=1 ] [align=left] {X};

\end{tikzpicture}

\caption{Position of $S$ and $R$, the point $(x,y)$ is located at {\color{red} X} and here, $R$ has an eccentricity exactly equals to $\tan(\theta)$.} \label{figure1}
\end{figure}

 As a consequence, $F\circ \phi$ and $h_{R}$ are both constant on the support of $(F\circ \phi)\cdot h_R$ and so with \eqref{ScapR},
$$ |\langle F \circ \phi, h_R\rangle| = \frac{|S\cap R|}{|R|} \geq \frac{1}{16}.$$
This yields in particular a lower bound
$$ \frac{1}{16}\leq \|F\circ \phi\|_{\BMO(\BBR^2)}$$
which, combined with $\|F\|_{\BMO(\BBR^2)}=0$, gives a negative answer to Question \ref{conj_1}.

\bigskip

\begin{remark} \label{rem:kernel}
Indeed we see by this counterexample that for a function $\phi$ to satisfy the inequality (\ref{conj_1_ineq}) in Question \ref{conj_1}, a necessary condition is that $\phi$ preserves {\it the kernel} of $\BMO(\BBR^2)$.

Let
\begin{equation*}
F:=(x,y) \mapsto f_x + g_y
\end{equation*}
a function in the kernel.
Hence $F=0$ in $\BMO(\BBR^2)$ so if we have a map $\phi$ which answers positively to Question \ref{conj_1} then $F \circ \phi$ has also to be in the kernel of $\BMO(\BBR^2)$.

If we look for $\phi$ a rotation, then there is only the four 'trivial' possibilities of rotations of angle $0,\frac{\pi}{2},\pi,\frac{3\pi}{2}$, which are easily preserving the $\BMO$ norm (since the image of a rectangle is again a rectangle with sides parallel to the axis). Observe that the previous counterexample requires $|\tan(\theta)|<\infty$ and $\sin(\theta)\neq 0$, which exactly excludes these trivial situations.
\end{remark}

\begin{remark}\label{haar_composition}
This counterexample not only proves the failure of preservation of biparameter $\BMO$ norm under rotations, but also suggests extra conditions one needs to impose on $F$. One reasonable condition will be to impose some integrability $F \in L^p$ for some $p<\infty$ to exclude the functions in the kernel.

Indeed one first notices that for any $F \in \BMO$, $F$ can be written as 
\begin{equation}
F = \sum_{R \in \mathbb{D}}\langle F, h_R \rangle h_R + f_x + g_y.
\end{equation}
Suppose that $F \in L^p$ for some $p\in(1,\infty)$. Then by Khincthine's inequality and the square function estimate, 
\begin{align}
\left\|\sum_{R \in \mathbb{D}}\langle F, h_R \rangle h_R\right\|_p \sim \left\|\left(\sum_{R \in \mathbb{D}} |\langle F, h_R \rangle|^2\right)^{\frac{1}{2}} \right\|_p \lesssim \|F\|_{p}.
\end{align}
Therefore, by triangle's inequality,
\begin{align}
\| f_x + g_y\|_p = \left\| F - \sum_{R \in \mathbb{D}}\langle F, h_R \rangle h_R \right\|_p \leq \|F\|_p + \left\| \sum_{R \in \mathbb{D}}\langle F, h_R \rangle h_R \right\|_p \lesssim \|F\|_p < \infty
\end{align}
which imposes the condition that for almost every $(x,y) \in \mathbb{R}^2$,
\begin{equation}
f_x \equiv g_y \equiv  0.
\end{equation}
In conclusion, assuming that $F\in L^p(\BBR^2)$ for some $p\in(1,\infty)$ will prevent us to deal with the kernel of $\BMO(\BBR^2)$.
\end{remark}

We move the reader to Section \ref{sec:motivation}, for more motivation about introducing a small term with a Sobolev norm.

\subsection{Counter-example for Question \ref{conj_2}}
\label{subsec:counterexample2}
To provide a negative answer to Question \ref{conj_2}, one would like to show its negation as stated in the following theorem.
\begin{theorem}
Suppose that $\phi:=\phi^\theta$ is the rotation in $\BBR^2$ of angle $\theta$. For any constant $C(\theta)$, there exists some $p>2$ and some $F \in \BMO(\BBR^2) \cap W^{1,p}(\BBR^2)$ such that for any $\epsilon(\theta,p)<1$, the following two conditions cannot hold simultaneously.
\begin{enumerate}
\item
 $\displaystyle \lim_{\theta\rightarrow 0}\epsilon(\theta,p) = 0$
 \item
 The inequality 
\begin{equation} \label{conj_2_eq}
\|F \circ \phi^{\theta}\|_{\BMO(\BBR^2)} \leq C(\theta) \|F\|_{\BMO(\BBR^2)} + \epsilon(\theta,p)\|F\|_{W^{1,p}(\BBR^2)}
\end{equation}
is valid.
\end{enumerate}
\end{theorem}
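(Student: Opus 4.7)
My plan is to construct, given any prescribed $C(\theta)$, an explicit function $F \in \BMO(\BBR^2) \cap W^{1,p}(\BBR^2)$ (for $p > 2$ to be chosen) that approximates an element of $\kernel(\BMO(\BBR^2))$ in a quantitative sense. The motivation is Remark \ref{rem:kernel}: a rotation does not preserve the kernel, so a function very close to the kernel (hence with very small $\BMO$ norm) will, after rotation, exit the kernel and pick up a non-trivial $\BMO$ norm bounded below. The role of the Sobolev term is then to control the ``cost'' of the cutoff used to place the kernel approximation inside $L^p$.

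Concretely, for a large scale $N$ depending on $C$, I take $F(x,y) = (f(x)+g(y))\chi_N(x,y)$ where $f, g$ are smooth bumps on $[0,1]$ and $\chi_N$ is a smooth cutoff equal to $1$ on $[-N,N]^2$ and supported in $[-2N,2N]^2$. On the interior of $\chi_N$, the function $F$ coincides with the kernel element $f(x)+g(y)$, so all biparameter Haar coefficients $\langle F,h_I\otimes h_J\rangle$ vanish there. The boundary of $\chi_N$ is the only source of non-zero coefficients, and by Bessel's inequality $\sum_R |\langle F,h_R\rangle|^2 \leq \|F\|_{L^2}^2 \lesssim N$, which, normalized by $|\Omega|\sim N^2$ in the Carleson criterion, yields $\|F\|_{\BMO(\BBR^2)} \lesssim N^{-1/2}$. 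Direct computation also gives $\|F\|_{W^{1,p}(\BBR^2)} \lesssim N^{1/p}$, since the support has area $\sim N^2$, the function is uniformly bounded, and its gradient is of order $O(1)$ in the interior and $O(1/N)$ from the cutoff.

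For the rotated function, on the interior of the rotated support we have $F\circ\phi^\theta(x,y)\approx f(\cos\theta\, x+\sin\theta\, y)+g(-\sin\theta\, x+\cos\theta\, y)$, a sum of two tilted ridges which, for $\theta\notin\{0,\frac{\pi}{2},\pi,\frac{3\pi}{2}\}$, does \emph{not} belong to $\kernel(\BMO(\BBR^2))$. Mimicking the computation of Section \ref{subsec:counterexample1} (and smoothing the indicator reasoning of Figure \ref{figure1}) on dyadic rectangles of eccentricity $\sim\tan\theta$ straddling the tilted ridge of $f(\cos\theta\, x+\sin\theta\, y)$, one obtains a quantitative lower bound $\|F\circ\phi^\theta\|_{\BMO(\BBR^2)} \geq c(\theta) > 0$ for some value of $\theta$ in a range that we control independently of $N$.

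Combining these three estimates in the inequality \eqref{conj_2_eq} at such a $\theta$ yields $c(\theta) \leq C(\theta)\, N^{-1/2} + \epsilon(\theta,p)\, N^{1/p}$. Given any admissible $C$, we pick $N$ large enough that $C(\theta)N^{-1/2} < c(\theta)/2$, which forces $\epsilon(\theta,p) \gtrsim c(\theta) N^{-1/p}$, a quantity strictly bounded away from $0$ for our fixed choice. This is incompatible with the requirement $\lim_{\theta\to 0}\epsilon(\theta,p)=0$, so conditions (1) and (2) cannot hold simultaneously. The main obstacle is step four, which requires a careful analysis of biparameter Haar coefficients of the smoothed tilted ridge: unlike the sharp indicator in Section \ref{subsec:counterexample1}, the smooth bump yields smaller individual coefficients, so one must accumulate contributions over many rectangles on the ridge boundary via the Carleson criterion to obtain the lower bound $c(\theta)$, and verify that this quantity does not collapse to zero as one searches for the optimal angle in the allowed range.
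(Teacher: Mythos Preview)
Your proposal has a fatal gap in the key upper bound $\|F\|_{\BMO(\BBR^2)} \lesssim N^{-1/2}$. The Bessel argument you give is not a valid estimate of the biparameter $\BMO$ norm: the Carleson criterion is a \emph{supremum} over all open sets $\Omega$, and you are not free to normalize by $|\Omega|\sim N^2$. In fact your function satisfies $\|F\|_{\BMO(\BBR^2)}\gtrsim 1$ uniformly in $N$. To see this, take $N=2^m$, fix a dyadic $I_0\subset[0,1]$ with $\osc_{I_0}(f)>0$, and set $\Omega=I_0\times[N,2N]$. On this region $g$ vanishes and $\chi(x/N)=1$, so $F(x,y)=f(x)\,\chi(y/N)$ is a tensor product there, and the Carleson quantity factors as
\[
\frac{1}{|\Omega|}\sum_{R\subset\Omega}|\langle F,h_R\rangle|^2
=\osc_{I_0}(f)^2\cdot\osc_{[N,2N]}\big(\chi(\cdot/N)\big)^2
=\osc_{I_0}(f)^2\cdot\int_1^2|\chi(t)-\bar\chi|^2\,dt,
\]
a fixed positive constant since $\chi$ transitions from $1$ to $0$ on $[1,2]$. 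Thus the construction collapses: you cannot make $C(\theta)\|F\|_{\BMO}$ small by enlarging $N$. The underlying obstruction is that a compact cutoff of the kernel element $f(x)+g(y)$ unavoidably creates biparameter oscillation of order $1$ at the edges of the cutoff, at every scale.

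The paper circumvents this with a genuinely different mechanism: it takes a \emph{tensor product} $F(x,y)=f(x)g(y)$ with $g(y)=|y|^{-\alpha}$ (where $\alpha=2/p$) decaying just fast enough to lie in $L^p$ but slowly enough that $\|g\|_{\BMO(\BBR)}\lesssim\alpha^{1/4}$, which is proved by direct oscillation estimates. Via a product-type argument this yields $\|F\|_{\BMO(\BBR^2)}\lesssim p^{-1/4}$ while $\|F\|_{W^{1,p}}$ stays bounded \emph{uniformly in $p$}; the rotated $\BMO$ norm is bounded below exactly as in Section~\ref{subsec:counterexample1}. One then defeats any prescribed $C(\theta)$ by sending $p\to\infty$ rather than by introducing a spatial scale.
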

\begin{proof}
Define 
$$
F(x,y) := f(x) g(y),
$$
where 
\begin{enumerate}
\item
$
f(x) := \psi_{[0,1]} (x)
$
denotes a smooth variant of the Haar wavelet on $[1,2]$;
\item
Let $\alpha := \alpha(p)$ denote a positive number depending on $p$. Define
$$
g(y) :=  
\begin{cases}
1 \quad\quad\quad\quad\quad \text{for} \ \ y \in [-1,1] \\
|y|^{-\alpha} \quad\quad\ \ \text{for} \ \ y \in (\infty,-1) \cup (1,\infty). \\
\end{cases}
$$
\end{enumerate}
Observe that the function $F$ depends on $p$ through the parameter $\alpha$, we will adopt a simplified notation and keep this dependence implicit. We will prove the following three properties of $F$:
\begin{itemize}
\item \textit{Property (A):} For the choice $\alpha(p):=\frac{2}{p}$, there exists a numerical constant $C_1$ (independent of $p$) such that
\begin{equation*} 
\sup_{2\leq p <\infty} \ \|F\|_{W^{1,p}} \leq C_1.
\end{equation*}
\item \textit{Property (B):} For the choice $\alpha(p)=\frac{2}{p}$, there exists a numerical constant $C_2$, independent of $p$ such that for every $p\in[4,\infty)$
\begin{equation*}
\|F\|_{\BMO} \leq C_2 p^{-\frac{1}{4}}.
\end{equation*}
\item \textit{Property (C):} There exists a numerical constant $c>0$, such that for every $\theta\in(0,2\pi)\setminus\{\frac{\pi}{2},\pi,\frac{3\pi}{2}\}$
\begin{equation*} 
c< \|F \circ \phi^{\theta}\|_{\BMO}.
\end{equation*}
\end{itemize}

These three properties will be proved in the next lemmas, we are now going to explain how they imply the desired statement.

Assume that the inequality \eqref{conj_2_eq} is true for this specific function $F$ with the choice $\alpha(p)=\frac{2}{p}$, for the sake of contradiction. By combining \textit{Properties} A, B and C, one deduces that
\begin{equation} \label{conj_2_eq_final}
c < \|F\circ \phi^{\theta}\|_{\BMO} \leq C(\theta) C_2 p^{-\frac{1}{4}} + C_1 \epsilon(\theta,p).
\end{equation}
For any $C(\theta)$, as large as we want, one can choose $p>4$ large enough such that 
\begin{equation} \label{ch_1}
C(\theta) C_2 p^{-\frac{1}{4}} < \frac{c}{10}.
\end{equation}
For such exponent $p \in (4,\infty)$, one concludes that
\begin{equation*}
c < \frac{c}{10} + C_1\epsilon(\theta,p)
\end{equation*}
which implies that 
\begin{equation*}
\epsilon(\theta,p) > \frac{9}{10}c \cdot C_1^{-1}
\end{equation*}
and completes the proof of the theorem.
\end{proof}

We now have to detail the proofs of the three properties, which will be done through the next lemmas.

\begin{lemma} \label{lemmaA} For the choice $\alpha(p):=\frac{2}{p}$, there exists a numerical constant $C_1$ (independent of $p$) such that
\begin{equation} \label{prop_1} \tag{$A$}
\sup_{2\leq p <\infty} \ \|F\|_{W^{1,p}} \leq C_1.
\end{equation}
\end{lemma}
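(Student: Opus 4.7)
The plan is to exploit the tensor-product structure $F(x,y) = f(x) g(y)$ together with the critical scaling $\alpha(p) = 2/p$, which makes every integral in the computation of $\|F\|_{W^{1,p}}$ converge with a $p$-independent bound.

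First I would reduce to estimating one-variable norms. Since $\partial_x F(x,y) = f'(x) g(y)$ and $\partial_y F(x,y) = f(x) g'(y)$, the tensor-product factorization of $L^p(\mathbb{R}^2)$ yields
\begin{equation*}
\|F\|_{W^{1,p}(\BBR^2)} \leq \|f\|_{L^p(\BBR)} \|g\|_{L^p(\BBR)} + \|f'\|_{L^p(\BBR)} \|g\|_{L^p(\BBR)} + \|f\|_{L^p(\BBR)} \|g'\|_{L^p(\BBR)}.
\end{equation*}
Since $f$ is a fixed smooth, compactly supported function, both $\|f\|_{L^p(\BBR)}$ and $\|f'\|_{L^p(\BBR)}$ are uniformly bounded for $p \in [2,\infty)$, with bound depending only on $\|f\|_\infty$, $\|f'\|_\infty$, and the measure of its support.

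It remains to control $\|g\|_{L^p(\BBR)}$ and $\|g'\|_{L^p(\BBR)}$ uniformly in $p$, and this is where the choice $\alpha(p) = 2/p$ plays its role. The bound region $[-1,1]$ contributes at most $2$ to $\|g\|_{L^p}^p$. The tail contribution is
\begin{equation*}
2 \int_1^\infty y^{-\alpha p}\, dy \;=\; \frac{2}{\alpha p - 1} \;=\; 2,
\end{equation*}
using exactly that $\alpha p = 2$. Therefore $\|g\|_{L^p} \leq 4^{1/p} \leq 2$. For the derivative, $g' \equiv 0$ on $(-1,1)$ (or we may smooth out the corners at $\pm 1$ without affecting anything), and $|g'(y)| = \alpha |y|^{-\alpha - 1}$ for $|y|>1$, so
\begin{equation*}
\|g'\|_{L^p(\BBR)}^p \;=\; 2 \alpha^p \int_1^\infty y^{-(\alpha+1)p}\, dy \;=\; \frac{2\alpha^p}{(\alpha+1)p - 1} \;=\; \frac{2(2/p)^p}{p+1},
\end{equation*}
which is uniformly bounded (in fact goes to $0$ as $p\to\infty$).

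Combining these three estimates gives $\|F\|_{W^{1,p}(\BBR^2)} \leq C_1$ for an absolute constant $C_1$. There is no genuine obstacle here; the only substantive point is to verify that the precise balance $\alpha p = 2$ makes both the $L^p$-integral of $g$ and of $g'$ simultaneously converge with controllable bounds as $p$ ranges over $[2,\infty)$, and this follows by direct computation.
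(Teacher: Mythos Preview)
Your proof is correct and follows essentially the same approach as the paper: both reduce via the tensor-product structure to bounding $\|f\|_p$, $\|f'\|_p$, $\|g\|_p$, and $\|g'\|_p$ separately, compute the same explicit integrals for $g$ and $g'$, and exploit precisely the balance $\alpha p = 2$ to make all bounds uniform in $p$. The only cosmetic difference is that the paper tracks the constants a bit more explicitly, but the argument is the same.
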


\begin{proof} One first notices that
\begin{equation} \label{p_norm}
\|F\|_{L^p(\mathbb{R}^2)} = \|f\|_{p}\|g\|_p
\end{equation}
where for some numerical constant $Q:=\max(\|F\|_2,\|F\|_\infty)$ we have 
$$ \sup_{2\leq p < \infty} \ \|F\|_p \leq Q.$$ 
Moreover
\begin{align} \label{g_norm}
\|g\|_p^p & = 2 + 2\int_{1}^{\infty}y^{-\alpha p}dy \nonumber \\
& = 2 + 2\frac{1}{\alpha p - 1} \nonumber \\
& = 2(1+ \frac{1}{\alpha p -1}),
\end{align}
where we used $\alpha p=2>1$.
By plugging the above estimates into \eqref{p_norm}, one obtains
\begin{equation*}
\|F\|_{L^p} \leq 2^{\frac{1}{p}} Q(1+ \frac{1}{\alpha p -1})^{\frac{1}{p}} = 4^{\frac{1}{p}}Q.
\end{equation*}
Moreover, by applying the computation \eqref{g_norm}, one derives that
\begin{equation*}
\|D_x F\|_p = \|f'\|_p\|g\|_p \leq 2^{\frac{1}{p}} Q' (1+ \frac{1}{\alpha p -1})^{\frac{1}{p}}=4^{\frac{1}{p}} Q',
\end{equation*}
with another numerical constant $Q'$.
Meanwhile,
\begin{equation*}
\|D_y F\|_p = \|f\|_p\|g'\|_p  
\end{equation*}
where
\begin{align}
\|g'\|_p^p  & = 2\alpha^p \int_{1}^{\infty}y^{-\alpha p - p} dy \nonumber \\
& = 2\alpha^p \frac{1}{\alpha p + p -1} \nonumber \\
& = 2 \frac{\alpha^p}{\alpha p + p -1}.
\end{align}
Then
\begin{equation*}
\|D_y F\|_p \leq Q\cdot 2^{\frac{1}{p}}\frac{\alpha}{(\alpha p + p -1)^{\frac{1}{p}}} \leq 2^{\frac{1}{p}}\frac{2}{p(1+p)^{\frac{1}{p}}} Q.
\end{equation*}
In conclusion,
\begin{equation} \label{sob_norm}
\|F\|_{W^{1,p}} \leq (Q+Q')4^{\frac{1}{p}} 
+Q 2^{\frac{1}{p}}\frac{2}{p(1+p)^{\frac{1}{p}}}. 
\end{equation}
Then \eqref{sob_norm} can be simplified as
\begin{equation} \label{sob_norm_bound}
\|F\|_{W^{1,p}} \leq 2(Q+Q')4^{\frac{1}{p}} \big(1+\frac{2}{p(1+p )^{\frac{1}{p}}}\big).
\end{equation}
For any $p \in [2,\infty)$, \eqref{sob_norm_bound} can be estimated by a universal constant independent of $p$. In particular, since $(\frac{1}{p+1})^{\frac{1}{p}} \leq 1$ for any $2 \leq p < \infty$,
\begin{equation}
\|F\|_{W^{1,p}} \leq 8(Q+Q') =: C_1.
\end{equation}
\end{proof}

\begin{lemma} \label{lemmaB} For the choice $\alpha(p)=\frac{2}{p}$, there exists a numerical constant $C_2$, independent of $p$ such that for every $p\in[4,\infty)$
\begin{equation} \label{prop_2} \tag{$B$}
\|F\|_{\BMO(\BBR^2)} \leq C_2 p^{-\frac{1}{4}}.
\end{equation}
\end{lemma}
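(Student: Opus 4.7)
The plan is to exploit the tensor product structure $F=f\otimes g$ via the dyadic characterization $\BMO(\BBR^2)=\bigcap_{a,b\in\{0,\delta\}}\BMO^{a,b}(\BBR^2)$ and the Haar factorization $\langle F,h_I\otimes h_J\rangle=\langle f,h_I\rangle\langle g,h_J\rangle$. The central observation is that $g$ degenerates to the constant function $1$ as $\alpha=2/p\to 0$, so its one-parameter BMO norm should shrink like $\alpha$, and this smallness will propagate to $F$; the desired bound $p^{-1/4}$ will in fact come out as $O(1/p)$.

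The first step is the sharp one-parameter estimate $\|g\|_{\BMO(\BBR)}\lesssim\alpha$, uniform in the shifted dyadic grid. Since $g\equiv 1$ on $[-1,1]$ and $|g'(y)|=\alpha|y|^{-\alpha-1}\leq\alpha$ for $|y|>1$, $g$ is globally $\alpha$-Lipschitz, so $\osc_{J_0}(g)\lesssim\alpha$ on any small interval. The delicate case is a large interval $J_0=[-N,N]$, where the range of $g$ itself is $O(1)$; there a direct variance computation from $\int_{J_0}(g-\bar g)^2=\int_{J_0}g^2-|J_0|\bar g^2$ produces, after the exact cancellation of the $O(1)$ terms, a leading contribution of order $\alpha^2 N^{1-2\alpha}/[(1-2\alpha)(1-\alpha)^2]$, which divided by $|J_0|=2N$ stays $O(\alpha^2)$ uniformly in $N$ throughout the range $p\geq 4$.

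The second step is a Carleson-type estimate exploiting the tensor product. For any open $\Omega\subset\BBR^2$ and any $I\in\BBD^a$, introduce the vertical fiber
\[
\Omega_I:=\{y\in\BBR:\,I\times\{y\}\subset\Omega\},
\]
so that $I\times J\subset\Omega$ if and only if $J\subset\Omega_I$. The inclusion $I\times\Omega_I\subset\Omega$ yields the key geometric bound $|I|\cdot|\Omega_I|\leq|\Omega|$. Applying the Carleson-measure characterization for $g$, valid on any measurable set once decomposed into its maximal dyadic sub-intervals, one obtains
\begin{align*}
\sum_{I\times J\subset\Omega}|\langle f,h_I\rangle|^2|\langle g,h_J\rangle|^2
&\leq \|g\|_{\BMO^b}^2 \sum_{I\in\BBD^a} |\langle f,h_I\rangle|^2 |\Omega_I| \\
&\leq |\Omega|\,\|g\|_{\BMO^b}^2 \sum_{I\in\BBD^a}\frac{|\langle f,h_I\rangle|^2}{|I|}.
\end{align*}
The tail $\sum_{I}|\langle f,h_I\rangle|^2/|I|$ is a finite constant $C_f$ depending only on $f$, by a routine scale-by-scale count using $|\langle f,h_I\rangle|\lesssim\|f'\|_\infty|I|^{3/2}$ at small scales (smoothness) and $|\langle f,h_I\rangle|\lesssim\|f\|_1|I|^{-1/2}$ together with $\int f=0$ at large scales (only $O(1)$ relevant intervals per scale).

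Taking the supremum over $\Omega$ and over $(a,b)\in\{0,\delta\}^2$ gives $\|F\|_{\BMO(\BBR^2)}\lesssim\alpha=2/p\leq 2p^{-1/4}$ for every $p\geq 1$, which proves the claim with a numerical constant $C_2$ independent of $p$. The main obstacle is Step 1: the naive bound $\|g\|_{\BMO}\leq 2\|g\|_\infty=2$ is only $O(1)$, so detecting the $\alpha$-decay requires the precise variance calculation on large intervals, together with verifying that the cancellation remains robust as $\alpha$ approaches the endpoint $1/2$ (that is, $p=4$), where the leading-order formula superficially degenerates.
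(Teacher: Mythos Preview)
Your two-step reduction is the same as the paper's: Step~2 is precisely Claim~3.5 (with the cosmetic difference that you use the pointwise bound $|I|\,|\Omega_I|\leq|\Omega|$ and then need $\sum_I|\langle f,h_I\rangle|^2/|I|<\infty$, whereas the paper uses the disjointness bound $\sum_{|I|=2^k}|I|\,|\Omega_I|\leq|\Omega|$ together with $|\langle f,h_I\rangle|\lesssim|I|$; both arguments are fine).

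Where you diverge is Step~1: you claim the sharper estimate $\|g\|_{\BMO(\BBR)}\lesssim\alpha$, while the paper's Claim~3.6 only proves $\lesssim\alpha^{1/4}$ via a Poincar\'e/short--long split. Your sharper bound is in fact true, and there is a one-line argument that sidesteps your variance computation entirely: since $g(y)=\exp\!\big(-\alpha\,(\log|y|)_+\big)$ and $t\mapsto e^t$ is $1$-Lipschitz on $(-\infty,0]$, the elementary composition inequality $\osc_J(G\circ u)\leq\|G'\|_\infty\,\osc_J(u)$ gives
\[
\|g\|_{\BMO(\BBR)}\ \leq\ \alpha\,\big\|(\log|y|)_+\big\|_{\BMO(\BBR)}\ \leq\ \alpha\,\big\|\log|y|\big\|_{\BMO(\BBR)},
\]
a universal multiple of $\alpha$. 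This yields $\|F\|_{\BMO(\BBR^2)}\lesssim 1/p$, which of course implies the stated $p^{-1/4}$.

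Your own route to Step~1 via the explicit variance, though, is not complete as written. Two points: (i) you treat only the symmetric interval $[-N,N]$, but one must also handle $J=[a,b]\subset[1,\infty)$ with $b/a$ large --- this reduces by the scaling $g(ay)=a^{-\alpha}g(y)$ to $[1,N]$, which you should say; (ii) your leading-order formula carries the factor $(1-2\alpha)^{-1}$, and at $p=4$ (i.e.\ $\alpha=1/2$) this is not a ``superficial'' degeneration but a genuine pole of the leading term, cancelled only by the subleading corrections you dropped. One can repair this by a continuity/compactness argument for $V(\alpha,N)/\alpha^2$ on $[0,1/2]\times[1,\infty]$, but the Lipschitz-composition argument above is cleaner and avoids the issue altogether.
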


\begin{proof}
Estimate \eqref{prop_2} follows immediately from the two following Claims \ref{claim_1} and \ref{claim_2} with the choice of $\alpha = \frac{2}{p}$.
\end{proof}

\begin{claim}\label{claim_1}
\begin{equation}
\|F\|_{{\BMO(\BBR^2)}}  \lesssim \|g\|_{\BMO(\BBR)}.
\end{equation}
\end{claim}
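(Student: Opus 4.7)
The plan is to reduce the biparameter Carleson-type sum defining $\|F\|_{\BMO(\BBR^2)}$ to a one-parameter Carleson sum for $g$, by directly exploiting the tensor structure $F=f\otimes g$. Fix any pair of shifted grids $\alpha,\beta\in\{0,\delta\}$ and any open set $\Omega\subset\BBR^2$ of finite measure. Since the two-parameter Haar wavelet factors, $\langle F,h_I\otimes h_J\rangle=\langle f,h_I\rangle\,\langle g,h_J\rangle$, so
\begin{equation*}
\sum_{I\times J\subset\Omega}|\langle F,h_I\otimes h_J\rangle|^2 \;=\; \sum_{I\in\BBD^\alpha}|\langle f,h_I\rangle|^2\,\sum_{J\in\BBD^\beta,\;J\subset\tilde\Omega_I}|\langle g,h_J\rangle|^2,
\end{equation*}
where I introduce the fiber $\tilde\Omega_I:=\{y\in\BBR:\,I\times\{y\}\subset\Omega\}$, using that $I\times J\subset\Omega$ is equivalent to $J\subset\tilde\Omega_I$. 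Since $I\times\tilde\Omega_I\subset\Omega$, Fubini gives the pointwise control $|\tilde\Omega_I|\le|\Omega|/|I|$.

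Next I apply the one-parameter $\BMO$ Carleson characterization to $g$ on the open set $\tilde\Omega_I$. Decomposing $\tilde\Omega_I$ into the maximal dyadic subintervals $\{J_k\}$ of the shifted grid $\BBD^\beta$, one has $\sum_{J\subset J_k}|\langle g,h_J\rangle|^2\le|J_k|\,\|g\|_{\BMO(\BBR)}^2$, and summing in $k$ yields $\sum_{J\subset\tilde\Omega_I}|\langle g,h_J\rangle|^2\le|\tilde\Omega_I|\,\|g\|_{\BMO(\BBR)}^2$. Inserting this and the geometric bound $|\tilde\Omega_I|\le|\Omega|/|I|$ controls the full sum by
\begin{equation*}
|\Omega|\,\|g\|_{\BMO(\BBR)}^2\;\sum_{I\in\BBD^\alpha}\frac{|\langle f,h_I\rangle|^2}{|I|}.
\end{equation*}
It remains to see that the test constant $C_f:=\sum_I|\langle f,h_I\rangle|^2/|I|$ is bounded by a universal number (in particular independent of $p$). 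This is the only place where the smooth-Haar structure of $f$ enters: for small intervals $|I|\le 1$, integration by parts combined with $f$ being smooth with compact support yields $|\langle f,h_I\rangle|\lesssim\|f'\|_\infty|I|^{3/2}$, so that the dyadic sum $\sum_{k\ge0}2^k(2^{-k})^2$ converges geometrically; for large intervals containing $\operatorname{supp}(f)$ the wavelet $h_I$ is constant on $\operatorname{supp}(f)$ and $\int f=0$ kills the pairing, leaving only finitely many intermediate scales that contribute a bounded amount.

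Dividing by $|\Omega|$, taking the supremum over $\Omega$, and then over the four grids $\alpha,\beta\in\{0,\delta\}$ yields $\|F\|_{\BMO(\BBR^2)}\lesssim\|g\|_{\BMO(\BBR)}$. The heart of the argument is the fiber-level decoupling $J\subset\tilde\Omega_I$: once one recognizes that the factorized Haar coefficients produce a weighted sum of one-parameter $\BMO$ Carleson masses in $J$ against the weights $|\langle f,h_I\rangle|^2$ in $I$, the geometric inequality $|I||\tilde\Omega_I|\le|\Omega|$ is exactly what restores the normalizing factor $|\Omega|$, and no obstacle beyond the elementary smoothness bound on $C_f$ remains.
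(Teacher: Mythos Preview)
Your proof is correct and follows the same overall architecture as the paper: tensor-factor the Haar coefficients, introduce the fiber $\tilde\Omega_I=\{y:I\times\{y\}\subset\Omega\}$, and reduce the $J$-sum to a one-parameter Carleson mass for $g$ on that fiber. The only genuine difference is in the bookkeeping of the $I$-sum. The paper uses the weaker pointwise bound $|\langle f,h_I\rangle|\lesssim|I|$ and compensates via the scale-by-scale disjointness $\sum_{|I|=2^k}|I||\Omega_I|\le|\Omega|$ (the rectangles $I\times\Omega_I$ at a fixed scale are disjoint in $\Omega$), summing the resulting geometric series $\sum_{2^k\le4}2^k$. You instead apply the cruder individual bound $|\tilde\Omega_I|\le|\Omega|/|I|$ and make up for the loss with the sharper integration-by-parts estimate $|\langle f,h_I\rangle|\lesssim|I|^{3/2}$, so that $\sum_I|\langle f,h_I\rangle|^2/|I|$ converges outright. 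Both routes are clean; the paper's has the mild advantage of needing only $C^0$ information on $f$ (plus support and mean zero) at small scales, whereas yours uses one derivative of $f$ but avoids the disjointness observation entirely.
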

\begin{proof}[Proof of Claim \ref{claim_1}]
By definition of the biparameter $\BMO$ norm, 
$$ \|F\|_{{\BMO}} = \sup_{a,b\in\{0,\delta\}} \ \|F\|_{\BMO^{a,b}(\BBR^2)}.$$
So fix some parameters $a,b\in\{0,\delta\}$, we have
\begin{equation}
\|F\|_{{\BMO^{a,b}}}^2  =\sup_{\Omega} \frac{1}{|\Omega|} \sum_{\substack{I \times J \subseteq \Omega \\ I,J\in \BBD^a \times \BBD^b}} | \langle f \otimes g, h_{I}\otimes h_J \rangle|^2 = \sup_{\Omega} \frac{1}{|\Omega|} \sum_{\substack{I \times J \subseteq \Omega \\ I,J\in \BBD^a \times \BBD^\delta}}| \langle f, h_I \rangle|^2 |\langle g, h_J \rangle|^2,
\end{equation}
where the supremum is taken over all subset $\Omega \subset \BBR^2$.
Since $f$ is a smooth variant of the Haar wavelet on the interval $[1,2]$, one can intuitively guess that some open set $\Omega$ for which the sup is attained has to be of the form 
$$ \Omega = [1,2] \times \Omega_2.$$
More rigorously, by integration by parts, we have
$$ |\langle f, h_I\rangle |\lesssim |I|$$
and is non-vanishing only if $I \subset [0,4]$. For such $I$, let $\Omega_I$ denotes a maximal disjoint union of dyadic intervals in $\mathbb{R}_y$ such that
$$ I \times \Omega_I \subset \Omega.$$
In particular,
$$\displaystyle \Omega_I = \bigsqcup_{K\in K_I} K$$
for some maximal collection $K_I$ of disjoint dyadic intervals (of $\BBD^b$).
We deduce that
\begin{align}
\frac{1}{|\Omega|} \sum_{I \times J \subseteq \Omega}| \langle f, h_I \rangle|^2 |\langle g, h_J \rangle|^2 & \lesssim \frac{1}{|\Omega|} \sum_{I \subset [0,4]} |I|^2  \sum_{J \subseteq \Omega_I}|\langle g, h_J \rangle|^2  =    \frac{1}{|\Omega|} \sum_{\substack{I \subset [0,4] \\ I\in \BBD^a}} |I|^2 \sum_{K\in K_I} \sum_{J \subseteq K}|\langle g, h_J \rangle|^2 \nonumber \\
& \lesssim   \frac{1}{|\Omega|} \sum_{\substack{I\subset [0,4] \\ I\in \BBD^a}} |I|^2 \sum_{K\in K_I}|K| \|g\|_{\BMO}^2  \lesssim   \|g\|_{\BMO}^2 \frac{1}{|\Omega|} \sum_{I\subset [0,4]} |I|^2 |\Omega_I|  \nonumber \\
& \lesssim   \|g\|_{\BMO}^2 \sum_{2^k \leq 4} 2^k \frac{1}{|\Omega|} \sum_{I,\ |I|=2^k} |I| |\Omega_I| \lesssim \|g\|_{\BMO}^2 \sum_{2^k \leq 4} 2^k  \nonumber \\
& \lesssim   \|g\|_{\BMO}^2,
\end{align}
where the last equality follows from the fact that for a fixed scale $k$, then $(I\times \Omega_I)_{I, |I|=2^k}$ is a disjoint collection of subset of $\Omega$ so
$$\sum_{I,\ |I|=2^k} |I| |\Omega_I| \leq |\Omega|.$$
Indeed as initially guessed, the main contribution is obtained for the scale $1$ which corresponds to the situation where the considered subset $\Omega$ is of the type $\Omega = [1,2] \times \Omega_{[1,2]}$.
\end{proof}

\begin{claim} \label{claim_2}  For $\alpha\in(0,1/2)$, for every interval $J\subset \BBR$,
\begin{equation}
\\osc_J(g):=\bigg(\frac{1}{|J|}\int_{J}|g - \fint_J g|^2 dy\bigg)^{\frac{1}{2}} \lesssim \alpha^{\frac{1}{4}}
\end{equation}
and so $\|g\|_{\BMO(\BBR)} \lesssim \alpha^{\frac{1}{4}}$.
\end{claim}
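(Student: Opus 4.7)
The plan is to bound $\osc_J(g)$ by case analysis combined with the two-sided control provided by the uniform bound $|g|\leq 1$ and a derivative bound. Using the evenness of $g$ and the fact that $g\equiv 1$ on $[-1,1]$ (so any subinterval of $[-1,1]$ contributes zero oscillation), the problem reduces by symmetry to the case where $J$ extends into $[1,\infty)$. The main case is $J=[a,b]\subset[1,\infty)$; the straddling case $a\leq 1\leq b$ will be handled by splitting the integrals defining $\osc_J$ into the piece over $J\cap[-1,1]$ (where $g$ is constant and contributes nothing to the variance except through $\bar g_J$) and the piece over $J\cap\{|y|\geq 1\}$, which reduces to the main case. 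For $J=[a,b]\subset[1,\infty)$, the change of variables $y=at$ gives
$$\osc_J^2(g) = a^{-2\alpha}\,\osc_{[1,R]}^2(h) \leq \osc_{[1,R]}^2(h), \qquad R:=b/a,\ h(t):=t^{-\alpha},$$
so it suffices to bound $\osc_{[1,R]}(h)$ uniformly in $R>1$.

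To control $\osc_{[1,R]}(h)$, I would combine two estimates matched at a well-chosen scale. First, since $|h'(t)|=\alpha t^{-\alpha-1}\leq\alpha$ and $0\leq h\leq 1$, one has $|h(y)-h(z)|\leq \min(\alpha|y-z|,1)\leq\sqrt{\alpha|y-z|}$; plugging into the identity $\osc_{[1,R]}^2(h)=\frac{1}{2(R-1)^2}\iint_{[1,R]^2}(h(y)-h(z))^2\,dy\,dz$ and using $\iint_{[1,R]^2}|y-z|\,dy\,dz=(R-1)^3/3$ gives
$$\osc_{[1,R]}^2(h)\leq \frac{\alpha(R-1)}{6},$$
which is $\lesssim\alpha^{1/2}$ when $R-1\leq\alpha^{-1/2}$. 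For $R-1>\alpha^{-1/2}$, I would compute explicitly
$$\osc_{[1,R]}^2(h)=p_2(R)-p_1(R)^2, \qquad p_k(R):=\frac{R^{1-k\alpha}-1}{(R-1)(1-k\alpha)},$$
and exploit the cancellation $\frac{1}{1-2\alpha}-\frac{1}{(1-\alpha)^2}=\frac{\alpha^2}{(1-2\alpha)(1-\alpha)^2}$: for such $R$ one has $R^{1-k\alpha}/(R-1)\leq 2R^{-k\alpha}\leq 2$, and factoring out $R^{-2\alpha}$ from the dominant terms yields $p_2-p_1^2\lesssim \alpha^2 R^{-2\alpha}\leq \alpha^2$, hence $\osc_{[1,R]}(h)\lesssim\alpha\leq\alpha^{1/4}$. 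The two bounds together give $\osc_{[1,R]}(h)\lesssim\alpha^{1/4}$ uniformly in $R$.

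The main obstacle is the intermediate regime $R\sim\alpha^{-1/2}$, where neither the Lipschitz estimate nor the asymptotic decay of $R^{-\alpha}$ is individually sharp; the crossover point $\alpha^{-1/2}$ is precisely chosen so that the two bounds meet at $\alpha^{1/4}$, which explains the exponent appearing in the statement. Finally, the straddling case and the case where $J$ contains both transitions at $\pm 1$ follow by a short computation: writing $\bar g_J$ and $\overline{g^2}_J$ as weighted averages between the constant piece (contribution $1$) and the power piece (already controlled above), the extra terms introduced by the $[-1,1]$-portion only make $\bar g_J$ closer to $1$ and reduce the variance, so no new obstruction arises.
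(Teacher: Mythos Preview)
Your approach is correct in outline and structurally parallels the paper's (both split at $|J|\sim\alpha^{-1/2}$), but the execution differs: the paper uses Poincar\'e's inequality for short intervals and, for long ones, bounds $\osc_J^2(g)$ by the $L^1$-oscillation $\frac{1}{|J|}\int_J|g-g(b)|\,dy$ (using $|g-g(b)|\leq 1$) and computes that integral explicitly. You instead rescale $[a,b]\subset[1,\infty)$ to $[1,R]$ and compute the variance $p_2-p_1^2$ directly. Your scaling reduction is a clean simplification the paper does not use, and the algebraic cancellation $\frac{1}{1-2\alpha}-\frac{1}{(1-\alpha)^2}=\frac{\alpha^2}{(1-2\alpha)(1-\alpha)^2}$ you exploit actually yields the sharper bound $\osc\lesssim\alpha$ for large $R$, whereas the paper obtains only $\osc^2\lesssim\alpha^{1/2}$.

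One step, however, is underjustified. The non-leading terms in $p_2-p_1^2$ (arising from the $-1$'s in the numerators of $p_1,p_2$) are individually of size $O(1/(R-1))$, which at the crossover $R-1\sim\alpha^{-1/2}$ is $\alpha^{1/2}\gg\alpha^2$; ``factoring out $R^{-2\alpha}$ from the dominant terms'' does not by itself explain why these do not spoil your bound. In fact they combine to something non-positive: with $x=R^{-\alpha}$ one checks
\[
p_2-p_1^2=\frac{\alpha^2\,R^{1-2\alpha}}{(R-1)(1-2\alpha)(1-\alpha)^2}\;-\;\frac{\alpha^2}{(R-1)(1-2\alpha)(1-\alpha)^2}\;-\;\frac{R(1-x)^2}{(R-1)^2(1-\alpha)^2},
\]
the last two terms being negative, so your $O(\alpha^2)$ claim holds --- but this computation is needed. (Alternatively, the cruder bound $p_2-p_1^2\lesssim\alpha^2+1/(R-1)\lesssim\alpha^{1/2}$ already suffices for $\alpha^{1/4}$.) The straddling case also deserves a line rather than a heuristic: writing $J_1=J\cap[-1,1]$, $J_2=J\setminus J_1$, $w_i=|J_i|/|J|$, $m=\fint_{J_2}g$, the identity $\osc_J^2(g)=w_1w_2(1-m)^2+w_2\,\osc_{J_2}^2(g)$ reduces to your main case plus a cross term, and that cross term is $\lesssim\alpha^{1/2}$ since either $1-m\lesssim\alpha|J_2|$ (when $|J_2|\leq\alpha^{-1/2}$) or $w_1\leq 2/|J|\leq 2\alpha^{1/2}$ (when $|J|>\alpha^{-1/2}$).
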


\begin{proof}[Proof of Claim \ref{claim_2}]
The proof proceeds as a case-by-case analysis. A detailed argument would be included for the the first case while the second case follows from a similar argument. 
\newline
\noindent
\textbf{Case I. $|J| \leq \alpha^{-\frac{1}{2}}$}
\begin{enumerate}
\item[(a)]
For ``small'' intervals of the form $J = [1, 1 + j]$ where $j := |J|$, one can invoke the Poincar\'e inequality to estimate
\begin{align} 
\osc_{J}(g) & = \bigg(\frac{1}{|J|}\int_{J}|g - \fint_J g|^2 dy \bigg)^{\frac{1}{2}} \lesssim |J| \bigg( \frac{1}{|J|}\int_{1}^{1+j}|\nabla g|^2 dy \bigg)^{\frac{1}{2}} \label{poincare} \\
 & \lesssim \alpha |J|^{\frac{1}{2}} \bigg( \int_{1}^{1+j}y^{-2\alpha - 2} dy \bigg)^{\frac{1}{2}} \lesssim \alpha |J|^{\frac{1}{2}} \bigg( \frac{1-(1+j)^{-2 \alpha - 1} }{2\alpha  + 1} \bigg)^{\frac{1}{2}} \nonumber 
\end{align}
If $j \geq 1$, one can apply the trivial estimate that given $-2\alpha -1 < 0$,
$$ 0 < (1+j)^{-2\alpha -1}  \leq 1$$ 
and thus
\begin{equation} \label{pc_1}
0 \leq 1- (1+j)^{-2\alpha -1} < 1 \leq (2\alpha + 1)j.
\end{equation}
If $j < 1$, one employs the Taylor expansion
\begin{equation*}
(1+ j)^{-2\alpha -1} = 1 + (-2\alpha - 1 )j + O\left(j^{2}\right)
\end{equation*}
which leads to
\begin{equation} \label{pc_2}
1- (1+j)^{-2\alpha -1} \leq  (2\alpha  +1)j.
\end{equation}
Combining (\ref{pc_1}) and (\ref{pc_2}), one obtains the following estimate for \eqref{poincare},
$$
\osc_J(g) \leq \bigg(\frac{1}{|J|}\int_{J}|g - \fint_J g|^2 dy \bigg)^{\frac{1}{2}} \lesssim \alpha |J| \lesssim \alpha^{\frac{1}{2}},
$$
where the last inequality follows from the condition that $j \leq  \alpha^{-\frac{1}{2}}$.
\item[(b)]
For intervals of the form $J = [1-j_1, 1+ j_2]$ for some $j_1 < 2$ or $J = [-1-j_1, 1 + j_2]$ for some $j_1, j_2 \geq 0 $, the argument in (a) can be applied similarly. In particular, for $J = [-1-j_1, 1 + j_2]$,
\begin{align*}
\bigg(\frac{1}{|J|}\int_{J}|g - \fint_J g|^2 dy \bigg)^{\frac{1}{2}} \leq & C|J| \bigg( \frac{1}{|J|}\int_{-1-j_1}^{1+j_2}|\nabla g|^2 dy \bigg)^{\frac{1}{2}} \\
& = C|J| \bigg( \frac{1}{|J|}\int_{-1-j_1}^{-1}|\nabla g|^2 dy + \frac{1}{|J|} \int_{1}^{1+j_2}|\nabla g|^2 dy  \bigg)^{\frac{1}{2}}
\end{align*}
where both integrals can be estimated using the same reasoning as above to derive that
$$
|J| \frac{1}{|J|}\int_{-1-j_1}^{-1}|\nabla g|^2 dy \lesssim \alpha ^2 j_1 \qquad \textrm{and} \qquad 
|J| \frac{1}{|J|}\int_{1}^{1+j_2}|\nabla g|^2 dy \lesssim \alpha^2 j_2.
$$
When combined, one attains that 
\begin{equation*} 
\osc_J(g) = \bigg(\frac{1}{|J|}\int_{J}|g - \fint_J g|^2 dy \bigg)^{\frac{1}{2}} \lesssim \alpha|J| \leq C \alpha^{\frac{1}{2}}.
\end{equation*}
\item[(c)] Since we are estimating the oscillation through the Poincar\'e inequality and that the gradient $\nabla g$ is increasing in $(-\infty,-1)$ vanishing on $(-1,1)$ and decreasing on $(1,\infty)$, one easily deduces that intervals maximizing the oscillation have to be of the form $(a)$ or $(b)$.
\end{enumerate}
Therefore, for $0 < \alpha < \frac{1}{2}$, one can summarize the estimate for Case I as follows
\begin{equation} \label{case_1}
\osc_J(g) \lesssim \alpha^{\frac{1}{2}} \lesssim \alpha^{\frac{1}{4}}.
\end{equation}

\noindent
\textbf{Case II. $|J| > \alpha^{-\frac{1}{2}}$}
\begin{enumerate}
\item[(a)]
For the interval of the form $J = [1, 1+j]$, one first notices that for any $y \in \mathbb{R}$, $|g(y) - g(1+j)| \leq 1$, which gives 
\begin{align*}
\osc_J(g)^2 & \lesssim \frac{1}{|J|}\int_{1}^{1+j}|g(y) - g(1+j)|^2 dy  \\
& \lesssim \frac{1}{|J|}\int_{1}^{1+j}|g(y) - g(1+j)| dy  \\
& \lesssim \frac{1}{|J|}\int_{1}^{1+j} y^{-\alpha} - (1+j)^{-\alpha}dy \\
 & \lesssim \frac{1}{|J|} \left(\frac{(1+j)^{1-\alpha} -1}{1-\alpha}- (1+j)^{-\alpha} j \right)  \\
 & \lesssim \frac{1}{|J|} \left(\frac{(1+j)^{1-\alpha} - 1}{1-\alpha}- (1+j)^{1-\alpha} + (1+j)^{-\alpha}  \right)  \\
 & \lesssim \frac{1}{j}(1+j)^{1-\alpha}(\frac{1}{1-\alpha} - 1) + \frac{1}{j (1+j)^{\alpha}}
\end{align*}
where, according to the fact that $0 < \alpha < 1/2$ and $j > \alpha^{-\frac{1}{2}}$,
\begin{equation*}
\frac{1}{j}(1+j)^{1-\alpha}(\frac{1}{1-\alpha} - 1) \leq \frac{1+j}{j}(\frac{1}{1-\alpha} - 1) \leq 2 \frac{1+j}{j} \alpha \lesssim \alpha \lesssim \sqrt{\alpha}
\end{equation*}
and 
\begin{equation*}
\frac{1}{j (1+j)^{\alpha}} \leq \frac{1}{j} \leq \sqrt{\alpha}.
\end{equation*}
So finally $\osc_J(g)\lesssim \alpha^{\frac{1}{4}}$.
\item[(b)]
Suppose that $J = [1-j_1, 1+ j_2]$ for some $j_1 < 2$ or $J = [-1-j_1, 1+ j_2]$ for some $j_1, j_2 > 0$, the computation in (a) is still valid with slight modifications. For example, consider $J = [-1-j_1, 1+ j_2]$ with $j_1 < j_2$, then $1\leq \alpha^{-\frac{1}{2}} \lesssim |J|\simeq j_2$ and  
\begin{align*}
\osc_J(g) & \lesssim \frac{1}{|J|}\int_{-1-j_1}^{1+j_2}|g(y) - g(1+j_2)| dy \\
& \lesssim \frac{1}{|J|}\left( \int_{-1}^{1} + \int_{-1-j_1}^{-1}  + \int_{1}^{1+j_2} \right)  |g(y) - g(1+j_2)| dy \\
 & \lesssim \frac{1}{|J|}\left( \int_{-1}^{1} + \int_{-1-j_2}^{-1}  + \int_{1}^{1+j_2} \right)  |g(y) - g(1+j_2)| dy \\
 & \lesssim \frac{1}{|J|}\left( \int_{-1}^{1}  |g(y) - g(1+j_2)| dy+ 2 \int_{1}^{1+j_2} |g(y) - g(1+j_2)| dy \right)
\end{align*}
where 
$$
 \frac{1}{|J|} \int_{-1}^{1}  |g(y) - g(1+j_2)| dy = 2\frac{1}{|J|} (1-(1+j_2)^{-\alpha}) \leq \frac{2}{|J|} \leq 2\alpha^{\frac{1}{2}}
$$
and the previous argument generates the estimate
$$
\frac{1}{|J|}\int_{1}^{1+j_2} |g(y) - g(1+j_2)| dy \lesssim \alpha^{\frac{1}{2}}.
$$
\item[(c)] Since this argument relies on a exact computation of the oscillation, it can be automatically extended to other kind of interval $[a,a+j]$ with $a$ different of $1$; and a similar estimate can be proved.
\end{enumerate}
Therefore, for $0 < \alpha < \frac{1}{2}$, one can summarize the estimate for Case II as follows
\begin{equation} \label{case_2}
\osc_J(g) \lesssim \alpha^{\frac{1}{4}}.
\end{equation}

One concludes by combining \eqref{case_1} and \eqref{case_2}, for $\alpha\in(0,1/2)$
$$
\|g\|_{\BMO(\BBR)} \lesssim \alpha^{\frac{1}{4}}.
$$
\end{proof}

\begin{lemma} \label{lemmaC}
There exists a numerical constant $c>0$, such that for every $\theta\in(0,2\pi)\setminus\{\frac{\pi}{2},\pi,\frac{3\pi}{2}\}$
\begin{equation} \label{prop_3} \tag{$C$}
c< \|F \circ \phi^{\theta}\|_{\BMO}.
\end{equation}
\end{lemma}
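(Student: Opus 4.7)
The plan is to mimic the construction from Section \ref{subsec:counterexample1}: we exhibit one shifted dyadic rectangle $R = I\times J$ on which the Haar coefficient satisfies $|\langle F\circ\phi^\theta, h_R\rangle|\gtrsim |R|^{1/2}$ with an absolute constant, and apply the dyadic definition of the biparameter $\BMO$ norm with $\Omega=R$ to conclude that $\|F\circ\phi^\theta\|_{\BMO(\BBR^2)}\geq c$ uniformly in $\theta$.

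By symmetry it suffices to treat $\theta\in(0,\pi/2)$. With $\phi^\theta(x,y)=(x\cos\theta-y\sin\theta,\, x\sin\theta+y\cos\theta)$, we have
$(F\circ\phi^\theta)(x,y)=f(x\cos\theta-y\sin\theta)\,g(x\sin\theta+y\cos\theta),$
which reduces to $f(x\cos\theta-y\sin\theta)$ on the tilted strip $\mathcal{S}:=\{|x\sin\theta+y\cos\theta|\leq 1\}$ since $g\equiv 1$ on $[-1,1]$. Because $f$ is a smooth variant of the Haar wavelet, it has a sign change near $u=1/2$, with $f\approx +1$ on most of $[0,1/2]$ and $f\approx -1$ on most of $[1/2,1]$ (up to a narrow smoothing zone). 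Consequently, on $\mathcal{S}$ the function $F\circ\phi^\theta$ takes values $\approx\pm 1$ on the two sides of the sign-change line $\Gamma:=\{x\cos\theta-y\sin\theta=1/2\}$, which has slope $\cot\theta$ and sits at distance $1/2$ from the origin. The point $p_0:=(\tfrac12\cos\theta,-\tfrac12\sin\theta)$ lies on $\Gamma$ and well inside $\mathcal{S}$, so we concentrate our rectangles near $p_0$.

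Choose scale $r=2^k$ with $r\ll\sin\theta$ so that any rectangle $R$ of side $r$ centered near $p_0$ lies inside $\mathcal{S}$, and fix $m\in\BBZ$ with $2^m\simeq\cot\theta$. Consider rectangles $R=I\times J$ with $|I|=r$ and $|J|=2^m r$, so the main BL--TR diagonal of $R$ has slope close to $\cot\theta$, matching $\Gamma$. We want to place $R$ so that $\Gamma$ traverses $R$ parallel to that diagonal but offset upward by $\simeq|J|/2$, thereby cutting off a triangle $T$ of area $\simeq|R|/8$ inside the top-left sub-quadrant of $R$. On $T$, $f$ takes the value $\approx +1$, while on $R\setminus T$ it takes the value $\approx -1$ (the transition zone has negligible area as $r\to 0$). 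A direct computation, using that $h_R=+|R|^{-1/2}$ on the BL and TR sub-quadrants and $-|R|^{-1/2}$ on TL and BR, then yields
$$|\langle F\circ\phi^\theta, h_R\rangle|\geq c_0\,|R|^{1/2}$$
for an absolute constant $c_0>0$. To ensure $R\in\BBD^\alpha$ for some $\alpha\in\{0,\delta\}^2$, we run a pigeonhole argument over the four shifted grids: as the position of the dyadic rectangles of our fixed dimensions sweeps along $\Gamma\cap\mathcal{S}$, their relative offset from $\Gamma$ varies over an interval of length $\simeq|J|$, and one of the four grids provides a rectangle whose offset lands in the favorable range $[|J|/4,\,3|J|/4]$. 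Taking $\Omega=R$ in the definition of $\|\cdot\|_{\BMO^\alpha(\BBR^2)}$ then gives $\|F\circ\phi^\theta\|_{\BMO^\alpha(\BBR^2)}\geq c_0$, and equivalence of $\BMO^\alpha$ with the full biparameter $\BMO$ concludes.

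The main obstacle is this last geometric/pigeonhole step: one must simultaneously enforce (i) $R\subset\mathcal{S}$, (ii) eccentricity of $R$ matched to $\cot\theta$ up to a bounded factor, and (iii) that $\Gamma$ cuts $R$ with an offset of order $|J|$ away from the main diagonal --- an offset too close to zero would cause the Haar coefficient to vanish by the cancellations observed when the sign-change line passes through the center of $R$. Once the geometric arrangement is secured the Haar computation is routine, and the resulting constant $c$ depends only on the (universal) shape of $f$ (and the fact that $g\equiv1$ on $[-1,1]$), hence is independent of $\theta$ and of $p$.
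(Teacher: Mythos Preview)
Your strategy is the same as the paper's: exhibit a single (shifted) dyadic rectangle $R$ with $|\langle F\circ\phi^\theta,h_R\rangle|\gtrsim |R|^{1/2}$ and read off the lower bound for the $\BMO$ norm with $\Omega=R$. The difference is in \emph{which} jump line of $f$ you build the rectangle around. The paper simply reuses the construction of Subsection~\ref{subsec:counterexample1}, placing $R$ across the \emph{support boundary} of the tilted strip (the line where $f\circ\phi^\theta$ drops from a nonzero constant to $0$), so that $\operatorname{supp}(F\circ\phi^\theta)\cap R$ sits inside one quarter of $R$ on which both $h_R$ and $F\circ\phi^\theta$ are constant; this makes the Haar computation a one-line volume count, and passing from the exact Haar $h_{[1,2]}$ to its smooth variant is a small perturbation. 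You instead keep $R$ entirely inside the strip $\mathcal S$ and aim the \emph{internal sign-change line} $\Gamma=\{u=1/2\}$ at an off-center chord of $R$, cutting off a triangle $T$ in one sub-quadrant; then $\langle F\circ\phi^\theta,h_R\rangle=2\int_T h_R$. This is perfectly valid and gives the same conclusion, but it is geometrically a bit more delicate than the paper's route: because the slope of $\Gamma$ only matches the eccentricity of $R$ up to a factor of two, you must check that the cut-off triangle really stays inside a single sub-quadrant (your ``main obstacle''), whereas the support-boundary version avoids this issue by construction. Both proofs leave the positioning/pigeonhole step somewhat implicit; your use of the four shifted grids to secure the favourable offset is a clean way to make that step rigorous, and would equally tidy up the paper's Subsection~\ref{subsec:counterexample1}.
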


\begin{proof}
To obtain this lower bound of the $\BMO$ norm, we are considering a specific rectangle $R$, which will be the same as the one already used in Subsection \ref{subsec:counterexample1}. Indeed, there the counterexample phenomena are ``localized'' (in other words they do not exist at infinity). Here we want to consider Sobolev functions $F$, so the function $g$ is decaying a little bit at infinity, but locally around the unit square $[-1,1]^2$, our present function $F$ is 'similar' as the one used in the previous counterexample and for which we already proved a lower bound of the $\BMO$ norm. There are only two minor differences: $f$ is now a smooth version of the Haar wavelet, instead of being the characteristic function of the interval. Since the phenomena are localized only around one extremity of the function, considering the Haar wavelet or the characteristic function does not change anything. The smooth variant can be considered as a small perturbation, hence also does not bring new difficulties.

\medskip

So consider first the 'exact case' where $f=h_{[1,2]}$ is the exact Haar wavelet.
Then by following the example illustrated in Subsection \ref{subsec:counterexample1}, we can consider the same rectangle $R$ (as there) and we see that, with the same notations as there, $F\circ \phi^\theta$ is supported on the strip $S$ and $S\cap R$ is contained in one quarter of $R$ (where $h_R$ is constant) and also in one half of the strip $S$ (where $F\circ \phi^\theta$ is constant). So by that way, $F\circ \phi^\theta$ and $h_R$ are both constant on the support of $(F\circ \phi^\theta) \cdot h_R$. A similar computation yields then
$$ \frac{1}{16} \leq |\langle F\circ \phi^\theta,h_R\rangle|.$$

Now, there is no problem to see that if $f$ is a smooth variant of the Haar wavelet, close enough, then we will still have
$$ \frac{1}{32} \leq |\langle F\circ \phi^\theta,h_R\rangle|,$$
which implies the lower bound for the $\BMO$ norm 
$$
\frac{1}{32} \leq \|F \circ \phi^{\theta} \|_{\BMO}.
$$
\end{proof}

\section{Motivation of Theorem \ref{main_thm_pos}} \label{sec:motivation}
Before delving into the proof of Theorem \ref{main_thm_pos}, we would first motivate why we expect the two norms on the right hand side of (\ref{main_pos_bound}) to appear and why the weight in front of each norm is natural. Suppose that $\psi \in \mathcal{S}(\mathbb{R}^2)$ - a smooth version of the Haar wavelet - is defined as
\begin{equation*}
\psi(x,y) := \psi_1(x) \psi_2(y)
\end{equation*}
where $\psi_1, \psi_2 \in \mathcal{S}(\mathbb{R})$ are compactly supported in the interval $[-\frac{1}{2}, \frac{1}{2}]$ and satisfy 
\begin{align*}
 \int_{\mathbb{R}} \psi_1(x)\, dx  = \int_{\mathbb{R}} \psi_2(y) \, dy=0.
\end{align*}
Let $K$ denote the dyadic rectangle illustrated in Figure \ref{figure:motivation} centered at the origin with $|K|= K_1 \times K_2$. Define
\begin{equation*}
F(x,y) := \frac{1}{|K|^{\frac{1}{2}}}\psi(K_1^{-1}x, K_2^{-1}y)
\end{equation*}
which is a smooth $L^2$-normalized bump function adapted to $K$ and $F \in W^{s,p}$ for $p \geq 2$ and $0 \leq s \leq 1$.
To study the oscillation of $F$ on the dyadic rectangle $K$, a strictly simpler problem would be to understand the interaction of $F$ with one wavelet $h_K$ and in particular, one has the estimate
\begin{equation}
|\langle F, h_K \rangle| \leq \|\psi\|_2.
\end{equation}
This could be viewed as the case when $\theta = 0$ and so $\phi^\theta=\textrm{Id}$. As $|\theta|$ grows positive but still ``small'' enough, one would expect $F\circ \phi^{\theta}$ to be contained in $2\epsilon^{-1}K$ - the $2\epsilon^{-1}$ enlargement of $K$ which could be majorized by the first term in (\ref{main_pos_bound}). This case is drawn as the blue rectangle in Figure \ref{figure:motivation}. Furthermore, the quantification of the ``small'' angle $\theta$ depends on the eccentricity of $K$. If $K_2 \leq \epsilon^{-1} K_1$, then for any $0 \leq \theta < \frac{\pi}{2}$, $F \circ \phi^{\theta}$ would always be compactly supported in $2 \epsilon^{-1}K$. 
If $ K_2 > \epsilon^{-1} K_1$, then $\theta$ needs to satisfy that $ |\sin \theta| \leq \frac{\epsilon^{-1}K_1}{K_2}$. As $|\theta|$ increases such that $|\sin \theta| > \frac{\epsilon^{-1}K_1}{K_2}$, then we would expect $|\langle F \circ \phi^{\theta}, h_K \rangle|$ to be bounded by the second term in (\ref{main_pos_bound}). 
\begin{enumerate}
\item
\textbf{Geometric approach - multilinear Kakeya perspective:}
Heuristically, when $\theta$ gets close to $\frac{\pi}{2}$, we could give the following trivial estimate
\begin{equation} \label{multilinear_Kakeya}
|\langle F \circ \phi^{\theta}, h_K \rangle| \leq |K|^{-\frac{1}{2}}|K|^{-\frac{1}{2}}|K^{\theta}\cap K|,
\end{equation}
where $K^{\theta}$ denote the rotated $K$ depicted as the red rectangle in Figure \ref{figure:motivation}. In this scenario, we encounter the intersection of two transversal rectangles which could be bounded by
\begin{equation*}
|K^{\theta}\cap K| \lesssim K_1^2 |\sin \theta|^{-1}
\end{equation*}
and by recalling the condition that $|\sin \theta| >\frac{\epsilon^{-1}K_1}{K_2} $, we can deduce that
\begin{equation} \label{one_wavelet}
|\langle F \circ \phi^{\theta}, h_K \rangle| \lesssim \epsilon.
\end{equation}
\item
\textbf{Analytic approach:}
In contrast with the first approach which reduces the estimate to a purely geometric problem, we could estimate the left hand side of (\ref{one_wavelet}) using the regularity of $F$ and the integration by parts. We first denote (the $L^2$-normalized dilated verion)
\begin{equation*}
\psi_{K^{\frac{\pi}{2}}}(x,y) := - |K|^{-\frac{1}{2}} \psi(K_2^{-1}x, K_1^{-1}y),
\end{equation*}
then for $\theta = \frac{\pi}{2}$,
\begin{align*}
|\langle F \circ \phi^{\theta}, h_K \rangle| = |\langle \psi_{K^{\frac{\pi}{2}}}, h_K \rangle| \leq & \|\partial_x\psi_{K^{\frac{\pi}{2}}}\|_{L^{p}}\cdot \|H^1_K\|_{L^{p'}},
\end{align*}
where 
\begin{equation*}
H^1_K(x,y) := \int_{\infty}^x h_K(s,y) ds.
\end{equation*}
Moreover, we have the estimates that
\begin{equation}
\label{IBP}
  \begin{split} 
& \|H^1_K\|_{L^{p'}} \lesssim K_1 \cdot |K|^{-\frac{1}{2}} \cdot |K|^{\frac{1}{p'}} =K_1 \cdot |K|^{\frac{1}{2}} \cdot |K|^{-\frac{1}{p}}  \\
& \|\partial_x\psi_{K^{\frac{\pi}{2}}}\|_{L^p} \lesssim K_2^{-1} \cdot |K|^{-\frac{1}{2}} \cdot |K|^{\frac{1}{p}}.
\end{split}
\end{equation}
Finally by combining both inequalities in (\ref{IBP}) and recalling the condition $K_2 > \epsilon^{-1} K_1$, we conclude that 
\begin{align}\label{ana}
 |\langle \psi_{K^{\frac{\pi}{2}}}, h_K \rangle| \lesssim \frac{K_1}{K_2} < \epsilon.
\end{align}

\item 
\textbf{Comparison of geometric and analytic approaches}
Though in the current setting, both approaches give the same estimate, it is noteworthy that the geometric approach is less optimal than the analytic approach because the former never takes advantage of the oscillations of $F$ which could be captured by the latter approach. More precisely, the geometric approach would work equally well for any $L^2$-normalized non-negative function supported on $K$. The simplest example would be 
\begin{equation}
F= |K|^{-\frac{1}{2}}\mathbbm{1}_{K}.
\end{equation}
However, the analytic approach exploits the information of $\partial_x (F \circ \phi^{\theta})$ not only for the domain intersecting with $K$, but also the behavior of  $\partial_x (F \circ \phi^{\theta})$ outside and far away from $K$, which gives the small factor $K_2^{-1}$ in (\ref{ana}).

\medskip

We would now look at a more complicated example of $F$ to elaborate this advantage.

Let $K^{\frac{\pi}{2}}$ denote the rectangle rotated by the angle $\frac{\pi}{2}$ from $K$ and let $\mathcal{K}^{\frac{\pi}{2}}_t$ denote the collection of rectangles which are vertically translated copy of $K^{\frac{\pi}{2}}$ as indicated by the red rectangles in Figure \ref{figure:compare_geo_ana}. Let
\begin{equation}\label{def_superposition_wavelet}
\psi_{\mathcal{K}_t^{\frac{\pi}{2}}} := |K|^{-\frac{1}{2}} \sum_{K' \in \mathcal{K}^{\frac{\pi}{2}}_t}-\psi(K_2^{-1}x, K_1^{-1}y - y_{K'}).
\end{equation}
where $y_{K'}$ denotes the $y$-coordinate of the center of $K' \in \mathcal{K}^{\frac{\pi}{2}}_t$.
Define
\begin{equation*}
F := \psi_{\mathcal{K}_t^{\frac{\pi}{2}}} \circ \phi^{-\frac{\pi}{2}}.
\end{equation*}
Equivalently, we have $F \circ \phi^{\frac{\pi}{2}} = \psi_{\mathcal{K}_t^{\frac{\pi}{2}}}$.
\begin{enumerate}
\item \textbf{Geometric approach:}
Without the consideration of oscillations, we perform the same geometric estimate for each $K'$ so that for $\theta=\pi/2$
\begin{align}
|\langle F \circ \phi^{\theta}, h_K \rangle| \leq |K|^{-\frac{1}{2}} |K|^{-\frac{1}{2}}\sum_{K' \in \mathcal{K}^{\frac{\pi}{2}}_t}|K' \cap K| \sim 1.
\end{align}
\item
\textbf{Analytic approach:}
We can take advantage of the mild oscillations of $\psi_{\mathcal{K}_t^{\frac{\pi}{2}}}$ in $x$-direction and the cancellation of some relatively highly oscillating piece $h_K$ in $x$-direction. We use the integration by parts to apply both information, which yields the same estimate as before:
\begin{align*}
|\langle \psi_{\mathcal{K}_t^{\frac{\pi}{2}}}, h_K \rangle| \lesssim \|\partial_x\psi_{\mathcal{K}_t^{\frac{\pi}{2}}}\|_{L^{p}}\cdot \|H^1_K\|_{L^{p'}}. 
\end{align*}
Then by applying the estimate for $H_K^1$ in (\ref{IBP}), we obtain
\begin{equation} \label{mot_eccentricity}
|\langle \psi_{\mathcal{K}_t^{\frac{\pi}{2}}}, h_K \rangle| \lesssim  \|\partial_x\psi_{\mathcal{K}_t^{\frac{\pi}{2}}}\|_{L^{p}} \cdot K_1^{1-\frac{1}{p}}K_2^{-\frac{1}{p}}|K|^{\frac{1}{2}}.
\end{equation}
By choosing $p = 2$, we conclude that
\begin{equation} \label{mot_long_tall_total}
|\langle \psi_{\mathcal{K}_t^{\frac{\pi}{2}}}, h_K \rangle| \lesssim  \epsilon^{\frac{1}{2}}|K|^{\frac{1}{2}}\|\partial_x\psi_{\mathcal{K}_t^{\frac{\pi}{2}}}\|_{L^{2}}.
\end{equation}
It is also possible to recover the same estimate in (\ref{ana}) as follows. We will choose $p = \infty$. Then
since $\psi_{\mathcal{K}_t^{\frac{\pi}{2}}}$ is defined (in \eqref{def_superposition_wavelet}) as the superposition of functions with disjoint supports and each function is a translated copy of $\psi(K_2^{-1}x, K_1^{-1},y)$, we obtain
\begin{equation} \label{ana_recover}
\|\partial_x\psi_{\mathcal{K}_t^{\frac{\pi}{2}}}\|_{L^{\infty}} \leq K_2^{-1}|K|^{-\frac{1}{2}}\|\psi\|_{W^{1,\infty}}
\end{equation}
which generates the exactly same estimate as (\ref{ana}) after combining (\ref{IBP}) for $H_K^1$ and (\ref{ana_recover}):
\begin{equation}
|\langle \psi_{\mathcal{K}_t^{\frac{\pi}{2}}}, h_K \rangle| \lesssim K_2^{-1} |K|^{-\frac{1}{2}} \cdot K_1 |K|^{\frac{1}{2}} < \epsilon.
\end{equation}
\end{enumerate}
\end{enumerate}
Motivated by the comparison and computations, we would like to take advantage of the regularity of $F \in W^{s,p}$ to extract small factors of the wavelet where $F \circ \phi^{\theta}$ is tested on so that we could obtain some similar estimate as (\ref{mot_eccentricity}) and thus (\ref{mot_long_tall_total}). One key observation is that the condition on the eccentricity $\varepsilon_K < \epsilon$ (or $\varepsilon_K > \epsilon^{-1}$) plays an important role to attain the small power $\epsilon$ from (\ref{mot_eccentricity}). The following proposition generalizes the estimate (\ref{mot_long_tall_total}) under the condition on $\varepsilon_K$ when the wavelets are localized to a dyadic rectangle. 
\begin{figure}
\centering

\tikzset{every picture/.style={line width=0.75pt}} 

\begin{tikzpicture}[x=0.75pt,y=0.75pt,yscale=-1,xscale=1]

\draw   (337.33,204.07) -- (337.33,302.95) -- (324.33,302.95) -- (324.33,204.07) -- cycle ;
\draw  [dash pattern={on 4.5pt off 4.5pt}] (305.84,116.97) -- (355.53,117.03) -- (355.3,389.03) -- (305.6,388.97) -- cycle ;
\draw  [color={rgb, 255:red, 74; green, 144; blue, 226 }  ,draw opacity=1 ] (326.62,203.32) -- (352.21,298.83) -- (339.65,302.19) -- (314.06,206.68) -- cycle;
\draw  [color={rgb, 255:red, 144; green, 19; blue, 254 }  ,draw opacity=1 ] (296.48,215.82) -- (366.4,285.73) -- (357.21,294.92) -- (287.29,225.01) -- cycle ;
\draw  [color={rgb, 255:red, 208; green, 2; blue, 27 }  ,draw opacity=1 ] (284.27,235.53) -- (381.65,252.7) -- (379.39,265.5) -- (282.02,248.33) -- cycle ;

\draw (310,131.4) node [anchor=north west][inner sep=0.75pt]    {$2\epsilon^{-1} K$};
\draw (325.33,207.47) node [anchor=north west][inner sep=0.75pt]    {$K$};

\end{tikzpicture}

\caption{Naive motivation for Theorem \ref{main_thm_pos}.} \label{figure:motivation}
\end{figure}
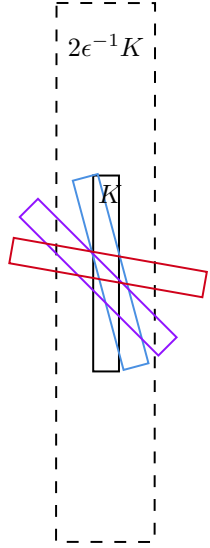

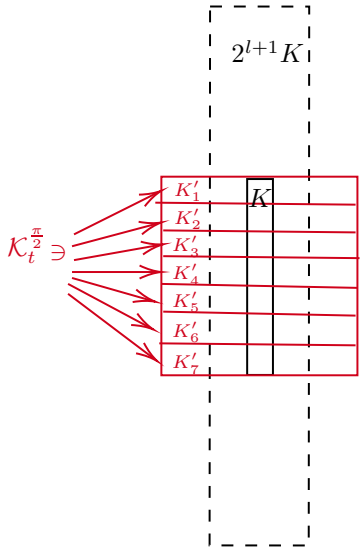
\begin{figure}
\centering

\tikzset{every picture/.style={line width=0.75pt}} 

\begin{tikzpicture}[x=0.75pt,y=0.75pt,yscale=-1,xscale=1]

\draw   (342.33,223.07) -- (342.33,321.95) -- (329.33,321.95) -- (329.33,223.07) -- cycle ;
\draw  [dash pattern={on 4.5pt off 4.5pt}] (310.62,135.98) -- (360.62,136.02) -- (360.38,408.02) -- (310.38,407.98) -- cycle ;
\draw  [color={rgb, 255:red, 208; green, 2; blue, 27 }  ,draw opacity=1 ] (286.06,221.88) -- (384.94,221.88) -- (384.94,322.12) -- (286.06,322.12) -- cycle ;
\draw [color={rgb, 255:red, 208; green, 2; blue, 27 }  ,draw opacity=1 ]   (283,235) -- (384,236) ;
\draw [color={rgb, 255:red, 208; green, 2; blue, 27 }  ,draw opacity=1 ]   (287,249) -- (384,250) ;
\draw [color={rgb, 255:red, 208; green, 2; blue, 27 }  ,draw opacity=1 ]   (287,262) -- (386,263) ;
\draw [color={rgb, 255:red, 208; green, 2; blue, 27 }  ,draw opacity=1 ]   (286,276) -- (385,278) ;
\draw [color={rgb, 255:red, 208; green, 2; blue, 27 }  ,draw opacity=1 ]   (287,290) -- (384,292) ;
\draw [color={rgb, 255:red, 208; green, 2; blue, 27 }  ,draw opacity=1 ]   (285,306) -- (384,307) ;
\draw [color={rgb, 255:red, 208; green, 2; blue, 27 }  ,draw opacity=1 ]   (239,276) -- (280.24,298.06) ;
\draw [shift={(282,299)}, rotate = 208.14] [color={rgb, 255:red, 208; green, 2; blue, 27 }  ,draw opacity=1 ][line width=0.75]    (10.93,-3.29) .. controls (6.95,-1.4) and (3.31,-0.3) .. (0,0) .. controls (3.31,0.3) and (6.95,1.4) .. (10.93,3.29)   ;
\draw [color={rgb, 255:red, 208; green, 2; blue, 27 }  ,draw opacity=1 ]   (239,281) -- (282.4,313.79) ;
\draw [shift={(284,315)}, rotate = 217.07] [color={rgb, 255:red, 208; green, 2; blue, 27 }  ,draw opacity=1 ][line width=0.75]    (10.93,-3.29) .. controls (6.95,-1.4) and (3.31,-0.3) .. (0,0) .. controls (3.31,0.3) and (6.95,1.4) .. (10.93,3.29)   ;
\draw [color={rgb, 255:red, 208; green, 2; blue, 27 }  ,draw opacity=1 ]   (242,251) -- (284.21,229.89) ;
\draw [shift={(286,229)}, rotate = 513.4300000000001] [color={rgb, 255:red, 208; green, 2; blue, 27 }  ,draw opacity=1 ][line width=0.75]    (10.93,-3.29) .. controls (6.95,-1.4) and (3.31,-0.3) .. (0,0) .. controls (3.31,0.3) and (6.95,1.4) .. (10.93,3.29)   ;
\draw [color={rgb, 255:red, 208; green, 2; blue, 27 }  ,draw opacity=1 ]   (241,273) -- (281.08,284.45) ;
\draw [shift={(283,285)}, rotate = 195.95] [color={rgb, 255:red, 208; green, 2; blue, 27 }  ,draw opacity=1 ][line width=0.75]    (10.93,-3.29) .. controls (6.95,-1.4) and (3.31,-0.3) .. (0,0) .. controls (3.31,0.3) and (6.95,1.4) .. (10.93,3.29)   ;
\draw [color={rgb, 255:red, 208; green, 2; blue, 27 }  ,draw opacity=1 ]   (241,270) -- (284,270) ;
\draw [shift={(286,270)}, rotate = 180] [color={rgb, 255:red, 208; green, 2; blue, 27 }  ,draw opacity=1 ][line width=0.75]    (10.93,-3.29) .. controls (6.95,-1.4) and (3.31,-0.3) .. (0,0) .. controls (3.31,0.3) and (6.95,1.4) .. (10.93,3.29)   ;
\draw [color={rgb, 255:red, 208; green, 2; blue, 27 }  ,draw opacity=1 ]   (242,264) -- (285.03,256.35) ;
\draw [shift={(287,256)}, rotate = 529.9200000000001] [color={rgb, 255:red, 208; green, 2; blue, 27 }  ,draw opacity=1 ][line width=0.75]    (10.93,-3.29) .. controls (6.95,-1.4) and (3.31,-0.3) .. (0,0) .. controls (3.31,0.3) and (6.95,1.4) .. (10.93,3.29)   ;
\draw [color={rgb, 255:red, 208; green, 2; blue, 27 }  ,draw opacity=1 ]   (241,257) -- (284.07,245.52) ;
\draw [shift={(286,245)}, rotate = 525.0699999999999] [color={rgb, 255:red, 208; green, 2; blue, 27 }  ,draw opacity=1 ][line width=0.75]    (10.93,-3.29) .. controls (6.95,-1.4) and (3.31,-0.3) .. (0,0) .. controls (3.31,0.3) and (6.95,1.4) .. (10.93,3.29)   ;

\draw (320,151.4) node [anchor=north west][inner sep=0.75pt]    {$2^{l+1} K$};
\draw (328.33,226.47) node [anchor=north west][inner sep=0.75pt]    {$K$};
\draw (207,246.4) node [anchor=north west][inner sep=0.75pt]  [color={rgb, 255:red, 208; green, 2; blue, 27 }  ,opacity=1 ]  {$\mathcal{K}^{\frac{\pi }{2}}_{t}$};
\draw (290.98,222.42) node [anchor=north west][inner sep=0.75pt]  [font=\scriptsize,color={rgb, 255:red, 208; green, 2; blue, 27 }  ,opacity=1 ,rotate=-359.89]  {$K'_{1}$};
\draw (290.98,236.42) node [anchor=north west][inner sep=0.75pt]  [font=\scriptsize,color={rgb, 255:red, 208; green, 2; blue, 27 }  ,opacity=1 ,rotate=-359.89]  {$K'_{2}$};
\draw (290.01,250.4) node [anchor=north west][inner sep=0.75pt]  [font=\scriptsize,color={rgb, 255:red, 208; green, 2; blue, 27 }  ,opacity=1 ,rotate=-359.89]  {$K'_{3}$};
\draw (290.01,264.4) node [anchor=north west][inner sep=0.75pt]  [font=\scriptsize,color={rgb, 255:red, 208; green, 2; blue, 27 }  ,opacity=1 ,rotate=-359.89]  {$K'_{4}$};
\draw (290.01,278.4) node [anchor=north west][inner sep=0.75pt]  [font=\scriptsize,color={rgb, 255:red, 208; green, 2; blue, 27 }  ,opacity=1 ,rotate=-359.89]  {$K'_{5}$};
\draw (290.01,293.4) node [anchor=north west][inner sep=0.75pt]  [font=\scriptsize,color={rgb, 255:red, 208; green, 2; blue, 27 }  ,opacity=1 ,rotate=-359.89]  {$K'_{6}$};
\draw (290.01,309.4) node [anchor=north west][inner sep=0.75pt]  [font=\scriptsize,color={rgb, 255:red, 208; green, 2; blue, 27 }  ,opacity=1 ,rotate=-359.89]  {$K'_{7}$};
\draw (240,265.6) node [anchor=north west][inner sep=0.75pt]  [color={rgb, 255:red, 208; green, 2; blue, 27 }  ,opacity=1 ,rotate=-180]  {$\in $};

\end{tikzpicture}
\caption{Comparison of geometric and analytic approaches} \label{figure:compare_geo_ana}
\end{figure}

\begin{proposition}\label{long_tall_pre} Suppose that $F \in W^{s,p}$ for some $s\in(0,1)$, $p\in[2,\infty)$ with $s>\frac{2}{p}$. Fix a dyadic grid $\mathbb{D}^{\alpha}$ and a dyadic rectangle $K_0 \in \mathbb{D}^{\alpha}$. 
Let $\zeta$ denote a positive real number. Then for any rotation map $\phi^{\theta}$ with $0 \leq \theta < \frac{\pi}{2}$,
\begin{align} 
&\sum_{\substack{K \in \mathbb{D}^{\alpha} \\ \substack{K \subseteq K_0 \\ \varepsilon_K \leq \zeta}}} \left|\langle F \circ \phi^{\theta}, h_K \rangle \right|^2 \lesssim \zeta^{2\gamma } |K_0|^{} \|F\|_{W^{s,p}}^2, \label{eq:propex-1} \\
& \sum_{\substack{K \in \mathbb{D}^{\alpha} \\ \substack{K \subseteq K_0 \\ \varepsilon_K \geq \zeta^{-1}}}} \left|\langle F \circ \phi^{\theta}, h_K \rangle \right|^2 \lesssim \zeta^{2\gamma } |K_0|^{} \|F\|_{W^{s,p}}^2  \label{eq:propex-2}
\end{align}
for an arbitrary exponent $\gamma\in(0,\delta)$ with $\delta:=s-\frac{2}{p}$.
\end{proposition}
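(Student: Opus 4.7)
The plan is to reduce scale-by-scale to Lemma \ref{lemma:regularity} applied to $F\circ\phi^\theta$, which has the same $W^{s,p}$ norm as $F$ since rotations are isometries, and then to extract the factor $\zeta^{2\gamma}$ via a careful geometric summation over the dyadic scales, exploiting the constraint $\varepsilon_K \leq \zeta$. The second bound \eqref{eq:propex-2} will follow from \eqref{eq:propex-1} by the $x \leftrightarrow y$ symmetry, which swaps $\varepsilon_K$ with $\varepsilon_K^{-1}$ and conjugates the rotation $\phi^\theta$ into another rotation, so I focus on \eqref{eq:propex-1}.

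I first fix sidelengths $|K| = 2^{k_1} \times 2^{k_2}$ and observe that the number of $K \in \mathbb{D}^\alpha$ of this size contained in $K_0$ is at most $|K_0|/|K|$. H\"older's inequality combined with Lemma \ref{lemma:regularity}, applied to $F\circ\phi^\theta$ with the \emph{antisymmetric} choice of exponents $\gamma_1 = -\gamma,\ \gamma_2 = \gamma$ (both in $(-\delta,\delta)$ as $\gamma < \delta$), then yields
\begin{equation*}
\sum_{\substack{K \in \mathbb{D}^\alpha,\ K \subseteq K_0 \\ |K| = 2^{k_1}\times 2^{k_2}}} \bigl|\langle F\circ\phi^\theta, h_K \rangle\bigr|^2 \lesssim |K_0|^{1 - \frac{2}{p}} \cdot 2^{2k_1(\frac{1}{p} - \gamma)} \cdot 2^{2k_2(\frac{1}{p} + \gamma)} \cdot \|F\|_{W^{s,p}}^2.
\end{equation*}
Writing $n_i := \log_2 L_i$ for the dyadic sidelengths $|K_0| = L_1 L_2$, I sum over $k_i \leq n_i$ subject to the eccentricity constraint $k_2 - k_1 \leq \log_2 \zeta$. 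The substitution $v := k_2 - k_1$ produces an inner geometric sum over $k_1$ with strictly positive exponent $4/p$ (because $\gamma_1 + \gamma_2 = 0$), dominated by its largest term at $k_1 = \min(n_1, n_2 - v)$.

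This reduces matters to a dichotomy on the eccentricity of $K_0$ itself. In the regime $L_2/L_1 \geq \zeta$, the constraint $v \leq \log_2 \zeta$ is binding for every admissible $v$, and summing directly produces a factor $\zeta^{2(\frac{1}{p}+\gamma)}$ which, together with $L_1^{4/p}$ and $|K_0|^{1-\frac{2}{p}}$, gives a total bounded by $\zeta^{2\gamma} |K_0| \cdot (\zeta L_1/L_2)^{2/p} \lesssim \zeta^{2\gamma} |K_0|$. In the opposite regime $L_2/L_1 < \zeta$ (where $K_0$ itself is thinner than $\zeta$), I split $v$ further at $v = n_2 - n_1$, where the binding inclusion switches from $k_1 \leq n_1$ to $k_1 \leq n_2 - v$; in both sub-regimes the stray factor $(L_2/L_1)^{2\gamma}$ is majorised by $\zeta^{2\gamma}$. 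The main subtlety lies precisely in this final case analysis: a naive choice $\gamma_1 = \gamma_2 = 0$ would yield only logarithmic factors and no $\zeta^{2\gamma}$ gain, whereas the antisymmetric choice manufactures the compensating factor $L_1^{-2\gamma} L_2^{2\gamma} = (L_2/L_1)^{2\gamma}$, which in all cases can be compared to $\zeta^{2\gamma}$ via the eccentricity constraint on $K$.
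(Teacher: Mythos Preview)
Your proof is correct and follows essentially the same route as the paper: invoke rotation-invariance of the Sobolev norm, apply Lemma~\ref{lemma:regularity} scale-by-scale to $F\circ\phi^\theta$, and sum geometrically over $(k_1,k_2)$ under the eccentricity constraint. The only notable difference is in the bookkeeping of the geometric sum: the paper uses the pointwise (sup) consequence of Lemma~\ref{lemma:regularity} and then lets $\gamma_1$ vary with the sign of $k_1$ to make the $k_1$-series converge in one stroke, whereas you fix the antisymmetric choice $\gamma_1=-\gamma,\ \gamma_2=\gamma$ from the outset (after sharpening the fixed-scale bound via H\"older) and pay for it with a case split on $L_2/L_1$ versus $\zeta$. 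Both executions are valid; the paper's variable-$\gamma_1$ trick avoids your case analysis, while your H\"older step yields a slightly sharper per-scale bound that is not needed for the final estimate.
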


\begin{proof}
Let us detail the argument for \eqref{eq:propex-1} and the same argument will also hold for \eqref{eq:propex-2}.

Since the Sobolev norm preserves the rotation map, we have $F \circ \phi \in W^{s,p}$. Then we directly have (see Lemma \ref{lemma:regularity})
$$  |\langle F \circ \phi^{\theta}, h_K \rangle| \lesssim \|F\|_{W^{s,p}} |K_1|^{\gamma_1} |K_2|^{\gamma_2} |K|^{1/2}$$
where we can choose arbitrarily $\gamma_1,\gamma_2 \in (-\delta,\delta)$. So we get
\begin{align*}
\sum_{\substack{K \in \mathbb{D}^{\alpha} \\ K \subseteq K_0 \\ \varepsilon_K \leq \zeta}} |\langle F \circ \phi^{\theta}, h_K \rangle|^2  & \lesssim \|F\|_{W^{s,p}}^2 \sum_{\substack{K \in \mathbb{D}^{\alpha} \\ K \subseteq K_0 \\ \varepsilon_K \leq \zeta}} |K| |K_1|^{2\gamma_1} |K_2|^{2\gamma_2} \\
& \lesssim \|F\|_{W^{s,p}}^2 \sum_{2^{k_1} \leq |K_0^1|} \sum_{\substack{2^{k_2} \leq |K_0^2| \\ 2^{k_2}\leq \zeta 2^{k_1}}} \frac{|K_0|}{2^{k_1+k_2}} 2^{k_1+k_2} 2^{2\gamma_1 k_1} 2^{2\gamma_2 k_2},
\end{align*}
since for $k_1,k_2$ fixed, then the number of rectangles $K\subset K_0$ of dimensions  $2^{k_1} \times 2^{k_2}$ is exactly $\frac{|K_0|}{2^{k_1+k_2}}$. Using that we can choose $\gamma_2>0$ (as close as we want of $\delta$) so that
\begin{align*}
\sum_{\substack{K \in \mathbb{D}^{\alpha} \\ K \subseteq K_0 \\ \varepsilon_K \leq \zeta}} |\langle F \circ \phi^{\theta}, h_K \rangle|^2  & \lesssim \|F\|_{W^{s,p}}^2 |K_0|^{} \sum_{2^{k_1} \leq |K_0^1|} \sum_{\substack{2^{k_2} \leq |K_0^2| \\ 2^{k_2}\leq \zeta 2^{k_1}}} 2^{2\gamma_1 k_1} 2^{2\gamma_2 k_2}  \\
& \lesssim \|F\|_{W^{s,p}}^2 |K_0|^{} \zeta^{2\gamma_2 } \sum_{2^{k_1} \leq |K_0^1|} 2^{2(\gamma_1+\gamma_2) k_1}  \\
& \lesssim \|F\|_{W^{s,p}}^2 |K_0|^{} \zeta^{2\gamma_2},
\end{align*}
where at the end, we compute the sum over $k_1$ by chosing $\gamma_1\in(-\delta,\delta)$ such that $\gamma_1+\gamma_2> 0$ if $k_1\leq 0$ and $\gamma_1+\gamma_2<0 $ if $k_1\geq 0$ (which is always possible).
\end{proof}

\section{Proof of Theorem \ref{main_thm_pos} - Localizations and Decompositions}

\label{sec:main0}

We will start  the proof of Theorem \ref{main_thm_pos}. Let us fix the rotation $\phi=\phi^\theta$ for some angle $\theta$, a function $F\in \BMO \cap W^{s,p}$ for some exponents $s\in(0,1)$, $p\in[2,\infty)$ with $s>\frac{2}{p}$ and a small parameter $\epsilon \in(0,1)$. 

We aim to control the biparameter $\BMO$ norm of $F \circ \phi$, so we have to bound any oscillation with respect to arbitrary open subsets. So let us fix an open subset of finite measure $\Omega \subset \BBR^2$ and a dyadic grid $\mathbb{D}^\alpha$ and we would like to control
\begin{equation} 
 \osc_{\Omega}(F \circ \phi):=\left(\sum_{\substack{K \in \mathbb{D}^{\alpha}\\ K \subseteq \Omega}} \left| \langle F \circ \phi, h_K \rangle \right|^2\right)^{\frac{1}{2}}. \label{eq:oscOmega}
\end{equation}
First of all, we would like to take advantage of Proposition \ref{long_tall_pre} which relies heavily on the preservation of the Sobolev norm under rotations and suggests 
\begin{enumerate}
\item[(a)]
a separate treatment of sufficiently large or small eccentricities from intermediate eccentricities of $K$;
\item[(b)]
a natural localization of the oscillation on an arbitrary open set to rectangles which are more regular from both geometric and analytic perspectives for the application of Proposition \ref{long_tall_pre} and for further estimates.
\end{enumerate}
\begin{definition} \label{def_max_rectangle}
A rectangle $K_0$ 
is \textbf{maximal} if it is maximal with respect to inclusion in $\Omega$, written as $K_0 \subseteq_{\max} \Omega$. 
We would denote the maximal rectangles by $K_{\max}$.
\end{definition}

Then localization of (\ref{eq:oscOmega}) to maximal rectangles can be achieved as follows.
\begin{align} \label{localization_osc}
\osc_{\Omega}(F \circ \phi) =&\left(\sum_{\substack{K_{\max} \in \mathbb{D}^{\alpha}\\ K_{\max} \subseteq \Omega}}\sum_{\substack{K \in \mathbb{D}^{\alpha} \\ K \subseteq K_{\max}} }\left| \langle F \circ \phi, h_K \rangle \right|^2\right)^{\frac{1}{2}}
\end{align}
In general, the maximal rectangles are nested so that $\displaystyle \sum_{K_{\max}\subseteq \Omega}|K_{\max}|$ is not comparable to $|\Omega|$ apriori. To resolve this issue, we will further decompose the maximal rectangles so that we can apply Journ\'e's lemma and understand the estimate we could aim for. 
\begin{definition}\label{K_l_definition}
For an integer $l \in \mathbb{N}$,  define a sub-collection of the maximal rectangles $\mathcal{K}_l$ as
\begin{equation}
\mathcal{K}_l := \{K_0 \in \mathbb{D}^\alpha: K_0 \subseteq_{\max} \Omega \ \  \text{and} \ \ 2^{l}K_0 \subseteq \widetilde{\widetilde{\Omega}}, 2^{l+1} K_0\nsubseteq \widetilde{\widetilde{\Omega}} \},
\end{equation}
where 
\begin{equation*}
\widetilde \Omega:=\{(x,y), MM(\mathbbm{1}_\Omega)(x,y)> \epsilon\} 
\end{equation*}
and
\begin{equation*}
\widetilde{\widetilde \Omega}:=\{(x,y), MM(\mathbbm{1}_{\widetilde{\Omega}})(x,y)> \epsilon\}.
\end{equation*}
\end{definition}
\begin{remark}
According to Definition \ref{K_l_definition}, we obtain a decomposition on the collection of maximal dyadic rectangles, denoted by $\mathcal{K}_{\max}$. In particular, 
\begin{equation} \label{max_rec_decomposition}
\mathcal{K}_{\max} = \bigcup_{l \in \mathbb{N}} \mathcal{K}_l.
\end{equation}
\end{remark}
One important observation about the relationship between $\epsilon>0$ and $l \in \mathbb{N}$ is summarized in the following lemma.
\begin{lemma} \label{l_epsilon}
Let $\epsilon>0$ and $l \in \mathbb{N}$ denote the enlargement factor and decomposition levels as specified in Definition \ref{K_l_definition}. Then the following relation is always true:
\begin{equation}
2^{- 2 l} \lesssim \epsilon.
\end{equation}
\end{lemma}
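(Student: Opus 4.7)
The plan is to combine two elementary containments with the definitions of the iterated strong maximal function. First, I would verify that $K_0 \subseteq \widetilde{\Omega}$: since $K_0 \subseteq \Omega$ and $\Omega$ is an open set (as stipulated in the definition of the biparameter $\BMO$ space in \eqref{eq:defBMO}), every $(x,y)\in K_0$ lies in some small axis-parallel rectangle entirely contained in $\Omega$, so $MM(\mathbbm{1}_{\Omega})(x,y) = 1 > \epsilon$ and $(x,y)\in \widetilde{\Omega}$. Consequently
$$
|2^{l+1}K_0 \cap \widetilde{\Omega}| \geq |K_0|.
$$

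Next I would exploit the defining maximality in $l$. Since $K_0 \in \mathcal{K}_l$ we have $2^{l+1}K_0 \not\subseteq \widetilde{\widetilde{\Omega}}$, so we may pick a witness $p \in 2^{l+1}K_0 \setminus \widetilde{\widetilde{\Omega}}$, i.e.\ a point with $MM(\mathbbm{1}_{\widetilde{\Omega}})(p) \leq \epsilon$. Testing this strong maximal function on the specific rectangle $R := 2^{l+1}K_0$, which indeed contains $p$, yields
$$
|2^{l+1}K_0 \cap \widetilde{\Omega}| \leq \epsilon \cdot |2^{l+1}K_0| = 4\,\epsilon \cdot 2^{2l}\,|K_0|.
$$

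Chaining the two inequalities gives $|K_0| \leq 4\,\epsilon \cdot 2^{2l}\,|K_0|$, hence $2^{-2l} \leq 4\epsilon$, which is the desired $2^{-2l} \lesssim \epsilon$. There is no real obstacle here: the argument is a short pigeonhole-style geometric check, and the only subtlety is in choosing the natural test rectangle $R = 2^{l+1}K_0$ through $p$ (rather than an arbitrary one), so that the maximal-function upper bound and the trivial lower bound involve the same rectangle and can be chained.
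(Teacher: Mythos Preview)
Your proof is correct and follows essentially the same pigeonhole argument as the paper: both compare the ratio $|K_0|/|2^{l+1}K_0| = 2^{-2(l+1)}$ against $\epsilon$ by testing the strong maximal function on the rectangle $2^{l+1}K_0$. The only cosmetic difference is that the paper phrases it as a contradiction (assuming $2^{-2(l+1)} > \epsilon$ and showing $2^{l+1}K_0 \subseteq \widetilde{\widetilde{\Omega}}$), whereas you argue directly by picking a witness point in $2^{l+1}K_0 \setminus \widetilde{\widetilde{\Omega}}$ and bounding $|2^{l+1}K_0 \cap \widetilde{\Omega}|$ from both sides.
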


\begin{proof}
 Indeed if $2^{l} \leq \frac{1}{4} \sqrt{\epsilon}$ then for any maximal rectangle $K_0$ and for every point $(x,y)\in 2^{l+1} K_0$, we will have
 $$ MM(\mathbbm{1}_\Omega)(x,y) \geq \frac{1}{|2^{l+1} K_0|} \int_{2^{l+1} K_0} \mathbbm{1}_{\Omega}(s,t) \, dsdt \geq 2^{-2(l+1)} $$
 since we work in dimension $2$. So that yields $MM(\mathbbm{1}_\Omega)(x,y)\geq 2\epsilon>\epsilon$ and hence $(x,y)\in \widetilde{\widetilde \Omega}$. We deduce that $2^{l+1} K_0 \subset \widetilde{\widetilde \Omega}$ which is in contradiction with the definition of $K_0$ beeing in $\mathcal{K}_l$.
\end{proof}

\subsection{Localization and decomposition of $\osc_{\Omega}(F \circ \phi)$ for dyadic rectangles with sufficiently large and small eccentricities}
We can then rewrite (\ref{localization_osc}) based on the decomposition (\ref{max_rec_decomposition}) as follows. 
\begin{align} \label{decomposition_osc}
\osc_{\Omega}(F \circ \phi) =&\left(\sum_{l \in \mathbb{N}}\ \sum_{\substack{K_{\max} \in \mathbb{D}^{\alpha}\\ K_{\max} \subseteq \Omega \\ K_{\max} \in \mathcal{K}_l}}\ \sum_{\substack{K \in \mathbb{D}^{\alpha} \\ K \subseteq K_{\max}}} \left| \langle F \circ \phi, h_K \rangle \right|^2\right)^{\frac{1}{2}}
\end{align}
Thanks to Journ\'e lemma  \cite{JourneLemma}, the nested sum of maximal rectangles in each sub-collection $\mathcal{K}_l$ with $l \in \mathbb{N}$ fixed is comparable to the measure of $\Omega$ with a loss of an arbitrarily small power of $2^l$.

We first state Journ\'e's lemma:

\begin{lemma}[Journ\'e's lemma] 
Let $U \subseteq \mathbb{R}^2$ be an open set and $k$ a fixed non-negative integer. Assume that $\mathcal{R}$ is a collection of dyadic rectangles which are maximal with respect to inclusion and which all lie in $U$.  Let 
\begin{equation*}
\tilde{U} := \{MM \mathbbm{1}_{U} > \frac{1}{2} \}
\end{equation*}
and
\begin{equation*}
\tilde{\tilde{U}} := \{MM \mathbbm{1}_{\tilde{U}} > \frac{1}{2} \}.
\end{equation*}
Suppose that for each $R \in \mathcal{R}$, $2^k R \subseteq \tilde{\tilde{U}}$ and that $k$ is maximal with this property. Then for every $\nu>0$, 
$$ \sum_{R \in \mathcal{R}} |R| \lesssim_{\nu} 2^{l\nu} |U|.$$
\end{lemma}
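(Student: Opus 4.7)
The plan is to reduce the statement to the classical one-directional version of Journ\'e's inequality. For each dyadic rectangle $R = I \times J \in \mathcal{R}$, introduce the two maximal one-directional enlargements: let $\hat{J}(R)$ be the largest dyadic interval containing $J$ such that $I \times \hat{J}(R) \subseteq \tilde{U}$, and let $\hat{I}(R)$ be defined analogously in the horizontal direction. Set
\begin{equation*}
\mu_1(R) := \frac{|\hat{I}(R)|}{|I|}, \qquad \mu_2(R) := \frac{|\hat{J}(R)|}{|J|}.
\end{equation*}
The first ingredient is the one-directional Journ\'e estimate
\begin{equation*}
\sum_{R \in \mathcal{R}} |R|\, \mu_i(R)^{-\delta} \lesssim_{\delta} |U|, \qquad i \in \{1,2\},\ \delta > 0,
\end{equation*}
whose proof proceeds by a sparse/Vitali-type covering of $\tilde{U}$ combined with the maximality of each $R$ inside $U$, exploiting the fact that $\{\hat{I}(R) \times J\}_R$ (respectively $\{I \times \hat{J}(R)\}_R$) enjoys a bounded overlap inside $\tilde U$.

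The second, more delicate ingredient is to translate the biparameter hypothesis $2^k R \subseteq \tilde{\tilde{U}}$ (with $k$ maximal) into a one-directional lower bound of the form
\begin{equation*}
\max \bigl(\mu_1(R),\, \mu_2(R)\bigr) \gtrsim 2^k.
\end{equation*}
The heuristic is that a point $(x,y) \in \tilde{\tilde{U}}$ means that some rectangle through $(x,y)$ has more than half its mass in $\tilde{U}$; a Fubini-type integration over the enlarged rectangle $2^k R$ then produces, for a large proportion of either $x \in 2^k I$ or $y \in 2^k J$, an entire horizontal or vertical slice sitting inside $\tilde{U}$. Comparing this with the definitions of $\hat{I}(R)$ and $\hat{J}(R)$ forces one of the two factors $\mu_i(R)$ to be of order $2^k$. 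This is exactly the step where the \emph{double} application of the strong maximal function plays a crucial role: one application turns a rectangular condition into a line condition, and the second propagates it to the $2^k$-enlargement.

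Once this lower bound is in hand, we partition $\mathcal{R} = \mathcal{R}^{(1)} \sqcup \mathcal{R}^{(2)}$ according to which $\mu_i$ realizes the bound, and estimate
\begin{equation*}
\sum_{R \in \mathcal{R}} |R| \leq 2^{k\delta} \sum_{i=1}^{2} \sum_{R \in \mathcal{R}^{(i)}} |R|\, \mu_i(R)^{-\delta} \lesssim_\delta 2^{k\delta} |U|,
\end{equation*}
and choosing $\delta = \nu$ yields the announced bound. The main obstacle is the second ingredient, which carries the geometric content of the lemma; the remaining steps are essentially combinatorial and follow the standard pattern in biparameter harmonic analysis.
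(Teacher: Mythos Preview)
The paper does not give its own proof of this lemma; it simply cites Muscalu--Schlag (Vol.~II, \S3.6) and Journ\'e's original paper. So there is nothing in the paper to compare your argument against directly.

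Your overall strategy---reduce to the one-directional Journ\'e inequality---is the standard one, and the first ingredient is classical. The gap is in your second ingredient. With $\mu_1,\mu_2$ defined as one-directional enlargements of $R$ inside $\tilde U$, the claim $\max(\mu_1(R),\mu_2(R))\gtrsim 2^k$ is not established by the Fubini heuristic you describe: knowing that every point of $2^kR$ lies in $\tilde{\tilde U}$ does not produce slices lying in $\tilde U$, since $\tilde{\tilde U}\supsetneq\tilde U$ in general. As stated, the implication is unclear and quite possibly false.

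There is a clean fix that also simplifies the scheme. Define the enlargement into $\tilde{\tilde U}$ instead: let $\hat I'$ be the maximal dyadic interval containing $I$ with $\hat I'\times J\subseteq\tilde{\tilde U}$, and set $\mu_1'(R)=|\hat I'|/|I|$. Since $2^kI\times J\subseteq 2^kR\subseteq\tilde{\tilde U}$, any dyadic ancestor of $I$ of length $\le 2^{k-1}|I|$ sits inside $2^kI$, so $\mu_1'(R)\ge 2^{k-1}$ directly---no splitting into $\mathcal R^{(1)},\mathcal R^{(2)}$ is needed. On the other hand $\tilde U\subseteq\tilde{\tilde U}$ gives $\mu_1'\ge\mu_1$, hence $\sum_R|R|\,\mu_1'(R)^{-\delta}\le\sum_R|R|\,\mu_1(R)^{-\delta}\lesssim_\delta|U|$ by the classical one-directional estimate. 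Combining,
\[
\sum_{R\in\mathcal R}|R|\ \lesssim\ 2^{k\delta}\sum_{R\in\mathcal R}|R|\,\mu_1'(R)^{-\delta}\ \lesssim_\delta\ 2^{k\delta}|U|.
\]
Alternatively, the iterated construction (enlarge $J$ to $\hat J$ in $\tilde U$, then enlarge $I$ paired with $\hat J$ in $\tilde{\tilde U}$) used in Pipher's extension of Journ\'e also yields the correct dichotomy; but either way the point is that the double enlargement $\tilde{\tilde U}$ must appear in the definition of at least one of the factors, which is precisely what your formulation omits.
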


We refer the reader to \cite[Tome II - Section 3.6]{MuscaluSchlag} for a proof of Journ\'e's lemma (initially detailed in \cite{JourneLemma}). We remark that for $U := \Omega$, and the enlargements $\tilde{\Omega}$ and $\tilde{\tilde{\Omega}}$ are slightly different from $\tilde{U}$ and $\tilde{\tilde{U}}$ since the fraction $\frac{1}{2}$ is replaced by $\epsilon >0$, which suggests that we would need a more general statement of Journ\'e's lemma in the current setting. We will show how such generalization is implemented to generate the desired estimate in the following lemma.

\begin{lemma} \label{lemma:journe}
For every $\nu>0$ arbitrarily small and every integer $l\geq 1$ (with $2^{-2l} \lesssim \epsilon$), we have
$$ \sum_{K_{\max} \in \mathcal{K}_l} |K_{\max}| \lesssim \big(\sqrt{\epsilon} . 2^{l}\big)^\nu |\Omega|.$$
\end{lemma}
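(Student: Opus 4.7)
The plan is to adapt the classical proof of Journé's lemma (stated just above with threshold $\tfrac12$) to the present setting, where the enlargements $\widetilde\Omega$ and $\widetilde{\widetilde\Omega}$ are defined with the smaller threshold $\epsilon$. Since Lemma~\ref{l_epsilon} guarantees $\sqrt{\epsilon}\cdot 2^l \gtrsim 1$, the target bound $(\sqrt{\epsilon}\cdot 2^l)^\nu |\Omega|$ is always at least $|\Omega|$, and the point is that the losses incurred by passing from threshold $\tfrac12$ to threshold $\epsilon$ should all be absorbable into an arbitrarily small power of $\sqrt{\epsilon}\cdot 2^l$.

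First I would record the $L\log L$ weak-type bound for the strong maximal function $MM$, which yields
$$ |\widetilde\Omega| \lesssim \epsilon^{-1}(1+\log\epsilon^{-1})|\Omega|\quad\text{and}\quad |\widetilde{\widetilde\Omega}| \lesssim \epsilon^{-2}(1+\log\epsilon^{-1})^2 |\Omega|. $$
Then, for each maximal $R = I \times J \in \mathcal{K}_l$, I would carry out the Journé-type projection/Vitali construction to produce a \emph{shadow} rectangle $I \times J^*$ (and, symmetrically, $I^* \times J$) with $J^* \supseteq J$ dyadic, contained in $\widetilde\Omega$ in a quantitative density sense, whose vertical enlargement factor satisfies a lower bound of the shape $|J^*|/|J| \gtrsim \sqrt{\epsilon} \cdot 2^l$ up to logarithmic factors in $\epsilon^{-1}$. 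The condition $2^l R \subseteq \widetilde{\widetilde\Omega}$ supplies, at each point of $2^l R$, a nearby rectangle of $\widetilde\Omega$-density exceeding $\epsilon$, and the half-power $\sqrt{\epsilon}$ should arise because the two nested applications of $MM$ in the definition of $\widetilde{\widetilde\Omega}$ can each be absorbed into one of the two coordinate directions.

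With the shadows in hand, their bounded overlap inside $\widetilde\Omega$ (a consequence of the maximality of the $R$'s in $\Omega$) combined with the bound on $|\widetilde\Omega|$ would give
$$ \sum_{R \in \mathcal{K}_l} |R| = \sum_R \frac{|J|}{|J^*|}|I \times J^*| \lesssim (\sqrt{\epsilon}\cdot 2^l)^{-1}(\log\epsilon^{-1})^{O(1)} |\Omega|. $$
The desired estimate then follows from the elementary inequality $\log x \leq C_\nu x^\nu$ applied to $x = \sqrt{\epsilon}\cdot 2^l \geq 1$, which absorbs the logarithmic losses into $(\sqrt{\epsilon}\cdot 2^l)^\nu$.

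The main obstacle will be the geometric shadow construction with the correct exponent of $\epsilon$: in the classical proof (threshold $\tfrac12$) the factor $2$ shows up naturally in Vitali, overlap, and stopping-time arguments, and replacing it by $\epsilon$ while obtaining an overall loss of only $\epsilon^{-1/2}$ (rather than a larger negative power) requires carefully balancing the two applications of $MM$ against the two coordinate directions. An alternative route that I would also consider, if the direct adaptation proves too cumbersome, is to iterate the standard Journé lemma $\sim \log\epsilon^{-1}$ times through the sequence of sets $\Omega^{(k+1)} := \{MM\mathbbm{1}_{\Omega^{(k)}} > \tfrac12\}$ and compare $\Omega^{(k)}$ with $\widetilde\Omega$ to transfer the classical result to our setting.
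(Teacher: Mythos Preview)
Your approach is substantially more complicated than necessary, and the final step contains a concrete error. The paper does not redo the Journ\'e shadow construction at all; it reduces directly to the \emph{classical} Journ\'e lemma (threshold $\tfrac12$) applied to $U=\Omega$, via a one-line rescaling. Here is the key observation you missed: if $K_{\max}\in\mathcal K_l$ then $2^{l+1}K_{\max}\not\subseteq\widetilde{\widetilde\Omega}$, so some point of $2^{l+1}K_{\max}$ has $MM(\mathbbm{1}_{\widetilde\Omega})\le\epsilon$, whence $|\widetilde\Omega\cap 2^{l+1}K_{\max}|\le\epsilon\,|2^{l+1}K_{\max}|$. Since $\Omega\subseteq\widetilde\Omega$ and (for small $\epsilon$) $\sqrt{\epsilon}\,2^{l+2}K_{\max}\subseteq 2^{l+1}K_{\max}$, this rewrites as
\[
|\Omega\cap \sqrt{\epsilon}\,2^{l+2}K_{\max}|\le\tfrac12\,|\sqrt{\epsilon}\,2^{l+2}K_{\max}|,
\]
which is precisely the stopping condition of the classical Journ\'e lemma at scale $l+k_0$ with $2^{k_0}\simeq 2\sqrt{\epsilon}$. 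The black-box classical lemma then gives $\sum|K_{\max}|\lesssim_\nu 2^{(l+k_0)\nu}|\Omega|\simeq(\sqrt\epsilon\cdot 2^l)^\nu|\Omega|$ immediately, with no logarithmic losses and no need to revisit shadows or Vitali.

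Your last step is also wrong as stated. You claim to absorb $(\log\epsilon^{-1})^{O(1)}$ into $(\sqrt\epsilon\cdot 2^l)^\nu$ via $\log x\le C_\nu x^\nu$ with $x=\sqrt\epsilon\cdot 2^l$, but $\log\epsilon^{-1}$ is not $\log(\sqrt\epsilon\cdot 2^l)$: when $l$ is near its minimum (so $\sqrt\epsilon\cdot 2^l\simeq 1$ by Lemma~\ref{l_epsilon}), the right-hand side is $\simeq 1$ while $\log\epsilon^{-1}$ is unbounded as $\epsilon\to 0$. Your alternative route of iterating the classical lemma $\sim\log\epsilon^{-1}$ times would suffer the same defect. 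The paper's rescaling trick avoids this entirely because it produces no logarithmic factor to begin with.
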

\begin{proof}
 Indeed for $K_{\max} \in \mathcal{K}_l$, we have that 
$$ |\widetilde \Omega \cap 2^{l+1} K_{\max}| \leq \epsilon |2^{l+1} K_{\max}|$$
which can be rewritten as
$$ |\widetilde \Omega \cap 2^{l+1} K_{\max}| \leq \frac{1}{2} |\sqrt{\epsilon} 2^{l+2} K_{\max}|$$
and that yields (because $\epsilon$ is a small parameter and so $2\sqrt{\epsilon} \leq 1$)
$$ |\Omega \cap \sqrt{\epsilon} 2^{l+2} K_{\max}| \leq \frac{1}{2} |\sqrt{\epsilon} 2^{l+2} K_{\max}|.$$
Indeed this is the main property in the Journ\'e's Lemma. With $k_0$ such that $\sqrt{\epsilon}2 \simeq 2^{k_0}$, we see that for $K_{\max} \in \mathcal{K}_l$ then $K_{\max} \in \mathcal{C}_{l+k_0}$ where $\mathcal{C}_k$ is the maximal class of Journ\'e's Lemma. So by the previous lemma, one gets
\begin{align*}
 \sum_{K_{\max} \in \mathcal{K}_l} |K_{\max}| & \leq \sum_{K_{\max}\in \mathcal{C}_{l+k_0}} |K_{\max}| \\
 & \lesssim  2^{(l+k_0)\nu} |\Omega| \lesssim \big(\sqrt{\epsilon} . 2^{l}\big)^\nu |\Omega|.
\end{align*}
\end{proof}

Inspired by Journ\'e's Lemma, we can apply Proposition \ref{long_tall_pre} to the oscillation of $F$ tested on the wavelets $K$ which are contained in a fixed maximal rectangle $K_{\max} \in \mathcal{K}_l$ and satisfy the eccentricity condition quantified by $2^{-l}$, which yields the following corollary.
\begin{corollary}\label{long_tall_total} For a fixed maximal rectangle $K_{\max} \in \mathcal{K}_l$ with $ |K_{\max}| = K_{\max}^1 \times K_{\max}^2$, we have
\begin{align} 
\sum_{\substack{K \in \mathbb{D}^{\alpha} \\ \substack{K \subseteq K_{\max} \\ \varepsilon_K \leq 2^{-l+3}}}} \left|\langle F \circ \phi, h_K \rangle \right|^2 \lesssim 2^{-2\gamma l } |K_{\max}|^{} \|F\|_{W^{s,p}}^2, 
\end{align}
and similarly
\begin{align} 
\sum_{\substack{K \in \mathbb{D}^{\alpha} \\ K \subseteq K_{\max} \\ \varepsilon_K \geq 2^{l-3}}}\left|\langle F \circ \phi, h_K \rangle \right|^2 \lesssim 2^{- 2\gamma l} |K_{\max}|^{} \|F\|_{W^{s,p}}^2, 
\end{align}
for an arbitrary exponent $\gamma\in(0,\delta)$ with $\delta:=\min(s-\frac{2}{p}, \frac{1}{p})$.
\end{corollary}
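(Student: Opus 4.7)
The plan is to derive Corollary \ref{long_tall_total} as a direct specialization of Proposition \ref{long_tall_pre}, which has already been proved in general form for arbitrary dyadic rectangles $K_0$ and arbitrary eccentricity threshold $\zeta>0$. The key point is that a maximal rectangle $K_{\max}\in\mathcal K_l$ is itself a dyadic rectangle in $\BBD^{\alpha}$, so no structural information about its position inside $\Omega$ (nor about the parameter $l$) is used in the bound: only the inclusion $K\subseteq K_{\max}$ together with the eccentricity condition matters, which is precisely the setup of Proposition \ref{long_tall_pre}.

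Concretely, for the first inequality I would apply Proposition \ref{long_tall_pre} with $K_0:=K_{\max}$ and $\zeta:=2^{-l+3}$, which yields
\begin{equation*}
\sum_{\substack{K \in \mathbb{D}^{\alpha} \\ K \subseteq K_{\max}\\ \varepsilon_K \leq 2^{-l+3}}} \left|\langle F \circ \phi, h_K \rangle \right|^2 \lesssim (2^{-l+3})^{2\gamma}|K_{\max}|\,\|F\|_{W^{s,p}}^2 \lesssim 2^{-2\gamma l}|K_{\max}|\,\|F\|_{W^{s,p}}^2,
\end{equation*}
where the absolute factor $2^{6\gamma}\leq 2^{6\delta}$ is absorbed in the implicit constant. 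For the second inequality, I would apply the second estimate of Proposition \ref{long_tall_pre} with the same choice $\zeta=2^{-l+3}$, so that the threshold $\varepsilon_K\geq \zeta^{-1}=2^{l-3}$ matches the hypothesis, and the same absorption gives the claimed bound $2^{-2\gamma l}|K_{\max}|\,\|F\|_{W^{s,p}}^2$.

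A minor technical point concerns the angle restriction $0\leq\theta<\pi/2$ assumed in Proposition \ref{long_tall_pre}: for a general rotation $\phi=\phi^\theta$ I would write $\phi^\theta=\phi^{k\pi/2}\circ\phi^{\theta-k\pi/2}$ with $k\in\{0,1,2,3\}$, and observe that the quarter-turn rotation $\phi^{k\pi/2}$ sends axis-parallel dyadic rectangles to axis-parallel dyadic rectangles (after possibly swapping coordinates and reflecting), mapping eccentricity $\varepsilon_K$ to either $\varepsilon_K$ or $\varepsilon_K^{-1}$. Thus it suffices to prove the statement for $\theta\in[0,\pi/2)$, and the two cases $\varepsilon_K\leq 2^{-l+3}$ and $\varepsilon_K\geq 2^{l-3}$ get swapped in pairs, which is already accounted for by the symmetric form of the corollary.

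I do not anticipate any genuine obstacle here since Proposition \ref{long_tall_pre} does all the analytic work (Sobolev regularity under rotations via Lemma \ref{lemma:regularity}, together with a geometric sum over dyadic scales controlled by $\zeta^{2\gamma}$). The only thing to keep in mind is the compatibility of the two definitions of $\delta$: the corollary takes $\delta=\min(s-\tfrac2p,\tfrac1p)$, which is no larger than the $\delta=s-\tfrac2p$ used in Proposition \ref{long_tall_pre}, so any admissible $\gamma$ in the corollary is admissible in the proposition as well.
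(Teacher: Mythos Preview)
Your proposal is correct and matches the paper's approach exactly: the paper states the corollary immediately after Proposition \ref{long_tall_pre} with the remark that one simply applies that proposition with $K_0=K_{\max}$ and the eccentricity threshold $\zeta=2^{-l+3}$, which is precisely what you do. Your additional care about the angle restriction and the two definitions of $\delta$ is appropriate (and in fact more detailed than the paper itself, which leaves the corollary without a separate proof).
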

As a consequence of Corollary \ref{long_tall_total} and Journ\'e's lemma, for $0 < \gamma < \min(s-\frac{2}{p}, \frac{1}{p})$ and $\nu >0$,
\begin{equation} \label{result_each_level}
\sum_{\substack{K_{\max} \in \mathbb{D}^{\alpha}\\ K_{\max} \subseteq \Omega \\ K_{\max} \in \mathcal{K}_l}} \ \sum_{\substack{K \in \mathbb{D}^{\alpha} \\ \substack{K \subseteq K_{\max} \\ \varepsilon_K \leq 2^{-l+3}}}} \left|\langle F \circ \phi, h_K \rangle \right|^2 \lesssim 2^{-2\gamma l } \sum_{K_{\max} \in \mathcal{K}_l}|K_{\max}|^{} \|F\|_{W^{s,p}}^2 \lesssim \epsilon^{\frac{\nu}{2}}2^{-l(2\gamma-\nu)}|\Omega|\|F\|_{W^{s,p}}^2.
\end{equation}
By choosing $\nu>0$ such that $2\gamma - \nu >0$, we can majorize the oscillation of $F$ on a restricted subset of wavelets $K \subseteq \Omega$ by
\begin{equation} 
\left(\sum_{l \in \mathbb{N}} \ \sum_{\substack{K_{\max} \in \mathbb{D}^{\alpha} \\ K_{\max} \subseteq \Omega \\ K_{\max} \in \mathcal{K}_l}} \ \sum_{\substack{K \in \mathbb{D}^{\alpha} \\ \substack{K \subseteq K_{\max} \\ \varepsilon_K \leq 2^{-l+3}}}} \left|\langle F \circ \phi, h_K \rangle \right|^2\right)^{\frac{1}{2}} \lesssim \epsilon^{\frac{\gamma}{2}}|\Omega|^{\frac{1}{2}}\|F\|_{W^{s,p}}, \label{error1}
\end{equation}
where the inequality follows from (\ref{result_each_level}) and the relation between $l$ and $\epsilon$ highlighted in Lemma \ref{l_epsilon}.
By the exactly same argument,
\begin{equation} 
\left(\sum_{l \in \mathbb{N}} \ \sum_{\substack{K_{\max} \in \mathbb{D}^{\alpha}\\ K_{\max} \subseteq \Omega \\K_{\max} \in \mathcal{K}_l}} \ \sum_{\substack{K \in \mathbb{D}^{\alpha} \\ \substack{K \subseteq K_{\max} \\ \varepsilon_K \geq 2^{l-3}}}} \left|\langle F \circ \phi, h_K \rangle \right|^2\right)^{\frac{1}{2}} \lesssim \epsilon^{\frac{\gamma}{2}}|\Omega|^{\frac{1}{2}}\|F\|_{W^{s,p}}.
\label{error2}
\end{equation}

\subsection{Localization and decomposition of $\osc_{\Omega}(F \circ \phi)$ for dyadic rectangles with intermediate eccentricities}
We start with the following crucial observation.
\begin{obs} \label{R_restrict}
Fix an integer $l \in \mathbb{N}$. Suppose that $K _{\max}\in \mathcal{K}_l$ (Definition \ref{K_l_definition}) and $R$ is a rectangle with sides parallel to the coordinate axes and $|R| := 2^{r_1} \times 2^{r_2}$. Further assume that $R \nsubseteq \widetilde{\phi^\theta(\Omega)}$. Then at least one of the following necessary conditions hold:
\begin{equation}\label{R_1}
2^{-l}2^{r_1} \geq \frac{1}{2}\min(\frac{K_{\max}^1}{|\cos\theta|}, \frac{K_{\max}^2}{|\sin \theta|})
\qquad \textrm{or} \qquad 
2^{-l}2^{r_2} \geq \frac{1}{2}\min(\frac{K_{\max}^1}{|\sin \theta|}, \frac{K_{\max}^2}{|\cos\theta|}).
\end{equation}
Moreover $l$ has to satisfy the condition $l \geq L$ where $L$ is defined to be $2^{-L} := \epsilon^{1/2}$.
\end{obs}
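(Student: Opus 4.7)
The plan is to argue by contrapositive: assuming the strict opposites of both inequalities in (\ref{R_1}) hold, i.e.\
\begin{equation*}
2^{r_1} < 2^{l-1}\min\left(\tfrac{K_{\max}^1}{|\cos\theta|},\tfrac{K_{\max}^2}{|\sin\theta|}\right) \quad \text{and} \quad 2^{r_2} < 2^{l-1}\min\left(\tfrac{K_{\max}^1}{|\sin\theta|},\tfrac{K_{\max}^2}{|\cos\theta|}\right),
\end{equation*}
I will deduce that every point of $R$ lies in $\widetilde{\phi^\theta(\Omega)}$, contradicting the hypothesis $R\nsubseteq\widetilde{\phi^\theta(\Omega)}$.

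I would first reinterpret the two minima geometrically. Using the half-space description
\begin{equation*}
\phi^\theta(2^l K_{\max}) = \{(u,v)\colon |u\cos\theta+v\sin\theta|\leq 2^{l-1}K_{\max}^1,\; |{-u}\sin\theta+v\cos\theta|\leq 2^{l-1}K_{\max}^2\},
\end{equation*}
the quantities $2^{l-1}\min(K_{\max}^1/|\cos\theta|,K_{\max}^2/|\sin\theta|)$ and $2^{l-1}\min(K_{\max}^1/|\sin\theta|,K_{\max}^2/|\cos\theta|)$ are exactly half the horizontal and vertical chord lengths of $\phi^\theta(2^l K_{\max})$ through its centre. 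Setting $w=2^{r_1}$, $h=2^{r_2}$, the contrapositive hypotheses immediately yield $w\cos\theta+h\sin\theta<2^lK_{\max}^1$ and $w\sin\theta+h\cos\theta<2^lK_{\max}^2$, so an axis-parallel rectangle of dimensions $w\times h$ fits by translation inside $\phi^\theta(2^lK_{\max})$.

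To close the argument, I would fix an arbitrary point $p\in R$ and build an axis-parallel rectangle $Q_p\ni p$ of dimensions comparable to the bounding box of $\phi^\theta(2^lK_{\max})$, chosen so that a translate of the inscribed $w\times h$ rectangle above sits inside $Q_p\cap\phi^\theta(2^lK_{\max})$. A packing computation, combined with the definition $2^lK_{\max}\subseteq\widetilde{\widetilde\Omega}$ and the fact that rotations are measure-preserving, yields $|Q_p\cap\phi^\theta(\Omega)|/|Q_p|\gtrsim 2^{-2l}$, which is $\gtrsim\epsilon$ by Lemma \ref{l_epsilon}. Hence $MM(\mathbbm{1}_{\phi^\theta(\Omega)})(p)>\epsilon$, so $p\in\widetilde{\phi^\theta(\Omega)}$ and, $p$ being arbitrary, $R\subseteq\widetilde{\phi^\theta(\Omega)}$. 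The main obstacle is precisely this packing step: because the strong maximal function is adapted to axis-parallel rectangles while $\phi^\theta(\Omega)$ has been rotated, one must carefully track the geometry of $\phi^\theta(K_{\max})$ inside its axis-parallel bounding box in order to extract a density at least $\epsilon$.

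Finally, the assertion $l\geq L$ with $2^{-L}=\epsilon^{1/2}$ is a direct restatement of Lemma \ref{l_epsilon}, which gives $2^{-2l}\lesssim\epsilon$ and hence $2^{-l}\lesssim 2^{-L}$ up to an absolute constant.
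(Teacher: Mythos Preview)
Your approach has a fatal logical error in the density step. You claim that the packing computation gives $|Q_p\cap\phi^\theta(\Omega)|/|Q_p|\gtrsim 2^{-2l}$, and then invoke Lemma~\ref{l_epsilon} to conclude this is $\gtrsim\epsilon$. But Lemma~\ref{l_epsilon} says $2^{-2l}\lesssim\epsilon$, not $2^{-2l}\gtrsim\epsilon$: it is an \emph{upper} bound on $2^{-2l}$. So even granting your density bound, you would only know that the density is $\gtrsim$ something that is $\lesssim\epsilon$, which says nothing about whether the density exceeds $\epsilon$. This step cannot be repaired: for large $l$ the quantity $2^{-2l}$ is genuinely much smaller than $\epsilon$, so a bound of the form ``density $\gtrsim 2^{-2l}$'' is far too weak to place $p$ in $\widetilde{\phi^\theta(\Omega)}$.

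There is also a missing hypothesis. As stated, the Observation is being applied in the paper only to rectangles $R$ arising in the Haar expansion of $F\circ\phi$ against $h_K$ with $K\subseteq K_{\max}$, so one has the additional information $R\cap\phi^\theta(K_{\max})\neq\emptyset$. Without this the claim is false: an arbitrarily small $R$ placed far from $\phi^\theta(\Omega)$ satisfies $R\nsubseteq\widetilde{\phi^\theta(\Omega)}$ yet violates both inequalities in~\eqref{R_1}. Your contrapositive argument never invokes this intersection, which is why you are forced into a hopeless global density estimate.

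The paper proceeds very differently. It first observes that $\widetilde{\phi^\theta(\Omega)}$ is defined through the \emph{rotated} maximal function $MM^\theta$, so that $\widetilde{\phi^\theta(\Omega)}=\phi^\theta(\widetilde\Omega)$ exactly, and hence $\phi^\theta(2^lK_{\max})\subseteq\widetilde{\phi^\theta(\Omega)}$. Combined with $R\cap\phi^\theta(K_{\max})\neq\emptyset$ and $R\nsubseteq\widetilde{\phi^\theta(\Omega)}$, one deduces that $R$ (or $2R$) has a side meeting both a side of $\phi^\theta(K_{\max})$ and a side of $\phi^\theta(2^lK_{\max})$. The bounds~\eqref{R_1} then follow from elementary coordinate geometry, analysing the four possible pairings of sides. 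No density or packing argument is needed; the work is purely in tracking which segments are crossed.
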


\begin{proof}
Fix $l \in \mathbb{N}$. The equivalence of the following two conditions is highlighted and heavily used in the proof. In particular, since $\phi^\theta$ is a $1$-to-$1$ map,
\begin{equation*}
 2^{l}K_{\max} \subseteq \tilde{\Omega}
\end{equation*}
if and only if 
\begin{equation*}
\phi^{\theta}(2^{l}K_{\max}) \subseteq \phi^{\theta}(\widetilde{\Omega}).
\end{equation*}
Indeed, the reason is that for $(x,y)\in\BBR^2$
\begin{align*}
MM^{\theta}[\mathbbm{1}_{\phi^{\theta}(\Omega)}](x,y) & := \sup_{K: (x,y) \in \phi^{\theta}(K)} \frac{1}{|\phi^{\theta}(K)|}\int_{\phi^{\theta}(K)}\mathbbm{1}_{\phi^{\theta}(\Omega)}(t,s) dtds \nonumber \\
& = \sup_{K: \phi^{-\theta}(x,y) \in K}\frac{1}{|K|} \int_{K} \mathbbm{1}_{\Omega}(\tilde{t}, \tilde{s})d\tilde{t}d\tilde{s} = MM[\mathbbm{1}_{\Omega}](\phi^{-\theta}(x,y))
\end{align*}
where the equality follows from the change of variables
\begin{equation*}
(\tilde{t},\tilde{s}) := \phi^{-\theta}(t,s) = [\phi^\theta]^{-1}(t,s)
\end{equation*}
and the fact that $\phi^\theta$ is a rotation and so preserves the measure. As a consequence, this computation shows that if $(\tilde{x}, \tilde{y})$ satisfies that there exists a rectangle $K$ such that 
\begin{equation} \label{cont_cond}
 (\tilde{x}, \tilde{y}) \in K \nonumber \qquad \textrm{and} \qquad
 \frac{|K \cap \Omega|}{|K|} > \epsilon,
\end{equation}
then $(x,y) := \phi^{\theta}(\tilde{x}, \tilde{y})$ satisfies that
\begin{equation} \label{containment_cond}
MM^{\theta}[\mathbbm{1}_{\phi^{\theta}(\Omega)}](x,y)  > \epsilon,
\end{equation}
or equivalently, $(x,y) \in \widetilde{\phi^\theta(\Omega)}$. One recalls that $2^{l}K_{\max} \subseteq \tilde{\Omega}$ means for any $(\tilde{x}, \tilde{y}) \in 2^{l}K_{\max}$, there exists $K$ containing $(\tilde{x},\tilde{y})$ satisfying (\ref{containment_cond}). Therefore, one can conclude that 
\begin{equation} \label{dil_in}
\phi^{\theta}(2^{l}K_{\max}) \subseteq \widetilde{\phi^{\theta}(\Omega)}.
\end{equation} 
A consequence of (\ref{dil_in}) is that every rectangle $R$ with $R \cap \left(\widetilde{\phi^{\theta}(\Omega)}\right)^c \neq \emptyset$ satisfies
\begin{equation} \label{int_inside}
R \cap \left(\phi^{\theta}(2^{l}K_{\max})\right)^c \neq \emptyset.
\end{equation}
Furthermore, for any $K \subseteq K_{\max}$,  $\langle h_R \circ \phi^\theta, h_K \rangle \neq 0 $ implies that
\begin{equation} \label{int_outside}
R \cap \phi^{\theta}(K_{\max}) \neq \emptyset.
\end{equation}
The geometry of $R$, $\phi^{\theta}(K_{\max}) $ and $\phi^{\theta}(2^{l}K_{\max})$ are shown in Figures \ref{figure:RR} and \ref{figureR-2R}. \\
To quantify the geometric depiction, one represents $\phi^{\theta}(K_{\max})$ and $\phi^{\theta}(2^lK_{\max})$ in coordinates. A point $(x,y)\in \BBR$ will be represented by the $2D$ vector $\begin{bmatrix} 
x \\
y 
\end{bmatrix}$ and a rectangle will be described by its four corners. By translation invariance, we can assume that one of the corner of $K_{\max}$ is $0$ and so by denoting $\kappa_1,\kappa_2$ its two length, let us suppose that 
\begin{align*}
& K_{\max} = 
\begin{bmatrix} 
0 & \kappa_1 & \kappa_1& 0\\
0 & 0 &  \kappa_2 & \kappa_2
\end{bmatrix},\\
& 2^l K_{\max} = 
\frac{1}{2}\begin{bmatrix} 
-2^l \kappa_1 & (2^l+1)\kappa_1 &(2^l+1) \kappa_1&-2^l \kappa_1 \\
-2^l \kappa_2 & -2^l \kappa_2  &  (2^l+1)\kappa_2 & (2^l+1) \kappa_2
\end{bmatrix},
\end{align*}
and the rotation $\phi^\theta$ is given by the matrix 
\begin{equation*}
\phi^{\theta} = 
\begin{bmatrix}
\cos \theta & -\sin \theta \\
\sin \theta & \cos \theta
\end{bmatrix}.
\end{equation*}
Then
\begin{align*}
\phi^{\theta}(K_{\max}) := & \phi^{\theta} \cdot
\begin{bmatrix} 
0 & \kappa_1 & \kappa_1& 0\\
0 & 0 &  \kappa_2 & \kappa_2
\end{bmatrix}
= 
\begin{bmatrix}
0 & \kappa_1 \cos \theta & \kappa_1 \cos \theta - \kappa_2 \sin \theta  & -\kappa_2 \sin \theta \\
0 &\kappa_1 \sin \theta & \kappa_1 \sin \theta + \kappa_2 \cos \theta & \kappa_2 \cos \theta
\end{bmatrix};
\end{align*}
and
{\fontsize{9.5}{9.5}
\begin{align*}
& \phi^{\theta}(2^lK_{\max}) \\
 := &
\frac{1}{2} \phi^{\theta} \cdot
\begin{bmatrix} 
-2^l \kappa_1 & (2^l+1)\kappa_1 &(2^l+1) \kappa_1&-2^l \kappa_1 \\
-2^l \kappa_2 & -2^l \kappa_2  &  (2^l+1)\kappa_2 & (2^l+1) \kappa_2
\end{bmatrix} \nonumber\\
 = & {\frac{1}{2} \begin{bmatrix}
 -2^l \kappa_1 \cos \theta + 2^l \kappa_2 \sin \theta & (2^l+1)\kappa_1 \cos \theta + 2^l \kappa_2 \sin\theta & (2^l+1)\kappa_1 \cos \theta - (2^l+1)\kappa_2 \sin \theta   & -2^l\kappa_1 \cos \theta - (2^l+1)\kappa_2 \sin \theta \\
-2^l \kappa_1 \sin \theta -2^l \kappa_2 \cos \theta &(2^l+1) \kappa_1 \sin \theta -2^l \kappa_2 \cos\theta & (2^l+1)\kappa_1 \sin \theta + (2^l+1)\kappa_2 \cos \theta & -2^l \kappa_1 \sin \theta + (2^l+1)\kappa_2 \cos \theta
\end{bmatrix}}
\end{align*}}
Moreover, one observes that for \eqref{int_inside} and \eqref{int_outside} to happen simultaneously, $R$ has to intersect at least one side of $\phi^\theta(K_{\max})$ and at least one side of $\phi^{\theta}(2^lK_{\max})$. More precisely one same side of $R$ will have to intersect a side of $\phi^\theta(K_{\max})$ and a side of $\phi^{\theta}(2^lK_{\max})$ ; or if it is not the case then it has to be true for $2R$. See the following configuration for example in Figure \ref{figureR-2R}.

\begin{figure}

\centering

\tikzset{every picture/.style={line width=0.75pt}} 

\begin{tikzpicture}[x=0.75pt,y=0.75pt,yscale=-1,xscale=1,scale=0.75]

\draw   (128.69,182.15) -- (293.89,89.14) -- (315.2,126.99) -- (150,220) -- cycle ;
\draw   (1.65,215.87) -- (391.8,6.15) -- (436.37,89.06) -- (46.23,298.79) -- cycle ;
\draw  [color={red}  ,draw opacity=1 ] (120,100) -- (250,100) -- (250,120) -- (120,120) -- cycle ;
\draw    (30,220) -- (530,220) ;
\draw    (150,10) -- (150,310) ;
\draw  [color={blue}  ,draw opacity=1 ] (190,110) -- (250,110) -- (250,120) -- (190,120) -- cycle ;

\draw (249,70) node [anchor=north west][inner sep=0.75pt]   [align=left] {$ $};
\draw (311,89) node [anchor=north west][inner sep=0.75pt]   [align=left] {$\displaystyle \phi ^{\theta }(K_{\max})$};
\draw (421,24) node [anchor=north west][inner sep=0.75pt]   [align=left] {$\displaystyle \phi ^{\theta }(2^{\ell} K_{\max})$};
\draw (108,80) node [anchor=north west][inner sep=0.75pt]  [color={red}  ,opacity=1 ] [align=left] {$\displaystyle 2R$};
\draw (251,122) node [anchor=north west][inner sep=0.75pt]  [color={blue}  ,opacity=1 ] [align=left] {$\displaystyle R$};

\end{tikzpicture}

\caption{Position of \textcolor{blue}{$\displaystyle R$} and \textcolor{red}{$\displaystyle 2R$}.} \label{figureR-2R}
\end{figure}

Suppose that $0 < \theta< \frac{\pi}{2}$ and assume without loss of generality that one side of $2R$ intersects at least one of the rotated sides, denoted by $l_1$ and $l_2$, of $\phi^{\theta}(K_{\max})$ and intersects at least one of the rotated sides, denoted by $l_1'$ and $l_2'$, of $\phi^{\theta}(2^lK_{\max})$, where
\begin{align}
l_1&:= \left\{\phi^{\theta}\cdot
\begin{bmatrix}
0\\
\tilde{y}
\end{bmatrix},\ 0\leq \tilde y \leq \kappa_2\right\}
= 
\left\{\begin{bmatrix}
-\tilde{ y} \sin \theta \\
\tilde{y} \cos \theta
\end{bmatrix}, \ 0\leq \tilde y \leq \kappa_2\right\} \nonumber \\
l_2 &:=\left\{\phi^{\theta}\cdot
\begin{bmatrix}
\tilde{x}\\
\kappa_2
\end{bmatrix},\ 0\leq \tilde x \leq \kappa_1\right\} 
= 
\left\{\begin{bmatrix}
\tilde{x}\cos \theta - \kappa_2 \sin \theta \\
\tilde{x}\sin\theta+ \kappa_2 \cos \theta
\end{bmatrix},\ 0\leq \tilde x \leq \kappa_1\right\}. \nonumber
\end{align}
And similarly,
\begin{align}
l_1'&:=\left\{\phi^{\theta}\cdot
\begin{bmatrix}
-2^l \kappa_1\\
\tilde{y}
\end{bmatrix},\ -2^l \kappa_2 \leq y \leq (2^l+1)\kappa_2\right\} 
= 
\left\{\begin{bmatrix}
- 2^l \kappa_1 \cos \theta - \tilde{y} \sin \theta \\
-2^l \kappa_1 \sin \theta + \tilde{y} \cos \theta
\end{bmatrix},\ -2^l \kappa_2 \leq y \leq (2^l+1)\kappa_2\right\} \nonumber \\
l_2' &:=\left\{\phi^{\theta}\cdot
\begin{bmatrix}
\tilde{x}\\
(2^l+1)\kappa_2
\end{bmatrix},\  -2^l \kappa_1 \leq x \leq (2^{l}+1)\kappa_1\right\}
= 
\left\{\begin{bmatrix}
\tilde{x}\cos \theta -(2^l+1) \kappa_2 \sin \theta \\
\tilde{x}\sin\theta+ (2^l+1)\kappa_2 \cos \theta
\end{bmatrix},\ -2^l \kappa_1 \leq x \leq (2^{l}+1)\kappa_1 \right\}. 
\end{align}
There are $4$ different cases, that we are going to study:
\begin{enumerate}
\item[(a)]
Suppose that a horizontal side of $2R$ intersects $l_1$ and $l_1'$. So for a fix $c$, there exist $x_1,x_2$ such that $(x_1,c)\in l_1$ and $(x_2,x)\in l_1'$ which means for some suitable $\tilde y_1,\tilde y_2$
\begin{align*}
\begin{bmatrix}
x_1 \\
c
\end{bmatrix}
=
\phi^{\theta}\cdot
\begin{bmatrix}
0 \\
\tilde{y}_1
\end{bmatrix} 
= 
\begin{bmatrix}
-\tilde{ y}_1 \sin \theta \\
\tilde{y}_1 \cos \theta
\end{bmatrix} \qquad \textrm{and} \qquad
\begin{bmatrix}
x_2 \\
c
\end{bmatrix}
=
\phi^{\theta}\cdot
\begin{bmatrix}
-2^l \kappa_1\\
\tilde{y}_2
\end{bmatrix} 
= 
\begin{bmatrix}
- 2^l \kappa_1 \cos \theta - \tilde{y}_2 \sin \theta \\
-2^l \kappa_1 \sin \theta + \tilde{y}_2 \cos \theta
\end{bmatrix}
\end{align*}
A straightforward computation shows that
\begin{equation*}
2 R_1 \geq |x_2-x_1| = \frac{2^l\kappa_1}{|\cos\theta|}.
\end{equation*}
\item[(b)]
Suppose that a horizontal side of $2R$ intersects $l_2$ and $l_2'$. One carries out the same computation as in (a) and yields that
\begin{equation*}
2 R_1 \geq |x_2-x_1| = \frac{2^l\kappa_2}{|\sin\theta|}.
\end{equation*}
\item[(c)] 
Suppose that a horizontal side of $2R$ intersects $l_1$ and $l_2'$, as in Figure \ref{figure:RR}.
\begin{figure}
\centering

\tikzset{every picture/.style={line width=0.75pt}} 

\begin{tikzpicture}[x=0.75pt,y=0.75pt,yscale=-1,xscale=1,scale=0.75]

\draw   (150,180) -- (322.12,180) -- (322.12,220) -- (150,220) -- cycle ;
\draw   (128.69,182.15) -- (293.89,89.14) -- (315.2,126.99) -- (150,220) -- cycle ;
\draw   (1.65,215.87) -- (391.8,6.15) -- (436.37,89.06) -- (46.23,298.79) -- cycle ;
\draw  [color={red}  ,draw opacity=1 ] (10,190) -- (140,190) -- (140,200) -- (10,200) -- cycle ;
\draw    (30,220) -- (530,220) ;
\draw    (150,10) -- (150,310) ;

\draw (326,200) node [anchor=north west][inner sep=0.75pt]   [align=left] {$\displaystyle K_{\max}$};
\draw (311,89) node [anchor=north west][inner sep=0.75pt]   [align=left] {$\displaystyle \phi ^{\theta}(K_{\max})$};
\draw (421,24) node [anchor=north west][inner sep=0.75pt]   [align=left] {$\displaystyle \phi ^{\theta }(2^{\ell} K_{\max})$};
\draw (261,110) node [anchor=north west][inner sep=0.75pt]   [align=left] {$\displaystyle l_{2}$};
\draw (285,30) node [anchor=north west][inner sep=0.75pt]   [align=left] {$\displaystyle l'_{2}$};
\draw (41,260) node [anchor=north west][inner sep=0.75pt]   [align=left] {$\displaystyle l'_{1}$};
\draw (131,210) node [anchor=north west][inner sep=0.75pt]   [align=left] {$\displaystyle l_{1}$};
\draw (21,170) node [anchor=north west][inner sep=0.75pt]  [color={red}  ,opacity=1 ] [align=left] {$\displaystyle 2R$};

\end{tikzpicture}

\caption{Positions of $K_{\max}$, $\phi^\theta(K_{\max})$, $\phi^\theta(2^l K_{\max})$ and $\textcolor{red}{2R}$ in case $(c)$.} \label{figure:RR}
\end{figure}
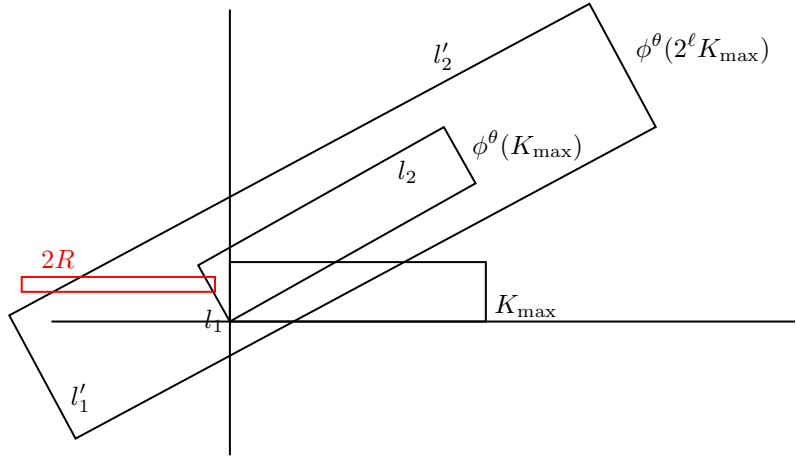

Then by a similar reasoning in (a) and (b), one obtains
\begin{equation*}
|x_2 - x_1| = 2^l\frac{\tilde{x_2}}{|\cos\theta|}
\end{equation*}
where $\tilde{x_2}$ ranges between $-2^l \kappa_1$ and $-2^l \kappa_2 \frac{\cos \theta}{\sin \theta}$. It is trivial that  $2^l\frac{\tilde{x_2}}{|\cos\theta|}$ is a monotonic function of $\tilde{x_2}$. Suppose, without loss of generality, that $0 \leq \theta \leq \frac{\pi}{2}$, and if
\begin{equation*}
-2^l \kappa_1 \geq-2^l \kappa_2 \frac{\cos \theta}{\sin \theta} \iff \frac{\kappa_1}{|\cos \theta|} \leq \frac{\kappa_2}{|\sin\theta|}
\end{equation*}
then 
\begin{equation*}
2R_1 \geq |x_2 - x_1| \geq 2^{l}\frac{\kappa_1}{|\cos \theta|};
\end{equation*}
If otherwise 
\begin{equation*}
-2^l \kappa_1 \leq -2^l \kappa_2 \frac{\cos \theta}{\sin \theta} \iff \frac{\kappa_1}{|\cos \theta|} \geq \frac{\kappa_2}{|\sin\theta|}
\end{equation*}
then 
\begin{equation*}
2R_1 \geq |x_2 - x_1| \geq 2^{l}\frac{\kappa_2}{|\sin \theta|}.
\end{equation*}
\item[(d)]
Suppose that a horizontal side of $R$ intersects $l_2$ and $l_1'$, by the same argument in (c), one deduces that
\begin{equation*}
2R_1 \geq \min(\frac{2^l\kappa_1}{|\cos \theta|}, \frac{2^l\kappa_2}{\sin \theta|}).
\end{equation*}
By summarizing all four cases, one concludes that if at least one horizontal side of $2R$ intersects $\phi^{\theta}(K_{\max})$ and $(\phi^{\theta}(2^lK_{\max}))^c$, then
\begin{equation*}
2 R_1 \geq \min(\frac{2^l\kappa_1}{|\cos \theta|}, \frac{2^l\kappa_2}{\sin \theta|}).
\end{equation*}
And parallel arguments shows that if at least one vertical side of $2R$ intersects $\phi^{\theta}(K_{\max})$ and $(\phi^{\theta}(2^lK_{\max}))^c$, then
\begin{equation*}
2 R_2 \geq \min(\frac{2^l\kappa_1}{|\sin \theta|}, \frac{2^l\kappa_2}{\cos \theta|}).
\end{equation*}
\end{enumerate}
This completes the proof of the observation.
\end{proof}

Based on Observation \ref{R_restrict}, one can assume without loss of generality that (\ref{R_1}) holds and in particular
\begin{equation}
2^{-l}2^{r_1} \geq \frac{1}{2}\min(\frac{K_{\max}^1}{|\cos\theta|}, \frac{K_{\max}^2}{|\sin \theta|}) = \frac{1}{2}\frac{K_{\max}^1}{|\cos\theta|}. \label{eq:r1}
\end{equation}
\begin{remark}
One first notices that the two conditions in (\ref{R_1}) are symmetric in the sense that if we interchange the role of $2^{r_1}$ and $2^{r_2}$, the argument developed below would still go through. 
Furthermore, what we have assumed in (\ref{eq:r1}) is that 
$$
\frac{K^2_{\max}}{|\sin \theta|} \geq \frac{K^1_{\max}}{|\cos \theta|}.
$$
One key feature used by (\ref{eq:r1}) is that for any $K \subseteq K_{\max}$ with $|K|= 2^{k_1} \times 2^{k_2}$,
one has $2^{k_1} \leq K^1_{\max}$, and thus
\begin{equation} \label{condition_k_1}
2^{k_1} \lesssim 2^{-l}2^{r_1}.
\end{equation}
If instead 
$$\frac{K^2_{\max}}{|\sin \theta|} \leq \frac{K^1_{\max}}{|\cos \theta|},$$
then we would proceed the discussion with the following condition on the shapes of the dyadic rectangles involved:
\begin{equation} \label{condition_k_2}
2^{k_2} \lesssim 2^{-l}2^{r_1}.
\end{equation}
It is not difficult to observe the symmetry between (\ref{condition_k_1}) and (\ref{condition_k_2}) in the argument by switching $2^{k_1}$ and $2^{k_2}$. 
\end{remark}

Under the assumption (\ref{eq:r1}), we obtain the following corollary which can be viewed as a quantification and generalization to the example in Section \ref{sec:motivation} (3).

\begin{corollary} \label{cor:r1} 
Suppose that $R$ and $K_{\max}$ satisfy the conditions stated in Observation \ref{R_restrict} and in particular (\ref{eq:r1}). Then for any $K_s \subseteq K_{\max}$ with $|K_s| = 2^{k_1} \times 2^{k_2}$ such that $2^{-l+3} \leq \varepsilon_{K_s}:= \frac{2^{k_2}}{2^{k_1}} \leq 2^{l-3}$, we have
\begin{equation} 2^{k_1}+2^{k_2} \leq \frac{1}{4} 2^{r_1}. \label{eqcor:r1} \end{equation}
\end{corollary}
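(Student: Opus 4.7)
The plan is to derive \eqref{eqcor:r1} as a short chain of inequalities driven by the hypothesis \eqref{eq:r1} from Observation \ref{R_restrict} together with the intermediate-eccentricity assumption $2^{-l+3} \le \varepsilon_{K_s} \le 2^{l-3}$. No new geometric input will be needed beyond what is already established; the corollary is essentially an algebraic consequence of the setup.

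First I would rewrite \eqref{eq:r1}. Using $|\cos\theta|\le 1$, the inequality $2^{-l}\,2^{r_1} \ge \tfrac{1}{2}\, K^1_{\max}/|\cos\theta|$ yields
\[
K^1_{\max} \le 2\,|\cos\theta|\cdot 2^{-l}\,2^{r_1} \le 2^{-l+1}\,2^{r_1}.
\]
Since $K_s \subseteq K_{\max}$, the horizontal side length of $K_s$ satisfies $2^{k_1} \le K^1_{\max}$, so $2^{k_1} \le 2^{-l+1}\,2^{r_1}$. This extracts the $k_1$ bound from the geometric picture of Figures \ref{figure:RR} and \ref{figureR-2R}.

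Next I would use the upper eccentricity bound $2^{k_2}/2^{k_1} \le 2^{l-3}$ to transfer the estimate to $k_2$:
\[
2^{k_2} \le 2^{l-3}\cdot 2^{k_1} \le 2^{l-3}\cdot 2^{-l+1}\,2^{r_1} = 2^{-2}\,2^{r_1} = \tfrac{1}{4}\,2^{r_1}.
\]
The non-emptiness of the eccentricity window $[2^{-l+3},2^{l-3}]$ forces $l\ge 3$, and more importantly Observation \ref{R_restrict} guarantees $l\ge L$ with $2^{-L} = \sqrt{\epsilon}$; hence in the small-$\epsilon$ regime where the corollary is applied, the factor $2^{-l+1}$ in the $2^{k_1}$ bound is considerably smaller than $2^{-2}$. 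Combining the two estimates then yields $2^{k_1}+2^{k_2} \le \tfrac{1}{4}\,2^{r_1}$, as claimed.

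There is no real obstacle in the argument; it is a tight bookkeeping exercise. The only thing to watch is the numerical constant $\tfrac{1}{4}$: the bound on $2^{k_2}$ is already $\tfrac{1}{4}\,2^{r_1}$, so the small excess $2^{k_1} \le 2^{-l+1}\,2^{r_1}$ must be absorbed by taking $l$ sufficiently large, which is precisely the regime enforced by $l\ge L$ for the relevant small $\epsilon$. If one prefers a universally clean statement, the same proof gives the marginally weaker bound $2^{k_1}+2^{k_2}\le \tfrac{1}{2}\,2^{r_1}$ for all admissible $l\ge 3$, which still serves the later application of separating $R$ from rectangles $K_s$ of intermediate eccentricity.
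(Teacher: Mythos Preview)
Your proposal is correct and follows essentially the same route as the paper: extract $2^{k_1}\le 2\cdot 2^{-l}\,2^{r_1}$ from \eqref{eq:r1} using $|\cos\theta|\le 1$ and $K_s\subseteq K_{\max}$, then push this to $2^{k_2}$ via the eccentricity bound $2^{k_2-k_1}\le 2^{l-3}$, and appeal to the small-$\epsilon$ regime (the paper phrases this as ``we can assume $l\ge 10$'' through Lemma~\ref{l_epsilon}) to control the remaining constant. You are also right to flag the bookkeeping tension around the exact constant $\tfrac14$; the paper's own computation has the same slack, and in either version the inequality holds with a harmless absolute constant that is all the downstream argument needs.
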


\begin{proof} As pointed out from the beginning, $\epsilon$ is considered as a small parameter, since the desired estimate is ``trivial'' for $\epsilon \simeq 1$. So by Lemma \ref{l_epsilon}, we can then assume that $l\geq 10$.  From that, \eqref{eq:r1} implies
$$ \min(2^{k_1},2^{k_2}) \leq 2. 2^{-l} 2^{r_1} \leq \frac{1}{20} 2^{r_1}$$
and so with $8.2^{-l} \leq 2^{k_2-k_1} \leq \frac{1}{8}2^{l}$, one deduces that
$$ \max(2^{k_1},2^{k_2})\leq \frac{1}{8} 2^{r_1}$$
and \eqref{eqcor:r1} thus follows.
\end{proof}

Corollary \ref{cor:r1} provides a desirable localization of $\osc_{\Omega}(F \circ \phi)$ in the sense that if $F\circ \phi$ adopts a Haar expansion for some $h_R$ with $R \nsubseteq \widetilde{\phi^\theta(\Omega)}$, then the linear form $\langle \langle F, h_R \rangle h_R\circ \phi, h_{K_s} \rangle$ can be well localized to a dyadic children of $R$ if $K_s$ satisfies the condition specified in Corollary \ref{cor:r1}. 

In order to achieve such localization, we need to restrict to rectangles whose the eccentricity belongs to $[2^{-l+3},2^{l-3}]$.

If $  2^{-l+3} \leq \varepsilon_{K_{\max}} \leq 2^{l-3}$, then there is nothing to do and we set $K_{\max,1}=K_{\max}$.
Assume now that $\varepsilon_{K_{\max}}>2^{l-3}$. We decompose the segment $K^2_{\max}$ by dyadic disjoint segments of length $2^{l-2} |K^1_{\max}|$ (and so larger than $2^{-l+2}|K^1_{\max}|$), which creates a decomposition of $$\displaystyle K^2_{\max}= \bigsqcup_{i=1}^{N} K^2_{\max,i}$$ and also a decomposition of 
$$\displaystyle K_{\max}:= \bigsqcup_{i=1}^{N} K_{\max,i}:= \bigsqcup_{i=1}^{N} K_{\max}^1 \times K_{\max,i}^2.$$ 
Then for every $K=K^1 \times K^2\subset K_{\max}$ with the eccentricity $\varepsilon_{K_{\max,i}} \in [2^{-l+3},2^{l-3}]$, $|K^2| \leq 2^{l-3} |K^1| \leq 2^{l-3} |K_{\max}^1|$. So by the dyadic property of the grid, $K^2$ has to be included in one of the intervals $K_{\max,i}^2$ and so $K$ is included in one of the rectangles $K_{\max,i}$ for some unique $i$. \\
If the initial eccentricity $\varepsilon_{K_{\max}}<2^{-l+3}$, we perform the same decomposition by replacing $K^2_{\max}$ with $K^1_{\max}$.
In all three situations, we have decomposed $K_{\max}$ as a union of disjoint rectangles $(K_{\max,i})_{i}$.

\begin{remark}\label{inclu_submax}
We will call the rectangles $K_{\max,i}$ \textit{sub-maximal rectangles}. 
Each of the sub-maximal rectangles $K_{\max,i}$ has an eccentricity $\varepsilon_{K_{\max,i}} \in[2^{-l+3},2^{l-3}]$ and one has the following property: for every $K=K^1 \times K^2\subset K_{\max}$ with the eccentricity $\varepsilon_{K_{\max,i}} \in[2^{-l+3},2^{l-3}]$, $K$ is included in one and only one of the  sub-maximal rectangles.
\end{remark}

To estimate the remaining cases, we will develop a proof described in Figure \ref{outline_proof}. As indicated in the flow-chart, we have performed localization and decomposition of maximal rectangles in this section. Moreover,  the estimate for $\mathcal{E}_1$ term, which consists of two cases, have been treated in this section as well.  

We will further decompose $F$ into two parts based on its wavelet expansion. The first part $\mathcal{M}$ would generate the first term on the right hand side of (\ref{main_pos_bound}) while the second part $\mathcal{E}_2$ contributes to the second term involving the Sobolev norm of $F$. The notations $\mathcal{M}$ and $\mathcal{E}_i$ for $i = 1,2$ represent for main term and error term respectively. 

\begin{figure}

\centering

\tikzset{every picture/.style={line width=0.75pt}} 
        
\hspace*{-2cm}%
\begin{tikzpicture}[x=0.75pt,y=0.75pt,yscale=-1,xscale=1]

\draw    (276.5,156) -- (276.87,196.17) ;
\draw [shift={(276.88,198.17)}, rotate = 269.48] [color={rgb, 255:red, 0; green, 0; blue, 0 }  ][line width=0.75]    (10.93,-3.29) .. controls (6.95,-1.4) and (3.31,-0.3) .. (0,0) .. controls (3.31,0.3) and (6.95,1.4) .. (10.93,3.29)   ;
\draw    (298.57,268.91) -- (466.57,315.47) ;
\draw [shift={(468.5,316)}, rotate = 195.49] [color={rgb, 255:red, 0; green, 0; blue, 0 }  ][line width=0.75]    (10.93,-3.29) .. controls (6.95,-1.4) and (3.31,-0.3) .. (0,0) .. controls (3.31,0.3) and (6.95,1.4) .. (10.93,3.29)   ;
\draw    (256.5,267) -- (121.99,311.44) ;
\draw [shift={(120.09,312.07)}, rotate = 341.72] [color={rgb, 255:red, 0; green, 0; blue, 0 }  ][line width=0.75]    (10.93,-3.29) .. controls (6.95,-1.4) and (3.31,-0.3) .. (0,0) .. controls (3.31,0.3) and (6.95,1.4) .. (10.93,3.29)   ;
\draw    (513.5,391) -- (544.77,471.14) ;
\draw [shift={(545.5,473)}, rotate = 248.68] [color={rgb, 255:red, 0; green, 0; blue, 0 }  ][line width=0.75]    (10.93,-3.29) .. controls (6.95,-1.4) and (3.31,-0.3) .. (0,0) .. controls (3.31,0.3) and (6.95,1.4) .. (10.93,3.29)   ;
\draw    (494.91,389.38) -- (244.41,467.41) ;
\draw [shift={(242.5,468)}, rotate = 342.7] [color={rgb, 255:red, 0; green, 0; blue, 0 }  ][line width=0.75]    (10.93,-3.29) .. controls (6.95,-1.4) and (3.31,-0.3) .. (0,0) .. controls (3.31,0.3) and (6.95,1.4) .. (10.93,3.29)   ;
\draw    (280.94,55.02) -- (280.94,82.85) ;
\draw [shift={(280.94,84.85)}, rotate = 270] [color={rgb, 255:red, 0; green, 0; blue, 0 }  ][line width=0.75]    (10.93,-3.29) .. controls (6.95,-1.4) and (3.31,-0.3) .. (0,0) .. controls (3.31,0.3) and (6.95,1.4) .. (10.93,3.29)   ;
\draw    (274.5,268) -- (274.5,313) ;
\draw [shift={(274.5,315)}, rotate = 270] [color={rgb, 255:red, 0; green, 0; blue, 0 }  ][line width=0.75]    (10.93,-3.29) .. controls (6.95,-1.4) and (3.31,-0.3) .. (0,0) .. controls (3.31,0.3) and (6.95,1.4) .. (10.93,3.29)   ;

\draw    (244.04,27.22) -- (312.04,27.22) -- (312.04,50.22) -- (244.04,50.22) -- cycle  ;
\draw (257.04,31.62) node [anchor=north west][inner sep=0.75pt]  [font=\footnotesize]  {\hspace{-0.2cm} $\osc_{\Omega } F \circ \phi $};
\draw  [color={rgb, 255:red, 208; green, 2; blue, 27 }  ,draw opacity=1 ]  (299.56,57.3) -- (552.56,57.3) -- (552.56,82.3) -- (299.56,82.3) -- cycle  ;
\draw (312.56,61.3) node [anchor=north west][inner sep=0.75pt]  [color={rgb, 255:red, 208; green, 2; blue, 27 }  ,opacity=1 ] [align=left] {localization onto maximal rectangles };
\draw    (186.94,200.55) -- (380.94,200.55) -- (380.94,263.55) -- (186.94,263.55) -- cycle  ;
\draw (199.94,204.95) node [anchor=north west][inner sep=0.75pt]  [font=\footnotesize]  {$\left(\displaystyle \sum_{l\ \in \mathbb{N}}\sum_{\substack{K_{\max} \in \mathcal{K}_{l}}}\left( osc_{{K}_{\max}} F \circ \phi \right)^{2}\right)^{\frac{1}{2}}$};
\draw    (194.94,90.29) -- (365.94,90.29) -- (365.94,150.29) -- (194.94,150.29) -- cycle  ;
\draw (207.94,94.69) node [anchor=north west][inner sep=0.75pt]  [font=\footnotesize]  {$\left(\displaystyle \sum_{K_{\max} \subseteq \Omega }\left( osc_{{K}_{\max}} F \circ \phi \right)^{2}\right)^{\frac{1}{2}}$};
\draw  [color={rgb, 255:red, 208; green, 2; blue, 27 }  ,draw opacity=1 ]  (303.09,162.94) -- (558.82,162.94) -- (558.82,187.94) -- (303.09,187.94) -- cycle  ;
\draw (316.16,166.94) node [anchor=north west][inner sep=0.75pt]  [color={rgb, 255:red, 208; green, 2; blue, 27 }  ,opacity=1 ,xslant=-0.03] [align=left] {decomposition of maximal rectangles};
\draw (181.26,193.44) node [anchor=north west][inner sep=0.75pt]    {$$};
\draw  [fill={rgb, 255:red, 248; green, 231; blue, 28 }  ,fill opacity=0.16 ]  (0,321.53) -- (225.81,321.53) -- (225.81,390.53) -- (0,390.53) -- cycle  ;
\draw (0,325.93) node [anchor=north west][inner sep=0.75pt]  [font=\scriptsize]  {$\quad \left(\displaystyle \sum_{l\in \mathbb{N}} \sum_{K_{\max} \in \mathcal{K}_{l}} \sum_{ \substack{
K\subseteq K_{\max}\\
\varepsilon _{K} \leq 2^{-l+3} }} |\langle F \circ \phi ,h_{K} \rangle |^{2}\right)^{\frac{1}{2}}$};
\draw  [color={rgb, 255:red, 208; green, 2; blue, 27 }  ,draw opacity=1 ]  (364.71,270.98) -- (686.71,270.98) -- (686.71,295.98) -- (364.71,295.98) -- cycle  ;
\draw (347.71,275.98) node [anchor=north west][inner sep=0.75pt]  [color={rgb, 255:red, 208; green, 2; blue, 27 }  ,opacity=1 ] [align=left] { \qquad decompositions of wavelets relative to eccentricities};
\draw    (460.29,321.53) -- (750.29,321.53) -- (750.29,390.53) -- (460.29,390.53) -- cycle  ;
\draw (460.29,325.62) node [anchor=north west][inner sep=0.75pt]  [font=\scriptsize]  {$\quad \left(\displaystyle \sum_{l\in \mathbb{N}} \sum_{K_{\max} \in \mathcal{K}_{l}}\sum_{\substack{K_{\max,i} \\ \subseteq K_{\max}}} \sum_{ \substack{K\subseteq K_{\max,i}\\
2^{-l+3} < \varepsilon _{K} < 2^{l-3}}}
 |\langle F \circ \phi ,h_{K} \rangle |^{2}\right)^{\frac{1}{2}}$};
\draw  [fill={rgb, 255:red, 248; green, 231; blue, 28 }  ,fill opacity=0.16 ]  (230.21,321.86) -- (455.21,321.86) -- (455.21,390.53) -- (230.21,390.53) -- cycle  ;
\draw (230.21,326.26) node [anchor=north west][inner sep=0.75pt]  [font=\scriptsize]  {$\quad \left(\displaystyle \sum_{l\in \mathbb{N}}\sum_{K_{\max} \in \mathcal{K}_{l}} \sum_{ \substack{
K\subseteq K_{\max}\\
\varepsilon _{K} \geq 2^{l-3}}} 
|\langle F \circ \phi ,h_{K} \rangle |^{2}\right)^{\frac{1}{2}}$};
\draw  [fill={rgb, 255:red, 80; green, 227; blue, 194 }  ,fill opacity=0.16 ]  (0,474.11) -- (392.64,474.11) -- (392.64,550.11) -- (0,550.11) -- cycle  ;
\draw (-10,478.51) node [anchor=north west][inner sep=0.75pt]  [font=\tiny]  {$\qquad \left(\displaystyle \sum_{l\in \mathbb{N}} \sum_{K_{\max} \in \mathcal{K}_{l}}\sum_{\substack{K_{\max,i} \\ \subseteq K_{\max}}} \sum_{ \substack{
K\subseteq K_{\max,i}\\
2^{-l+3} < \varepsilon _{K} < 2^{l-3}}}
\Bigg| \displaystyle \sum_{ \substack{R\in \mathbb{D} ^{\beta ( K_{\max,i})}\\
R\ \subseteq \widetilde{\widetilde{\phi (\Omega)}}}}
\langle \ F,h_{R} \rangle \langle h_{R} \circ \phi ,h_{K} \rangle \Bigg| ^{2}\right)^{\frac{1}{2}}$};
\draw  [color={rgb, 255:red, 0; green, 0; blue, 0 }  ,draw opacity=1 ][fill={rgb, 255:red, 248; green, 231; blue, 28 }  ,fill opacity=0.16 ]  (393.18,474.11) -- (775.18,474.11) -- (775.18,550.11) -- (393.18,550.11) -- cycle  ;
\draw (371.18,479.88) node [anchor=north west][inner sep=0.75pt]  [font=\tiny]  {$\qquad \left(\displaystyle \sum_{l\in \mathbb{N}} \sum_{K_{\max} \in \mathcal{K}_{l}}\sum_{\substack{K_{\max,i} \\ \subseteq K_{\max}}}  \sum_{\substack{
K\subseteq K_{\max,i}\\
2^{-l+3} < \varepsilon _{K} < 2^{l-3} }}
\Bigg| 
\displaystyle \sum_{\substack{R\in \mathbb{D} ^{\beta ( K_{\max,i})}\\ R\ \nsubseteq \widetilde{\widetilde{\phi (\Omega)}}}}
\langle \ F,h_{R} \rangle \langle h_{R} \circ \phi ,h_{K} \rangle \Bigg| ^{2}\right)^{\frac{1}{2}}$};
\draw  [color={rgb, 255:red, 208; green, 2; blue, 27 }  ,draw opacity=1 ]  (405.4,410.52) -- (689.4,410.52) -- (689.4,440.52) -- (405.4,440.52) -- cycle  ;
\draw (418.4,420.52) node [anchor=north west][inner sep=0.75pt]  [color={rgb, 255:red, 208; green, 2; blue, 27 }  ,opacity=1 ] [align=left] {decomposition of wavelet expansion for $\displaystyle f$};
 Text Node
\draw (115,324.4) node [anchor=north west][inner sep=0.75pt]  [font=\footnotesize,color={rgb, 255:red, 80; green, 227; blue, 194 }  ,opacity=1 ]  {$ \begin{array}{l}
\textcolor[rgb]{0.56,0.07,1}{\mathcal{E}_{1}}\\
\end{array}$};
\draw (330,325.4) node [anchor=north west][inner sep=0.75pt]  [font=\footnotesize,color={rgb, 255:red, 80; green, 227; blue, 194 }  ,opacity=1 ]  {$ \begin{array}{l}
\mathcal{\textcolor[rgb]{0.56,0.07,1}{E}}\textcolor[rgb]{0.56,0.07,1}{_{1}}\\
\end{array}$};
\draw (512,478.4) node [anchor=north west][inner sep=0.75pt]  [font=\footnotesize]  {$\textcolor[rgb]{0.56,0.07,1}{\mathcal{E}_{2}}$};
\draw (180,478.4) node [anchor=north west][inner sep=0.75pt]  [font=\footnotesize,color={rgb, 255:red, 80; green, 227; blue, 194 }  ,opacity=1 ]  {$ \begin{array}{l}
\mathcal{\textcolor[rgb]{0.56,0.07,1}{M}}\textcolor[rgb]{0.56,0.07,1}{_{}}\\
\end{array}$};

\end{tikzpicture}

\caption{Outline for Proof of Theorem \ref{main_thm_pos}. Following this decomposition, the study of $\osc_{\Omega}(F \circ \phi)$ is reduced to the control of the four coloured quantities. The two terms $\mathcal{E}_1$ have been estimated in \eqref{error1} and \eqref{error2}. The next section is devoted to the study of  $\mathcal{M}$ and $\mathcal{E}_2$.} \label{outline_proof}

\end{figure}
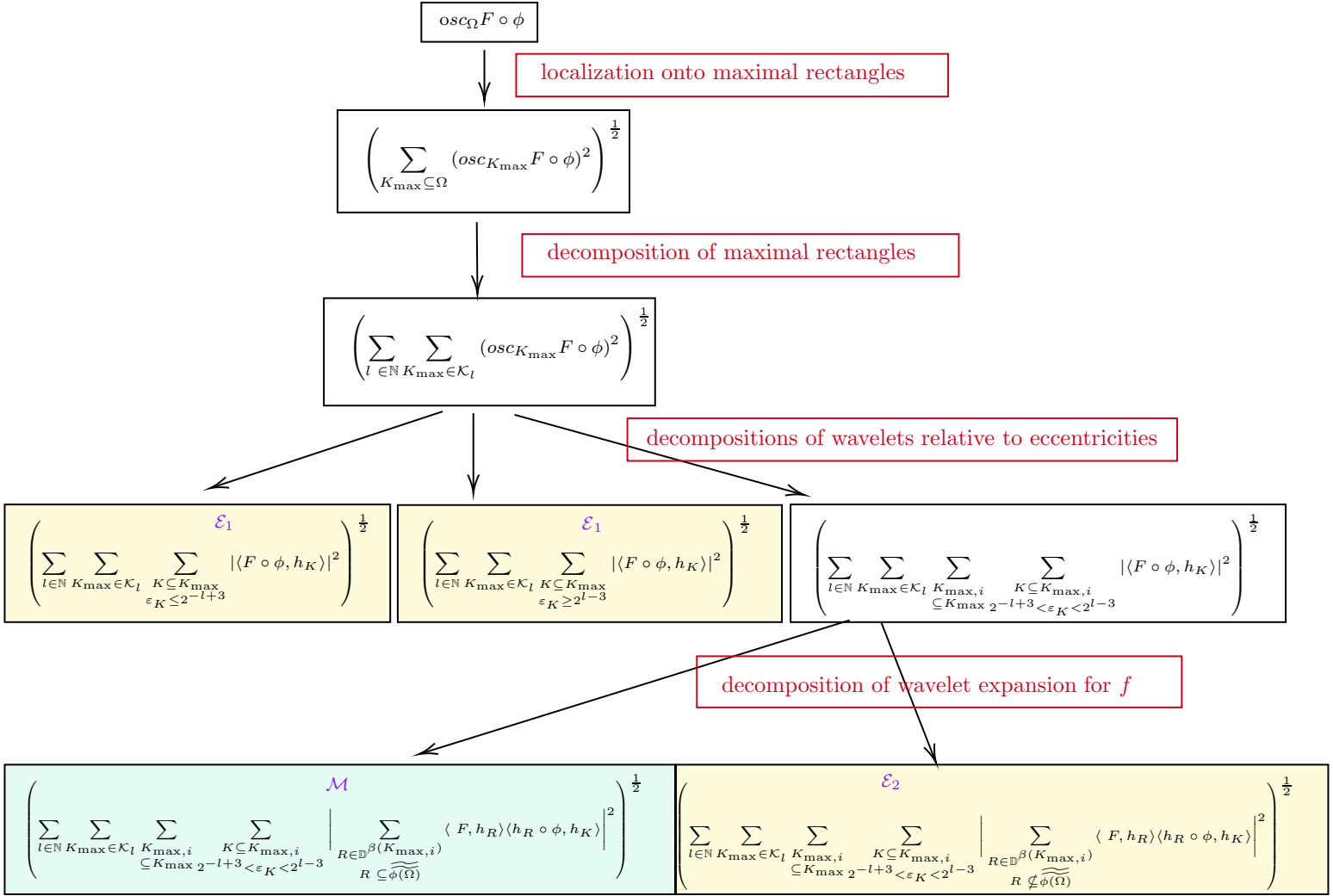


\section{Proof of Theorem \ref{main_thm_pos} - Main Term}\label{sec:main}

It remains to prove that
\begin{equation} \label{main_thm_intermediate}
\left(\sum_{l \in \mathbb{N}}\sum_{\substack{K_{\max} \in \mathbb{D}^{\alpha} \\K_{\max} \subseteq \Omega \\ K_{\max} \in \mathcal{K}_l}}\sum_{\substack{K_{\max,i} \\ \subseteq K_{\max}}} \sum_{\substack{K \in \mathbb{D}^{\alpha} \\ K\subseteq K_{\max,i} \\ 2^{-l+3} < \varepsilon_K < 2^{l-3} }}|\langle F \circ \phi, h_K\rangle|^2\right)^{\frac{1}{2}} \lesssim \epsilon^{-1}\log \epsilon^{-1}|\Omega|^{\frac{1}{2}}\|F\|_{\BMO} + \epsilon^{-\tilde{\delta}} |\Omega|^{\frac{1}{2}}\|F\|_{W^{s,p}}.
\end{equation} 
for some $\tilde{\delta} := \tilde{\delta}(s,p) >0 $ to be determined later.

\begin{notation}
For the sake of simplicity, we would omit the indication that $K_{\max}$, $K_{\max,i}$ and $K$ are dyadic rectangles in the dyadic grid $\mathbb{D}^{\alpha}$.
\end{notation}

We recall that the estimate of term $\mathcal{E}_1$ exploits purely the regularity of $F$ and holds uniformly with respect to the angle of rotation $\theta$. To study the left hand side of (\ref{main_thm_intermediate}), we would take advantage of both the analytic and geometric properties of the interaction of $ F \circ \phi$ with the wavelets $h_K $, which have been discussed in Section 5 when $F$ is a single smooth wavelet or a sum of smooth wavelets of fixed scale. In those simple cases, geometric properties can be investigated more thoroughly. When $F$ is a generic function, it is difficult to understand the geometry of $F \circ \phi$ and the wavelets directly. However, we can expand $F$ in its wavelet expansion to create a similar setting as in Section 5. In particular, we choose a dyadic grid $\mathbb{D}^{\beta}$ and rewrite 
\begin{equation*}
F = \sum_{R \in \mathbb{D}^{\beta}} \langle F,h_R \rangle h_R.
\end{equation*}
Due to Remark (\ref{haar_composition}), we can further rewrite
\begin{equation*}
F \circ \phi = \sum_{R \in \mathbb{D}^{\beta}} \langle F,h_R \rangle h_R \circ \phi
\end{equation*}
so that 
\begin{equation} \label{mot_wavelet_expansion}
\langle F \circ \phi, h_K\rangle = \sum_{R \in \mathbb{D}^{\beta}} \langle F,h_R \rangle \langle h_R \circ \phi, h_K\rangle.
\end{equation}
The right hand side of (\ref{mot_wavelet_expansion}) is more desirable in the sense that we separate the interaction of $F \circ \phi$ with $h_K$ into two parts (as indicated in Figure \ref{fig:strategy_intermediate}):
\begin{enumerate}
\item[(a)]
we study the interaction of $F$ with a single wavelet $h_R$, which allows the analytic implication on the estimate of $\langle F, h_R \rangle $;
\item[(b)] 
we explore the geometric implication from the interaction of wavelets with other rotated wavelets $\langle h_R \circ \phi, h_K \rangle$.
\end{enumerate}
\begin{figure}
\centering

\tikzset{every picture/.style={line width=0.75pt}} 

\begin{tikzpicture}[x=0.75pt,y=0.75pt,yscale=-1,xscale=1]

\draw    (200,201) -- (272.54,268.64) ;
\draw [shift={(274,270)}, rotate = 223] [color={rgb, 255:red, 0; green, 0; blue, 0 }  ][line width=0.75]    (10.93,-3.29) .. controls (6.95,-1.4) and (3.31,-0.3) .. (0,0) .. controls (3.31,0.3) and (6.95,1.4) .. (10.93,3.29)   ;
\draw [color={rgb, 255:red, 208; green, 2; blue, 27 }  ,draw opacity=1 ]   (309,270) -- (369.68,200.51) ;
\draw [shift={(371,199)}, rotate = 491.13] [color={rgb, 255:red, 208; green, 2; blue, 27 }  ,draw opacity=1 ][line width=0.75]    (10.93,-3.29) .. controls (6.95,-1.4) and (3.31,-0.3) .. (0,0) .. controls (3.31,0.3) and (6.95,1.4) .. (10.93,3.29)   ;
\draw [color={rgb, 255:red, 208; green, 2; blue, 27 }  ,draw opacity=1 ] [dash pattern={on 0.84pt off 2.51pt}]  (209,190) .. controls (319.4,246.64) and (357.92,200.06) .. (366.13,185.63) ;
\draw [shift={(367,184)}, rotate = 476.57] [color={rgb, 255:red, 208; green, 2; blue, 27 }  ,draw opacity=1 ][line width=0.75]    (10.93,-3.29) .. controls (6.95,-1.4) and (3.31,-0.3) .. (0,0) .. controls (3.31,0.3) and (6.95,1.4) .. (10.93,3.29)   ;

\draw (381,174.4) node [anchor=north west][inner sep=0.75pt]    {$h_{K}$};
\draw (283,264.4) node [anchor=north west][inner sep=0.75pt]    {$h_{R}$};
\draw (198,173.4) node [anchor=north west][inner sep=0.75pt]    {$F$};
\draw (322,216.4) node [anchor=north west][inner sep=0.75pt]  [color={rgb, 255:red, 208; green, 2; blue, 27 }  ,opacity=1 ]  {$\circ \phi $};
\draw (122,233) node [anchor=north west][inner sep=0.75pt]   [align=left] {Analytic properties};
\draw (351,237.5) node [anchor=north west][inner sep=0.75pt]   [align=left] {Geometric properties};

\end{tikzpicture}
\caption{Strategy for the estimates of $\mathcal{M}$ and $\mathcal{E}_2$.} \label{fig:strategy_intermediate}

\end{figure}
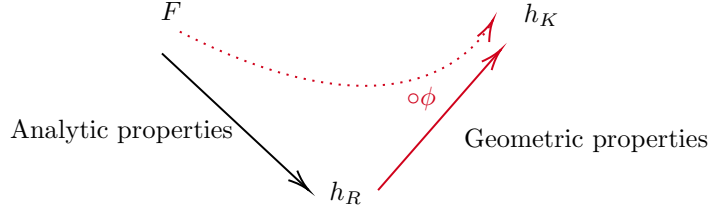

Then we can rewrite the left hand side of (\ref{main_thm_intermediate}) as
\begin{equation} \label{trivial_main_total}
\left(\sum_{l \in \mathbb{N}}\ \sum_{\substack{K_{\max} \subseteq \Omega \\ K_{\max} \in \mathcal{K}_l}} \ \sum_{\substack{K_{\max,i} \\ \subseteq K_{\max}}} \sum_{\substack{K\subseteq K_{\max,i} \\ 2^{-l+3} < \varepsilon_K < 2^{l-3} }}\left|\sum_{R\in \mathbb{D}^{\beta}} \langle F, h_R\rangle \langle h_R \circ \phi, h_K\rangle\right|^2\right)^{\frac{1}{2}}. 
\end{equation}

Inspired by the discussion in Section 5, we can separate into two cases when the wavelets $h_R$ are supported in some enlargement of $\phi(\Omega)$ (corresponding to the blue rectangle in Figure \ref{figure:motivation}) and when the supports of the wavelets lie outside the enlargement of $\phi(\Omega)$ (which correspond to the purple and red rectangles in Figure \ref{figure:motivation}). The first case allows a straightforward estimate which relies heavily on the geometric property while the second case needs a more careful treatment and both the analytic and geometric aspects would be necessary. The rest of this section would be devoted to the first case and we would focus on the more difficult second case in the next section.

We now define enlarged subsets relatively to $\epsilon$ of $\phi(\Omega)$ by
\begin{equation*} 
\widetilde{\phi(\Omega)} := \left\{(x,y)\in\BBR^2,\ M M^{\theta}(\mathbbm{1}_{\phi(\Omega)}) > \epsilon \right\},
\end{equation*}
and
\begin{equation} \label{eq:Omegatildetilde}
\widetilde{\widetilde{\phi(\Omega)}} := \left\{(x,y)\in\BBR^2,\ M M^{\theta}(\mathbbm{1}_{\widetilde{\phi(\Omega)}}) > \epsilon \right\},
\end{equation}
where $MM^\theta$ is the rotated version of the strong biparameter maximal function: for $h\in L^1_{loc}(\BBR^2)$
\begin{align*}
MM^{\theta}(h)(x,y) & := \sup_{K: (x,y) \in \phi^{\theta}(K)}\frac{1}{|\phi^{\theta}(K)|}\int_{\phi^{\theta}(K)} |h(t,s)| dtds  \\
& = \sup_{K: \phi^{-\theta}(x,y) \in K}\frac{1}{|K|} \int_{K} |h \circ \phi^\theta (\tilde{t}, \tilde{s})|d\tilde{t}d\tilde{s} 
\\
& = MM[h \circ \phi^\theta]( \phi^{-\theta}(x,y)). 
\end{align*}
In particular using that $MM: L(1+\log^{+})L \rightarrow L^{1,\infty}$ (see \cite{JMZ,CF}), we easily deduce that for $0 < \epsilon < e^{-1}$,
\begin{equation} \label{eq:first_enlargement}
|\widetilde{\phi(\Omega)}| \lesssim  \int \frac{\mathbbm{1}_{\phi(\Omega)}(x,y)}{\epsilon}(1+ \log^{+} \frac{\mathbbm{1}_{\phi(\Omega)}(x,y)}{\epsilon}) dxdy = \int \frac{\mathbbm{1}_{\phi(\Omega)}(x,y)}{\epsilon}(1+\log^{+} \epsilon^{-1})dxdy \lesssim \epsilon^{-1}\log \epsilon^{-1}|\phi(\Omega)|
\end{equation}
and by the same argument,
\begin{equation}
|\widetilde{\widetilde{\phi(\Omega)}}|\lesssim \epsilon^{-1}\log \epsilon^{-1} |\widetilde{\phi(\Omega)}|.
\label{eq:double_enlargement}
\end{equation}
By combining (\ref{eq:first_enlargement}) and (\ref{eq:double_enlargement}) and using the assumption that $\phi$ is measure-preserving, 
\begin{equation}
|\widetilde{\widetilde{\phi(\Omega)}}| \lesssim \epsilon^{-2} (\log \epsilon^{-1})^2|\phi(\Omega)| = \epsilon^{-2} (\log \epsilon^{-1})^2 |\Omega|.
\end{equation}
Then we would like to decompose (\ref{trivial_main_total}) depending on the supports of the wavelets $h_R$ into the cases when $R \subseteq \widetilde{\widetilde{\phi(\Omega)}}$ and $R \nsubseteq \widetilde{\widetilde{\phi(\Omega)}}$. However, before performing such decomposition, we would like to rewrite (\ref{trivial_main_total}) with a more careful choice of dyadic grids when we expand $F \circ \phi$ in term of the Haar basis.

\medskip

Instead of picking an arbitrary dyadic grid $\mathbb{D}^{\beta}$ as in (\ref{trivial_main_total}), we choose the dyadic grid more carefully so that the error term estimate could be simplified significantly. In particular, for each fixed $K$, there would be one and only one $K_{\max}$ such that $K \subseteq K_{\max}$ due to the maximality of $K_{\max}$. Moreover, for $K$ satisfying the eccentricity condition $2^{-l+3} \leq \varepsilon_{K} \leq 2^{l-3}$ , $K$ is uniquely contained in the sub-maximal rectangle $K_{\max,i} \subseteq K_{\max}$ by Remark \ref{inclu_submax},
which guarantees a disjoint decomposition of $K$ with respect to its inclusion to its correspondent sub-maximal rectangle.
Moreover, for every $K_{\max,i}$, let $\beta(\phi(K_{\max,i}))$ represent the choice of a particular dyadic grid $\mathbb{D}^{\beta(\phi(K_{\max,i}))}$ such that there exists a dyadic rectangle $Q \in \mathbb{D}^{\beta(\phi(K_{\max,i}))}$ satisfying the following two conditions (see Lemma \ref{lemma:grid}): 
\begin{enumerate}
\item[(a)]
\begin{equation}
\phi(K_{\max,i}) \subseteq Q=Q_1\times Q_2; \label{eq:Q}
\end{equation}
\item[(b)]
Let $|K_{\max,i}| = K_{\max,i}^1 \times K_{\max,i}^2$ denote the horizontal and vertical side lengths of the maximal rectangle $K_{\max,i}$. Then
\begin{equation} \label{choice_grid}
\begin{split}
& Q_1 \leq 2( K_{\max,i}^1|\cos \theta| + K_{\max,i}^2 |\sin \theta|) \\
& Q_2 \leq 2( K_{\max,i}^1|\sin \theta| + K_{\max,i}^2 |\cos \theta|).
\end{split}
\end{equation}
\end{enumerate}

\begin{remark} \label{rem:62bis}
This choice of such dyadic grid and such rectangle $Q$ is important, since it will allow us to quantify a situation where we have perfect cancellation in the inner product $\langle h_R \circ \phi, h_K\rangle$. In particular, the projection on the $x$-variable ${\mathbb P}_x [\phi(K_{\max,i})]$ is an interval of length $\leq K^1_{\max,i}+K^2_{\max,i} \leq \frac{1}{4} 2^{r_1}$ (due to Corollary \ref{cor:r1}). Since it is included in $Q_1$  with \eqref{eq:Q}, one deduces that $|Q_1| \leq \frac{1}{2} 2^{r_1}$. By the choice of the dyadic grid (through the parameter $\beta(K_{\max,i})$), $R$ and $Q$ belong to the same dyadic grid. Consequently we deduce that if $R \cap \phi(K_{\max,i}) \neq \emptyset$ and $R \nsubseteq \widetilde{\phi(\Omega)}$ then
\begin{equation}  {\mathbb P}_x [\phi(K_{\max,i})] \subset R_1^{left} \qquad \textrm{or} \qquad {\mathbb P}_x [\phi(K_{\max,i})] \subset R_1^{right}. \label{eq:R1} 
\end{equation}
\end{remark}

Now for each fixed $K_{\max,i}$, we can rewrite the inner sum of (\ref{trivial_main_total}) in terms of the Haar wavelet expression of $F$ with the dyadic grid $\mathbb{D}^{\beta(\phi(K_{\max,i}))}$ so that
\begin{align*} \label{fine_decomp}
& \left(\sum_{l \in \mathbb{N}} \ \sum_{\substack{K_{\max} \subseteq \Omega \\ K_{\max} \in \mathcal{K}_l}} \  \sum_{\substack{K_{\max,i} \\ \subseteq K_{\max}}}\sum_{\substack{K \subseteq K_{\max,i} \\ 2^{-l+3} < \varepsilon_K < 2^{l-3}}} \Bigg| \sum_{\substack{R \in \mathbb{D}^{\beta(\phi(K_{\max,i}))} \\ R \subseteq \widetilde{\widetilde{\phi(\Omega)}}}} \langle F, h_R \rangle \langle h_R \circ \phi, h_K \rangle \Bigg|^2\right)^{\frac{1}{2}}
\end{align*}
We further apply the decomposition of the wavelets depending their supports and derive the terms $\mathcal{M}$ and $\mathcal{E}_2$ in Figure \ref{outline_proof}. More precisely,

\begin{align*}
 &\left(\sum_{l \in \mathbb{N}} \ \sum_{\substack{K_{\max} \subseteq \Omega \\ K_{\max} \in \mathcal{K}_l}} \  \sum_{\substack{K_{\max,i} \\ \subseteq K_{\max}}}\sum_{\substack{K \subseteq K_{\max,i} \\ 2^{-l+3} < \varepsilon_K < 2^{l-3}}} \Bigg| \sum_{\substack{R \in \mathbb{D}^{\beta(\phi(K_{\max,i}))} \\ R \subseteq \widetilde{\widetilde{\phi(\Omega)}}}} \langle F, h_R \rangle \langle h_R \circ \phi, h_K \rangle + \sum_{\substack{R \in \mathbb{D}^{\beta(\phi(K_{\max,i}))} \\ R \nsubseteq \widetilde{\widetilde{\phi(\Omega)}}}} \langle F, h_R \rangle \langle h_R \circ \phi, h_K \rangle \Bigg|^2\right)^{\frac{1}{2}} \nonumber\\
 \leq & \left(\sum_{l \in \mathbb{N}} \ \sum_{\substack{K_{\max} \subseteq \Omega \\ K_{\max} \in \mathcal{K}_l}} \ \sum_{\substack{K_{\max,i} \\ \subseteq K_{\max}}} \sum_{\substack{K \subseteq K_{\max,i} \\ 2^{-l+3} < \varepsilon_K < 2^{l-3}}} \Bigg| \sum_{\substack{R \in \mathbb{D}^{\beta(\phi(K_{\max,i}))} \\ R \subseteq \widetilde{\widetilde{\phi(\Omega)}}}} \langle F, h_R \rangle \langle h_R \circ \phi, h_K \rangle \Bigg|^2\right)^{\frac{1}{2}} + \nonumber\\
 & \left( \sum_{l \in \mathbb{N}} \ \sum_{\substack{K_{\max} \subseteq \Omega \\ K_{\max} \in \mathcal{K}_l}} \  \sum_{\substack{K_{\max,i} \\ \subseteq K_{\max}}}\sum_{\substack{K \subseteq K_{\max,i} \\ 2^{-l+3} < \varepsilon_K < 2^{l-3}}} \Bigg| \sum_{\substack{R \in \mathbb{D}^{\beta(\phi(K_{\max,i}))} \\ R \nsubseteq \widetilde{\widetilde{\phi(\Omega)}}}} \langle F, h_R \rangle \langle h_R \circ \phi, h_K \rangle \Bigg|^2 \right)^{\frac{1}{2}} \nonumber\\
 \leq & \underbrace{\left(\sum_{\substack{K_{\max} \subseteq \Omega \\ }}\sum_{i} \sum_{K \subseteq K_{\max,i}} \Bigg| \sum_{\substack{R \in \mathbb{D}^{\beta(\phi(K_{\max,i}))} \\ R \subseteq \widetilde{\widetilde{\phi(\Omega)}}}} \langle F, h_R \rangle \langle h_R \circ \phi, h_K \rangle \Bigg|^2\right)^{\frac{1}{2}}}_{\mathcal{M}} + \\
& \underbrace{\left( \sum_{l \in \mathbb{N}}\ \sum_{\substack{K_{\max} \subseteq \Omega \\ K_{\max} \in \mathcal{K}_l}}  \sum_{\substack{K_{\max,i} \\ \subseteq K_{\max}}} \sum_{\substack{K \subseteq K_{\max,i} \\ 2^{-l+3} < \varepsilon_K < 2^{l-3}}} \Bigg| \sum_{\substack{R \in \mathbb{D}^{\beta(\phi(K_{\max,i}))} \\ R \nsubseteq \widetilde{\widetilde{\phi(\Omega)}}}} \langle F, h_R \rangle \langle h_R \circ \phi, h_K \rangle \Bigg|^2 \right)^{\frac{1}{2}}}_{\mathcal{E}_2},
\end{align*}
where the second inequality follows from the triangle inequality and the last inequality follows from dropping the condition on the wavelets $K$ and thus adding more terms.
\begin{remark} \label{rem:62}
As pointed out earlier (Remark \ref{rem:62bis}), such decomposition is performed so that the error term would be well-behaved. More precisely, for any $R \in \mathbb{D}^{\beta(\phi(K_{\max,i}))}$, if $|R| = R_1 \times R_2$ satisfies the condition that 
\begin{equation}
R_1 \geq 4(K_{\max,i}^1\sin \theta + K_{\max,i}^2 \cos \theta) 
\end{equation}
and
\begin{equation}
R_2 \geq 4(K_{\max,i}^1\cos \theta + K_{\max,i}^2 \sin \theta )
\end{equation}
then by (\ref{choice_grid}) and the choice of the dyadic grid $\mathbb{D}^{\beta(\phi(K_{\max,i}))}$, we deduce $\phi(K_{\max,i})$ is contained in a dyadic son of $R$ so that $h_R$ is constant on $\phi(K_{\max,i})$ and in particular on $\phi(K)$ for any $K \subseteq K_{\max,i}$. Thus
\begin{equation}
\langle h_R \circ \phi, h_K \rangle = \langle h_R, h_K \circ \phi^{-1} \rangle = 0.
\end{equation}
As a consequence of the careful choice of dyadic grid with respect to each maximal rectangle, we avoid the appearance of wavelets for $F$ which are not well-localizable. 
\end{remark}

We would now develop the estimate for the main term $\mathcal{M}$. Let us define
\begin{equation}
F^{\beta} :=\sum_{\substack{R \in \mathbb{D}^{\beta} \\ R \subseteq \widetilde{\widetilde{\phi(\Omega)}}}}\langle F, h_R \rangle h_R.
\end{equation}
We also recall that there are finitely many choices of dyadic grids, let's denoting it by $n$ (which depends only on the dimension and of the systems of shifted dyadic grids). Suppose each dyadic grid corresponds to a parameter denoted by $\beta_j$, then $(\beta_j)_{j=1}^n$ represents the collection of all possible dyadic grids. Then for any $K_{\max,i}$ and correspondent $\beta(\phi(K_{\max,i}))$, $ \beta(\phi(K_{\max,i})) = \beta_{j}$ for some $1 \leq j \leq n$. Let $K_{\max,i} \mapsto \beta_j$ denote such correspondence. We can then group those belonging to the same dyadic grid as follows.
{\fontsize{9.5}{9.5}
\begin{align} \label{M_estimate}
\mathcal{M} \leq & \left\Vert \sum_{j=1}^n  \ \sum_{\substack{K_{\max} \in \mathbb{D}^{\alpha} \\ K_{\max} \subseteq \Omega \\ }} \sum_{\substack{K_{\max,i} \\ \subseteq K_{\max}}} \sum_{K_{\max,i} \mapsto \beta_j}\sum_{\substack{K \in \mathbb{D}^{\alpha} \\ \substack{K \subseteq K_{\max,i} \\  }}} \langle F^{\beta_j} \circ \phi, h_K \rangle  h_K \right\Vert_2 \nonumber\\
\leq &\sum_{j=1}^n   \left\Vert \sum_{\substack{K_{\max} \in \mathbb{D}^{\alpha} \\ K_{\max} \subseteq \Omega \\ }} \sum_{\substack{K_{\max,i} \\ \subseteq K_{\max}}} \sum_{K_{\max,i} \mapsto \beta_j}\sum_{\substack{K \in \mathbb{D}^{\alpha} \\ \substack{K \subseteq K_{\max,i} \\  }}}\langle F^{\beta_j} \circ \phi, h_K \rangle  h_K \right\Vert_2, 
\end{align}}
where for each $1 \leq j \leq n$, we can apply the square function estimate and the fact that $\phi$ preserves the $L^2$-norm to derive
\begin{align*}
 \left\Vert \sum_{\substack{K_{\max} \in \mathbb{D}^{\alpha} \\ K_{\max} \subseteq \Omega \\ }} \sum_{\substack{K_{\max,i} \\ \subseteq K_{\max}}} \sum_{K_{\max,i} \mapsto \beta_j}\sum_{\substack{K \in \mathbb{D}^{\alpha} \\ \substack{K \subseteq K_{\max,i} \\  }}} \langle F^{\beta_j} \circ \phi, h_K \rangle  h_K \right\Vert_2 \lesssim \|F^{\beta_j} \circ \phi\|_2 =  \|F^{\beta_j}\|_2.
\end{align*}
By recalling the definition of $F^{\beta_j}$ and with \eqref{eq:double_enlargement}, we have
\begin{align}
 \|F^{\beta_j}\|_2 = \left(\sum_{\substack{R \in \mathbb{D}^{\beta_j} \\ R \subseteq \widetilde{\widetilde{\phi(\Omega)}}}}|\langle F, h_R \rangle|^2 \right)^{\frac{1}{2}} \leq   \|F\|_{\BMO} |\widetilde{\widetilde{\phi(\Omega)}}|^{\frac{1}{2}} \lesssim \epsilon^{-1} \log \epsilon^{-1}|\Omega|^{\frac{1}{2}}\|F\|_{\BMO}. \label{eq:termI}
\end{align}
Since \eqref{eq:termI} holds for any $j$, we can plug it into (\ref{M_estimate}) and conclude that
\begin{equation} \label{main_term_result}
\mathcal{M} \lesssim \epsilon^{-1} \log \epsilon^{-1} |\Omega|^{\frac{1}{2}}\|F\|_{\BMO}.
\end{equation}
This completes the estimate for the main term and it remains to control the error term $\mathcal{E}_2$ which will be far more technical.

\section{Proof of Theorem \ref{main_thm_pos} - Error Term}

\subsection{Reduction of $\mathcal{E}_2$ and analytic implications} \label{error_analytic}
As commented at the beginning of Section \ref{sec:main} and indicated in Figure \ref{fig:strategy_intermediate}, we need to apply the analytic property - Lemma \ref{lemma:regularity} - for the estimate involving $\langle F, h_R \rangle$ and the geometric property for the estimate involving $\langle h_R \circ \phi, h_K \rangle$. Section \ref{error_analytic} would contribute to the analytic perspective and reduces the estimate of the error term $\mathcal{E}_2$ to a careful study of geometric implications from interactions between wavelets and rotated wavelets. 

We first apply the dualization and study the corresponding multilinear form for $\mathcal{E}_2$. In particular, for some function $\eta \in L^2$ with $\|\eta \|_2 = 1$,
\begin{align}
\|\mathcal{E}_2\|_2 \leq |\langle \mathcal{E}_2,\eta\rangle| = &  \left| \sum_{l \geq 0} \sum_{\substack{K_{\max} \subseteq \Omega \\ K_{\max} \in \mathcal{K}_{l}}} \sum_{\substack{K_{\max,i} \\ \subseteq K_{\max}}}\sum_{\substack{K \subseteq K_{\max,i} \\ 2^{-l+3} <\varepsilon_K < 2^{l-3}}} \sum_{\substack{R \in \mathbb{D}^{\beta(\phi(K_{\max,i}))}\\ R \nsubseteq \widetilde{\phi(\Omega)} \\ }}\langle F, h_R \rangle \langle h_R \circ \phi, h_K \rangle  \langle h_K, \eta \rangle \right| \nonumber\\
\leq &\sum_{l \geq 0} \ \sum_{\substack{K_{\max} \subseteq \Omega \\ K_{\max} \in \mathcal{K}_l}} \ \sum_{\substack{K_{\max,i} \\ \subseteq K_{\max}}}\sum_{\substack{K \subseteq K_{\max,i} \\ 2^{-l+3} <\varepsilon_K < 2^{l-3}}} \sum_{\substack{R \in \mathbb{D}^{\beta(\phi(K_{\max,i}))}\\ R \nsubseteq \widetilde{\phi(\Omega)} \\ }} |\langle F, h_R \rangle| |\langle h_R \circ \phi, h_K \rangle| |\langle h_K, \eta \rangle|.   \nonumber
\end{align}
It suffices to prove the following for every fixed $l$ such that $2^l \geq \epsilon^{-\frac{1}{2}}$ (see Lemma \ref{l_epsilon}), $\mathcal{E}_2^l$ defined as
\begin{equation} \label{goal}
\mathcal{E}_2^l := \sum_{\substack{ K_{\max} \subseteq \Omega \\ K _{\max} \in \mathcal{K}_l}}\sum_{\substack{K_{\max,i} \\ \subseteq K_{\max}}}\sum_{\substack{K \subseteq K_{\text{max,i}} \\ 2^{-l+3} < \varepsilon_K < 2^{l-3}}} \sum_{\substack{R \in \mathbb{D}^{\beta(\phi(K_{\max,i})} \\ R \nsubseteq \widetilde{\phi(\Omega)} }} |\langle F, h_R \rangle| |\langle h_R \circ \phi, h_K \rangle| |\langle h_K, \eta \rangle|.
\end{equation}
satisfies the estimate
\begin{equation}\label{goal_estimate}
\mathcal{E}_2^l  \lesssim 2^{-l\tilde{\delta}}|\Omega|^{\frac{1}{2}} \|F\|_{W^{s,p}}.
\end{equation}
for some $\tilde{\delta} >0$. 
Indeed the series of $2^{-l\tilde{\delta}}$ is convergent and bounded by $\epsilon^{\tilde{\delta}/2}$ (up to some constant), which completes the proof of the estimate for $\mathcal{E}_2$.

First of all, we would like to decompose $\mathcal{E}^l_2$ into the sums where $K \subseteq K_{\max,i}$ and $R$ are of fixed sizes so that both analytic and geometric properties can be exploited. Define $\mathcal{EXP}$ as the set of $(r_1,r_2,k_1,k_2)$ (also depending on $K_{\max,i}$ and $l$ but we will keep this independence implicit in the notation) as follows:
$$ \mathcal{EXP}:= \left\{(r_1,r_2,k_1,k_2),\ 2^{k_1}\leq K_{\max,i}^1,\ 2^{k_2}\leq K_{\max,i}^2,\ 2^{-l+3} \leq 2^{k_2-k_1} \leq 2^{l-3}\right\},$$
Then $\mathcal{E}^l_2$ can be rewritten as
\begin{equation*}
\mathcal{E}^l_2 = \sum_{\substack{ K_{\max} \subseteq \Omega \\ K _{\max} \in \mathcal{K}_l}}\sum_{\substack{K_{\max,i} \\ \subseteq K_{\max}}}\sum_{(r_1, r_2,k_1,k_2) \in \mathcal{EXP}}\sum_{\substack{K \subseteq K_{\max,i} \\ |K| = 2^{k_1} \times 2^{k_2}}} \sum_{\substack{R \in \mathbb{D}^{\beta(\phi(K_{\max,i})} \\ R \nsubseteq \widetilde{\phi(\Omega)} \\ |R| = 2^{r_1} \times 2^{r_2}}} |\langle F, h_R \rangle| |\langle h_R \circ \phi, h_K \rangle| |\langle h_K, \eta \rangle|.
\end{equation*}
By H\"older's inequality, for any $p, p' >1$ with $\frac{1}{p}+\frac{1}{p'} =1$,
{\fontsize{9}{9}
\begin{align}\label{E_2_step_1}
\mathcal{E}^l_2 \leq&
  \sum_{\substack{K_{\max} \in \mathbb{D}^{\alpha} \\ K_{\max} \subseteq \Omega \\ K _{\max} \in \mathcal{K}_l}}\sum_{\substack{K_{\max,i} \\ \subseteq K_{\max}}}\sum_{(r_1, r_2,k_1,k_2) \in \mathcal{EXP}}\sum_{\substack{K \in \mathbb{D}^{\alpha} \\ K \subseteq K_{\max,i} \\ |K| = 2^{k_1} \times 2^{k_2}}} \left(\sum_{\substack{R \in \mathbb{D}^{\beta(\phi(K_{\max,i}))} \\ R \nsubseteq \widetilde{\phi(\Omega)} \\ |R| = 2^{r_1} \times 2^{r_2}}}  |\langle F, h_R \rangle|^p \right)^{\frac{1}{p}} \left(\sum_{\substack{R \in \mathbb{D}^{\beta(\phi(K_{\max,i}))} \\ R \nsubseteq \widetilde{\phi(\Omega)} \\ |R| = 2^{r_1} \times 2^{r_2} }} |\langle h_R \circ \phi, h_K \rangle|^{p'} \right)^{\frac{1}{p'}} |\langle \eta, h_K \rangle|.
\end{align}}
Now we are ready to apply the analytic property which is given by Lemma \ref{lemma:regularity} and the regularity of $F$, which gives
\begin{equation}\label{E_2_analytic}
\left(\sum_{\substack{R \in \mathbb{D}^{\beta(\phi(K_{\max,i}))} \\ R \nsubseteq \widetilde{\phi(\Omega)} \\ |R| = 2^{r_1} \times 2^{r_2}}}  |\langle F, h_R \rangle|^p \right)^{\frac{1}{p}} \lesssim  \|F\|_{W^{s,p}} 2^{r_1(\frac{1}{2}+ \gamma_1)}2^{r_2(\frac{1}{2}+\gamma_2)} 
\end{equation}
for $\gamma_1, \gamma_2 \in (-\delta, \delta)$ with $\delta:= \min(\frac{1}{p}, s-\frac{2}{p})$. By plugging the estimate (\ref{E_2_analytic}) into (\ref{E_2_step_1}), we have
{\fontsize{9}{9}
\begin{align*}
&\|F\|_{W^{s,p}} \sum_{\substack{K_{\max,i} \subseteq \Omega \\ K _{\max} \in \mathcal{K}_l}}\sum_{\substack{K_{\max,i} \\ \subseteq K_{\max}}}\sum_{(r_1, r_2,k_1,k_2) \in \mathcal{EXP}} 2^{r_1(\frac{1}{2} +\gamma_1)+r_2(\frac{1}{2}+\gamma_2)} \sum_{\substack{K \subseteq K_{\max,i} \\ |K| = 2^{k_1} \times 2^{k_2}}} \left(\sum_{\substack{R \in \mathbb{D}^{\beta(\phi(K_{\max,i}))} \\ R \nsubseteq \widetilde{\phi(\Omega)} \\ |R| = 2^{r_1} \times 2^{r_2}}} |\langle h_R \circ \phi, h_K \rangle|^{p'} \right)^{\frac{1}{p'}} |\langle \eta, h_K \rangle| \nonumber\\
\lesssim & \|F\|_{W^{s,p}} \left[\sum_{\substack{K_{\max} \subseteq \Omega \\ K _{\max} \in \mathcal{K}_l}}\sum_{\substack{K_{\max,i} \\ \subseteq K_{\max}}}\sum_{(r_1, r_2,k_1,k_2) \in \mathcal{EXP}} 2^{2r_1(\frac{1}{2}+ \gamma_1)}2^{2r_2(\frac{1}{2}+ \gamma_2)}\sum_{\substack{K \subseteq K_{\max,i} \\ |K| = 2^{k_1} \times 2^{k_2}}} \left(\sum_{\substack{R \in \mathbb{D}^{\beta(\phi(K_{\max,i}))} \\ R \nsubseteq \widetilde{\phi(\Omega)} \\ |R| = 2^{r_1} \times 2^{r_2}}} |\langle h_R \circ \phi, h_K \rangle|^{p'} \right)^{\frac{2}{p'}} \right]^{\frac{1}{2}}  \nonumber \\
&\quad  \quad \quad \quad \cdot \left(\sum_{\substack{K_{\max} \subseteq \Omega \\ K _{\max} \in \mathcal{K}_l}}\sum_{\substack{K_{\max,i} \\ \subseteq K_{\max}}}\sum_{(r_1, r_2,k_1,k_2) \in \mathcal{EXP}}\sum_{\substack{K \subseteq K_{\max,i} \\ |K| = 2^{k_1} \times 2^{k_2}}}|\langle \eta, h_K \rangle|^2 \right)^{\frac{1}{2}}  \nonumber \\
 \lesssim & \|F\|_{W^{s,p}}\left[\sum_{\substack{K_{\max} \subseteq \Omega \\ K_{\max} \in \mathcal{K}_l}} \sum_{\substack{K_{\max,i} \\ \subseteq K_{\max}}}\sum_{(r_1, r_2,k_1,k_2) \in \mathcal{EXP}}2^{2r_1(\frac{1}{2}+ \gamma_1)}2^{2r_2(\frac{1}{2}+ \gamma_2)}\sum_{\substack{K \subseteq K_{\max,i} \\ |K| = 2^{k_1} \times 2^{k_2}}} \left(\sum_{\substack{R \in \mathbb{D}^{\beta(\phi(K_{\max,i}))} \\R \nsubseteq \widetilde{\phi(\Omega)} \\ |R| = 2^{r_1} \times 2^{r_2}}} |\langle h_R \circ \phi, h_K \rangle|^{p'} \right)^{\frac{2}{p'}} \right]^{\frac{1}{2}}, \nonumber
\end{align*}}
where the second inequality follows from Cauchy-Schwartz inequality and the last inequality uses the square function estimate for $\eta$ and the fact that $\|\eta\|_2=1$. 

\subsection{Geometric implications}
It suffices to prove that for any fixed $l \in \mathbb{N}$, $K_{\max}\in {\mathcal{K}}_l$ and $K_{\max,i} \subseteq K_{\max}$, 
\begin{equation} \label{subpart}
\sum_{(r_1, r_2,k_1,k_2) \in \mathcal{EXP}}2^{2r_1(\frac{1}{2}+ \gamma_1)}2^{2r_2(\frac{1}{2}+ \gamma_2)}  \sum_{\substack{K \subseteq K_{\max,i} \\ |K| = 2^{k_1} \times 2^{k_2}}} \left( \sum_{\substack{R \in \mathbb{D}^{\beta(\phi(K_{\max,i}))} \\ R \nsubseteq \widetilde{\phi(\Omega)} \\ |R| = 2^{r_1} \times 2^{r_2}}} |\langle h_R \circ \phi, h_K \rangle|^{p'} \right)^{\frac{2}{p'}} \lesssim 2^{-l\mu}|K_{\max,i}|
\end{equation}
for some $\mu >0$. Indeed, together with Journ\'e lemma (Lemma \ref{lemma:journe}) that will imply 
\begin{align*}
& \sum_{\substack{K_{\max} \subseteq \Omega \\ K_{\max} \in \mathcal{K}_l}}\sum_{\substack{K_{\max,i} \\ \subseteq K_{\max}}}\sum_{(r_1, r_2,k_1,k_2) \in \mathcal{EXP}_i}2^{2r_1(\frac{1}{2}+ \gamma_1)}2^{2r_2(\frac{1}{2}+ \gamma_2)}  \sum_{\substack{K \subseteq K_{\max,i} \\ |K| = 2^{k_1} \times 2^{k_2}}} \left( \sum_{\substack{R \in \mathbb{D}^{\beta(\phi(K_{\max,i}))} \\R \nsubseteq \widetilde{\phi(\Omega)} \\ |R| = 2^{r_1} \times 2^{r_2}}} |\langle h_R \circ \phi, h_K \rangle|^{p'} \right)^{\frac{2}{p'}} \\
& \qquad \lesssim \sum_{\substack{K_{\max} \subseteq \Omega \\ K_{\max} \in \mathcal{K}_l}}\sum_{\substack{K_{\max,i} \\ \subseteq K_{\max}}} 2^{-l\mu}|K_{\max,i}| \lesssim 
\sum_{\substack{K_{\max} \subseteq \Omega \\ K_{\max} \in \mathcal{K}_l}} 2^{-l\mu}|K_{\max}| \\
& \qquad \lesssim \epsilon^{\frac{\nu}{2}}2^{-l(\mu-\nu)}|\Omega|,
\end{align*}
for an arbitrarily small $\nu>0$ with $\nu < \mu$. As a result, one completes the proof as follows: 
\begin{equation} \label{E_2_to_fill}
\mathcal{E}_2 \lesssim \|F\|_{W^{s,p}} \left( \epsilon^{\frac{\nu}{2}}\sum_{l: 2^{-l} \lesssim \epsilon^{\frac{1}{2}}}2^{-l(\mu-\nu)}|\Omega|\right)^{\frac{1}{2}} \lesssim  \|F\|_{W^{s,p}}|\Omega|^{\frac{1}{2}}\left( \epsilon^{\frac{\nu}{2}} \epsilon^{\frac{1}{2}(\mu-\nu)}\right)^{\frac{1}{2}} = \epsilon^{\frac{\mu}{4}}\|F\|_{W^{s,p}}|\Omega|^{\frac{1}{2}}.
\end{equation}

Now we will focus the study on $\Gamma(K_{\max,i},r_1,r_2,k_1,k_2)$ defined as
\begin{equation} \Gamma(K_{\max,i},r_1,r_2,k_1,k_2):= \sum_{\substack{K \subseteq K_{\max,i} \\ |K| = 2^{k_1} \times 2^{k_2}}} \left(\sum_{\substack{R \in \mathbb{D}^{\beta(\phi(K_{\max,i}))} \\R \nsubseteq \widetilde{\phi(\Omega)} \\ |R| = 2^{r_1} \times 2^{r_2}}} |\langle h_R \circ \phi, h_K \rangle|^{p'} \right)^{\frac{2}{p'}} . \label{eq:Gamma} \end{equation}
for a fix rectangle $K_{\max,i}$ and fixed parameters $r_1,r_2,k_1,k_2$. This is indeed the subject which we will give a delicate treatment depending on the geometry of rotations. More precisely, we will study how rotated wavelets produce cancellations when tested on wavelets with sides parallel to the axes, which will generate a more accurate estimate than the trivial estimate with a multilinear Kakeya perspective described in Section 5 (see (\ref{multilinear_Kakeya})). We could give a naive estimate for $\langle h_R \circ \phi, h_K \rangle$ as follows: 
\begin{equation} \label{trivial_Gamma}
|\langle h_R \circ \phi, h_K \rangle| \leq |R|^{-\frac{1}{2}}|K|^{-\frac{1}{2}}|\phi(K) \cap R|.
\end{equation}
Evidently (\ref{trivial_Gamma}) is not optimal for similar reasons discussed in Section 5. Firstly, it does not pay attention to the oscillatory behavior of the wavelets $h_R$ and $h_K$. Moreover, it does not use the geometric implication from the rotation map $\phi$. In other words, (\ref{trivial_Gamma}) would still hold if $h_R$ and $h_K$ are replaced by positive $L^2$-normalized functions supported on $R$ and $K$ respectively, and if $\phi$ is an arbitrary bi-Lipschitz measure-preserving map. 

The rest of the section will be organized as follows. In Subsection \ref{subsec:cancelation}, we will establish two main ingredients  - a precise and uniform estimate for $|\langle h_R \circ \phi, h_K \rangle|$ in Proposition \ref{prop:1} and the mechanism to count the numbers of rectangles $(R,K)$ involved in the double sum $\Gamma(K_{\max,i},r_1,r_2,k_1,k_2)$ (Proposition \ref{prop:number} for the counting of $R$ with $K$ fixed and Proposition \ref{prop_sparse_count} for the counting of $K$ with $R$ fixed first). We will then combine these two ingredients to obtain the main cancellation estimates which are summarized in Table \ref{cancellation_est_table}.
Finally we will apply the cancellation estimates to conclude the discussion of $\Gamma(K_{\max,i},r_1,r_2,k_1,k_2)$ and derive the estimate (\ref{subpart}) in Subsection \ref{subsection:endproof}.

\vskip .25in

\subsubsection{Two main ingredients relying on cancellation} \label{subsec:cancelation}

We will now start the discussion of an important cancellation estimate which employs heavily the oscillation of Haar wavelets and the geometry of the rotation map $\phi$, which was not taken into consideration in the trivial estimate (\ref{trivial_Gamma}). Let $S,T$ denote two dyadic rectangles and $h_S, h_T$ the Haar wavelets corresponding to $S$ and $T$. Heuristically, suppose that 
\begin{equation}\label{mot_cancel}
|\langle h_S \circ \phi, h_T \rangle| \lesssim |S|^{-\frac{1}{2}}|T|^{-\frac{1}{2}} \cdot \text{(shaded area in Figure \ref{fig:intuition_cancel})}. 
\end{equation} 
Clearly (\ref{mot_cancel}) is more optimal than (\ref{trivial_Gamma}) since the shaded area is smaller than the area of $S \cap \phi(T)$ used in the trivial estimate. We will now make the estimate (\ref{mot_cancel}) more precise in Proposition \ref{prop:1}.

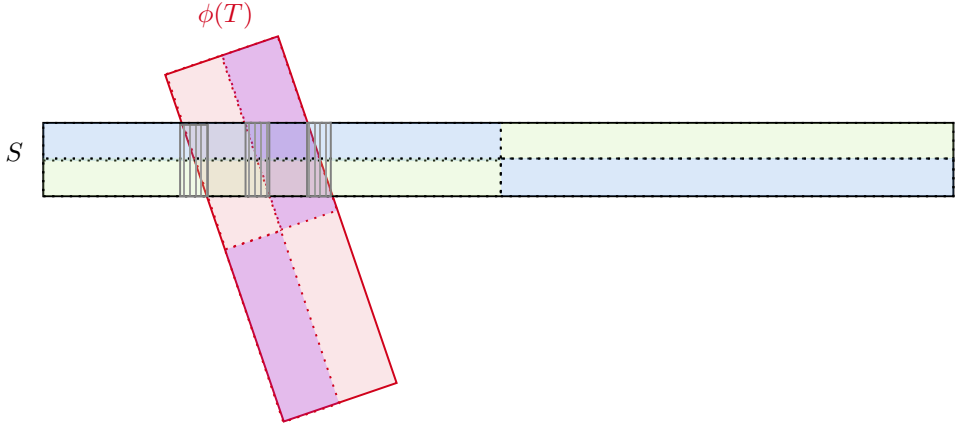
\begin{figure}
\centering

 
\tikzset{
pattern size/.store in=\mcSize, 
pattern size = 5pt,
pattern thickness/.store in=\mcThickness, 
pattern thickness = 0.3pt,
pattern radius/.store in=\mcRadius, 
pattern radius = 1pt}
\makeatletter
\pgfutil@ifundefined{pgf@pattern@name@_sq6xocrjp}{
\pgfdeclarepatternformonly[\mcThickness,\mcSize]{_sq6xocrjp}
{\pgfqpoint{-\mcThickness}{-\mcThickness}}
{\pgfpoint{\mcSize}{\mcSize}}
{\pgfpoint{\mcSize}{\mcSize}}
{
\pgfsetcolor{\tikz@pattern@color}
\pgfsetlinewidth{\mcThickness}
\pgfpathmoveto{\pgfpointorigin}
\pgfpathlineto{\pgfpoint{0}{\mcSize}}
\pgfusepath{stroke}
}}
\makeatother

 
\tikzset{
pattern size/.store in=\mcSize, 
pattern size = 5pt,
pattern thickness/.store in=\mcThickness, 
pattern thickness = 0.3pt,
pattern radius/.store in=\mcRadius, 
pattern radius = 1pt}
\makeatletter
\pgfutil@ifundefined{pgf@pattern@name@_u85uq9yk3}{
\pgfdeclarepatternformonly[\mcThickness,\mcSize]{_u85uq9yk3}
{\pgfqpoint{-\mcThickness}{-\mcThickness}}
{\pgfpoint{\mcSize}{\mcSize}}
{\pgfpoint{\mcSize}{\mcSize}}
{
\pgfsetcolor{\tikz@pattern@color}
\pgfsetlinewidth{\mcThickness}
\pgfpathmoveto{\pgfpointorigin}
\pgfpathlineto{\pgfpoint{0}{\mcSize}}
\pgfusepath{stroke}
}}
\makeatother

 
\tikzset{
pattern size/.store in=\mcSize, 
pattern size = 5pt,
pattern thickness/.store in=\mcThickness, 
pattern thickness = 0.3pt,
pattern radius/.store in=\mcRadius, 
pattern radius = 1pt}
\makeatletter
\pgfutil@ifundefined{pgf@pattern@name@_vrhgvrm6z}{
\pgfdeclarepatternformonly[\mcThickness,\mcSize]{_vrhgvrm6z}
{\pgfqpoint{-\mcThickness}{-\mcThickness}}
{\pgfpoint{\mcSize}{\mcSize}}
{\pgfpoint{\mcSize}{\mcSize}}
{
\pgfsetcolor{\tikz@pattern@color}
\pgfsetlinewidth{\mcThickness}
\pgfpathmoveto{\pgfpointorigin}
\pgfpathlineto{\pgfpoint{0}{\mcSize}}
\pgfusepath{stroke}
}}
\makeatother
\tikzset{every picture/.style={line width=0.75pt}} 

\begin{tikzpicture}[x=0.75pt,y=0.75pt,yscale=-1,xscale=1]

\draw  [color={rgb, 255:red, 208; green, 2; blue, 27 }  ,draw opacity=1 ][fill={rgb, 255:red, 208; green, 2; blue, 27 }  ,fill opacity=0.1 ] (236.89,39.31) -- (296.69,214.26) -- (239.79,233.71) -- (180,58.75) -- cycle ;
\draw  [color={rgb, 255:red, 208; green, 2; blue, 27 }  ,draw opacity=1 ][dash pattern={on 0.84pt off 2.51pt}] (180.52,58.36) -- (208.74,48.79) -- (238.71,137.19) -- (210.5,146.75) -- cycle ;
\draw  [color={rgb, 255:red, 208; green, 2; blue, 27 }  ,draw opacity=1 ][fill={rgb, 255:red, 144; green, 19; blue, 254 }  ,fill opacity=0.2 ][dash pattern={on 0.84pt off 2.51pt}] (208.26,49.02) -- (236.68,39.31) -- (266.7,127.22) -- (238.27,136.93) -- cycle ;
\draw  [color={rgb, 255:red, 208; green, 2; blue, 27 }  ,draw opacity=1 ][fill={rgb, 255:red, 144; green, 19; blue, 254 }  ,fill opacity=0.2 ][dash pattern={on 0.84pt off 2.51pt}] (210.12,147.04) -- (238.34,137.51) -- (267.75,224.54) -- (239.53,234.08) -- cycle ;
\draw   (118.5,83) -- (577.5,83) -- (577.5,120) -- (118.5,120) -- cycle ;
\draw  [dash pattern={on 0.84pt off 2.51pt}] (118.5,101) -- (349.25,101) -- (349.25,120) -- (118.5,120) -- cycle ;
\draw  [fill={rgb, 255:red, 74; green, 144; blue, 226 }  ,fill opacity=0.2 ][dash pattern={on 0.84pt off 2.51pt}] (118.5,83) -- (349.25,83) -- (349.25,102) -- (118.5,102) -- cycle ;
\draw  [dash pattern={on 0.84pt off 2.51pt}] (349.25,101) -- (577.5,101) -- (577.5,120) -- (349.25,120) -- cycle ;
\draw  [fill={rgb, 255:red, 184; green, 233; blue, 134 }  ,fill opacity=0.2 ][dash pattern={on 0.84pt off 2.51pt}] (349.25,83) -- (577.5,83) -- (577.5,101) -- (349.25,101) -- cycle ;
\draw  [fill={rgb, 255:red, 184; green, 233; blue, 134 }  ,fill opacity=0.2 ][dash pattern={on 0.84pt off 2.51pt}] (118.5,101) -- (349.25,101) -- (349.25,120) -- (118.5,120) -- cycle ;
\draw  [fill={rgb, 255:red, 74; green, 144; blue, 226 }  ,fill opacity=0.2 ][dash pattern={on 0.84pt off 2.51pt}] (349.25,101) -- (577.5,101) -- (577.5,120) -- (349.25,120) -- cycle ;
\draw  [color={rgb, 255:red, 128; green, 128; blue, 128 }  ,draw opacity=1 ][pattern=_sq6xocrjp,pattern size=2.25pt,pattern thickness=0.75pt,pattern radius=0pt, pattern color={rgb, 255:red, 155; green, 155; blue, 155}] (187.5,84) -- (201.5,84) -- (201.5,120) -- (187.5,120) -- cycle ;
\draw  [color={rgb, 255:red, 128; green, 128; blue, 128 }  ,draw opacity=1 ][pattern=_u85uq9yk3,pattern size=2.25pt,pattern thickness=0.75pt,pattern radius=0pt, pattern color={rgb, 255:red, 155; green, 155; blue, 155}] (220.5,83) -- (232.5,83) -- (232.5,120) -- (220.5,120) -- cycle ;
\draw  [color={rgb, 255:red, 128; green, 128; blue, 128 }  ,draw opacity=1 ][pattern=_vrhgvrm6z,pattern size=2.25pt,pattern thickness=0.75pt,pattern radius=0pt, pattern color={rgb, 255:red, 155; green, 155; blue, 155}] (251.5,83) -- (263.5,83) -- (263.5,120) -- (251.5,120) -- cycle ;

\draw (195,20.4) node [anchor=north west][inner sep=0.75pt]  [color={rgb, 255:red, 208; green, 2; blue, 27 }  ,opacity=1 ]  {$\phi ( T)$};
\draw (98,91.4) node [anchor=north west][inner sep=0.75pt]    {$S$};

\end{tikzpicture} \caption{Intuition for the cancellation estimate} \label{fig:intuition_cancel}

\end{figure}
For a fixed rectangle $T = [a_1,a_2] \times [b_1,b_2]$. We consider the following 6 segments\begin{enumerate}
\item 3 vertical segments
$$
l^v_1 := 
\left\{\begin{bmatrix}
\frac{a_1+a_2}{2} \\
\tilde{y}
\end{bmatrix}, \ b_1 \leq \tilde{y} \leq b_2\right\},
\qquad 
l^v_2 := 
\left\{\begin{bmatrix}
a_2 \\
\tilde{y}
\end{bmatrix}, \ b_1 \leq \tilde{y} \leq b_2\right\}
\qquad \textrm{and} \qquad
l^v_{3}:=
\left\{\begin{bmatrix}
a_1 \\
\tilde{y}
\end{bmatrix}, \ b_1 \leq \tilde{y} \leq b_2\right\}
$$ 
\item 3 horizontal segments
$$
l^h_1:=
\left\{\begin{bmatrix}
\tilde{x} \\
\frac{b_1+b_2}{2}
\end{bmatrix},\ a_1 \leq \tilde{x} \leq a_2\right\},
\qquad
l^h_2 :=
\left\{\begin{bmatrix}
\tilde{x} \\
b_2
\end{bmatrix},\ a_1 \leq \tilde{x} \leq a_2\right\}
\qquad \textrm{and} \qquad
l^h_3 :=
\left\{\begin{bmatrix}
\tilde{x} \\
b_1
\end{bmatrix},\ a_1 \leq \tilde{x} \leq a_2\right\}.
$$
\end{enumerate}
and the $9$ ``{\it tops}'' defined as
$$ \tops (T):=\Big\{(a,b),\ a\in\{a_1,\frac{a_1+a_2}{2},a_2\},\ b\in\{b_1,\frac{b_1+b_2}{2},b_2\} \Big\}.$$
We also extend these notions to rotated rectangles.

If needed, we denote by $l^h_i(T)$ or $l^v_i(T)$ these segments associated to the rectangle $T$. And finally we set the 
boundary
$$ \partial T:= l^v_1 \cup l^v_2 \cup l^v_3 \cup l^h_1 \cup l^h_2 \cup l^h_3.$$
This subset is important because one notices that $h_T$ is a step function, which changes values between $|T|^{-\frac{1}{2}}, -|T|^{-\frac{1}{2}} $ and $0$ according to these segments. We can further quantify the step function associated to the rotated rectangle $\phi(T)$ with segments as follows. For $1 \leq i \leq 3$, define 
\begin{equation}
h_{\phi(l^h_i)}(x,y) := |T|^{-\frac{1}{2}} \cdot sgn(y - (x \tan \theta + C_{l^h_i})) \mathbbm{1}_{\phi(T)}(x,y),
\end{equation}
where $y = x \tan \theta + C_{l^h_i}$ parametrizes the extended rotated horizontal segment $l^h_i$ while the characteristic function localizes the step function to the support of $\phi(T)$. Similarly define, for $1 \leq j \leq 3$, 
\begin{equation}
h_{\phi(l^v_j)}(x,y) :=|T|^{-\frac{1}{2}} \cdot sgn(y - (x \cot \theta + C_{l^v_j}))\mathbbm{1}_{\phi(T)}(x,y),
\end{equation}
where $y = x \cot \theta + C_{l^v_j}$ parametrizes the extended rotated vertical segment $l^v_j$.
One simple observation is that $$\langle h_S \circ \phi, h_T \rangle \neq 0$$ if and only if $S$ meets the set $\phi(\partial T)$. We could further decompose $\langle h_S \circ \phi, h_T \rangle$ in terms of whether $S$ intersects with the vertical or horizontal segments of $T$ and more precisely in terms of the inner product between $h_S$ and $h_{\phi(l^v_j)}$ or $h_{\phi(l^h_i)}$.
\begin{definition}
Let $\{ l_j^v(T): 1 \leq j \leq 3 \}$ denote the (extended) vertical segments of a rectangle $T$ and $\{ l_j^h(T): 1 \leq j \leq 3 \}$ the (extended) horizontal segments of $T$. Define
\begin{equation*}
\Delta^v_T(S) := \sum_{j=1}^3|\langle h_S, h_{\phi(l^v_j)} \rangle|
\end{equation*}
and
\begin{equation*}
\Delta^h_T(S) := \sum_{i=1}^3|\langle h_S, h_{\phi(l^h_i)} \rangle|.
\end{equation*}
\end{definition}

We separate the cases when 
\begin{enumerate}
\item
the dyadic rectangle $S$ only intersects with the rotated vertical or horizontal segments of $T$, as indicated by $S = S^1$ in Figure \ref{fig:mot_prop:1} or Figure \ref{fig:mot_prop:1(1)};
\item
the dyadic rectangle $S$ intersects with both rotated and vertical segments of $T$ and the vertical boundaries of $S$ do not intersect with $\phi(T)$, as indicated by $S = S^2$ in Figure \ref{fig:mot_prop:1}. In this case, the decomposition of $\langle h_S \circ \phi, h_T \rangle$ into $\Delta^v_T(S)$ and $\Delta^h_T(S)$ can be achieved and the estimate for each piece will be introduced;
\item
it is also marked in Figure \ref{fig:mot_prop:1} the situation when a wavelet, namely $S^3$, intersects with a top of the rotated rectangle $\phi(T)$, in which case a simpler estimate similar as (\ref{trivial_Gamma}) applies and will be discussed in Remark \ref{remark_top}. 
\end{enumerate}

\begin{figure}
\centering

 
\tikzset{
pattern size/.store in=\mcSize, 
pattern size = 5pt,
pattern thickness/.store in=\mcThickness, 
pattern thickness = 0.3pt,
pattern radius/.store in=\mcRadius, 
pattern radius = 1pt}
\makeatletter
\pgfutil@ifundefined{pgf@pattern@name@_tifirdnag}{
\pgfdeclarepatternformonly[\mcThickness,\mcSize]{_tifirdnag}
{\pgfqpoint{-\mcThickness}{-\mcThickness}}
{\pgfpoint{\mcSize}{\mcSize}}
{\pgfpoint{\mcSize}{\mcSize}}
{
\pgfsetcolor{\tikz@pattern@color}
\pgfsetlinewidth{\mcThickness}
\pgfpathmoveto{\pgfpointorigin}
\pgfpathlineto{\pgfpoint{0}{\mcSize}}
\pgfusepath{stroke}
}}
\makeatother

 
\tikzset{
pattern size/.store in=\mcSize, 
pattern size = 5pt,
pattern thickness/.store in=\mcThickness, 
pattern thickness = 0.3pt,
pattern radius/.store in=\mcRadius, 
pattern radius = 1pt}
\makeatletter
\pgfutil@ifundefined{pgf@pattern@name@_tnmk38a98}{
\pgfdeclarepatternformonly[\mcThickness,\mcSize]{_tnmk38a98}
{\pgfqpoint{-\mcThickness}{-\mcThickness}}
{\pgfpoint{\mcSize}{\mcSize}}
{\pgfpoint{\mcSize}{\mcSize}}
{
\pgfsetcolor{\tikz@pattern@color}
\pgfsetlinewidth{\mcThickness}
\pgfpathmoveto{\pgfpointorigin}
\pgfpathlineto{\pgfpoint{0}{\mcSize}}
\pgfusepath{stroke}
}}
\makeatother

 
\tikzset{
pattern size/.store in=\mcSize, 
pattern size = 5pt,
pattern thickness/.store in=\mcThickness, 
pattern thickness = 0.3pt,
pattern radius/.store in=\mcRadius, 
pattern radius = 1pt}
\makeatletter
\pgfutil@ifundefined{pgf@pattern@name@_vpt0j7e5b}{
\pgfdeclarepatternformonly[\mcThickness,\mcSize]{_vpt0j7e5b}
{\pgfqpoint{-\mcThickness}{-\mcThickness}}
{\pgfpoint{\mcSize}{\mcSize}}
{\pgfpoint{\mcSize}{\mcSize}}
{
\pgfsetcolor{\tikz@pattern@color}
\pgfsetlinewidth{\mcThickness}
\pgfpathmoveto{\pgfpointorigin}
\pgfpathlineto{\pgfpoint{0}{\mcSize}}
\pgfusepath{stroke}
}}
\makeatother

 
\tikzset{
pattern size/.store in=\mcSize, 
pattern size = 5pt,
pattern thickness/.store in=\mcThickness, 
pattern thickness = 0.3pt,
pattern radius/.store in=\mcRadius, 
pattern radius = 1pt}
\makeatletter
\pgfutil@ifundefined{pgf@pattern@name@_jrwbomdft}{
\pgfdeclarepatternformonly[\mcThickness,\mcSize]{_jrwbomdft}
{\pgfqpoint{-\mcThickness}{-\mcThickness}}
{\pgfpoint{\mcSize}{\mcSize}}
{\pgfpoint{\mcSize}{\mcSize}}
{
\pgfsetcolor{\tikz@pattern@color}
\pgfsetlinewidth{\mcThickness}
\pgfpathmoveto{\pgfpointorigin}
\pgfpathlineto{\pgfpoint{0}{\mcSize}}
\pgfusepath{stroke}
}}
\makeatother

 
\tikzset{
pattern size/.store in=\mcSize, 
pattern size = 5pt,
pattern thickness/.store in=\mcThickness, 
pattern thickness = 0.3pt,
pattern radius/.store in=\mcRadius, 
pattern radius = 1pt}
\makeatletter
\pgfutil@ifundefined{pgf@pattern@name@_z3ryrpx55}{
\pgfdeclarepatternformonly[\mcThickness,\mcSize]{_z3ryrpx55}
{\pgfqpoint{-\mcThickness}{-\mcThickness}}
{\pgfpoint{\mcSize}{\mcSize}}
{\pgfpoint{\mcSize}{\mcSize}}
{
\pgfsetcolor{\tikz@pattern@color}
\pgfsetlinewidth{\mcThickness}
\pgfpathmoveto{\pgfpointorigin}
\pgfpathlineto{\pgfpoint{0}{\mcSize}}
\pgfusepath{stroke}
}}
\makeatother

 
\tikzset{
pattern size/.store in=\mcSize, 
pattern size = 5pt,
pattern thickness/.store in=\mcThickness, 
pattern thickness = 0.3pt,
pattern radius/.store in=\mcRadius, 
pattern radius = 1pt}
\makeatletter
\pgfutil@ifundefined{pgf@pattern@name@_8k1jhhl1y}{
\pgfdeclarepatternformonly[\mcThickness,\mcSize]{_8k1jhhl1y}
{\pgfqpoint{-\mcThickness}{-\mcThickness}}
{\pgfpoint{\mcSize}{\mcSize}}
{\pgfpoint{\mcSize}{\mcSize}}
{
\pgfsetcolor{\tikz@pattern@color}
\pgfsetlinewidth{\mcThickness}
\pgfpathmoveto{\pgfpointorigin}
\pgfpathlineto{\pgfpoint{0}{\mcSize}}
\pgfusepath{stroke}
}}
\makeatother

 
\tikzset{
pattern size/.store in=\mcSize, 
pattern size = 5pt,
pattern thickness/.store in=\mcThickness, 
pattern thickness = 0.3pt,
pattern radius/.store in=\mcRadius, 
pattern radius = 1pt}
\makeatletter
\pgfutil@ifundefined{pgf@pattern@name@_ffepub23j}{
\pgfdeclarepatternformonly[\mcThickness,\mcSize]{_ffepub23j}
{\pgfqpoint{-\mcThickness}{-\mcThickness}}
{\pgfpoint{\mcSize}{\mcSize}}
{\pgfpoint{\mcSize}{\mcSize}}
{
\pgfsetcolor{\tikz@pattern@color}
\pgfsetlinewidth{\mcThickness}
\pgfpathmoveto{\pgfpointorigin}
\pgfpathlineto{\pgfpoint{0}{\mcSize}}
\pgfusepath{stroke}
}}
\makeatother

 
\tikzset{
pattern size/.store in=\mcSize, 
pattern size = 5pt,
pattern thickness/.store in=\mcThickness, 
pattern thickness = 0.3pt,
pattern radius/.store in=\mcRadius, 
pattern radius = 1pt}
\makeatletter
\pgfutil@ifundefined{pgf@pattern@name@_8u3ndlwxt}{
\pgfdeclarepatternformonly[\mcThickness,\mcSize]{_8u3ndlwxt}
{\pgfqpoint{-\mcThickness}{-\mcThickness}}
{\pgfpoint{\mcSize}{\mcSize}}
{\pgfpoint{\mcSize}{\mcSize}}
{
\pgfsetcolor{\tikz@pattern@color}
\pgfsetlinewidth{\mcThickness}
\pgfpathmoveto{\pgfpointorigin}
\pgfpathlineto{\pgfpoint{0}{\mcSize}}
\pgfusepath{stroke}
}}
\makeatother

 
\tikzset{
pattern size/.store in=\mcSize, 
pattern size = 5pt,
pattern thickness/.store in=\mcThickness, 
pattern thickness = 0.3pt,
pattern radius/.store in=\mcRadius, 
pattern radius = 1pt}
\makeatletter
\pgfutil@ifundefined{pgf@pattern@name@_uyusttltt}{
\pgfdeclarepatternformonly[\mcThickness,\mcSize]{_uyusttltt}
{\pgfqpoint{-\mcThickness}{-\mcThickness}}
{\pgfpoint{\mcSize}{\mcSize}}
{\pgfpoint{\mcSize}{\mcSize}}
{
\pgfsetcolor{\tikz@pattern@color}
\pgfsetlinewidth{\mcThickness}
\pgfpathmoveto{\pgfpointorigin}
\pgfpathlineto{\pgfpoint{0}{\mcSize}}
\pgfusepath{stroke}
}}
\makeatother

 
\tikzset{
pattern size/.store in=\mcSize, 
pattern size = 5pt,
pattern thickness/.store in=\mcThickness, 
pattern thickness = 0.3pt,
pattern radius/.store in=\mcRadius, 
pattern radius = 1pt}
\makeatletter
\pgfutil@ifundefined{pgf@pattern@name@_m4j1isj21}{
\pgfdeclarepatternformonly[\mcThickness,\mcSize]{_m4j1isj21}
{\pgfqpoint{-\mcThickness}{-\mcThickness}}
{\pgfpoint{\mcSize}{\mcSize}}
{\pgfpoint{\mcSize}{\mcSize}}
{
\pgfsetcolor{\tikz@pattern@color}
\pgfsetlinewidth{\mcThickness}
\pgfpathmoveto{\pgfpointorigin}
\pgfpathlineto{\pgfpoint{0}{\mcSize}}
\pgfusepath{stroke}
}}
\makeatother
\tikzset{every picture/.style={line width=0.75pt}} 

\begin{tikzpicture}[x=0.75pt,y=0.75pt,yscale=-1,xscale=1]

\draw   (66.22,237.99) -- (178.5,163.5) -- (359.28,444.01) -- (246.99,518.5) -- cycle ;
\draw  [color={rgb, 255:red, 74; green, 144; blue, 226 }  ,draw opacity=1 ] (43,265.37) -- (648.5,265.37) -- (648.5,293.63) -- (43,293.63) -- cycle ;
\draw  [dash pattern={on 0.84pt off 2.51pt}]  (124.5,201.47) -- (301,480.53) ;
\draw  [dash pattern={on 0.84pt off 2.51pt}]  (156.63,377.21) -- (265.5,305.5) ;
\draw  [color={rgb, 255:red, 208; green, 2; blue, 27 }  ,draw opacity=1 ] (42,165.75) -- (647.5,165.75) -- (647.5,194.5) -- (42,194.5) -- cycle ;
\draw [color={rgb, 255:red, 208; green, 2; blue, 27 }  ,draw opacity=1 ] [dash pattern={on 0.84pt off 2.51pt}]  (345.5,165.5) -- (345.25,196.12) ;
\draw [color={rgb, 255:red, 208; green, 2; blue, 27 }  ,draw opacity=1 ] [dash pattern={on 0.84pt off 2.51pt}]  (43,181.5) -- (646.5,180.5) ;
\draw [color={rgb, 255:red, 74; green, 144; blue, 226 }  ,draw opacity=1 ] [dash pattern={on 0.84pt off 2.51pt}]  (345.5,264) -- (346.5,294.5) ;
\draw [color={rgb, 255:red, 74; green, 144; blue, 226 }  ,draw opacity=1 ] [dash pattern={on 0.84pt off 2.51pt}]  (44,280) -- (647.5,279) ;
\draw  [color={rgb, 255:red, 245; green, 166; blue, 35 }  ,draw opacity=1 ][pattern=_z3ryrpx55,pattern size=3pt,pattern thickness=0.75pt,pattern radius=0pt, pattern color={rgb, 255:red, 245; green, 166; blue, 35}] (83,265.5) -- (102.5,265.5) -- (102.5,293.5) -- (83,293.5) -- cycle ;
\draw  [color={rgb, 255:red, 245; green, 166; blue, 35 }  ,draw opacity=1 ][pattern=_8k1jhhl1y,pattern size=3pt,pattern thickness=0.75pt,pattern radius=0pt, pattern color={rgb, 255:red, 245; green, 166; blue, 35}] (245,265.5) -- (264.5,265.5) -- (264.5,293.5) -- (245,293.5) -- cycle ;
\draw  [color={rgb, 255:red, 245; green, 166; blue, 35 }  ,draw opacity=1 ][pattern=_ffepub23j,pattern size=3pt,pattern thickness=0.75pt,pattern radius=0pt, pattern color={rgb, 255:red, 245; green, 166; blue, 35}] (164,264.5) -- (183.5,264.5) -- (183.5,292.5) -- (164,292.5) -- cycle ;
\draw  [color={rgb, 255:red, 245; green, 166; blue, 35 }  ,draw opacity=1 ][pattern=_8u3ndlwxt,pattern size=3pt,pattern thickness=0.75pt,pattern radius=0pt, pattern color={rgb, 255:red, 245; green, 166; blue, 35}] (180.5,165.5) -- (198,165.5) -- (198,193.5) -- (180.5,193.5) -- cycle ;
\draw  [color={rgb, 255:red, 126; green, 211; blue, 33 }  ,draw opacity=1 ][pattern=_uyusttltt,pattern size=3pt,pattern thickness=0.75pt,pattern radius=0pt, pattern color={rgb, 255:red, 126; green, 211; blue, 33}] (130,166.5) -- (174.5,166.5) -- (174.5,193.5) -- (130,193.5) -- cycle ;
\draw  [color={rgb, 255:red, 144; green, 19; blue, 254 }  ,draw opacity=0.58 ] (43,497.37) -- (648.5,497.37) -- (648.5,525.63) -- (43,525.63) -- cycle ;
\draw  [color={rgb, 255:red, 155; green, 155; blue, 155 }  ,draw opacity=1 ][pattern=_m4j1isj21,pattern size=3pt,pattern thickness=0.75pt,pattern radius=0pt, pattern color={rgb, 255:red, 155; green, 155; blue, 155}] (231,497.5) -- (277.5,497.5) -- (277.5,525.5) -- (231,525.5) -- cycle ;
\draw    (150.5,174.5) .. controls (147.59,156.07) and (147.5,172.45) .. (146.59,153.37) ;
\draw [shift={(146.5,151.5)}, rotate = 447.4] [color={rgb, 255:red, 0; green, 0; blue, 0 }  ][line width=0.75]    (10.93,-3.29) .. controls (6.95,-1.4) and (3.31,-0.3) .. (0,0) .. controls (3.31,0.3) and (6.95,1.4) .. (10.93,3.29)   ;
\draw    (192.5,174.5) .. controls (198.14,159.46) and (200.25,161.21) .. (207.12,154.83) ;
\draw [shift={(208.5,153.5)}, rotate = 495] [color={rgb, 255:red, 0; green, 0; blue, 0 }  ][line width=0.75]    (10.93,-3.29) .. controls (6.95,-1.4) and (3.31,-0.3) .. (0,0) .. controls (3.31,0.3) and (6.95,1.4) .. (10.93,3.29)   ;

\draw (337,136.5) node [anchor=north west][inner sep=0.75pt]  [color={rgb, 255:red, 208; green, 2; blue, 27 }  ,opacity=1 ]  {$S^{2}$};
\draw (335,233.5) node [anchor=north west][inner sep=0.75pt]  [color={rgb, 255:red, 74; green, 144; blue, 226 }  ,opacity=1 ]  {$S^{1}$};
\draw (110,372.5) node [anchor=north west][inner sep=0.75pt]    {$\phi ( T)$};
\draw (195,128.5) node [anchor=north west][inner sep=0.75pt]  [color={rgb, 255:red, 245; green, 166; blue, 35 }  ,opacity=1 ]  {$\Delta ^{v}_{T}( S)$};
\draw (115,127.5) node [anchor=north west][inner sep=0.75pt]  [color={rgb, 255:red, 126; green, 211; blue, 33 }  ,opacity=1 ]  {$\Delta ^{h}_{T}( S)$};
\draw (325.5,530.5) node [anchor=north west][inner sep=0.75pt]  [color={rgb, 255:red, 144; green, 19; blue, 254 }  ,opacity=0.52 ]  {$S^{3}$};

\end{tikzpicture}

\caption{Geometric description of Proposition \ref{prop:1}} \label{fig:mot_prop:1}

\end{figure}
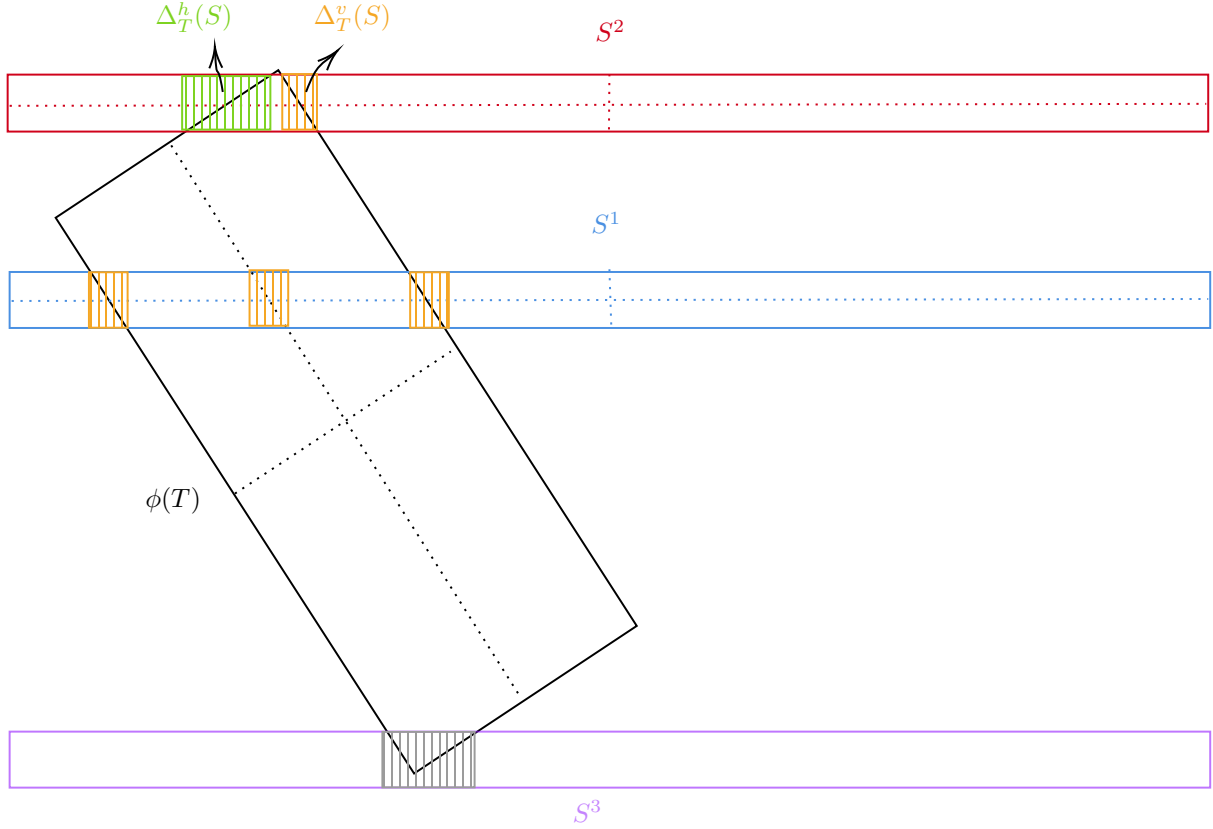

\begin{figure}
\centering

 
\tikzset{
pattern size/.store in=\mcSize, 
pattern size = 5pt,
pattern thickness/.store in=\mcThickness, 
pattern thickness = 0.3pt,
pattern radius/.store in=\mcRadius, 
pattern radius = 1pt}
\makeatletter
\pgfutil@ifundefined{pgf@pattern@name@_p0jwqc9ig}{
\pgfdeclarepatternformonly[\mcThickness,\mcSize]{_p0jwqc9ig}
{\pgfqpoint{-\mcThickness}{-\mcThickness}}
{\pgfpoint{\mcSize}{\mcSize}}
{\pgfpoint{\mcSize}{\mcSize}}
{
\pgfsetcolor{\tikz@pattern@color}
\pgfsetlinewidth{\mcThickness}
\pgfpathmoveto{\pgfpointorigin}
\pgfpathlineto{\pgfpoint{0}{\mcSize}}
\pgfusepath{stroke}
}}
\makeatother
\tikzset{every picture/.style={line width=0.75pt}} 

\begin{tikzpicture}[x=0.75pt,y=0.75pt,yscale=-1,xscale=1]

\draw   (78.83,685.45) -- (99.9,662.04) -- (336.36,874.88) -- (315.29,898.29) -- cycle ;
\draw  [dash pattern={on 0.84pt off 2.51pt}]  (90.73,674.2) -- (324.46,886.12) ;
\draw  [dash pattern={on 0.84pt off 2.51pt}]  (196.28,793.18) -- (218.99,767.9) ;
\draw  [color={rgb, 255:red, 74; green, 144; blue, 226 }  ,draw opacity=1 ] (42.5,693.5) -- (650,693.5) -- (650,750.5) -- (42.5,750.5) -- cycle ;
\draw [color={rgb, 255:red, 74; green, 144; blue, 226 }  ,draw opacity=1 ] [dash pattern={on 0.84pt off 2.51pt}]  (43,721.5) -- (649.5,722.5) ;
\draw [color={rgb, 255:red, 74; green, 144; blue, 226 }  ,draw opacity=1 ] [dash pattern={on 0.84pt off 2.51pt}]  (346,694) -- (346.5,750) ;
\draw  [color={rgb, 255:red, 245; green, 166; blue, 35 }  ,draw opacity=1 ][pattern=_p0jwqc9ig,pattern size=3pt,pattern thickness=0.75pt,pattern radius=0pt, pattern color={rgb, 255:red, 245; green, 166; blue, 35}] (88,693.5) -- (199.5,693.5) -- (199.5,750.5) -- (88,750.5) -- cycle ;

\draw (163,808.5) node [anchor=north west][inner sep=0.75pt]    {$\phi ( T)$};
\draw (338,662.5) node [anchor=north west][inner sep=0.75pt]  [color={rgb, 255:red, 74; green, 144; blue, 226 }  ,opacity=1 ]  {$S^1$};

\end{tikzpicture}\caption{Geometric description of Proposition \ref{prop:1} (1)}\label{fig:mot_prop:1(1)}

\end{figure}

\begin{proposition} \label{prop:1} Consider the rotation $\phi:=\phi^\theta$ for some $\theta\in[0,2\pi]$. Let $S,T$ denote two dyadic rectangles of dimension $S_1\times S_2$ and $T_1\times T_2$ respectively and assume that there are no tops of $\phi(T)$ in $S$ ($\tops (\phi(T)) \cap S=\emptyset$). 
\begin{enumerate}
\item 
If $S$ intersects with only the rotated vertical segments of $T$, then
\begin{equation} \label{only_vert}
|\langle h_S \circ \phi, h_T \rangle|  \leq 3 \frac{\min(S_2^2 |\tan \theta|, S_1^2 |\cot\theta|)}{|T|^{1/2} |S|^{1/2}}.
\end{equation}
\item
If $S$ intersects with only the rotated horizontal segments of $T$, then
\begin{equation} \label{only_hori}
|\langle h_S \circ \phi, h_T \rangle| \leq 3 \frac{\min(S_1^2|\tan \theta|, S_2^2 |\cot \theta|)}{|T|^{1/2} |S|^{1/2}}.
\end{equation}
\item 
If $S$ intersects with both the rotated vertical and horizontal segments of $T$ and the vertical boundaries of $S$ do not intersect with $\phi(T)$ \textbf{or} if $S$ intersects with both the rotated vertical and horizontal segments of $T$ and the horizontal boundaries of $S$ do not intersect with $\phi(T)$, then
\begin{equation} \label{prop:1_decomp}
|\langle h_S \circ \phi, h_T \rangle| \leq \Delta^v_T(S) + \Delta^h_T(S),
\end{equation}
where
$$ \Delta_T^v(S) \leq 3  \frac{\min(S_2^2 |\tan \theta|, S_1^2 |\cot\theta|)}{|T|^{1/2} |S|^{1/2}} \qquad \textrm{and} \qquad \Delta_T^h(S) \leq 3  \frac{\min(S_1^2 |\tan \theta|, S_2^2 |\cot\theta|)}{|T|^{1/2} |S|^{1/2}}.$$
\end{enumerate}
\end{proposition}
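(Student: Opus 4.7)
The plan is to reduce the pairing $\langle h_S\circ\phi, h_T\rangle=\langle h_S, h_T\circ\phi^{-1}\rangle$ to integrals of $h_S$ against indicators of half-planes bounded by lines of slope $\tan\theta$ or $-\cot\theta$ (the slopes of the rotated horizontal and vertical segments of $T$), then to estimate those integrals by a scalar cancellation lemma proved via integration by parts. The starting point is the factorization
$$h_T\circ\phi^{-1}(x,y)\;=\;|T|^{-1/2}\,\epsilon_1(u)\,\epsilon_2(v)\,\mathbbm{1}_{\phi(T)}(x,y),$$
where $(u,v)$ are the centered rotated coordinates aligned with $T$ and $\epsilon_1,\epsilon_2\in\{\pm 1\}$ are the Haar signs. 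The three hypotheses in the proposition are used to control which of the factors $\epsilon_1(u)$, $\epsilon_2(v)$, $\mathbbm{1}_{\phi(T)}$ remain genuinely non-constant on $S$, the hypothesis $\tops(\phi(T))\cap S=\emptyset$ precisely ruling out the ``corner'' configuration in which both signs flip inside $S\cap\phi(T)$.

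In Case (1), the absence of any rotated horizontal segment of $T$ in $S$ means that none of the lines $v=0,\pm T_2/2$ traverses $S$, so $\epsilon_2(v)\mathbbm{1}_{|v|\leq T_2/2}$ is a constant sign on $S$. Hence $(h_T\circ\phi^{-1})|_S$ is a signed combination of $h_{\phi(l_1^v)}|_S$ and two boundary correction pieces coming from $u=\pm T_1/2$, giving $|\langle h_S\circ\phi,h_T\rangle|\leq\sum_{j=1}^3|\langle h_S,h_{\phi(l_j^v)}\rangle|$. The estimate is then finished by a scalar lemma: for any half-plane $H=\{y<mx+b\}$,
$$\left|\int_S h_S\,\mathbbm{1}_H\right|\;\lesssim\;\frac{1}{|S|^{1/2}}\min\!\left(|m|\,S_1^2,\; S_2^2/|m|\right),$$
proved by Fubini on $h_S=h_{S_1}\otimes h_{S_2}$ and integration by parts: integrating first in $y$ produces the primitive $H_{S_2}$ (uniformly $\lesssim|S_2|^{1/2}$), after which IBP in $x$ transfers the derivative onto the linear boundary, yielding a factor $|m|$ together with the length constraint $|S_2|/|m|$ on the support of $h_{S_2}(mx+b)$; reversing the order of integration gives the symmetric expression. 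Taking $|m|=|\cot\theta|$ for the rotated vertical lines turns this into $\min(S_1^2|\cot\theta|,S_2^2|\tan\theta|)/|S|^{1/2}$, which times $|T|^{-1/2}$ and summed over the three segments produces exactly the claimed bound with constant $3$. Case (2) is the mirror statement with $|m|=|\tan\theta|$.

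For Case (3), the additional hypothesis that one family of boundary sides of $S$ lies entirely outside $\phi(T)$ is precisely what allows one to expand $\epsilon_1(u)\epsilon_2(v)\mathbbm{1}_{\phi(T)}\mathbbm{1}_S$ as an algebraic sum of the indicators of the half-planes defined by the six lines $\phi(l_j^v),\phi(l_i^h)$ without any residual corner term. Taking absolute values produces the splitting $|\langle h_S\circ\phi,h_T\rangle|\leq\Delta_T^v(S)+\Delta_T^h(S)$, and each summand is then estimated by the scalar lemma exactly as above (with slope $|\cot\theta|$ for the $\Delta^v$ pieces and $|\tan\theta|$ for the $\Delta^h$ pieces). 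The main obstacle is verifying cleanly that the geometric assumption on the sides of $S$ indeed rules out the corner contribution — this is what makes the decomposition into purely ``vertical'' and purely ``horizontal'' pieces possible. A secondary technical point is that the IBP in the scalar lemma must be carried out twice, once in each variable, to recover both terms inside the minimum; the two bounds are then genuinely independent and the min is sharp in the transversality parameter $\theta$.
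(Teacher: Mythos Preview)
Your approach is correct and runs parallel to the paper's, with one genuine simplification worth noting. The paper proves the bound for a single rotated segment by localizing to the ``$x$-strip'' (or ``$y$-strip'') where that segment crosses $S$ and bounding by the strip area times $\|h_S\|_\infty\|h_T\|_\infty$; your scalar half-plane lemma via integration by parts is exactly this computation, just phrased as an estimate on $\int_S h_S\,\mathbbm{1}_H$ rather than as a geometric area bound. The two arguments give the same $\min(S_2^2|\tan\theta|,S_1^2|\cot\theta|)$ factor (up to harmless constants), and your double-IBP recovers both options in the paper's Option~1/Option~2 dichotomy.

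Where your route is cleaner is in Case~(3): the paper uses the extra hypothesis that one pair of sides of $S$ lies outside $\phi(T)$ in order to obtain \emph{disjoint} strips in one variable and then identifies each strip integral with one $\langle h_S,h_{\phi(l_j^{\cdot})}\rangle$. Your decomposition into half-plane indicators, by contrast, needs only the ``no tops'' hypothesis: since the six lines of $\phi(\partial T)$ are pairwise non-crossing inside $S$ (parallel within each family, and meeting the other family only at tops), the function $\epsilon_1(u)\epsilon_2(v)\mathbbm{1}_{\phi(T)}$ is piecewise constant on a partition of $S$ by non-crossing chords, and any such function is a constant plus a linear combination of the half-plane indicators with coefficients equal to the (well-defined) jumps across each chord. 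This is correct, but note that it is the no-tops assumption, not the side hypothesis, that makes the linear expansion possible --- you should adjust that attribution. The exact constant $3$ may come out as $4$ in your count (the centre line carries a jump of size $2$), but this is irrelevant for the application.
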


\begin{remark} \label{rem:sym} 
Since $\phi=\phi^\theta$ is the rotation of angle $\theta$, then $\phi^{-1}=\phi^{-\theta}$ and 
\begin{equation} \langle h_S, h_T \circ \phi^{-1} \rangle = \langle h_S \circ \phi , h_T  \rangle
\label{eq:inner_p}
\end{equation}
and we see that up to a change of $\theta \to -\theta$ the quantity is symmetric in terms of $S$ and $T$. So the final estimate will be obtained by taking the minimum over different possibilities according that if we want to use cancellation of $h_S$ (along the $x$ coordinate, or $y$ coordinate) or of $h_T$. 
\end{remark}

\begin{proof}[Proof of Proposition \ref{prop:1}] 
We will start with the proof for Case (3) and the estimates for Cases (1) and (2) follow from the same proof with minor modifications. 
\begin{enumerate}[(a)] 
\item Case (3):
The proof consists of two parts - the first part verifies the decomposition (\ref{prop:1_decomp}) and the second part establishes the estimate for $\Delta^v_T(S)$. We remark that the argument for $\Delta^h_T(S)$ follows from the exactly same reasoning with $\theta$ replaced by $\frac{\pi}{2} - \theta$.
\newline
\textbf{Part I.}
Fix any dyadic rectangle $S=I\times J$ of the size 
$|S| = S_1 \times S_2$, in particular $I=(i,i+1)S_1$ and $J=(j,j+1)S_2$ for some integers $i,j$. By translation invariance, we assume that $T$ is has the lower left corner positioned at the origin.\footnote{The general case can be verified by the computation that 
\fontsize{6.5}{6.5}{$$\int h_{T}(\phi^{-1}(x,y)) h_S(x,y)dx dy = \int h_{T_0}(\phi^{-1}(x,y) - t) h_S(x,y)dx dy =\int h_{T_0} \circ \phi^{-1}\left((x,y) - \phi(t)\right) h_S(x,y)dx dy =  \int h_{T_0}\circ \phi^{-1}(x',y') h_S\left((x',y')+ \phi(t)\right)dx' dy',$$} where $T_0$ is the rectangle with the same shape as $T$ with lower left corner at the origin and $(x',y') := (x,y) + \phi(t)$.}

To compute
\begin{align} \label{inner_prod}
\langle h_S, h_T \circ \phi^{-1} \rangle = & \int_{i S_1}^{(i+1)S_1} \int_{j S_2}^{(j+1) S_2} h_I \otimes h_J (x,y) h_T \circ \phi^{-1}(x,y) dy dx,
\end{align}
we recall the $6$ segments: 
\begin{enumerate}
\item 3 vertical segments
$$
l^v_1 := 
\left\{\begin{bmatrix}
\frac{T_1}{2} \\
\tilde{y}
\end{bmatrix}, \ 0 \leq \tilde{y} \leq T_2\right\},
\qquad 
l^v_2 := 
\left\{\begin{bmatrix}
T_1 \\
\tilde{y}
\end{bmatrix}, \ 0 \leq \tilde{y} \leq T_2\right\}
\qquad \textrm{and} \qquad
l^v_{3}:=
\left\{\begin{bmatrix}
0 \\
\tilde{y}
\end{bmatrix}, \ 0 \leq \tilde{y} \leq T_2\right\}
$$ 
\item 3 horizontal segments
$$
l^h_1:=
\left\{\begin{bmatrix}
\tilde{x} \\
\frac{T_2}{2}
\end{bmatrix},\ 0 \leq \tilde{x} \leq T_1\right\},
\qquad
l^h_2 :=
\left\{\begin{bmatrix}
\tilde{x} \\
T_2
\end{bmatrix},\ 0 \leq \tilde{x} \leq T_1\right\}
\qquad \textrm{and} \qquad
l^h_3 :=
\left\{\begin{bmatrix}
\tilde{x} \\
0
\end{bmatrix},\ 0 \leq \tilde{x} \leq T_1\right\}.
$$
\end{enumerate}
Suppose there are no tops of $\phi(T)$ in $S$, the condition in (3) ensures that intersection of $S$ with all the $6$ segments can be written as a disjoint union of rectangles intersecting with each segment (up to endpoints).

In particular, if $S$ intersects with both the rotated vertical and horizontal segments of $T$ and the vertical boundaries of $S$ do not intersect with $\phi(T)$, then 
\begin{align} \label{x_disjoint}
& S \cap \phi(l^v_1 \cup l^v_2 \cup l^v_3 \cup l^h_1 \cup l^h_2 \cup l^h_3) \nonumber\\
= &\left((x_1^1, x_2^1) \times J \cap \phi(l^v_1) \right) \cup \left((x_1^2, x_2^2) \times J \cap \phi(l^v_2) \right) \cup \left((x_1^3, x_2^3) \times J \cap \phi(l^v_3) \right) \cup \nonumber\\
& \left((x_1^4, x_2^4) \times J \cap \phi(l^h_1) \right) \cup \left((x_1^5, x_2^5) \times J \cap \phi(l^h_2) \right) \cup \left((x_1^6, x_2^6) \times J \cap \phi(l^h_3) \right)
\end{align}
where 
\begin{equation*}
(x_1^{l}, x_2^l) \cap (x_1^{l'}, x_2^{l'}) = \emptyset
\end{equation*}
for $l\neq l'$. With a little abuse of notation, the empty interval is denoted by $(x_1,x_2)$ with $x_1 = x_2$.
If $S$ intersects with both the rotated vertical and horizontal segments of $T$ and the horizontal boundaries of $S$ do not intersect with $\phi(T)$, then
\begin{align*}
&S \cap \phi(l^v_1 \cup l^v_2 \cup l^v_3 \cup l^h_1 \cup l^h_2 \cup l^h_3) \nonumber\\
=&\left(I \times [y_1^1,y_2^1] \cap \phi(l^v_1) \right) \cup \left(I \times [y_1^2,y_2^2] \cap \phi(l^v_2)\right) \cup \left(I \times [y_1^3,y_2^3] \cap \phi(l^v_3) \right)\cup \\
& \left(I \times [y_1^4,y_2^4] \cap \phi(l^h_1) \right) \cup \left(I \times [y_1^5,y_2^5] \cap \phi(l^h_2)\right) \cup \left(I \times [y_1^6,y_2^6] \cap \phi(l^h_3) \right)
\end{align*}
where 
\begin{equation*}
(y_1^l, y_2^l) \cap (y_1^{l'}, y_2^{l'}) = \emptyset
\end{equation*}
for $l \neq l'$. 
Let us assume, without loss of generality, that we are in the case (\ref{x_disjoint}). Then (\ref{inner_prod}) can be rewritten as follows. 
\begin{align*}
 \iint h_I \otimes h_J (x,y) h_T \circ \phi^{-1}(x,y) dy dx =\sum_{\ell=1}^6 \int_{x_1^\ell < x < x_2^\ell} h_I \otimes h_J (x,y) h_T \circ \phi^{-1}(x,y) dy dx,
\end{align*}
since when $\displaystyle x\in I \setminus \bigcup_{\ell=1}^6 (x_1^{\ell},x_2^{\ell})$, all the segment $\{x\} \times J$ (up to endpoints) is included in one of the four quarters of $\phi(T)$ or in $\phi(T)^{c}$. So $h_T \circ \phi^{-1}$ is constant and $h_{S}$ has a mean zero on this segment due to the cancellation imposed by the oscillation of $h_J$. Thus the integral
$$ \int_{y\in J} h_S(x,y) h_T \circ \phi(x,y) \, dy=0.$$ 
Moreover, by the disjointness of the decomposition specified in (\ref{x_disjoint}),
\begin{equation}
 \int_{x_1^1 < x < x_2^1} h_I \otimes h_J (x,y) h_T \circ \phi^{-1}(x,y) dy dx =  \int_{x_1^1 < x < x_2^1} h_I \otimes h_J (x,y) h_{\phi(l^v_1)}(x,y) dy dx = \int_{S} h_I \otimes h_J (x,y) h_{\phi(l^v_1)}(x,y) dy dx
\end{equation}
where the last equation again follows from the oscillation of  $h_S$ in $y-$direction for each fixed $x$. The same reasoning can be applied to all the other rectangles in (\ref{x_disjoint}) so that we can derive 
\begin{equation}
\left|\sum_{\ell=1}^3 \int_{x_1^\ell < x < x_2^\ell} h_I \otimes h_J (x,y) h_T \circ \phi^{-1}(x,y) dy dx \right| \leq \Delta^v_T(S) 
\end{equation}
and 
\begin{equation}
\left|\sum_{\ell=3}^6 \int_{x_1^\ell < x < x_2^\ell} h_I \otimes h_J (x,y) h_T \circ \phi^{-1}(x,y) dy dx \right| \leq \Delta^h_T(S) 
\end{equation}
which completes the proof of (\ref{prop:1_decomp}).
\newline
\textbf{Part II.}
We will now prove the estimate for $\Delta^v_T(S)$, in particular
\begin{equation*}
\Delta^v_T(S) \leq 3  \frac{\min(S_2^2 |\tan \theta|, S_1^2 |\cot\theta|)}{|T|^{1/2} |S|^{1/2}}.
\end{equation*}
It suffices to show that for each $1 \leq j \leq 3$,
\begin{equation}\label{vertical_estimate}
|\langle h_S, h_{\phi(l^v_j)} \rangle| \leq \frac{\min(S_2^2 |\tan \theta|, S_1^2 |\cot\theta|)}{|T|^{1/2} |S|^{1/2}}.
\end{equation}
Without loss of generality (by translation), assume that $T$ can be represented by its four corner as
\begin{equation}
T = 
\begin{bmatrix}
0 & T_1 & T_1 & 0 \\
0 & 0 & T_2 & T_2
\end{bmatrix}
\end{equation}
where the size of the dyadic rectangle is represented as
$$|T| = T_1 \times T_2.$$
One notices that the argument for $\theta \in [-\frac{\pi}{2},\frac{\pi}{2}]$ can be applied to the case when $\theta \in [-\pi,-\frac{\pi}{2}] \cup [\frac{\pi}{2},\pi)$. Suppose that $\theta \in [-\frac{\pi}{2},\frac{\pi}{2}]$, then $\phi^{\theta}$ can be expressed as the rotation matrix 
\begin{equation}
\phi^{\theta} = 
\begin{bmatrix}
\cos \theta & -\sin \theta \\
\sin \theta & \cos \theta
\end{bmatrix}
\end{equation}
and the rotated rectangle $\phi(T)$ is indeed
\begin{equation}
\phi(T) = 
\phi^{\theta} \cdot T
= 
\begin{bmatrix}
0 & T_1 \cos \theta & T_1 \cos \theta - T_2 \sin \theta  & -T_2 \sin \theta \\
0 &T_1 \sin \theta & T_1 \sin \theta + T_2 \cos \theta & T_2 \cos \theta
\end{bmatrix}. 
\end{equation}

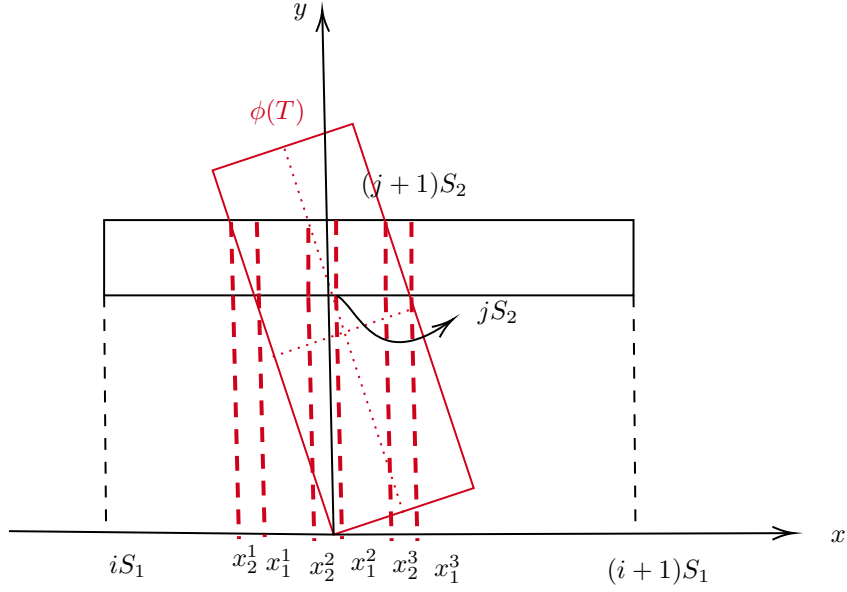
\begin{figure}
\centering

\tikzset{every picture/.style={line width=0.75pt}} 

\begin{tikzpicture}[x=0.75pt,y=0.75pt,yscale=-1,xscale=1]

\draw   (176,172) -- (443,172) -- (443,210) -- (176,210) -- cycle ;
\draw  [color={rgb, 255:red, 208; green, 2; blue, 27 }  ,draw opacity=1 ] (230.72,146.9) -- (291.76,330.73) -- (362.41,307.27) -- (301.36,123.44) -- cycle ;
\draw [color={rgb, 255:red, 208; green, 2; blue, 27 }  ,draw opacity=1 ] [dash pattern={on 0.84pt off 2.51pt}]  (267,136) -- (327,320) ;
\draw [color={rgb, 255:red, 208; green, 2; blue, 27 }  ,draw opacity=1 ] [dash pattern={on 0.84pt off 2.51pt}]  (261,241) -- (331,217) ;
\draw [color={rgb, 255:red, 208; green, 2; blue, 27 }  ,draw opacity=1 ][line width=1.5]  [dash pattern={on 5.63pt off 4.5pt}]  (240,173) -- (241,210) ;
\draw [color={rgb, 255:red, 208; green, 2; blue, 27 }  ,draw opacity=1 ][line width=1.5]  [dash pattern={on 5.63pt off 4.5pt}]  (253,173) -- (254,209) ;
\draw [color={rgb, 255:red, 208; green, 2; blue, 27 }  ,draw opacity=1 ][line width=1.5]  [dash pattern={on 5.63pt off 4.5pt}]  (279,176) -- (279,211) ;
\draw [color={rgb, 255:red, 208; green, 2; blue, 27 }  ,draw opacity=1 ][line width=1.5]  [dash pattern={on 5.63pt off 4.5pt}]  (293,172) -- (293,210) ;
\draw [color={rgb, 255:red, 208; green, 2; blue, 27 }  ,draw opacity=1 ][line width=1.5]  [dash pattern={on 5.63pt off 4.5pt}]  (318,173) -- (318,211) ;
\draw [color={rgb, 255:red, 208; green, 2; blue, 27 }  ,draw opacity=1 ][line width=1.5]  [dash pattern={on 5.63pt off 4.5pt}]  (331,173) -- (331,211) ;
\draw    (291.76,330.73) -- (286.04,68) ;
\draw [shift={(286,66)}, rotate = 448.75] [color={rgb, 255:red, 0; green, 0; blue, 0 }  ][line width=0.75]    (10.93,-3.29) .. controls (6.95,-1.4) and (3.31,-0.3) .. (0,0) .. controls (3.31,0.3) and (6.95,1.4) .. (10.93,3.29)   ;
\draw    (291.76,330.73) -- (522,330.01) ;
\draw [shift={(524,330)}, rotate = 539.8199999999999] [color={rgb, 255:red, 0; green, 0; blue, 0 }  ][line width=0.75]    (10.93,-3.29) .. controls (6.95,-1.4) and (3.31,-0.3) .. (0,0) .. controls (3.31,0.3) and (6.95,1.4) .. (10.93,3.29)   ;
\draw    (128,329) -- (291.76,330.73) ;
\draw [color={rgb, 255:red, 208; green, 2; blue, 27 }  ,draw opacity=1 ][line width=1.5]  [dash pattern={on 5.63pt off 4.5pt}]  (254,209) -- (257,332) ;
\draw [color={rgb, 255:red, 208; green, 2; blue, 27 }  ,draw opacity=1 ][line width=1.5]  [dash pattern={on 5.63pt off 4.5pt}]  (241,210) -- (244,333) ;
\draw [color={rgb, 255:red, 208; green, 2; blue, 27 }  ,draw opacity=1 ][line width=1.5]  [dash pattern={on 5.63pt off 4.5pt}]  (331,211) -- (334,334) ;
\draw [color={rgb, 255:red, 208; green, 2; blue, 27 }  ,draw opacity=1 ][line width=1.5]  [dash pattern={on 5.63pt off 4.5pt}]  (293,210) -- (296,333) ;
\draw [color={rgb, 255:red, 208; green, 2; blue, 27 }  ,draw opacity=1 ][line width=1.5]  [dash pattern={on 5.63pt off 4.5pt}]  (318,211) -- (321,334) ;
\draw [color={rgb, 255:red, 208; green, 2; blue, 27 }  ,draw opacity=1 ][line width=1.5]  [dash pattern={on 5.63pt off 4.5pt}]  (279,211) -- (282,334) ;
\draw    (293,210) .. controls (303.89,214.95) and (311.84,251.26) .. (350.81,222.88) ;
\draw [shift={(352,222)}, rotate = 503.13] [color={rgb, 255:red, 0; green, 0; blue, 0 }  ][line width=0.75]    (10.93,-3.29) .. controls (6.95,-1.4) and (3.31,-0.3) .. (0,0) .. controls (3.31,0.3) and (6.95,1.4) .. (10.93,3.29)   ;
\draw  [dash pattern={on 4.5pt off 4.5pt}]  (176,210) -- (177,330) ;
\draw  [dash pattern={on 4.5pt off 4.5pt}]  (443,210) -- (444,330) ;

\draw (256,336.4) node [anchor=north west][inner sep=0.75pt]    {$x^{1}_{1}$};
\draw (239,334.4) node [anchor=north west][inner sep=0.75pt]    {$x^{1}_{2}$};
\draw (278,338.4) node [anchor=north west][inner sep=0.75pt]    {$x^{2}_{2}$};
\draw (299,336.4) node [anchor=north west][inner sep=0.75pt]    {$x^{2}_{1}$};
\draw (320,337.4) node [anchor=north west][inner sep=0.75pt]    {$x^{3}_{2}$};
\draw (341,339.4) node [anchor=north west][inner sep=0.75pt]    {$x^{3}_{1}$};
\draw (304,146.4) node [anchor=north west][inner sep=0.75pt]    {$( j+1) S_{2}$};
\draw (363,211.4) node [anchor=north west][inner sep=0.75pt]    {$jS_{2}$};
\draw (177,339.4) node [anchor=north west][inner sep=0.75pt]    {$iS_{1}$};
\draw (428,341.4) node [anchor=north west][inner sep=0.75pt]    {$( i+1) S_{1}$};
\draw (541,327.4) node [anchor=north west][inner sep=0.75pt]    {$x$};
\draw (270,61.4) node [anchor=north west][inner sep=0.75pt]    {$y$};
\draw (248,109.4) node [anchor=north west][inner sep=0.75pt]  [color={rgb, 255:red, 208; green, 2; blue, 27 }  ,opacity=1 ]  {$\phi ( T)$};

\end{tikzpicture} \caption{Geometric interpretation of Proof for Proposition \ref{prop:1}}

\end{figure}

\underline{Option 1:} Integrating in $S$ over $J$ with $x \in I$ fixed.\\ We will now prove the following claim.
\begin{claim}
If $x_1,x_2$ comes from the same rotated vertical segment then 
\begin{equation}
|x_1- x_2| \leq \min(S_2 |\tan\theta|,S_1); \label{eq:x1x2-1}
\end{equation}
\end{claim}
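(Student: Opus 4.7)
The plan is to obtain \eqref{eq:x1x2-1} by a purely geometric/parametric argument: I would first write down an explicit parametrization of a rotated vertical segment $\phi(l_j^v)$ as a straight line and then apply two trivial bounds on the $x$-spread.

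More precisely, writing $l_j^v = \{(c_j,\tilde y) : 0 \leq \tilde y \leq T_2\}$ where $c_j \in \{0, T_1/2, T_1\}$, the image under $\phi^\theta$ is
\begin{equation*}
\phi(l_j^v) = \left\{ \bigl(c_j\cos\theta - \tilde y \sin\theta,\ c_j\sin\theta + \tilde y \cos\theta\bigr) : 0 \leq \tilde y \leq T_2 \right\},
\end{equation*}
which is a segment of a line of slope $-\cot\theta$ (assuming $\sin\theta \neq 0$; the case $\sin\theta = 0$ corresponds to $\tan\theta = 0$, for which the inequality is trivial since then $x_1 = x_2$ forces the bound). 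The relation obtained by eliminating $\tilde y$ reads $y_i - y_j = -\cot\theta \cdot (x_i - x_j)$ for any two points $(x_i,y_i),(x_j,y_j)$ on this line, so $|x_1 - x_2| = |y_1 - y_2|\cdot|\tan\theta|$.

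Now if $(x_1,y_1)$ and $(x_2,y_2)$ are both in $\phi(l_j^v)\cap S$ where $S = I \times J$, then $y_1,y_2 \in J$ with $|J| = S_2$, giving $|y_1-y_2|\leq S_2$, whence the first bound
\begin{equation*}
|x_1 - x_2| \leq S_2|\tan\theta|.
\end{equation*}
Simultaneously $x_1,x_2 \in I$ with $|I| = S_1$, giving the trivial bound $|x_1-x_2| \leq S_1$. Taking the minimum of the two bounds yields \eqref{eq:x1x2-1}.

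No obstacle is anticipated: the claim is a one-line elementary geometric fact, namely that the horizontal span of a line segment of slope $-\cot\theta$ inside $I\times J$ is controlled both by the width of $I$ and by $|\tan\theta|$ times the height of $J$. The only minor care needed is to isolate the degenerate case $\sin\theta = 0$ (where the ``rotated vertical segment'' is still vertical and so $x_1 = x_2$ automatically), which is consistent with the convention $\min(0,S_1) = 0$.
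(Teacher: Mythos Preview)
Your proof is correct and takes essentially the same approach as the paper: parametrize the rotated vertical segment, read off the slope $-\cot\theta$, and combine the resulting relation $|x_1-x_2|=|y_1-y_2|\,|\tan\theta|$ with the trivial containment bounds $|y_1-y_2|\le S_2$ and $|x_1-x_2|\le S_1$. The paper carries out the same computation by plugging in the explicit heights $y=jS_2$ and $y=(j+1)S_2$ to get the equality $|x_1-x_2|=S_2|\tan\theta|$, while you phrase it via the slope relation for arbitrary points on the segment inside $S$; the content is identical.
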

Suppose that $|x_1-x_2| \leq S_1$, then for some $\tilde{y}_1, \tilde y_2\in[0,T_2]$ ,
\begin{align}
 \begin{bmatrix}
x_1 \\
jS_2
\end{bmatrix}
= &
\phi^{\theta}\cdot
\begin{bmatrix}
\frac{T_1}{2} \\
\tilde{y}_1
\end{bmatrix} 
= 
\begin{bmatrix}
\frac{T_1}{2}\cos \theta - \tilde{y}_1 \sin \theta \\
\frac{T_1}{2} \sin\theta+ \tilde{y}_1 \cos \theta
\end{bmatrix} \nonumber \\
 \begin{bmatrix}
x_2 \\
(j+1)S_2
\end{bmatrix}
= &
\phi^{\theta}\cdot
\begin{bmatrix}
\frac{T_1}{2} \\
\tilde{y}_2
\end{bmatrix} 
= 
\begin{bmatrix}
\frac{T_1}{2}\cos \theta - \tilde{y}_2 \sin \theta \\
\frac{T_1}{2} \sin\theta+ \tilde{y}_2 \cos \theta
\end{bmatrix}. \nonumber
\end{align}
That generate the relation
$$ \tilde{y_2} - \tilde{y_1} = \frac{S_2}{\cos\theta}$$
and $$ x_2 - x_1 = (\tilde{y_1} - \tilde{y_2}) \sin\theta = -S_2 \tan\theta,$$
which concludes the proof of \eqref{eq:x1x2-1}. The computations are exactly the same for other vertical segments. For the second case, a similar argument holds with $\theta$ replaced by $\frac{\pi}{2}-\theta$.

\medskip

We can now apply the estimate (\ref{eq:x1x2-1}) to majorize the left hand side of (\ref{vertical_estimate}) by
\begin{align}\label{vertical_estimate_2}
\left|\iint_{x_1^j\leq x\leq x_2^j} h_I \otimes h_J (x,y) h_{\phi(l^v_j)}(x,y) dy dx \right| & \leq |[x_1^j,x_2^j] \times J| |S|^{-\frac{1}{2}} |T|^{-\frac{1}{2}} \leq \min(S_2 |\tan\theta|,S_1) \cdot S_2 \cdot |S|^{-\frac{1}{2}} |T|^{-\frac{1}{2}}
\end{align}

\underline{Option 2:} Integrating in $S$ over $I$ with $y \in J$ fixed.\\
By an analogous argument, we can show that by integrating over $x-$ direction, we obtain the following estimate.
\begin{claim}
If $x_1,x_2$ comes from the same rotated vertical segment then 
\begin{equation}\label{eq:x1x2-2}
|x_1- x_2| \leq \min(S_1 |\cot\theta|,S_2); 
\end{equation}
\end{claim}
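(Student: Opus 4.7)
The plan is to mirror the Option 1 argument with the roles of the horizontal and vertical coordinates reversed, which amounts to replacing $\theta$ by $\tfrac{\pi}{2} - \theta$ in the corresponding crossing analysis. First I would fix $y \in J = [jS_2,(j+1)S_2]$ and consider the inner integral $\int_I h_I(x)\, h_{\phi(l_j^v)}(x,y)\,dx$. Since $h_{\phi(l_j^v)}$ is constant on any horizontal slice not crossed by the rotated vertical line $\phi(l_j^v)$, the mean-zero property of $h_I$ forces this inner integral to vanish unless $\phi(l_j^v)$ crosses $I \times \{y\}$. The task therefore reduces to bounding the length of the set of $y \in J$ for which such a crossing occurs.

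To perform that bound I would parametrize $l_j^v = \{(c_j,\tilde y):0\leq \tilde y\leq T_2\}$ with $c_j\in\{0,\tfrac{T_1}{2},T_1\}$ and deduce that its rotated image lies on the affine line $x = \tfrac{c_j}{\cos\theta} - y\tan\theta$, equivalently $y = \tfrac{c_j}{\sin\theta} - x\cot\theta$. Requiring the intersection point to satisfy $x \in I$ constrains $y$ to a translate of $-I\cot\theta$, which is an interval of length $|I||\cot\theta| = S_1|\cot\theta|$. Intersecting this with $J$ (of length $S_2$) yields the stated bound $\min(S_1|\cot\theta|, S_2)$ (the letters $x_1, x_2$ in \eqref{eq:x1x2-2} being used by analogy with Option 1 to denote endpoints of a range, here of the $y$-projection of $\phi(l_j^v) \cap S$).

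Having proved the claim, I would then estimate $|\langle h_S, h_{\phi(l_j^v)}\rangle|$ by the $L^\infty$ bound $|S|^{-1/2}|T|^{-1/2}$ times the area of the support $I \times [y_1,y_2]$, obtaining $\min(S_1^2|\cot\theta|, S_1 S_2)\,|S|^{-1/2}|T|^{-1/2}$. Combining this with the Option 1 bound $\min(S_2^2|\tan\theta|, S_1 S_2)\,|S|^{-1/2}|T|^{-1/2}$ and using $\min(a,b)\leq \sqrt{ab}$ with $a = S_2^2|\tan\theta|$ and $b = S_1^2|\cot\theta|$ (so that $\sqrt{ab} = S_1 S_2$) causes the $S_1 S_2$ term to be absorbed, producing the target \eqref{vertical_estimate} and, by summing over $j \in \{1,2,3\}$, the required estimate on $\Delta^v_T(S)$. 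I do not anticipate a serious obstacle — the only delicate point is to track where $|\cos\theta|$ versus $|\sin\theta|$ appears in the denominators, which is the sole substantive difference between Options 1 and 2.
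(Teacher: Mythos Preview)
Your proposal is correct and follows essentially the same route as the paper, which simply says ``by an analogous argument'' swapping the roles of the two coordinates. Your explicit parametrization $y = c_j/\sin\theta - x\cot\theta$ and the $\min(a,b)\le\sqrt{ab}$ trick to absorb the $S_1S_2$ term are minor presentational variants of the paper's Option~1 crossing-point computation and case analysis, respectively; your reading of ``$x_1,x_2$'' in the claim as the $y$-projection endpoints is the intended one (the paper's notation here is admittedly overloaded).
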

By applying (\ref{eq:x1x2-2}), we derive the second estimate for the left hand side of (\ref{vertical_estimate}).
\begin{equation}\label{vertical_estimate_1}
\left|\iint_{x_1^j\leq x\leq x_2^j} h_I \otimes h_J (x,y) h_{\phi(l^v_j)}(x,y) dy dx \right|  \leq |[x_1^j,x_2^j] \times J| |S|^{-\frac{1}{2}} |T|^{-\frac{1}{2}} \leq \min(S_1 |\cot\theta|,S_2) \cdot S_1 \cdot |S|^{-\frac{1}{2}} |T|^{-\frac{1}{2}}
\end{equation}
By combining estimates (\ref{vertical_estimate_2}) and (\ref{vertical_estimate_1}), we deduce that
\begin{equation}
\left|\iint_{x_1^j\leq x\leq x_2^j} h_I \otimes h_J (x,y) h_{\phi(l^v_j)}(x,y) dy dx \right|  \leq \min(S_2^2 |\tan \theta|, S_1^2 |\cot\theta|,S_1S_2) \cdot |S|^{-\frac{1}{2}} |T|^{-\frac{1}{2}}.
\end{equation}
We notice that if $S_2^2 |\tan \theta| \geq S_1 S_2$, then $|\tan \theta| \geq \frac{S_1}{S_2}$, which implies that 
$
S_1^2 |\cot \theta| \leq S_1 S_2,
$
in which case 
$$
\min(S_2^2 |\tan \theta|, S_1^2 |\cot\theta|,S_1S_2) = S_1^2 |\cot \theta| .
$$
If $S_1^2 |\cot \theta| \geq S_1 S_2$, then $|\tan \theta| \leq \frac{S_1}{S_2}$, which implies that 
$
S_2^2 |\tan \theta| \leq S_1 S_2,
$
in which case 
$$
\min(S_2^2 |\tan \theta|, S_1^2 |\cot\theta|,S_1S_2) = S_2^2 |\tan\theta| .
$$
As a consequence, we conclude that
$$
|\langle h_S, h_{\phi(l^v_j)} \rangle| \leq \min(S_2^2 |\tan \theta|, S_1^2 |\cot \theta|) \cdot |S|^{-\frac{1}{2}} |T|^{-\frac{1}{2}},
$$
which completes the proof of (\ref{vertical_estimate}). By replacing $\theta$ with $\frac{\pi}{2}- \theta$ and applying the same argument, we can derive the desired estimates for $\Delta^h_T(S)$.
\item 
Cases (1) and (2): 
Both cases can be proven by a similar in (a). Although we can no longer convert the biparameter estimates to the sum of one-parameter estimates. In particular, we can majorize
$$
|\langle h_S \circ \phi, h_T \rangle | \leq \|h_S\|_{\infty} \|h_T\|_{\infty}|\phi(T) \cap S|.
$$
By the same argument in (a), we can show that
\begin{align*}
& |\phi(T) \cap S| \leq 
\begin{cases}
3 \min(S_2^2 |\tan \theta|, S_1^2 |\cot\theta|) \text{ in Case 1}\\
3 \min(S_1^2 |\tan \theta|, S_2^2 |\cot\theta|) \text{ in Case 2}.
\end{cases}
\end{align*}
\end{enumerate}
\end{proof}

\begin{remark} \label{remark_top}
\begin{enumerate}
\item
The assumption that $S$ does not contain any tops of $\phi(T)$ can be easily understood to be necessary for $\theta=0$. If $\theta=0$ and so $\phi=\textrm{Id}$, then if there is no tops of $T$ in $S$, then $S$ can intersect only horizontal (or vertical or none) sides of $T$, which means in particular that we will always be able to use the cancellation of $h_R$ along the $x$-variable or $y$-variable in the whole $R\cap S$. Hence $\langle h_R, h_S\rangle = 0$. However, if we have at least one top of $S$ in $T$ and only one top of $T$ in $S$, then it is easy to construct situations where the inner product is not vanishing (this is particular the case where $S=T$ !). In this case, we cannot use the perfect cancellation estimates in estimating the inner product. 
\item
We can no longer convert the bi-parameter problem to the sum of one-parameter estimates such as (\ref{prop:1_decomp}). Fortunately, such scenario does not happen too often and the following trivial estimate is sufficient:
\begin{equation}
|\langle h_S \circ \phi, h_T\rangle| \lesssim \frac{|\phi(T) \cap S|}{|T|^{1/2} |S|^{1/2}} \leq \frac{\min(|\phi(T)|, |S|)}{|T|^{1/2} |S|^{1/2}}. \label{eq:innertop}
\end{equation}
\end{enumerate}
\end{remark}

Our goal is to obtain an estimate of $\Gamma(K_{\max,i}, r_1,r_2, k_1, k_2)$ defined in \eqref{eq:Gamma} as
$$
\Gamma(K_{\max,i},r_1,r_2,k_1,k_2):= \sum_{\substack{K \subseteq K_{\max,i} \\ |K| = 2^{k_1} \times 2^{k_2}}} \left(\sum_{\substack{R \in \mathbb{D}^{\beta(\phi(K_{\max,i}))} \\R \nsubseteq \widetilde{\phi(\Omega)} \\ |R| = 2^{r_1} \times 2^{r_2}}} |\langle h_R \circ \phi, h_K \rangle|^{p'} \right)^{\frac{2}{p'}} .
$$ 
Having obtained a uniform bound of the inner product $\langle h_R \circ \phi, h_K \rangle$ for $(R,K) \in \Lambda(K_{\max,i},r_1,r_2,k_1,k_2)$, it suffices now to count the number of $R$ and $K$ with fixed sizes, involved in the sum defining $\Gamma(K_{\max,i},r_1,r_2,k_1,k_2)$. The way of counting the couples $(R,K)$ will differ according to the geometry of intersections between $R$ and $\phi(K)$. 
One case is that each rotated $K \subseteq K_{\max,i}$ intersect with $R$'s of fixed dimension evenly and frequently and we will denote it by ``{\bf even case}" from now on. The second case corresponds to when each $\phi(K) \subseteq \phi(K_{\max,i})$ intersects with $R$'s of fixed dimension sparsely, which will be denoted as 
``{\bf sparse case}". Both cases are indicated in Figure \ref{fig:mot_prop:number}. In the ``even case", we would count the number of $R$'s of fixed dimension intersecting with each $\phi(K)$ and then count the number of $K$'s contained in $K_{\max,i}$. Meanwhile, for the ``sparse case", we would like to use a different counting method because some $\phi(K)$'s do no intersect with any $R$ at all, such as $\phi(K^3)$ in Figure \ref{fig:mot_prop:number}, so that the computation for the ``even case" is not ideal any more.
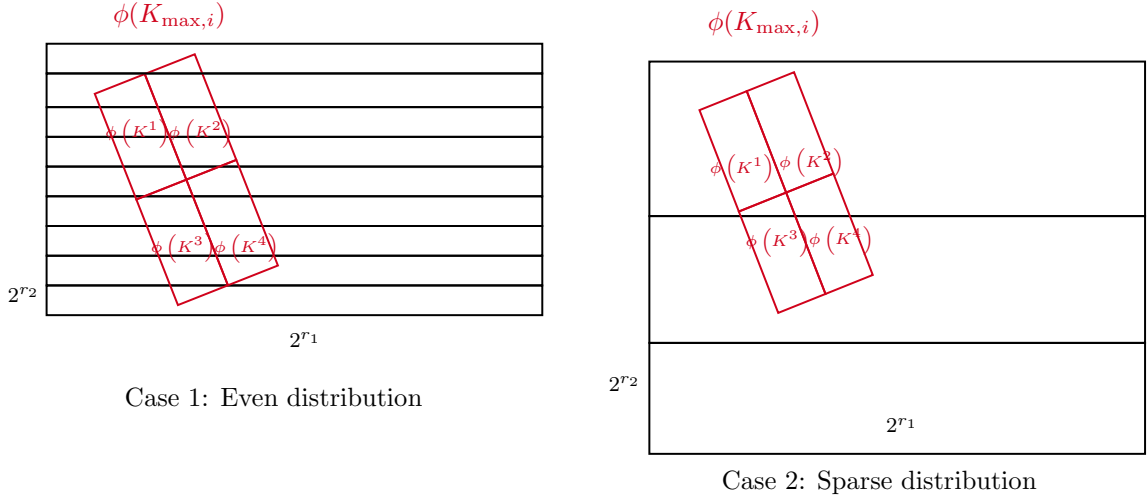
\begin{figure}
\centering

\tikzset{every picture/.style={line width=0.75pt}} 

\begin{tikzpicture}[x=0.75pt,y=0.75pt,yscale=-1,xscale=1]

\draw   (38,46) -- (288,46) -- (288,63) -- (38,63) -- cycle ;
\draw   (38,63) -- (288,63) -- (288,78) -- (38,78) -- cycle ;
\draw   (38,78) -- (288,78) -- (288,93) -- (38,93) -- cycle ;
\draw   (38,108) -- (288,108) -- (288,123) -- (38,123) -- cycle ;
\draw   (38,123) -- (288,123) -- (288,138) -- (38,138) -- cycle ;
\draw   (38,93) -- (288,93) -- (288,108) -- (38,108) -- cycle ;
\draw   (38,138) -- (288,138) -- (288,153) -- (38,153) -- cycle ;
\draw   (38,153) -- (288,153) -- (288,168) -- (38,168) -- cycle ;
\draw  [color={rgb, 255:red, 208; green, 2; blue, 27 }  ,draw opacity=1 ] (112.71,36.32) -- (133.69,89.63) -- (108.48,99.62) -- (87.5,46.3) -- cycle ;
\draw  [color={rgb, 255:red, 208; green, 2; blue, 27 }  ,draw opacity=1 ] (87.51,46.3) -- (108.49,99.62) -- (83.27,109.6) -- (62.3,56.28) -- cycle ;
\draw  [color={rgb, 255:red, 208; green, 2; blue, 27 }  ,draw opacity=1 ] (108.48,99.62) -- (129.46,152.93) -- (104.24,162.91) -- (83.26,109.6) -- cycle ;
\draw  [color={rgb, 255:red, 208; green, 2; blue, 27 }  ,draw opacity=1 ] (133.68,89.63) -- (154.66,142.95) -- (129.45,152.93) -- (108.47,99.61) -- cycle ;

\draw   (38,31) -- (288,31) -- (288,46) -- (38,46) -- cycle ;
\draw   (342,40) -- (592,40) -- (592,118) -- (342,118) -- cycle ;
\draw   (342,118) -- (592,118) -- (592,182) -- (342,182) -- cycle ;
\draw   (342,182) -- (592,182) -- (592,238) -- (342,238) -- cycle ;
\draw  [color={rgb, 255:red, 208; green, 2; blue, 27 }  ,draw opacity=1 ] (414.97,45.37) -- (434.82,96.54) -- (410.98,106.12) -- (391.12,54.94) -- cycle ;
\draw  [color={rgb, 255:red, 208; green, 2; blue, 27 }  ,draw opacity=1 ] (391.13,54.94) -- (410.99,106.11) -- (387.14,115.69) -- (367.28,64.52) -- cycle ;
\draw  [color={rgb, 255:red, 208; green, 2; blue, 27 }  ,draw opacity=1 ] (410.98,106.12) -- (430.83,157.29) -- (406.99,166.86) -- (387.13,115.69) -- cycle ;
\draw  [color={rgb, 255:red, 208; green, 2; blue, 27 }  ,draw opacity=1 ] (434.81,96.54) -- (454.67,147.71) -- (430.82,157.29) -- (410.97,106.12) -- cycle ;

\draw (70,8.4) node [anchor=north west][inner sep=0.75pt]  [color={rgb, 255:red, 208; green, 2; blue, 27 }  ,opacity=1 ]  {$\phi ( K_{\max,i})$};
\draw (65.97,67.46) node [anchor=north west][inner sep=0.75pt]  [font=\tiny,color={rgb, 255:red, 208; green, 2; blue, 27 }  ,opacity=1 ,rotate=-359.86]  {$\phi \left( K^{1}\right)$};
\draw (97.97,67.46) node [anchor=north west][inner sep=0.75pt]  [font=\tiny,color={rgb, 255:red, 208; green, 2; blue, 27 }  ,opacity=1 ,rotate=-359.86]  {$\phi \left( K^{2}\right)$};
\draw (88.97,125.46) node [anchor=north west][inner sep=0.75pt]  [font=\tiny,color={rgb, 255:red, 208; green, 2; blue, 27 }  ,opacity=1 ,rotate=-359.86]  {$\phi \left( K^{3}\right)$};
\draw (120.97,125.46) node [anchor=north west][inner sep=0.75pt]  [font=\tiny,color={rgb, 255:red, 208; green, 2; blue, 27 }  ,opacity=1 ,rotate=-359.86]  {$\phi \left( K^{4}\right)$};
\draw (17,152.4) node [anchor=north west][inner sep=0.75pt]  [font=\footnotesize]  {$2^{r_{2}}$};
\draw (159,175.4) node [anchor=north west][inner sep=0.75pt]  [font=\footnotesize]  {$2^{r_{1}}$};
\draw (370,12.4) node [anchor=north west][inner sep=0.75pt]  [color={rgb, 255:red, 208; green, 2; blue, 27 }  ,opacity=1 ]  {$\phi ( K_{\max,i})$};
\draw (370.97,85.46) node [anchor=north west][inner sep=0.75pt]  [font=\tiny,color={rgb, 255:red, 208; green, 2; blue, 27 }  ,opacity=1 ,rotate=-359.86]  {$\phi \left( K^{1}\right)$};
\draw (405.97,82.46) node [anchor=north west][inner sep=0.75pt]  [font=\tiny,color={rgb, 255:red, 208; green, 2; blue, 27 }  ,opacity=1 ,rotate=-359.86]  {$\phi \left( K^{2}\right)$};
\draw (388.97,122.46) node [anchor=north west][inner sep=0.75pt]  [font=\tiny,color={rgb, 255:red, 208; green, 2; blue, 27 }  ,opacity=1 ,rotate=-359.86]  {$\phi \left( K^{3}\right)$};
\draw (420.97,121.46) node [anchor=north west][inner sep=0.75pt]  [font=\tiny,color={rgb, 255:red, 208; green, 2; blue, 27 }  ,opacity=1 ,rotate=-359.86]  {$\phi \left( K^{4}\right)$};
\draw (320,197.4) node [anchor=north west][inner sep=0.75pt]  [font=\footnotesize]  {$2^{r_{2}}$};
\draw (460,218.4) node [anchor=north west][inner sep=0.75pt]  [font=\footnotesize]  {$2^{r_{1}}$};
\draw (76,203) node [anchor=north west][inner sep=0.75pt]   [align=left] {Case 1: Even distribution };
\draw (377,245) node [anchor=north west][inner sep=0.75pt]   [align=left] {Case 2: Sparse distribution};

\end{tikzpicture}
    
 \caption{Intuition for Counting of $(R,K)$ in $\Gamma(K_{\max,i}, r_1,r_2,k_1,k_2)$} \label{fig:mot_prop:number}

\end{figure} 
\vskip .15in
\noindent
\textbf{Even Case.}
Let us recall the notation of $\Lambda(K_{\max,i},r_1,r_2,k_1,k_2)$ for the set of couples $(R,K)$ such that $K \subseteq K_{\max,i}$, $|K| = 2^{k_1} \times 2^{k_2}$, $R \nsubseteq \widetilde{\phi(\Omega)}$, $|R| = 2^{r_1} \times 2^{r_2}$ with $R\cap \phi(\partial K) \neq \emptyset$ and $K \cap \phi^{-1}(\partial R) \neq \emptyset$.

For $\Gamma(K_{\max,i},r_1,r_2,k_1,k_2)$,  we can estimate $\langle h_R \circ \phi, h_K \rangle$ by (\ref{eq:innertop}) in Remark \ref{remark_top} and Proposition \ref{prop:1} with 
$$S:= R \qquad \textrm{and} \qquad T:= K.$$ 
 We have seen that we will have to consider different situations according to the corners and more precisely if some tops of $\phi(K)$ are included or not in $R$. So let us define, $\Lambda^0(K_{\max,i},r_1,r_2,k_1,k_2)$ (abbreviated as $\Lambda^0$) for the subcollection of $(R,K)$ with $R \cap \tops(\phi(K)) =\emptyset$ and the second subcollection $\Lambda^1(K_{\max,i},r_1,r_2,k_1,k_2)$ (abbreviated as $\Lambda^1$)  with $R \cap \tops(\phi(K)) \neq \emptyset$. Then, we have to estimate the corresponding partial sums 
$$\Gamma(K_{\max,i}, r_1,r_2,k_1,k_2) \leq \Gamma^0 (K_{\max,i},r_1,r_2,k_1,k_2)+\Gamma^1(K_{\max,i},r_1,r_2,k_1,k_2),$$
where 
\begin{equation} \label{def_Gamma^0}
\Gamma^0(K_{\max,i},r_1,r_2,k_1,k_2) := \sum_{\substack{K \subseteq K_{\max,i} \\ |K| = 2^{k_1} \times 2^{k_2}}} \left(\sum_{\substack{R: (R,K) \in \Lambda^0}} |\langle h_R \circ \phi, h_K \rangle|^{p'} \right)^{\frac{2}{p'}} 
\end{equation}
and
\begin{equation*}
\Gamma^1(K_{\max,i},r_1,r_2,k_1,k_2) := \sum_{\substack{K \subseteq K_{\max,i} \\ |K| = 2^{k_1} \times 2^{k_2}}} \left(\sum_{\substack{R: (R,K) \in \Lambda^1}} |\langle h_R \circ \phi, h_K \rangle|^{p'} \right)^{\frac{2}{p'}}. 
\end{equation*}
For $\Gamma^1(K_{\max,i},r_1,r_2,k_1,k_2)$, we apply the estimate (\ref{eq:innertop}) to deduce that
\begin{equation*}
\Gamma^1(K_{\max,i},r_1,r_2,k_1,k_2) \lesssim \sup_{(R,K) \in \Lambda^1} |\langle h_R \circ \phi, h_K \rangle|^2 \cdot \frac{|K_{\max,i}|}{2^{k_1}2^{k_2}} \lesssim \frac{|K|^2}{2^{k_1}2^{k_2}2^{r_1}2^{r_2}}\cdot \frac{|K_{\max,i}|}{2^{k_1}2^{k_2}} = |K_{\max,i}| 2^{-r_1}2^{-r_2}.
\end{equation*}

To derive estimates for $\Gamma^0(K_{\max,i},r_1,r_2,k_1,k_2)$, we apply  (\ref{only_vert}) or/and (\ref{only_hori}) in Proposition \ref{prop:1}. It is therefore natural to expect that the counting estimate useful for $\Gamma^0(K_{\max,i}, r_1, r_2, k_1, k_2)$ can be reduced to the counting of dyadic rectangles $R$ which intersect nontrivially with the rotated segments of $K$. 

We will give the precise definition for the counting number below and introduce Lemma \ref{count_Delta_R} which describes how $\Gamma^0$ can be estimated by appropriate applications of (\ref{only_vert}) and (\ref{only_hori}) and the corresponding counting estimates.
\begin{definition}\label{def_counting_number_R} Fix a rotation map $\phi:= \phi^{\theta}$ of angle $\theta \in [0,2\pi]$ and $K_{\max,i} \in \mathbb{D}^{\alpha}$. 
Define for each $K \subseteq K_{\max,i} \in \mathbb{D}^{\alpha}$, the number of dyadic rectangles with fixed dimension that have non-trivial intersections with the rotated vertical (or horizontal) segments of $K$ as follows:
$$
\mathcal{N}^v_{r_1,r_2}(\phi(K)) := \# \{R: (R, K) \in \Lambda(K_{\max,i}, r_1, r_2, k_1, k_2), R\cap \phi\big(l^v_1(K)\cup l^v_2(K)\cup l^v_3(K)\big)\neq\emptyset\}
$$
and 
$$
\mathcal{N}^h_{r_1,r_2}(\phi(K)) := \# \{R: (R,K) \in  \Lambda(K_{\max,i}, r_1, r_2, k_1, k_2), R\cap \phi\big(l^h_1(K)\cup l^h_2(K)\cup l^h_3(K)\big)\neq\emptyset \}.
$$
\end{definition}
\begin{remark}\label{trivial_top_vert}
Based on the definition of the counting numbers $\mathcal{N}^v_{r_1,r_2}(\phi(K))$ and $\mathcal{N}^h_{r_1,r_2}(\phi(K))$, it is not hard to observe that in the even situation
$$
\min\big(\mathcal{N}^v_{r_1,r_2}(\phi(K)), \mathcal{N}^h_{r_1,r_2}(\phi(K)) \big) \geq \#\text{top}(\phi(K)).
$$
\end{remark}
\begin{lemma}\label{count_Delta_R}
Let us recall the set $\Lambda^0 := \Lambda^0(K_{\max,i},r_1,r_2,k_1,k_2)$ of couples $(R,K)$ such that $K \subseteq K_{\max,i}$, $|K| = 2^{k_1} \times 2^{k_2}$, $R \nsubseteq \widetilde{\phi(\Omega)}$, $|R| = 2^{r_1} \times 2^{r_2}$ with $R\cap \phi(\partial K) \neq \emptyset$, $K \cap \phi^{-1}(\partial R) \neq \emptyset$ and $R \cap \tops(\phi(K)) =\emptyset$.
Then 
\begin{align} \label{Gamma^0_unquantified}
&\Gamma^0(K_{\max,i},r_1,r_2,k_1,k_2) := \sum_{\substack{K \subseteq K_{\max,i} \\ |K| = 2^{k_1} \times 2^{k_2}}} \left(\sum_{\substack{R: (R,K) \in \Lambda^0}} |\langle h_R \circ \phi, h_K \rangle|^{p'} \right)^{\frac{2}{p'}}  \nonumber \\
\leq& \left(\sup_{(R,K) \in \Lambda^0} \frac{\min(R_2^2 |\tan \theta|, R_1^2 |\cot\theta|)}{|K|^{1/2} |R|^{1/2}} \right)^2 \cdot \left(\sup_{\substack{K \subseteq K_{\max,i} \\ |K|= 2^{k_1} \times 2^{k_2}}}\mathcal{N}^v_{r_1,r_2}(\phi(K)) - \#\text{top}(\phi(K))\right)^{\frac{2}{p'}} \cdot \frac{|K_{\max,i}|}{2^{k_1+k_2}}+ \nonumber\\
& \left(\sup_{(R,K) \in \Lambda^0}  \frac{\min(R_1^2 |\tan \theta|, R_2^2 |\cot\theta|)}{|K|^{1/2} |R|^{1/2}}\right)^2 \cdot \left(\sup_{\substack{K \subseteq K_{\max,i} \\ |K|= 2^{k_1} \times 2^{k_2}}}\mathcal{N}^h_{r_1,r_2}(\phi(K)) - \#\text{top}(\phi(K))\right)^{\frac{2}{p'}} \cdot \frac{|K_{\max,i}|}{2^{k_1+k_2}}.
\end{align}
\end{lemma}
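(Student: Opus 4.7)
The strategy is to split the inner product $\langle h_R \circ \phi, h_K \rangle$ according to which rotated segments of $K$ the rectangle $R$ meets, apply the uniform bounds of Proposition \ref{prop:1}, and then count how many $R$'s of fixed dimensions contribute.

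First, since every pair $(R,K)\in \Lambda^0$ satisfies $R\cap \tops(\phi(K))=\emptyset$, Proposition \ref{prop:1} applies. Let $V(K)$ denote the set of $R$ with $|R|=2^{r_1}\times 2^{r_2}$, $R\nsubseteq\widetilde{\phi(\Omega)}$ which meet at least one of the segments $\phi(l^v_j(K))$, and similarly $H(K)$ for the horizontal segments. Cases (1)--(3) of Proposition \ref{prop:1} together yield the uniform pointwise majorization
\[
|\langle h_R\circ\phi, h_K\rangle|\ \leq\ 3\,C_v(R,K)\mathbbm{1}_{V(K)}(R)\ +\ 3\,C_h(R,K)\mathbbm{1}_{H(K)}(R),
\]
where $C_v(R,K):=\min(R_2^2|\tan\theta|,R_1^2|\cot\theta|)/(|K|^{1/2}|R|^{1/2})$ and $C_h(R,K):=\min(R_1^2|\tan\theta|,R_2^2|\cot\theta|)/(|K|^{1/2}|R|^{1/2})$: in Case (1) only the first term survives, in Case (2) only the second, and in Case (3) we use the decomposition $|\langle h_R\circ\phi,h_K\rangle|\leq \Delta^v_K(R)+\Delta^h_K(R)$ together with the estimates on $\Delta^v_K,\Delta^h_K$ provided by Proposition \ref{prop:1}.

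Next, I apply the triangle inequality in $\ell^{p'}$: for each fixed $K$,
\[
\Bigl(\sum_{R:(R,K)\in\Lambda^0}|\langle h_R\circ\phi,h_K\rangle|^{p'}\Bigr)^{1/p'}\ \leq\ 3\Bigl(\sum_{R\in V(K)\cap\Lambda^0}C_v(R,K)^{p'}\Bigr)^{1/p'}+3\Bigl(\sum_{R\in H(K)\cap\Lambda^0}C_h(R,K)^{p'}\Bigr)^{1/p'}.
\]
Replacing $C_v(R,K)$ and $C_h(R,K)$ by their respective suprema over $\Lambda^0$ pulls those quantities out of the sums, leaving only the cardinalities $\#(V(K)\cap\Lambda^0)$ and $\#(H(K)\cap\Lambda^0)$ to control. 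By definition $\#V(K)=\mathcal{N}^v_{r_1,r_2}(\phi(K))$; removing from $V(K)$ the rectangles that contain a top of $\phi(K)$ (which are excluded from $\Lambda^0$) and using the fact that each top belongs to at most one dyadic rectangle of fixed dimensions $2^{r_1}\times 2^{r_2}$ (together with Remark \ref{trivial_top_vert}), gives $\#(V(K)\cap\Lambda^0)\leq \mathcal{N}^v_{r_1,r_2}(\phi(K))-\#\tops(\phi(K))$, and similarly for the horizontal count.

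Squaring via $(a+b)^2\leq 2(a^2+b^2)$ and summing over the $|K_{\max,i}|/2^{k_1+k_2}$ dyadic rectangles $K\subseteq K_{\max,i}$ of fixed dimension, then taking suprema, yields
\[
\Gamma^0(K_{\max,i},r_1,r_2,k_1,k_2)\ \lesssim\ \Bigl(\sup_{\Lambda^0}C_v\Bigr)^2 \Bigl(\sup_K[\mathcal{N}^v-\#\tops]\Bigr)^{2/p'}\frac{|K_{\max,i}|}{2^{k_1+k_2}} + (\textrm{analogous horizontal term}),
\]
which is exactly \eqref{Gamma^0_unquantified}. The main subtlety is the bookkeeping in step two: one must verify that in Case (3), the $\Delta^v_K(R)$ contribution is nonzero only when $R\in V(K)$ (and likewise for $\Delta^h_K$), so that after the triangle inequality the sums genuinely reduce to counting $V(K)\cap\Lambda^0$ and $H(K)\cap\Lambda^0$; this is built into the definitions of $\Delta^v_K,\Delta^h_K$. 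The other delicate point is the subtraction of $\#\tops(\phi(K))$, which requires the observation that rectangles containing a top lie simultaneously in $V(K)$ and $H(K)$ but are excluded from $\Lambda^0$.
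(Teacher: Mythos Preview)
Your proof is correct and follows essentially the same approach as the paper's own (sketched) argument: decompose $|\langle h_R\circ\phi,h_K\rangle|$ via Proposition \ref{prop:1} into a vertical and a horizontal contribution, apply the triangle inequality in $\ell^{p'}$, pull out the uniform bounds, and reduce to counting the relevant $R$'s. Your write-up is in fact more explicit than the paper's sketch, in particular on the justification of the $-\#\tops(\phi(K))$ subtraction (each top lies on a rotated vertical segment, so the dyadic $R$ of fixed dimensions containing it is counted in $\mathcal{N}^v_{r_1,r_2}(\phi(K))$ but is excluded from $\Lambda^0$); the only implicit ingredient here is that in the ``even'' regime distinct tops land in distinct $R$'s, which is precisely the content behind Remark \ref{trivial_top_vert}.
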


\begin{proof}
We will provide a sketch of the proof. We first notice that since there are no tops of $\phi(K)$ intersecting $R$ for any $(R,K) \in \Lambda^0$, 
\begin{align*}
|\langle h_K \circ \phi^{-1}, h_R \rangle| \leq \Delta_K^v(R) + \Delta_K^h(R)
\end{align*}
by the reasoning in the proof of Proposition \ref{prop:1}.
Therefore, for any fixed $K \in $
\begin{align}
 \left(\sum_{\substack{R: (R,K) \in \Lambda^0}} |\langle h_R \circ \phi, h_K \rangle|^{p'} \right)^{\frac{2}{p'}} \leq& \left(\sum_{\substack{R: (R,K) \in \Lambda^0}}  (\Delta^v_K(R) + \Delta^h_K(R))^{p'} \right)^{\frac{2}{p'}} \nonumber \\
 \lesssim_{p'} & \left(\sum_{\substack{R: (R,K) \in \Lambda^0}}  (\Delta^v_K(R))^{p'} + (\Delta^h_K(R))^{p'} \right)^{\frac{2}{p'}} \nonumber \\
 \lesssim &  \left(\sum_{\substack{R: (R,K) \in \Lambda^0}}  (\Delta^v_K(R))^{p'}\right)^{\frac{2}{p'}} +  \left(\sum_{\substack{R: (R,K) \in \Lambda^0}} (\Delta^h_K(R))^{p'} \right)^{\frac{2}{p'}} \label{proof_lemma_Gamma^0}
\end{align}
and the rest of the proof follows from straightforward estimates on (\ref{proof_lemma_Gamma^0}).
\end{proof}
We can also estimate $\langle h_R \circ \phi, h_K \rangle$ for $(R,K) \in \Lambda$ by (\ref{only_vert}) or/and (\ref{only_hori}) with 
$$S:= K,$$ 
$$T:= R,$$
$$\phi := \phi^{-\theta}$$ as illustrated in Proposition \ref{prop:1}. One simple and important observation is that due to Corollary \ref{cor:r1}, for all $K$ such that $(R,K) \in\Lambda(K_{\max,i},r_1,r_2,k_1,k_2)$, $K \cap \text{top}(\phi^{-1}(R)) = \emptyset$. Moreover, the vertical boundary of $R$ never intersects with $K$ for $(R,K) \in \Lambda(K_{\max,i}, r_1, r_2, k_1, k_2)$, hence we only have Case (2) and the estimate (\ref{only_hori}) is sufficient. 
\begin{lemma} \label{count_Delta_K}
\begin{align*}
&\Gamma(K_{\max,i},r_1,r_2,k_1,k_2) \\
\leq &\left(\sup_{(R,K) \in \Lambda} \frac{\min(K_1^2 |\tan \theta|, K_2^2 |\cot\theta|)}{|K|^{1/2} |R|^{1/2}}\right)^2 \cdot \sup_{\substack{K \subseteq K_{\max,i} \\ |K|= 2^{k_1} \times 2^{k_2}}}\left(\mathcal{N}^v_{r_1,r_2}(\phi(K)) + \mathcal{N}^h_{r_1,r_2}(\phi(K)) \right)^{\frac{2}{p'}} \cdot \frac{|K_{\max,i}|}{2^{k_1+k_2}}.
\end{align*}
\end{lemma}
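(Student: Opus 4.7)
The plan is to mirror the proof of Lemma \ref{count_Delta_R} while reversing the roles of $S$ and $T$ in Proposition \ref{prop:1}, as suggested by the symmetry formula \eqref{eq:inner_p} in Remark \ref{rem:sym}. Concretely, I would rewrite $\langle h_R \circ \phi, h_K\rangle = \langle h_K \circ \phi^{-1}, h_R\rangle$ and apply Proposition \ref{prop:1} with $S := K$, $T := R$ and the angle $-\theta$.

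The first step is to verify that we lie exactly in Case (2) of that proposition. By the observation recorded just before the statement of the lemma, Corollary \ref{cor:r1} guarantees that $K \cap \tops(\phi^{-1}(R)) = \emptyset$ for every $(R,K) \in \Lambda(K_{\max,i}, r_1, r_2, k_1, k_2)$, so the ``no tops'' hypothesis of Proposition \ref{prop:1} is automatic. Furthermore, because $2^{k_1}+2^{k_2} \leq \tfrac14\, 2^{r_1}$ (again Corollary \ref{cor:r1}), a vertical side of $R$ cannot meet $K$, so only the rotated horizontal segments of $R$ can intersect $K$. Hence Case (2) applies and yields the uniform pointwise bound
$$
|\langle h_R \circ \phi, h_K\rangle| \;\leq\; 3\,\frac{\min(K_1^2|\tan\theta|,\, K_2^2|\cot\theta|)}{|R|^{1/2}|K|^{1/2}}
\qquad \forall\,(R,K)\in \Lambda,
$$
where we used that $|\tan(-\theta)|=|\tan\theta|$ and $|\cot(-\theta)|=|\cot\theta|$.

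Next I would count how many $R$ of dimensions $2^{r_1}\times 2^{r_2}$ can produce a nonzero pairing with a fixed $h_K$: any such $R$ must meet $\phi(\partial K)$, which is the union of the three rotated horizontal segments and the three rotated vertical segments of $K$. So the total count is bounded by $\mathcal N^v_{r_1,r_2}(\phi(K)) + \mathcal N^h_{r_1,r_2}(\phi(K))$ (Definition \ref{def_counting_number_R}). Combining the uniform bound with the trivial inequality $\|a\|_{\ell^{p'}}\leq \|a\|_{\ell^\infty}\cdot(\#\mathrm{supp})^{1/p'}$ gives, for each fixed $K$,
$$
\Big(\sum_{R:(R,K)\in\Lambda} |\langle h_R\circ\phi,\, h_K\rangle|^{p'}\Big)^{2/p'}
\leq \sup_{(R,K)\in\Lambda}|\langle h_R\circ\phi,\, h_K\rangle|^2 \cdot \big(\mathcal N^v_{r_1,r_2}(\phi(K)) + \mathcal N^h_{r_1,r_2}(\phi(K))\big)^{2/p'}.
$$
Finally, summing over the $|K_{\max,i}|/2^{k_1+k_2}$ rectangles $K\subseteq K_{\max,i}$ of size $2^{k_1}\times 2^{k_2}$ and pulling the two suprema outside gives the claimed estimate.

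I do not anticipate a serious obstacle. The main point is that, by swapping $S$ and $T$, we sidestep the need for a separate treatment of the ``tops'' subcollection $\Lambda^1$ that appeared in Lemma \ref{count_Delta_R}: Corollary \ref{cor:r1} kills this term for free because $K$ is much smaller than $R$. The only delicate verification is that smallness of $K$ in the $x$-direction relative to $R$ really forces both $K \cap \tops(\phi^{-1}(R))=\emptyset$ and the absence of vertical-boundary intersections, but both are direct geometric consequences of the estimate $2^{k_1}+2^{k_2}\leq \tfrac14\, 2^{r_1}$ from Corollary \ref{cor:r1}.
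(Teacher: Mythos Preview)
Your proposal is correct and follows essentially the same approach as the paper. The paper does not give a separate proof for this lemma; instead, the argument is sketched in the paragraph immediately preceding the statement, where it is noted that one applies Proposition~\ref{prop:1} with $S:=K$, $T:=R$, $\phi:=\phi^{-\theta}$, that Corollary~\ref{cor:r1} rules out tops of $\phi^{-1}(R)$ in $K$, and that only Case~(2) occurs because the vertical segments of $R$ never meet $\phi(K)$ (equivalently, the rotated vertical segments of $R$ never meet $K$). Your write-up fills in exactly these details and then combines the uniform pointwise bound with the $\ell^\infty$--cardinality estimate for the inner $\ell^{p'}$ sum, summed over the $|K_{\max,i}|/2^{k_1+k_2}$ rectangles $K$; this is precisely the intended argument. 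One small wording point: where you write ``a vertical side of $R$ cannot meet $K$,'' the precise statement (as in Remark~\ref{rem:62bis} and \eqref{eq:R1}) is that the vertical segments of $R$ cannot meet $\phi(K)$, or equivalently that $K$ cannot meet the $\phi^{-1}$-rotated vertical segments of $R$.
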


To further quantify $\Gamma^0$ and $\Gamma^1$, we would now establish the bounds for $\mathcal{N}^v_{r_1,r_2}(\phi(K))$ and $\mathcal{N}^h_{r_1,r_2}(\phi(K))$. Figure \ref{fig:mot_prop:number-bis} provides a geometric interpretation of $\mathcal{N}^v_{r_1,r_2}(\phi(K))$ which refers to the number of dyadic rectangles with the orange pattern, and $\mathcal{N}^h_{r_1,r_2}(\phi(K))$ which represents the number of dyadic rectangles with the green pattern.

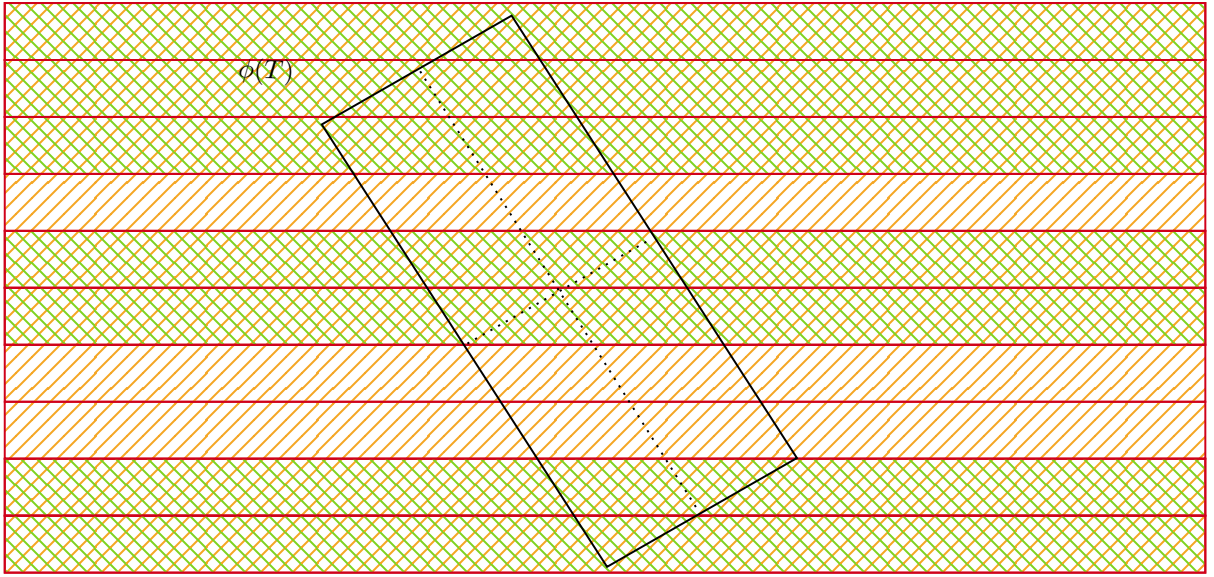
\begin{figure}
\centering

 
\tikzset{
pattern size/.store in=\mcSize, 
pattern size = 5pt,
pattern thickness/.store in=\mcThickness, 
pattern thickness = 0.3pt,
pattern radius/.store in=\mcRadius, 
pattern radius = 1pt}
\makeatletter
\pgfutil@ifundefined{pgf@pattern@name@_oj655gdon}{
\pgfdeclarepatternformonly[\mcThickness,\mcSize]{_oj655gdon}
{\pgfqpoint{0pt}{0pt}}
{\pgfpoint{\mcSize+\mcThickness}{\mcSize+\mcThickness}}
{\pgfpoint{\mcSize}{\mcSize}}
{
\pgfsetcolor{\tikz@pattern@color}
\pgfsetlinewidth{\mcThickness}
\pgfpathmoveto{\pgfqpoint{0pt}{0pt}}
\pgfpathlineto{\pgfpoint{\mcSize+\mcThickness}{\mcSize+\mcThickness}}
\pgfusepath{stroke}
}}
\makeatother

 
\tikzset{
pattern size/.store in=\mcSize, 
pattern size = 5pt,
pattern thickness/.store in=\mcThickness, 
pattern thickness = 0.3pt,
pattern radius/.store in=\mcRadius, 
pattern radius = 1pt}
\makeatletter
\pgfutil@ifundefined{pgf@pattern@name@_b42n3qwnl}{
\pgfdeclarepatternformonly[\mcThickness,\mcSize]{_b42n3qwnl}
{\pgfqpoint{0pt}{0pt}}
{\pgfpoint{\mcSize+\mcThickness}{\mcSize+\mcThickness}}
{\pgfpoint{\mcSize}{\mcSize}}
{
\pgfsetcolor{\tikz@pattern@color}
\pgfsetlinewidth{\mcThickness}
\pgfpathmoveto{\pgfqpoint{0pt}{0pt}}
\pgfpathlineto{\pgfpoint{\mcSize+\mcThickness}{\mcSize+\mcThickness}}
\pgfusepath{stroke}
}}
\makeatother

 
\tikzset{
pattern size/.store in=\mcSize, 
pattern size = 5pt,
pattern thickness/.store in=\mcThickness, 
pattern thickness = 0.3pt,
pattern radius/.store in=\mcRadius, 
pattern radius = 1pt}
\makeatletter
\pgfutil@ifundefined{pgf@pattern@name@_mao6v6af7}{
\pgfdeclarepatternformonly[\mcThickness,\mcSize]{_mao6v6af7}
{\pgfqpoint{0pt}{0pt}}
{\pgfpoint{\mcSize+\mcThickness}{\mcSize+\mcThickness}}
{\pgfpoint{\mcSize}{\mcSize}}
{
\pgfsetcolor{\tikz@pattern@color}
\pgfsetlinewidth{\mcThickness}
\pgfpathmoveto{\pgfqpoint{0pt}{0pt}}
\pgfpathlineto{\pgfpoint{\mcSize+\mcThickness}{\mcSize+\mcThickness}}
\pgfusepath{stroke}
}}
\makeatother

 
\tikzset{
pattern size/.store in=\mcSize, 
pattern size = 5pt,
pattern thickness/.store in=\mcThickness, 
pattern thickness = 0.3pt,
pattern radius/.store in=\mcRadius, 
pattern radius = 1pt}
\makeatletter
\pgfutil@ifundefined{pgf@pattern@name@_yq7yxl0t4}{
\pgfdeclarepatternformonly[\mcThickness,\mcSize]{_yq7yxl0t4}
{\pgfqpoint{0pt}{0pt}}
{\pgfpoint{\mcSize+\mcThickness}{\mcSize+\mcThickness}}
{\pgfpoint{\mcSize}{\mcSize}}
{
\pgfsetcolor{\tikz@pattern@color}
\pgfsetlinewidth{\mcThickness}
\pgfpathmoveto{\pgfqpoint{0pt}{0pt}}
\pgfpathlineto{\pgfpoint{\mcSize+\mcThickness}{\mcSize+\mcThickness}}
\pgfusepath{stroke}
}}
\makeatother

 
\tikzset{
pattern size/.store in=\mcSize, 
pattern size = 5pt,
pattern thickness/.store in=\mcThickness, 
pattern thickness = 0.3pt,
pattern radius/.store in=\mcRadius, 
pattern radius = 1pt}
\makeatletter
\pgfutil@ifundefined{pgf@pattern@name@_e5dh29c07}{
\pgfdeclarepatternformonly[\mcThickness,\mcSize]{_e5dh29c07}
{\pgfqpoint{0pt}{0pt}}
{\pgfpoint{\mcSize+\mcThickness}{\mcSize+\mcThickness}}
{\pgfpoint{\mcSize}{\mcSize}}
{
\pgfsetcolor{\tikz@pattern@color}
\pgfsetlinewidth{\mcThickness}
\pgfpathmoveto{\pgfqpoint{0pt}{0pt}}
\pgfpathlineto{\pgfpoint{\mcSize+\mcThickness}{\mcSize+\mcThickness}}
\pgfusepath{stroke}
}}
\makeatother

 
\tikzset{
pattern size/.store in=\mcSize, 
pattern size = 5pt,
pattern thickness/.store in=\mcThickness, 
pattern thickness = 0.3pt,
pattern radius/.store in=\mcRadius, 
pattern radius = 1pt}
\makeatletter
\pgfutil@ifundefined{pgf@pattern@name@_ohx0c5tvm}{
\pgfdeclarepatternformonly[\mcThickness,\mcSize]{_ohx0c5tvm}
{\pgfqpoint{0pt}{0pt}}
{\pgfpoint{\mcSize+\mcThickness}{\mcSize+\mcThickness}}
{\pgfpoint{\mcSize}{\mcSize}}
{
\pgfsetcolor{\tikz@pattern@color}
\pgfsetlinewidth{\mcThickness}
\pgfpathmoveto{\pgfqpoint{0pt}{0pt}}
\pgfpathlineto{\pgfpoint{\mcSize+\mcThickness}{\mcSize+\mcThickness}}
\pgfusepath{stroke}
}}
\makeatother

 
\tikzset{
pattern size/.store in=\mcSize, 
pattern size = 5pt,
pattern thickness/.store in=\mcThickness, 
pattern thickness = 0.3pt,
pattern radius/.store in=\mcRadius, 
pattern radius = 1pt}
\makeatletter
\pgfutil@ifundefined{pgf@pattern@name@_64wp4ygvt}{
\pgfdeclarepatternformonly[\mcThickness,\mcSize]{_64wp4ygvt}
{\pgfqpoint{0pt}{0pt}}
{\pgfpoint{\mcSize+\mcThickness}{\mcSize+\mcThickness}}
{\pgfpoint{\mcSize}{\mcSize}}
{
\pgfsetcolor{\tikz@pattern@color}
\pgfsetlinewidth{\mcThickness}
\pgfpathmoveto{\pgfqpoint{0pt}{0pt}}
\pgfpathlineto{\pgfpoint{\mcSize+\mcThickness}{\mcSize+\mcThickness}}
\pgfusepath{stroke}
}}
\makeatother

 
\tikzset{
pattern size/.store in=\mcSize, 
pattern size = 5pt,
pattern thickness/.store in=\mcThickness, 
pattern thickness = 0.3pt,
pattern radius/.store in=\mcRadius, 
pattern radius = 1pt}
\makeatletter
\pgfutil@ifundefined{pgf@pattern@name@_k1rz6invw}{
\pgfdeclarepatternformonly[\mcThickness,\mcSize]{_k1rz6invw}
{\pgfqpoint{0pt}{0pt}}
{\pgfpoint{\mcSize+\mcThickness}{\mcSize+\mcThickness}}
{\pgfpoint{\mcSize}{\mcSize}}
{
\pgfsetcolor{\tikz@pattern@color}
\pgfsetlinewidth{\mcThickness}
\pgfpathmoveto{\pgfqpoint{0pt}{0pt}}
\pgfpathlineto{\pgfpoint{\mcSize+\mcThickness}{\mcSize+\mcThickness}}
\pgfusepath{stroke}
}}
\makeatother

 
\tikzset{
pattern size/.store in=\mcSize, 
pattern size = 5pt,
pattern thickness/.store in=\mcThickness, 
pattern thickness = 0.3pt,
pattern radius/.store in=\mcRadius, 
pattern radius = 1pt}
\makeatletter
\pgfutil@ifundefined{pgf@pattern@name@_cqt97nhuv}{
\pgfdeclarepatternformonly[\mcThickness,\mcSize]{_cqt97nhuv}
{\pgfqpoint{0pt}{-\mcThickness}}
{\pgfpoint{\mcSize}{\mcSize}}
{\pgfpoint{\mcSize}{\mcSize}}
{
\pgfsetcolor{\tikz@pattern@color}
\pgfsetlinewidth{\mcThickness}
\pgfpathmoveto{\pgfqpoint{0pt}{\mcSize}}
\pgfpathlineto{\pgfpoint{\mcSize+\mcThickness}{-\mcThickness}}
\pgfusepath{stroke}
}}
\makeatother

 
\tikzset{
pattern size/.store in=\mcSize, 
pattern size = 5pt,
pattern thickness/.store in=\mcThickness, 
pattern thickness = 0.3pt,
pattern radius/.store in=\mcRadius, 
pattern radius = 1pt}
\makeatletter
\pgfutil@ifundefined{pgf@pattern@name@_hww6ky3iq}{
\pgfdeclarepatternformonly[\mcThickness,\mcSize]{_hww6ky3iq}
{\pgfqpoint{0pt}{-\mcThickness}}
{\pgfpoint{\mcSize}{\mcSize}}
{\pgfpoint{\mcSize}{\mcSize}}
{
\pgfsetcolor{\tikz@pattern@color}
\pgfsetlinewidth{\mcThickness}
\pgfpathmoveto{\pgfqpoint{0pt}{\mcSize}}
\pgfpathlineto{\pgfpoint{\mcSize+\mcThickness}{-\mcThickness}}
\pgfusepath{stroke}
}}
\makeatother

 
\tikzset{
pattern size/.store in=\mcSize, 
pattern size = 5pt,
pattern thickness/.store in=\mcThickness, 
pattern thickness = 0.3pt,
pattern radius/.store in=\mcRadius, 
pattern radius = 1pt}
\makeatletter
\pgfutil@ifundefined{pgf@pattern@name@_dl75cn01q}{
\pgfdeclarepatternformonly[\mcThickness,\mcSize]{_dl75cn01q}
{\pgfqpoint{0pt}{0pt}}
{\pgfpoint{\mcSize+\mcThickness}{\mcSize+\mcThickness}}
{\pgfpoint{\mcSize}{\mcSize}}
{
\pgfsetcolor{\tikz@pattern@color}
\pgfsetlinewidth{\mcThickness}
\pgfpathmoveto{\pgfqpoint{0pt}{0pt}}
\pgfpathlineto{\pgfpoint{\mcSize+\mcThickness}{\mcSize+\mcThickness}}
\pgfusepath{stroke}
}}
\makeatother

 
\tikzset{
pattern size/.store in=\mcSize, 
pattern size = 5pt,
pattern thickness/.store in=\mcThickness, 
pattern thickness = 0.3pt,
pattern radius/.store in=\mcRadius, 
pattern radius = 1pt}
\makeatletter
\pgfutil@ifundefined{pgf@pattern@name@_ivue1vqin}{
\pgfdeclarepatternformonly[\mcThickness,\mcSize]{_ivue1vqin}
{\pgfqpoint{0pt}{0pt}}
{\pgfpoint{\mcSize+\mcThickness}{\mcSize+\mcThickness}}
{\pgfpoint{\mcSize}{\mcSize}}
{
\pgfsetcolor{\tikz@pattern@color}
\pgfsetlinewidth{\mcThickness}
\pgfpathmoveto{\pgfqpoint{0pt}{0pt}}
\pgfpathlineto{\pgfpoint{\mcSize+\mcThickness}{\mcSize+\mcThickness}}
\pgfusepath{stroke}
}}
\makeatother

 
\tikzset{
pattern size/.store in=\mcSize, 
pattern size = 5pt,
pattern thickness/.store in=\mcThickness, 
pattern thickness = 0.3pt,
pattern radius/.store in=\mcRadius, 
pattern radius = 1pt}
\makeatletter
\pgfutil@ifundefined{pgf@pattern@name@_rny391xrz}{
\pgfdeclarepatternformonly[\mcThickness,\mcSize]{_rny391xrz}
{\pgfqpoint{0pt}{-\mcThickness}}
{\pgfpoint{\mcSize}{\mcSize}}
{\pgfpoint{\mcSize}{\mcSize}}
{
\pgfsetcolor{\tikz@pattern@color}
\pgfsetlinewidth{\mcThickness}
\pgfpathmoveto{\pgfqpoint{0pt}{\mcSize}}
\pgfpathlineto{\pgfpoint{\mcSize+\mcThickness}{-\mcThickness}}
\pgfusepath{stroke}
}}
\makeatother

 
\tikzset{
pattern size/.store in=\mcSize, 
pattern size = 5pt,
pattern thickness/.store in=\mcThickness, 
pattern thickness = 0.3pt,
pattern radius/.store in=\mcRadius, 
pattern radius = 1pt}
\makeatletter
\pgfutil@ifundefined{pgf@pattern@name@_ro6v5gph0}{
\pgfdeclarepatternformonly[\mcThickness,\mcSize]{_ro6v5gph0}
{\pgfqpoint{0pt}{-\mcThickness}}
{\pgfpoint{\mcSize}{\mcSize}}
{\pgfpoint{\mcSize}{\mcSize}}
{
\pgfsetcolor{\tikz@pattern@color}
\pgfsetlinewidth{\mcThickness}
\pgfpathmoveto{\pgfqpoint{0pt}{\mcSize}}
\pgfpathlineto{\pgfpoint{\mcSize+\mcThickness}{-\mcThickness}}
\pgfusepath{stroke}
}}
\makeatother

 
\tikzset{
pattern size/.store in=\mcSize, 
pattern size = 5pt,
pattern thickness/.store in=\mcThickness, 
pattern thickness = 0.3pt,
pattern radius/.store in=\mcRadius, 
pattern radius = 1pt}
\makeatletter
\pgfutil@ifundefined{pgf@pattern@name@_albnzvzo7}{
\pgfdeclarepatternformonly[\mcThickness,\mcSize]{_albnzvzo7}
{\pgfqpoint{0pt}{-\mcThickness}}
{\pgfpoint{\mcSize}{\mcSize}}
{\pgfpoint{\mcSize}{\mcSize}}
{
\pgfsetcolor{\tikz@pattern@color}
\pgfsetlinewidth{\mcThickness}
\pgfpathmoveto{\pgfqpoint{0pt}{\mcSize}}
\pgfpathlineto{\pgfpoint{\mcSize+\mcThickness}{-\mcThickness}}
\pgfusepath{stroke}
}}
\makeatother

 
\tikzset{
pattern size/.store in=\mcSize, 
pattern size = 5pt,
pattern thickness/.store in=\mcThickness, 
pattern thickness = 0.3pt,
pattern radius/.store in=\mcRadius, 
pattern radius = 1pt}
\makeatletter
\pgfutil@ifundefined{pgf@pattern@name@_h5dogccwd}{
\pgfdeclarepatternformonly[\mcThickness,\mcSize]{_h5dogccwd}
{\pgfqpoint{0pt}{-\mcThickness}}
{\pgfpoint{\mcSize}{\mcSize}}
{\pgfpoint{\mcSize}{\mcSize}}
{
\pgfsetcolor{\tikz@pattern@color}
\pgfsetlinewidth{\mcThickness}
\pgfpathmoveto{\pgfqpoint{0pt}{\mcSize}}
\pgfpathlineto{\pgfpoint{\mcSize+\mcThickness}{-\mcThickness}}
\pgfusepath{stroke}
}}
\makeatother

 
\tikzset{
pattern size/.store in=\mcSize, 
pattern size = 5pt,
pattern thickness/.store in=\mcThickness, 
pattern thickness = 0.3pt,
pattern radius/.store in=\mcRadius, 
pattern radius = 1pt}
\makeatletter
\pgfutil@ifundefined{pgf@pattern@name@_76ml7cdyf}{
\pgfdeclarepatternformonly[\mcThickness,\mcSize]{_76ml7cdyf}
{\pgfqpoint{0pt}{-\mcThickness}}
{\pgfpoint{\mcSize}{\mcSize}}
{\pgfpoint{\mcSize}{\mcSize}}
{
\pgfsetcolor{\tikz@pattern@color}
\pgfsetlinewidth{\mcThickness}
\pgfpathmoveto{\pgfqpoint{0pt}{\mcSize}}
\pgfpathlineto{\pgfpoint{\mcSize+\mcThickness}{-\mcThickness}}
\pgfusepath{stroke}
}}
\makeatother
\tikzset{every picture/.style={line width=0.75pt}} 

\begin{tikzpicture}[x=0.75pt,y=0.75pt,yscale=-1,xscale=1]

\draw  [color={rgb, 255:red, 208; green, 2; blue, 27 }  ,draw opacity=1 ][pattern=_oj655gdon,pattern size=6pt,pattern thickness=0.75pt,pattern radius=0pt, pattern color={rgb, 255:red, 245; green, 166; blue, 35}] (22,992.75) -- (627.5,992.75) -- (627.5,1021.5) -- (22,1021.5) -- cycle ;
\draw  [color={rgb, 255:red, 208; green, 2; blue, 27 }  ,draw opacity=1 ][pattern=_b42n3qwnl,pattern size=6pt,pattern thickness=0.75pt,pattern radius=0pt, pattern color={rgb, 255:red, 245; green, 166; blue, 35}] (22,1050.25) -- (627.5,1050.25) -- (627.5,1079) -- (22,1079) -- cycle ;
\draw  [color={rgb, 255:red, 208; green, 2; blue, 27 }  ,draw opacity=1 ][pattern=_mao6v6af7,pattern size=6pt,pattern thickness=0.75pt,pattern radius=0pt, pattern color={rgb, 255:red, 245; green, 166; blue, 35}] (22,1021.5) -- (627.5,1021.5) -- (627.5,1050.25) -- (22,1050.25) -- cycle ;
\draw  [color={rgb, 255:red, 208; green, 2; blue, 27 }  ,draw opacity=1 ][pattern=_yq7yxl0t4,pattern size=6pt,pattern thickness=0.75pt,pattern radius=0pt, pattern color={rgb, 255:red, 245; green, 166; blue, 35}] (22,1079) -- (627.5,1079) -- (627.5,1107.75) -- (22,1107.75) -- cycle ;
\draw  [color={rgb, 255:red, 208; green, 2; blue, 27 }  ,draw opacity=1 ][pattern=_e5dh29c07,pattern size=6pt,pattern thickness=0.75pt,pattern radius=0pt, pattern color={rgb, 255:red, 245; green, 166; blue, 35}] (22,1107.75) -- (627.5,1107.75) -- (627.5,1136.5) -- (22,1136.5) -- cycle ;
\draw  [color={rgb, 255:red, 208; green, 2; blue, 27 }  ,draw opacity=1 ][pattern=_ohx0c5tvm,pattern size=6pt,pattern thickness=0.75pt,pattern radius=0pt, pattern color={rgb, 255:red, 245; green, 166; blue, 35}] (22,1165.25) -- (627.5,1165.25) -- (627.5,1194) -- (22,1194) -- cycle ;
\draw  [color={rgb, 255:red, 208; green, 2; blue, 27 }  ,draw opacity=1 ][pattern=_64wp4ygvt,pattern size=6pt,pattern thickness=0.75pt,pattern radius=0pt, pattern color={rgb, 255:red, 245; green, 166; blue, 35}] (22,1136.5) -- (627.5,1136.5) -- (627.5,1165.25) -- (22,1165.25) -- cycle ;
\draw  [color={rgb, 255:red, 208; green, 2; blue, 27 }  ,draw opacity=1 ][pattern=_k1rz6invw,pattern size=6pt,pattern thickness=0.75pt,pattern radius=0pt, pattern color={rgb, 255:red, 245; green, 166; blue, 35}] (22,1194) -- (627.5,1194) -- (627.5,1222.75) -- (22,1222.75) -- cycle ;
\draw  [color={rgb, 255:red, 208; green, 2; blue, 27 }  ,draw opacity=1 ][pattern=_cqt97nhuv,pattern size=6pt,pattern thickness=0.75pt,pattern radius=0pt, pattern color={rgb, 255:red, 126; green, 211; blue, 33}] (22,1021.5) -- (627.5,1021.5) -- (627.5,1050.25) -- (22,1050.25) -- cycle ;
\draw  [color={rgb, 255:red, 208; green, 2; blue, 27 }  ,draw opacity=1 ][pattern=_hww6ky3iq,pattern size=6pt,pattern thickness=0.75pt,pattern radius=0pt, pattern color={rgb, 255:red, 126; green, 211; blue, 33}] (22,1079) -- (627.5,1079) -- (627.5,1107.75) -- (22,1107.75) -- cycle ;
\draw  [color={rgb, 255:red, 208; green, 2; blue, 27 }  ,draw opacity=1 ][pattern=_dl75cn01q,pattern size=6pt,pattern thickness=0.75pt,pattern radius=0pt, pattern color={rgb, 255:red, 245; green, 166; blue, 35}] (22,1222.75) -- (627.5,1222.75) -- (627.5,1251.5) -- (22,1251.5) -- cycle ;
\draw  [color={rgb, 255:red, 208; green, 2; blue, 27 }  ,draw opacity=1 ][pattern=_ivue1vqin,pattern size=6pt,pattern thickness=0.75pt,pattern radius=0pt, pattern color={rgb, 255:red, 245; green, 166; blue, 35}] (22,964) -- (627.5,964) -- (627.5,992.75) -- (22,992.75) -- cycle ;
\draw  [color={rgb, 255:red, 208; green, 2; blue, 27 }  ,draw opacity=1 ][pattern=_rny391xrz,pattern size=6pt,pattern thickness=0.75pt,pattern radius=0pt, pattern color={rgb, 255:red, 126; green, 211; blue, 33}] (22,964) -- (627.5,964) -- (627.5,992.75) -- (22,992.75) -- cycle ;
\draw  [color={rgb, 255:red, 208; green, 2; blue, 27 }  ,draw opacity=1 ][pattern=_ro6v5gph0,pattern size=6pt,pattern thickness=0.75pt,pattern radius=0pt, pattern color={rgb, 255:red, 126; green, 211; blue, 33}] (22,992.75) -- (627.5,992.75) -- (627.5,1021.5) -- (22,1021.5) -- cycle ;
\draw  [color={rgb, 255:red, 208; green, 2; blue, 27 }  ,draw opacity=1 ][pattern=_albnzvzo7,pattern size=6pt,pattern thickness=0.75pt,pattern radius=0pt, pattern color={rgb, 255:red, 126; green, 211; blue, 33}] (22,1107.75) -- (627.5,1107.75) -- (627.5,1136.5) -- (22,1136.5) -- cycle ;
\draw  [color={rgb, 255:red, 208; green, 2; blue, 27 }  ,draw opacity=1 ][pattern=_h5dogccwd,pattern size=6pt,pattern thickness=0.75pt,pattern radius=0pt, pattern color={rgb, 255:red, 126; green, 211; blue, 33}] (22,1194) -- (627.5,1194) -- (627.5,1222.75) -- (22,1222.75) -- cycle ;
\draw  [color={rgb, 255:red, 208; green, 2; blue, 27 }  ,draw opacity=1 ][pattern=_76ml7cdyf,pattern size=6pt,pattern thickness=0.75pt,pattern radius=0pt, pattern color={rgb, 255:red, 126; green, 211; blue, 33}] (22,1222.75) -- (627.5,1222.75) -- (627.5,1251.5) -- (22,1251.5) -- cycle ;
\draw   (181.88,1025.28) -- (277.63,970.36) -- (421.53,1193.65) -- (325.78,1248.57) -- cycle ;
\draw  [dash pattern={on 0.84pt off 2.51pt}]  (231.51,998.45) -- (371.9,1220.48) ;
\draw  [dash pattern={on 0.84pt off 2.51pt}]  (255.31,1135.89) -- (348.1,1083.04) ;

\draw (138.67,989.89) node [anchor=north west][inner sep=0.75pt]  [rotate=-1.87]  {$\phi ( T)$};

\end{tikzpicture}

\caption{Geometric description of Proposition \ref{prop:number}} \label{fig:mot_prop:number-bis}
\end{figure}

\begin{proposition} \label{prop:number} 
$\mathcal{N}^v_{r_1,r_2}(\phi(K))$ and $\mathcal{N}^h_{r_1,r_2}(\phi(K))$ (Definition \ref{def_counting_number_R}) satisfy the following estimates:
$$\mathcal{N}^v_{r_1,r_2}(\phi(K)) \leq 6.2^{k_2}\max(\frac{|\cos \theta|}{2^{r_2}}, \frac{|\sin\theta|}{2^{r_1}})+2$$
and
$$\mathcal{N}^h_{r_1,r_2}(\phi(K)) \leq 6.2^{k_1}  \max(\frac{|\cos \theta|}{2^{r_1}}, \frac{|\sin\theta|}{2^{r_2}})+2.$$
\end{proposition}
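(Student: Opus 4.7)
The plan is to prove Proposition \ref{prop:number} by a direct geometric counting argument applied to each of the three rotated vertical (or horizontal) segments separately, and then summing. The idea is that each rotated segment is a straight line segment, so it crosses axis-parallel grid lines in a controlled and monotone fashion.

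First, fix $j\in\{1,2,3\}$ and consider a single rotated vertical segment $\phi(l^v_j(K))$. This is a straight line segment of total length $2^{k_2}$ making angle $\theta$ with the vertical axis. Its projection onto the $x$-axis has length $2^{k_2}|\sin\theta|$ and its projection onto the $y$-axis has length $2^{k_2}|\cos\theta|$. The grid of dyadic rectangles of dimensions $2^{r_1}\times 2^{r_2}$ partitions $\BBR^2$ into vertical columns of width $2^{r_1}$ and horizontal rows of height $2^{r_2}$. Hence the segment crosses at most $\lfloor 2^{k_2}|\sin\theta|/2^{r_1}\rfloor + 1$ vertical grid lines and at most $\lfloor 2^{k_2}|\cos\theta|/2^{r_2}\rfloor + 1$ horizontal grid lines.

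Since the segment is a monotone line segment, as one traverses it from one endpoint to the other, one enters a new rectangle exactly when one crosses either a vertical or horizontal grid line. Therefore the total number of dyadic rectangles of size $2^{r_1}\times 2^{r_2}$ that it meets is at most
\[
1 + \frac{2^{k_2}|\sin\theta|}{2^{r_1}} + \frac{2^{k_2}|\cos\theta|}{2^{r_2}} \leq 1 + 2\cdot 2^{k_2}\max\!\Big(\frac{|\sin\theta|}{2^{r_1}},\frac{|\cos\theta|}{2^{r_2}}\Big).
\]
Summing this estimate over the three rotated vertical segments (and noting that rectangles containing tops of $\phi(K)$, which could otherwise be double-counted, contribute at most a harmless additive constant), we obtain
\[
\mathcal{N}^v_{r_1,r_2}(\phi(K))\le 6\cdot 2^{k_2}\max\!\Big(\frac{|\cos\theta|}{2^{r_2}},\frac{|\sin\theta|}{2^{r_1}}\Big)+2,
\]
as desired. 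The argument for $\mathcal{N}^h_{r_1,r_2}(\phi(K))$ is completely symmetric: each rotated horizontal segment has length $2^{k_1}$, with $x$-projection of length $2^{k_1}|\cos\theta|$ and $y$-projection of length $2^{k_1}|\sin\theta|$, so the same reasoning with the roles of $|\sin\theta|$ and $|\cos\theta|$ exchanged gives the matching bound.

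The proof is elementary; there is no substantial obstacle beyond careful bookkeeping. The only mildly delicate point is ensuring that the constants match precisely — in particular that the additive term from counting the starting rectangle of each of the three segments collapses to ``$+2$'' once one observes that the tops of $\phi(K)$ (which are the only points where two segments share a rectangle) must lie in the corresponding intersections, so some of the would-be ``$+3$'' terms are absorbed by this overlap. This is in fact consistent with Remark \ref{trivial_top_vert}.
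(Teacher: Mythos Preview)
Your proof is correct and follows essentially the same approach as the paper. Both arguments compute the projections of each rotated segment onto the coordinate axes (lengths $2^{k_2}|\sin\theta|$ and $2^{k_2}|\cos\theta|$ for the vertical segments) and bound the number of grid rectangles meeting the segment by these projection-to-grid-spacing ratios; the paper writes this directly as $2\max(\cdot,\cdot)$ per segment while you phrase it as a monotone grid-crossing count $1+a+b\le 1+2\max(a,b)$, but the geometry and the resulting constants are the same.
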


\begin{proof} [Proof of Proposition \ref{prop:number}]
Let us consider 
\begin{align*}
\mathcal {R}^v := \{R: (R, K) \in \Lambda(K_{\max,i}, r_1, r_2, k_1, k_2), R\cap \phi\big(l^v_1(K)\cup l^v_2(K)\cup l^v_3(K)\big)\neq\emptyset\}, \\
\mathcal {R}^h := \{R: (R,K) \in  \Lambda(K_{\max,i}, r_1, r_2, k_1, k_2), R\cap \phi\big(l^h_1(K)\cup l^h_2(K)\cup l^h_3(K)\big)\neq\emptyset \}.
\end{align*}
so that $\# \mathcal {R}^v = \mathcal{N}^v_{r_1,r_2}(\phi(K))$ and $\# \mathcal {R}^h = \mathcal{N}^h_{r_1,r_2}(\phi(K))$ by Definition \ref{def_counting_number_R}. After translation, one can assume that the rectangle $K$, is described by its four vertices given by 
$$K = \begin{bmatrix}
0 & K_1 & K_1 & 0 \\
0 & 0 & K_2 & K_2
\end{bmatrix}.$$
And so $\phi(K)$ is given by the following four vertices:
\begin{align}
A:= 
\begin{bmatrix}
\cos \theta & -\sin \theta \\
\sin \theta & \cos \theta
\end{bmatrix} 
\begin{bmatrix}
0 \\ 0
\end{bmatrix}
& = 
\begin{bmatrix}
0 \\ 0
\end{bmatrix} \nonumber \\ 
B:= 
\begin{bmatrix}
\cos \theta & -\sin \theta \\
\sin \theta & \cos \theta
\end{bmatrix} 
\begin{bmatrix}
K_1 \\ 0
\end{bmatrix}
& = 
\begin{bmatrix}
K_1 \cos \theta \\ K_1 \sin\theta
\end{bmatrix} \nonumber \\
C:=
\begin{bmatrix}
\cos \theta & -\sin \theta \\
\sin \theta & \cos \theta
\end{bmatrix} 
\begin{bmatrix}
K_1 \\ K_2
\end{bmatrix}
& = 
\begin{bmatrix}
K_1 \cos\theta - K_2 \sin \theta \\ K_1 \sin\theta + K_2 \cos \theta
\end{bmatrix} \nonumber \\ 
D:=
\begin{bmatrix}
\cos \theta & -\sin \theta \\
\sin \theta & \cos \theta
\end{bmatrix} 
\begin{bmatrix}
0 \\ K_2
\end{bmatrix}
& = 
\begin{bmatrix}
-K_2 \sin \theta \\ K_2 \cos \theta
\end{bmatrix}.
\end{align}
We consider rectangles $R\in\mathcal{R}^v$ so it meets only rotated vertical segments, which are $[A,D]$, $[B,C]$ or $[\frac{A+B}{2},\frac{C+D}{2}]$. Consider a rectangle $R$ which intersects $[A,D]$.
Therefore, the projection of the rotated boundary 
$(AD)$ onto $y$-direction has length 
\begin{equation}
|D_y - A_y| := K_2 |\cos \theta|.
\end{equation}
A similar reasoning shows that the length of the projection of $(AD)$ onto $x$-direction is
\begin{equation}
|D_x - A_x| := K_2 |\sin \theta|.
\end{equation}
Hence, the number of rectangles in $\mathcal{R}$ which intersect with $(AD)$ can be majorized by 
\begin{equation}
2 \max(\frac{K_2 |\cos \theta|}{2^{r_2}}, \frac{K_2 |\sin\theta|}{2^{r_1}}).
\end{equation}
A similar reasoning hold for the two other rotated vertical segments.

The same reasoning generates the following estimates for the number of rectangles in $\mathcal{R}^h$:
\begin{align}
 2\max(\frac{K_1|\sin \theta|}{2^{r_2}},\frac{K_1|\cos\theta|}{2^{r_1}}).
\end{align}
\end{proof}
\begin{remark}
We shall point out that in the current setting $2^{k_1} + 2^{k_2} \lesssim 2^{r_2}$ and the dyadic grid is chosen so that $\phi(K)$ does not intersect with the vertical boundaries of $R$ for $(R,K) \in \Lambda(K_{\max,i}, r_1, r_2, k_1, k_2)$. For $R$ intersecting with $\phi(l_1^v(K))\cup \phi(l_2^v(K)) \cup \phi(l_3^v(K))$ nontrivially, if $2^{k_2} |\cos \theta| < 2^{r_2}$, then we have the exact equality
$$
\mathcal{N}^v_{r_1,r_2}(\phi(K))= \#\text{top}(\phi(K)),
$$
which is a more precise estimate than the trivial observation in Remark \ref{trivial_top_vert}. Similarly, for $R$ intersecting with $\phi(l_1^h(K))\cup \phi(l_2^h(K)) \cup \phi(l_3^h(K))$ nontrivially, if $2^{k_2} |\sin \theta| < 2^{r_1}$, then 
$$
\mathcal{N}^h_{r_1,r_2}(\phi(K))= \#\text{top}(\phi(K)).
$$

\end{remark}
This ends the discussion of the even case. 
\vskip .15in
\noindent
\textbf{Sparse Case.} We would now develop the estimate for $\Gamma(K_{\max,i}, r_1, r_2, k_1, k_2)$ in the sparse case. We recall that the sparse case happens when each $\phi(K)$ intersects with at most two $R$'s for $(R,K) \in \Lambda(K_{\max,i}, r_1, r_2, k_1, k_2)$. From Corollary \ref{cor:r1}, we know that
\begin{equation} \label{assumption_r_1}
\max(2^{k_1}, 2^{k_2}) \leq \frac{1}{2} 2^{r_1},
\end{equation}
so we can quantify the sparseness by
\begin{equation} \label{sparse_exp}
2^{r_2} \geq 2^{k_2} |\cos \theta| + 2^{k_1} |\sin \theta|.
\end{equation}

Let $\Lambda^{sparse}$ denote $\Lambda(K_{\max,i}, r_1, r_2, k_1, k_2)$ with $(r_1, r_2,k_1,k_2)$ satisfying (\ref{assumption_r_1}) and (\ref{sparse_exp}). Correspondingly, let $\Gamma^{sparse}$ denote $\Gamma(K_{\max,i}, r_1, r_2, k_1, k_2)$ restricted on $(R,K) \in \Lambda^{sparse}$. 
In this sparse case, according to the estimates in Proposition \ref{prop:number},
$$
\mathcal{N}^v_{r_1,r_2}(\phi(K)) + \mathcal{N}^h_{r_1,r_2}(\phi(K)) \leq 10,$$
which by plugging in Lemma \ref{count_Delta_K}, yields that
\begin{equation} \label{sparse_trivial}
\Gamma^{sparse}(K_{\max,i},r_1,r_2,k_1,k_2) \lesssim \left(\sup_{(R,K) \in \Lambda} \frac{\min(K_1^2 |\tan \theta|, K_2^2 |\cot\theta|)}{|K|^{1/2} |R|^{1/2}}\right)^2 \cdot \frac{|K_{\max,i}|}{2^{k_1}2^{k_2}}.
\end{equation}
However, what is not taken into consideration in the estimate (\ref{sparse_trivial}) is that there are considerably many $K \subseteq K_{\max,i}$ such that $(R,K) \in \Lambda^{sparse}$ and $\phi(K) \subseteq R$ and thus 
$$
\phi(K) \cap (l_1^h(R) \cup l_2^h(R) \cup l_3^h(R) \cup l_1^v(R) \cup l_2^v(R) \cup l_3^v(R))  = \emptyset
$$
 in which case
$$
\langle h_R \circ \phi, h_K \rangle = 0.
$$
Let $\Lambda^{sparse, nontrivial} \subseteq \Lambda^{sparse}$ denote the sub-collection with $(R,K) \in \Lambda^{sparse}$ satisfying
$$
\phi(K) \cap (l_1^h(R) \cup l_2^h(R) \cup l_3^h(R) \cup l_1^v(R) \cup l_2^v(R) \cup l_3^v(R))  \neq \emptyset.
$$
We can then rewrite 
$$
\Gamma^{sparse}(K_{\max,i},r_1,r_2,k_1,k_2) =  \sum_{\substack{K \subseteq K_{\max,i} \\ |K| = 2^{k_1} \times 2^{k_2}}} \left(\sum_{\substack{R: (R,K) \in \Lambda^{sparse, nontrivial}}} |\langle h_R \circ \phi, h_K \rangle|^{p'} \right)^{\frac{2}{p'}}. 
$$
We now consider the inner sum of $\Gamma^{sparse}$. In particular
\begin{align*}
&\left(\sum_{\substack{(R,K) \in  \Lambda^{sparse, nontrivial}}} |\langle h_R \circ \phi, h_K \rangle|^{p'} \right)^{\frac{2}{p'}}.
\end{align*} 
Fix $K \subseteq K_{\max,i}$, there are at most two $R's$ intersecting with $\phi(K)$ where $(R,K) \in \Lambda^{sparse, nontrivial}$. Due to its almost unique intersections with $R$, $K$ can be associated to a unique $R$ for $(R, K) \in \Lambda^{sparse, nontrivial}$ and thus can be denoted by $K(R)$. The inner sum can be rewritten as
\begin{align*}
& \left(2 \cdot  \sup_{(R,K) \in \Lambda^{sparse, nontrivial}} |\langle h_R \circ \phi, h_K \rangle|^{p'} \right)^{\frac{2}{p'}}
\end{align*}
and
\begin{align} \label{Gamma_sparse}
\Gamma^{sparse}(K_{\max,i}, r_1, r_2, k_1, k_2) \lesssim \sum_{\substack{R \in \mathbb{D}^{\beta(\phi(K_{\max,i}))} \\R \nsubseteq \widetilde{\phi(\Omega)} \\ R \cap \phi(K_{\max,i}) \neq \emptyset \\ |R| = 2^{r_1} \times 2^{r_2}}}\sum_{\substack{K: (R,K) \in \Lambda^{sparse, nontrivial} \\ K= K(R)}} \sup_{(R,K) \in \Lambda^{sparse, nontrivial}} |\langle h_R \circ \phi, h_K \rangle|^{2}
\end{align}
for $(r_1, r_2,k_1,k_2)$ satisfying (\ref{assumption_r_1}) and (\ref{sparse_exp}). In the sparse case, the necessary counting estimates are \begin{enumerate}
\item
for $R$ fixed, how many rotated $K$'s intersect with $R$ where $(R,K) \in \Lambda^{sparse, nontrivial}$;
\item
for $K_{\max,i}$ fixed, how many $R$'s intersect with $\phi(K_{\max,i})$.
\end{enumerate}
By the assumption (\ref{assumption_r_1}) and the delicate choice of the sparse grid $\mathbb{D}^{\beta(\phi(K_{\max,i}))}$, we deduce that the vertical boundary of $R$'s never intersect with $K_{\max,i}$ and thus any $K\subseteq K_{\max,i}$. The intersections of $\phi(K)$ and $\phi(K_{\max,i})$ with the boundaries of $R$ can be simplified to the intersections with only the horizontal segments of $R$. We define the counting number as follows. 
\begin{definition} \label{def_count_sparse}
Fix a rotation map $\phi:= \phi^{\theta}$ of angle $\theta \in [0,2\pi]$.
\begin{itemize}
\item
Define for each $R \in \mathbb{D}^{\beta(\phi(K_{\max,i}))}$, the number of dyadic rectangles with fixed dimension that are contained in $K_{\max,i}$ and have non-trivial intersections with the rotated horizontal (or vertical) segments of $R$ as follows:
$$
\mathcal{L}^h_{k_1,k_2}(\phi^{-1}(R)) := \# \{K: (R,K) \in  \Lambda(K_{\max,i}, r_1, r_2, k_1, k_2), K \cap \phi^{-1}\big(l^h_1(R)\cup l^h_2(R)\cup l^h_3(R)\big)\neq\emptyset \}.
$$
\item
Define for each $K_{\max,i} \in \mathbb{D}^{\alpha}$, the number of dyadic rectangles of fixed dimension whose horizontal (or vertical) segments intersect non-trivially with $\phi(K_{\max,i})$ as follows:
$$
\mathcal{M}^h_{r_1, r_2}(\phi (K_{\max,i})) := \# \{R: R \in \mathbb{D}^{\beta(\phi(K_{\max,i}))}, |R| = 2^{r_1} \times 2^{r_2}, \phi(K_{\max,i}) \cap \big(l^h_1(R)\cup l^h_2(R)\cup l^h_3(R)\big)\neq\emptyset\}.
$$
\end{itemize}
\end{definition}
The following lemma summarizes the estimate of $\Gamma^{sparse}$ in (\ref{Gamma_sparse}) with the application of the counting numbers defined above. 
\begin{lemma} \label{lemma_sparse}
\begin{equation}
\Gamma^{sparse} \lesssim \sup_{(R,K) \in \Lambda^{sparse, nontrivial}} |\langle h_R \circ \phi, h_K \rangle|^{2} \cdot \sup_{|R| = 2^{r_1} \times 2^{r_2}}\mathcal{L}^h_{k_1,k_2}(\phi^{-1}(R)) \cdot \mathcal{M}^h_{r_1, r_2}(\phi (K_{\max,i})).
\end{equation}
\end{lemma}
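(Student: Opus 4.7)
My plan is to start from the intermediate bound displayed in \eqref{Gamma_sparse} and reduce the estimate to two independent counting problems. The first step is to pull the supremum of $|\langle h_R \circ \phi, h_K \rangle|^{2}$ over $(R,K) \in \Lambda^{sparse, nontrivial}$ outside both sums. What remains is to control the number of pairs $(R,K)$ contributing to the double sum, which factorizes into the number of relevant $K$'s per fixed $R$ times the number of relevant $R$'s intersecting $\phi(K_{\max,i})$.

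For the inner sum (with $R$ fixed), I would use the defining condition of $\Lambda^{sparse, nontrivial}$, namely that $\phi(K)$ meets $\partial R$. The crucial geometric fact to exploit is that in the sparse regime \eqref{sparse_exp}, combined with the tailored choice of the grid $\mathbb{D}^{\beta(\phi(K_{\max,i}))}$ (see \eqref{choice_grid} and Remark \ref{rem:62}), the vertical sides of $R$ do not intersect $\phi(K_{\max,i})$ at all, hence a fortiori they do not meet $\phi(K)$ for any $K\subseteq K_{\max,i}$. Therefore the meeting must happen on a horizontal side $l_i^h(R)$, equivalently $K \cap \phi^{-1}(l_i^h(R)) \neq \emptyset$, and by Definition \ref{def_count_sparse} the number of such $K$'s is at most $\mathcal{L}^h_{k_1,k_2}(\phi^{-1}(R))$. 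Taking the supremum over $R$ with $|R|=2^{r_1}\times 2^{r_2}$ gives the first factor.

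For the outer sum, I would argue that only those $R$ for which at least one $K \subseteq K_{\max,i}$ satisfies $(R,K) \in \Lambda^{sparse, nontrivial}$ contribute; for any other $R$ the inner sum is empty (indeed, if $\phi(K_{\max,i})$ sits entirely in the interior of $R$ and touches no horizontal segment of $R$, then $h_R\circ \phi$ is constant on all $\phi(K)$ and the inner product vanishes). By the same sparsity reasoning used in the previous step, such a contributing $R$ must have a horizontal segment meeting $\phi(K_{\max,i})$, so the count is controlled by $\mathcal{M}^h_{r_1,r_2}(\phi(K_{\max,i}))$. Multiplying the two counting bounds together with the prefactor yields the claim.

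The main subtlety I anticipate is the verification that the vertical sides of $R$ are geometrically excluded from meeting $\phi(K_{\max,i})$ in the sparse regime; once this is pinned down, the rest is just bookkeeping, since the sparseness has already reduced the multiplicity of $K$-$R$ incidences to an essentially one-parameter counting problem along the horizontal direction.
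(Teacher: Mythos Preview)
Your approach is correct and mirrors exactly what the paper does (the paper gives no detailed proof of this lemma beyond noting that it follows from \eqref{Gamma_sparse} together with the counting numbers of Definition \ref{def_count_sparse}). One small correction: the reason the vertical segments of $R$ cannot meet $\phi(K_{\max,i})$ is not the sparse condition \eqref{sparse_exp} --- that constrains $2^{r_2}$, not $2^{r_1}$, and says nothing about the horizontal extent of $R$ --- but rather the size condition \eqref{assumption_r_1} (equivalently Corollary \ref{cor:r1}) combined with the tailored grid choice; the relevant observation is Remark \ref{rem:62bis} and in particular \eqref{eq:R1}. With that citation adjusted, your argument goes through verbatim.
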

While $\displaystyle \sup_{(R,K) \in \Lambda^{sparse, nontrivial}} |\langle h_R \circ \phi, h_K \rangle|^{2}$ can be majorzied by the bounds in Proposition \ref{prop:1}, we would need precise estimates for $\mathcal{L}^h_{k_1,k_2}(\phi^{-1}(R))$  and $\mathcal{M}^h_{r_1, r_2}(\phi (K_{\max,i})) $.
\begin{proposition} \label{prop_sparse_count}
$\mathcal{L}^h_{k_1,k_2}(\phi^{-1}(R))$  and $\mathcal{M}^h_{r_1, r_2}(\phi (K_{\max,i})) $ (Definition \ref{def_count_sparse}) satisfy the following estimates:
\begin{align*}
\sup_{|R| = 2^{r_1} \times 2^{r_2}}\mathcal{L}^h_{k_1,k_2}(\phi^{-1}(R)) \leq & \mathbbm{1}_{|\tan \theta| \leq \frac{K_{\max,i}^2}{K_{\max,i}^1}}\max \left(\frac{K_{\max,i}^1}{2^{k_1}}, \frac{K_{\max,i}^1 |\tan \theta|}{2^{k_2}} \right) + \\
& \mathbbm{1}_{|\tan \theta| \geq \frac{K_{\max,i}^2}{K_{\max,i}^1}}\max \left(\frac{K_{\max,i}^2 |\cot \theta|}{2^{k_1}}, \frac{K_{\max,i}^2}{2^{k_2}} \right),
\end{align*}
and
\begin{equation*}
\mathcal{M}^h_{r_1, r_2}(\phi (K_{\max,i})) \leq \max \left(\frac{K_{\max,i}^2 |\cos \theta| + K_{\max,i}^1 |\sin \theta|}{2^{r_2}},1 \right) \leq 4 \cdot \max \left(\frac{K_{\max,i}^2 |\cos \theta| + K_{\max,i}^1 |\sin \theta|}{2^{r_2}},\frac{1}{4} \right).
\end{equation*}
\end{proposition}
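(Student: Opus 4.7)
The plan is to prove each bound by explicit geometric counting, tracking projections onto the coordinate axes.

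For the bound on $\mathcal{L}^h_{k_1,k_2}(\phi^{-1}(R))$, I would first parametrize $\phi^{-1}(R)$. Since $\phi^{-1}$ is the rotation of angle $-\theta$, each of the three horizontal segments $l^h_i(R)$ rotates into a parallel line segment of length $2^{r_1}$ with slope $-\tan\theta$. The full $x$-projection of such a segment has length $2^{r_1}|\cos\theta|$ and the full $y$-projection has length $2^{r_1}|\sin\theta|$. By Observation~\ref{R_restrict} and the working assumption \eqref{eq:r1}, $2^{r_1}|\cos\theta| \geq 2^l K_{\max,i}^1/2 \geq K_{\max,i}^1$, so the segment traverses horizontally at least the full width of $K_{\max,i}$. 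The step is therefore to intersect each rotated horizontal segment with $K_{\max,i}$ and compute the dimensions of the resulting sub-segment.

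The case split then follows from elementary geometry. If $|\tan\theta| \leq K_{\max,i}^2/K_{\max,i}^1$, i.e.\ $K_{\max,i}^1|\tan\theta| \leq K_{\max,i}^2$, the rotated segment exits $K_{\max,i}$ through the vertical sides before reaching the top or bottom, so the effective segment $\phi^{-1}(l_i^h(R))\cap K_{\max,i}$ has $x$-extent at most $K_{\max,i}^1$ and $y$-extent at most $K_{\max,i}^1|\tan\theta|$. A line segment of $x$-extent $\Delta x$ and $y$-extent $\Delta y$ meets at most $O(\Delta x/2^{k_1} + \Delta y/2^{k_2})$ dyadic rectangles of dimension $2^{k_1}\times 2^{k_2}$, which is controlled by the maximum. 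In the opposite case $|\tan\theta|\geq K_{\max,i}^2/K_{\max,i}^1$, the segment exits through the top/bottom first, giving $y$-extent $K_{\max,i}^2$ and $x$-extent $K_{\max,i}^2|\cot\theta|$; the same counting then yields the second term. Summing over the three rotated segments $l^h_1,l^h_2,l^h_3$ only multiplies the count by a constant absorbed in the notation.

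For the bound on $\mathcal{M}^h_{r_1,r_2}(\phi(K_{\max,i}))$, I would project $\phi(K_{\max,i})$ onto the $y$-axis. Its four rotated vertices $(0,0),(K_{\max,i}^1\cos\theta, K_{\max,i}^1\sin\theta),(K_{\max,i}^1\cos\theta-K_{\max,i}^2\sin\theta, K_{\max,i}^1\sin\theta+K_{\max,i}^2\cos\theta),(-K_{\max,i}^2\sin\theta, K_{\max,i}^2\cos\theta)$ yield a $y$-projection of length exactly $K_{\max,i}^1|\sin\theta| + K_{\max,i}^2|\cos\theta|$. A horizontal segment $l^h_i(R)$ sits at a fixed height, and the set of heights of horizontal segments of dyadic rectangles of vertical side $2^{r_2}$ in $\mathbb{D}^{\beta(\phi(K_{\max,i}))}$ is an arithmetic progression with step $2^{r_2}/2$. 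Only those at heights within the $y$-projection of $\phi(K_{\max,i})$ can intersect it, giving at most $\lceil 2(K_{\max,i}^2|\cos\theta|+K_{\max,i}^1|\sin\theta|)/2^{r_2}\rceil$ candidate heights and thus at most that many $R$'s contributing, which yields the stated bound $\max\left(\frac{K_{\max,i}^2|\cos\theta|+K_{\max,i}^1|\sin\theta|}{2^{r_2}},1\right)$ up to the harmless factor of 4.

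The main obstacle is keeping the geometric bookkeeping honest: in particular, verifying in $\mathcal{L}^h$ that the truncation of the rotated segment by $K_{\max,i}$ really produces the extents claimed in each regime, and in $\mathcal{M}^h$ that the alignment of the dyadic grid $\mathbb{D}^{\beta(\phi(K_{\max,i}))}$ guarantees only $O(1)$ distinct $R$'s per admissible $y$-layer (this uses that $Q\supseteq\phi(K_{\max,i})$ is a dyadic rectangle in the same grid with $Q_1\lesssim K_{\max,i}^1|\cos\theta|+K_{\max,i}^2|\sin\theta|$, so at each relevant $y$-height the horizontal slice of $\phi(K_{\max,i})$ lies inside at most a bounded number of $R$'s in the $x$-direction). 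Everything else is routine linear algebra on the rotation matrix.
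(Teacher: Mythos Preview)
Your approach is correct and is precisely the natural geometric counting the paper has in mind; the paper in fact states Proposition~\ref{prop_sparse_count} without proof, leaving the verification to exactly this kind of projection-and-count argument. Your handling of the two regimes for $\mathcal{L}^h$ via the slope comparison $|\tan\theta|\lessgtr K_{\max,i}^2/K_{\max,i}^1$, and of $\mathcal{M}^h$ via the $y$-projection of $\phi(K_{\max,i})$ together with the grid alignment (Remark~\ref{rem:62bis}) to rule out multiple $R$'s in the $x$-direction, is the right mechanism and matches the paper's implicit reasoning.
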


\begin{remark} \label{rem:important}
We notice that if $$\frac{K_{\max,i}^2 |\cos \theta| + K_{\max,i}^1 |\sin \theta|}{2^{r_2}} \leq \frac{1}{4},$$ 
or equivalently 
$$K_{\max,i}^2 |\cos \theta| + K_{\max,i}^1 |\sin \theta| \leq 
\frac{1}{4} \cdot 2^{r_2},$$
then thanks to the choice of the dyadic grid $\mathbb{D}^{\beta(\phi(K_{\max,i}))}$, we have chosen a dyadic rectangle such that
$$
\phi(K_{\max,i}) \subseteq Q
$$
and (\ref{choice_grid}) is satisfied, which implies that
\begin{align*}
& Q_2 \leq \frac{1}{2} \cdot 2^{r_2}.
\end{align*}
Moreover, under the assumptions and Corollary \ref{cor:r1}, we also have
\begin{align*}
& Q_1 \leq \frac{1}{2} \cdot 2^{r_1}.
\end{align*}
Since $Q$ and $R$ are dyadic rectangles in the chosen dyadic grid $\mathbb{D}^{\beta(\phi(K_{\max,i}))}$, we can deduce that $Q$ is included in one of the $4$ childrens of $R$. As a consequence, for any $K \subseteq K_{\max,i}$, $h_K \circ \phi^{-1}$ is constant on $R$, which implies that
\begin{equation}
\langle h_R \circ \phi, h_K \rangle = \langle h_R, h_K \circ \phi^{-1} \rangle = 0.
\end{equation}
Thus we would separate the discussion by the cases when 
$$
K_{\max,i}^2 |\cos \theta| + K_{\max,i}^1 |\sin \theta| \leq 
\frac{1}{4} \cdot 2^{r_2},
$$
so that we have the perfect degeneracy for $\Gamma$ and the more non-trivial case
$$
K_{\max,i}^2 |\cos \theta| + K_{\max,i}^1 |\sin \theta| \geq 
\frac{1}{4} \cdot 2^{r_2}.$$
\end{remark}

\begin{remark}
Combining the estimates for $\mathcal{L}^h_{k_1,k_2}(\phi^{-1}(R))$ and $\mathcal{M}^h_{r_1, r_2}(\phi (K_{\max,i}))$ in Proposition \ref{prop_sparse_count}, we derive that thanks to the previous remark, in the interesting situations (i.e. when $K_{\max,i}^2 |\cos \theta| + K_{\max,i}^1 |\sin \theta| \geq 
\frac{1}{4} \cdot 2^{r_2}$) then
\begin{equation}\label{sparse_count_simplified}
\sup_{|R| = 2^{r_1} \times 2^{r_2}}\mathcal{L}^h_{k_1,k_2}(\phi^{-1}(R)) \cdot \mathcal{M}^h_{r_1, r_2}(\phi (K_{\max,i})) \lesssim |K_{\max,i}|\left(\mathbbm{1}_{|\tan \theta| \geq \frac{2^{k_2}}{2^{k_1}}} \cdot \frac{|\sin \theta|}{2^{r_2}2^{k_2}} + \mathbbm{1}_{|\tan \theta| \leq \frac{2^{k_2}}{2^{k_1}}} \cdot \frac{|\cos \theta|}{2^{r_2}2^{k_1}}  \right).
\end{equation}
We remark that given the sparse condition 
$$
2^{r_2} \geq 2^{k_2} |\cos \theta| + 2^{k_1} |\sin \theta|,
$$
(\ref{sparse_count_simplified}) can be trivially bounded by
$$
\sup_{|R| = 2^{r_1} \times 2^{r_2}}\mathcal{L}^h_{k_1,k_2}(\phi^{-1}(R)) \cdot \mathcal{M}^h_{r_1, r_2}(\phi (K_{\max,i})) \lesssim \frac{|K_{\max,i}|}{2^{k_1}2^{k_2}},
$$
which agrees with the more naive estimate (\ref{sparse_trivial}) and indicates that (\ref{sparse_count_simplified}) is a natural and more optimal bound. 
\end{remark}

\subsubsection{Statement of Cancellation Estimates}  \label{subsec:cancelation2}
We have established all the partial estimates and would now assemble them to state the cancellation estimates in the format of a table. We list the different cases and clarify the lemma used in each case. According to the corresponding lemma, we specify the useful quantities and their estimates.

 \begin{table}[!h]
        \centering
        
\begin{tabular}{|p{0.08\textwidth}|p{0.03\textwidth}|p{0.32\textwidth}|p{0.19\textwidth}|p{0.19\textwidth}|p{0.19\textwidth}|}
\hline 
\multicolumn{6}{|l|}{$\displaystyle \textcolor[rgb]{0.25,0.46,0.02}{ K_{\max,i}^2 |\cos \theta| + K_{\max,i}^1 |\sin \theta| \leq  \cdot 2^{r_2-2}}$ \textbf{[Remark \ref{rem:important}]} } \\
\hline 
 \multicolumn{2}{|l|}{$\displaystyle \Gamma = $} & \multicolumn{4}{l|}{0} \\
\hline 
 \multicolumn{6}{|l|}{$ \textcolor[rgb]{0.25,0.46,0.02}{\displaystyle K_{\max,i}^2 |\cos \theta| + K_{\max,i}^1 |\sin \theta| \geq \cdot 2^{r_2-2}}$} \\
\hline 
 \multirow{13}{*}{Even} & \multicolumn{5}{l|}{\textcolor[rgb]{0.95,0.39,0.07}{$\displaystyle r_{2} \ \leq \min( k_{1} ,k_{2})$ }} \\
\cline{2-6} 
   & \multicolumn{5}{l|}{{$\displaystyle r_{2} \leq \min(k_{1},k_2) \leq \max(k_1, k_{2}) \leq r_{1}$
\textbf{[Lemma \ref{count_Delta_R}]}}} \\
\cline{2-6} 
   & \multirow{5}{*}{$\displaystyle \Gamma ^{0}$} &  & $\displaystyle 2^{k_{1}} |\sin \theta|\geq 2^{r_{2}}$ and $\displaystyle 2^{k_{2}} |\cos \theta|\geq 2^{r_{2}}$ & $\displaystyle 2^{k_{1}} |\sin \theta|\leq 2^{r_{2}}$ and
$\displaystyle 2^{k_{2}} |\cos \theta|\geq 2^{r_{2}}$ & $\displaystyle 2^{k_{1}} |\sin \theta |\geq 2^{r_{2}}$ and
$\displaystyle 2^{k_{2}} |\cos \theta |\leq 2^{r_{2}}$ \\
\cline{3-6} 
   &   & $\displaystyle \min\left( 2^{2r_{2}} |\tan \theta |,2^{2r_{1}} |\cot \theta |\right) \leq $ & $\displaystyle 2^{2r_{2}} |\tan \theta |$  & $\displaystyle 2^{2r_{2}} |\tan \theta |$ & $\displaystyle 2^{2r_{2}} |\tan \theta |$ \\
\cline{3-6} 
   &   & $\displaystyle \min\left( 2^{2r_{1}} |\tan \theta |,2^{2r_{2}} |\cot \theta |\right) \leq $ & $\displaystyle 2^{2r_{2}} |\cot \theta |$ & $\displaystyle 2^{2r_{2}} |\cot \theta |$ & $\displaystyle 2^{2r_{2}} |\cot \theta |$ \\
\cline{3-6} 
   &   & $\displaystyle \mathcal{N}^{v}_{r_{1} ,r_{2}}( \phi ( K)) - \#\text{top}(\phi(K)) \lesssim $ & $ \displaystyle \frac{2^{k_2}|\cos \theta|}{2^{r_2}}$  &$ \displaystyle \frac{2^{k_2}|\cos \theta|}{2^{r_2}}$  & $0$ \\
\cline{3-6} 
   &   & $\displaystyle \mathcal{N}^{h}_{r_{1} ,r_{2}}( \phi ( K)) -  \#\text{top}(\phi(K)) \lesssim $ & $ \displaystyle \frac{2^{k_1}|\sin \theta|}{2^{r_2}}$ & $0$ & $ \displaystyle \frac{2^{k_1}|\sin \theta|}{2^{r_2}}$  \\
\cline{2-6} 
   & \multicolumn{2}{l|}{$\displaystyle \Gamma ^{1} \lesssim $} & $\Gamma^0$ & $|K_{\max,i}| \cdot 2^{-r_1}2^{-r_2}$ & $|K_{\max,i}| \cdot 2^{-r_1}2^{-r_2}$  \\
\cline{2-6} 
   & \multicolumn{5}{l|}{\textcolor[rgb]{0.95,0.39,0.07}{$\displaystyle r_{2} \geq \min( k_{1} ,k_{2})$}} \\
\cline{2-6} 
   & \multicolumn{5}{l|}{{\textcolor{blue}{$\displaystyle 2^{k_{1}} |\sin \theta |+2^{k_{2}} |\cos \theta | >2^{r_{2}}$}
\textbf{[Lemma \ref{count_Delta_K}]}}} \\
\cline{2-6} 
   & \multicolumn{2}{l|}{} & $\displaystyle |\tan \theta |\leq \frac{2^{k_{2}}}{2^{k_{1}}}$ & \multicolumn{2}{l|}{$\displaystyle |\tan \theta | >\frac{2^{k_{2}}}{2^{k_{1}}}$} \\
\cline{2-6} 
   & \multicolumn{2}{l|}{$\displaystyle \min\left( 2^{2k_{1}} |\tan \theta |,2^{2k_{2}} |\cot \theta |\right) \leq $} & $\displaystyle 2^{2k_{1}} |\tan \theta |$ & \multicolumn{2}{l|}{$\displaystyle 2^{2k_{2}} |\cot \theta |$} \\
\cline{2-6} 
   & \multicolumn{2}{l|}{$\displaystyle \mathcal{N}^{v}_{r_{1} ,r_{2}}( \phi ( K))$+$\displaystyle \mathcal{N}^{h}_{r_{1} ,r_{2}}( \phi ( K)) \lesssim $} & $\displaystyle \frac{2^{k_{2}} |\cos \theta |}{2^{r_{2}}}$ & \multicolumn{2}{l|}{$\displaystyle \frac{2^{k_{1}} |\sin \theta |}{2^{r_{2}}}$} \\
\hline 
 \multirow{4}{*}{Sparse} & \multicolumn{5}{l|}{{\textcolor{blue}{$\displaystyle 2^{k_{1}} |\sin \theta |+2^{k_{2}} |\cos \theta |\leq 2^{r_{2}}$}
\textbf{[Lemma \ref{lemma_sparse}]}}} \\
\cline{2-6} 
   & \multicolumn{2}{l|}{} & $\displaystyle |\tan \theta |\leq \frac{2^{k_{2}}}{2^{k_{1}}}$ & \multicolumn{2}{l|}{$\displaystyle |\tan \theta | >\frac{2^{k_{2}}}{2^{k_{1}}}$} \\
\cline{2-6} 
   & \multicolumn{2}{l|}{$\displaystyle \min\left( 2^{2k_{1}} |\tan \theta |,2^{2k_{2}} |\cot \theta |\right) \leq $} & $\displaystyle 2^{2k_{1}} |\tan \theta |$ & \multicolumn{2}{l|}{$\displaystyle 2^{2k_{2}} |\cot \theta |$} \\
\cline{2-6} 
   & \multicolumn{2}{l|}{$\displaystyle \mathcal{L}^{h}_{k_{1} ,k_{2}}\left( \phi ^{-1}( R)\right)\mathcal{M}^{h}_{r_{1} ,r_{2}}( \phi ( K_{\max,i})) \lesssim$} & $\displaystyle |K_{\max,i}|\frac{|\cos \theta |}{2^{r_{2}} 2^{k_{1}}}$ & \multicolumn{2}{l|}{$\displaystyle |K_{\max,i}|\frac{|\sin \theta |}{2^{r_{2}} 2^{k_{2}}}$} \\
 \hline
\end{tabular}

\medskip

\caption{Cancellation Estimates Table}\label{cancellation_est_table}
        
        \end{table}
\begin{remark}
One can verify that on the 7th row where 
\begin{equation*}
 \min\left( 2^{2r_{2}} |\tan \theta |,2^{2r_{1}} |\cot \theta |\right) \leq \displaystyle 2^{2r_{2}} |\tan \theta |
\end{equation*}
is indeed 
\begin{equation*}
 \min\left( 2^{2r_{2}} |\tan \theta |,2^{2r_{1}} |\cot \theta |\right) = \displaystyle 2^{2r_{2}} |\tan \theta |.
\end{equation*}
In the other case when $2^{2r_{2}} |\tan \theta | \geq 2^{2r_{1}} |\cot \theta |$, we would have the configuration in Figure \ref{Est_box_min}.
Under the assumption that there are no tops of $\phi(K)$ intersecting $R$ (since it is about $\Gamma^0$), we could only be in case (b), which contradicts the condition that $2^{k_2} \leq \frac{1}{4}2^{r_1}$ as highlighted in (\ref{eqcor:r1}).
\end{remark}

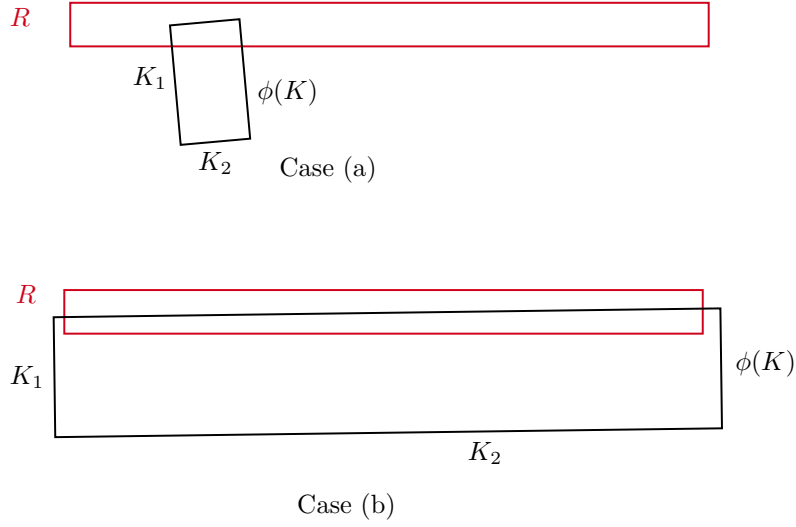
\begin{figure}
\centering

\tikzset{every picture/.style={line width=0.75pt}} 

\begin{tikzpicture}[x=0.75pt,y=0.75pt,yscale=-1,xscale=1]

\draw  [color={rgb, 255:red, 208; green, 2; blue, 27 }  ,draw opacity=1 ] (152,1524.5) -- (474,1524.5) -- (474,1546.5) -- (152,1546.5) -- cycle ;
\draw   (482.9,1533.75) -- (483.7,1594.28) -- (147.44,1598.74) -- (146.64,1538.21) -- cycle ;
\draw  [color={rgb, 255:red, 208; green, 2; blue, 27 }  ,draw opacity=1 ] (155,1379.5) -- (477,1379.5) -- (477,1401.5) -- (155,1401.5) -- cycle ;
\draw   (240.35,1387.79) -- (245.74,1448.08) -- (210.7,1451.21) -- (205.31,1390.92) -- cycle ;

\draw (123,1380.5) node [anchor=north west][inner sep=0.75pt]  [color={rgb, 255:red, 208; green, 2; blue, 27 }  ,opacity=1 ]  {$R$};
\draw (126,1520.5) node [anchor=north west][inner sep=0.75pt]  [color={rgb, 255:red, 208; green, 2; blue, 27 }  ,opacity=1 ]  {$R$};
\draw (248,1416.5) node [anchor=north west][inner sep=0.75pt]    {$\phi ( K)$};
\draw (489,1553.5) node [anchor=north west][inner sep=0.75pt]    {$\phi ( K)$};
\draw (259,1455.5) node [anchor=north west][inner sep=0.75pt]   [align=left] {Case (a)};
\draw (268,1625.5) node [anchor=north west][inner sep=0.75pt]   [align=left] {Case (b)};
\draw (185,1410.5) node [anchor=north west][inner sep=0.75pt]    {$K_{1}$};
\draw (123,1561.5) node [anchor=north west][inner sep=0.75pt]    {$K_{1}$};
\draw (218,1453.5) node [anchor=north west][inner sep=0.75pt]    {$K_{2}$};
\draw (354,1599.5) node [anchor=north west][inner sep=0.75pt]    {$K_{2}$};

\end{tikzpicture}
\caption{Geometric reasoning for minimum estimates in Cancellation Estimates Table} \label{Est_box_min}
\end{figure}

\subsubsection{Error Term Estimate: Application of Cancellation Lemma} \label{subsection:endproof} 
We will now demonstrate how to call the Cancellation Estimates Table to analyze the error term in different cases. We will exhaust all the cases in Table \ref{error_term_table} and categorize the types of arguments by different colors. There are 4 types of arguments, numbered as $\mathcal{I}, \mathcal{II}, \mathcal{III}$ and $\mathcal{IV}$ and indicated by green, blue, purple and red respectively. We will focus on one case for each type and all the other cases of the same type can be estimated using the exactly same argument. 

First, we recall that under symmetry arguments, we are working on the following conditions: 
\begin{itemize}
\item from \eqref{eq:r1}
\begin{equation*}
2^{-l}2^{r_1} \geq \frac{1}{2}\min(\frac{K_{\max}^1}{|\cos\theta|}, \frac{K_{\max}^2}{|\sin \theta|}) = \frac{1}{2}\frac{K_{\max}^1}{|\cos\theta|}. 
\end{equation*}
In particular, the considered $k_1$ parameter satisfies $2^{k_1} \lesssim 2^{-l} 2^{r_1}$.
\item and from \eqref{eqcor:r1}
\begin{equation*} 2^{k_1}+2^{k_2} \leq \frac{1}{4} 2^{r_1}. \end{equation*}
\end{itemize}

\begin{table}[!h]
\small
\begin{tabular}{|m{2cm}| m{1.5cm}| m{1.5cm} |m{1.5cm}| m{1.5cm}| m{1.5cm}| m{1.5cm}| m{1.5cm}| m{1.5cm}| m{1.5cm}| m{1.5cm}| m{1.5cm}| m{1.5cm}| m{1.5cm}| }
\hline
    & $r_2 \leq k_1 \leq k_2 \leq r_1$ & $r_2 \leq k_2 \leq k_1 \leq r_1$ & $k_1 \leq k_2 \leq r_2 \leq r_1$ & $k_2 \leq k_1 \leq r_2\leq r_1$ & $k_1 \leq k_2 \leq r_1 \leq r_2$ & $k_2 \leq k_1 \leq r_1 \leq r_2$ & $k_1 \leq r_2 \leq k_2 \leq r_1$ & $k_2 \leq r_2 \leq k_1 \leq r_1$ \\ \hline
$|\tan \theta| > \frac{2^{k_2}}{2^{k_1}}$ & {\cellcolor[HTML]{D2EED2}$\mathcal{I}$}                                              & \cellcolor[HTML]{D2EED2}                         & {\cellcolor[HTML]{D2ECEB}$\mathcal{II}$}                          & \cellcolor[HTML]{C5C6EE}                         &         \cellcolor[HTML]{D2ECEB}                                          &              \cellcolor[HTML]{C5C6EE}                                    & \cellcolor[HTML]{D2ECEB}                         &    \begin{tabular}{@{}l@{}}
                   {\cellcolor[HTML]{D2ECEB} $2^{k_1} |\sin \theta|$ } \\
                   {\cellcolor[HTML]{D2ECEB}  $ \leq 2^{r_2}\ \ \ \ $} \\ 
                  {\cellcolor[HTML]{FFCCC9}$2^{k_1} |\sin \theta| \ \ $}\\
                   {\cellcolor[HTML]{FFCCC9} $\geq 2^{r_2}\ \ \ $}\\
                 \end{tabular}                            \\ \cline{1-1} \cline{4-9} 
$|\tan \theta| \leq \frac{2^{k_2}}{2^{k_1}}$ & \multirow{-2}{*}{\cellcolor[HTML]{D2EED2}}                            & \multirow{-2}{*}{\cellcolor[HTML]{D2EED2}}       & {\cellcolor[HTML]{C5C6EE}  $\mathcal{III}$}                       & \cellcolor[HTML]{D2ECEB}                         &   \cellcolor[HTML]{C5C6EE}                                               &                 \cellcolor[HTML]{D2ECEB}                                  & \begin{tabular}{@{}l@{}}
                   {\cellcolor[HTML]{C5C6EE}$2^{k_2} |\cos \theta| \ \ \ $}\\
                    {\cellcolor[HTML]{C5C6EE}$\leq 2^{r_2}$}\\ 
                  {\cellcolor[HTML]{FFCCC9}$2^{k_2} |\cos \theta| \ \ $}\\
                   {\cellcolor[HTML]{FFCCC9}$\geq 2^{r_2}: \mathcal{IV}$}\\
                 \end{tabular}                         &       \cellcolor[HTML]{C5C6EE}                       \\ \hline
\end{tabular} 

\medskip

 \caption{Error Term Estimates} \label{error_term_table}
\end{table}
\begin{enumerate}
\item[Case $\mathcal{I}$: ]
$2^{r_2} \leq 2^{k_1} \leq 2^{k_2} \leq 2^{r_1}$
\begin{enumerate}
\item
$2^{k_2} |\cos \theta| \geq 2^{r_2}$ and $2^{k_1} |\sin \theta| \geq 2^{r_2}$
\\
According to the Cancellation Estimates Table, we apply Lemma \ref{count_Delta_R} where
\begin{align*}
\min\left( 2^{2r_{2}} |\tan \theta |,2^{2r_{1}} |\cot \theta |\right) &= 2^{2r_2}|\tan \theta| \\
\displaystyle \min\left( 2^{2r_{1}} |\tan \theta |,2^{2r_{2}} |\cot \theta |\right) &= \displaystyle 2^{2r_{2}} |\cot \theta | \\
\displaystyle \mathcal{N}^{v}_{r_{1} ,r_{2}}( \phi ( K)) - \#\text{top}(\phi(K)) & \lesssim \frac{2^{k_2}|\cos \theta|}{2^{r_2}} \\
 \displaystyle \mathcal{N}^{h}_{r_{1} ,r_{2}}( \phi ( K)) -  \#\text{top}(\phi(K)) & \lesssim  \frac{2^{k_1}|\sin \theta|}{2^{r_2}}.
\end{align*}
By plugging the above estimates into Lemma \ref{count_Delta_R}, we obtain
\begin{align}
& \sum_{r_2, k_1, k_2, r_1} 2^{2r_1(\frac{1}{2}+ \gamma_1)}2^{2r_2(\frac{1}{2}+ \gamma_2)} \Gamma(K_{\max,i}, r_1, r_2, k_1,k_2) \nonumber\\
\lesssim & \sum_{r_2, k_1, k_2, r_1}  \left(\frac{2^{4r_2}\tan^2 \theta}{2^{k_1+k_2}2^{r_1+r_2}} \cdot \left(\frac{2^{k_2}|\cos\theta|}{2^{r_2}}\right)^{\frac{2}{p'}} +  \frac{2^{4r_2}\cot^2 \theta}{2^{k_1+k_2}2^{r_1+r_2}} \cdot \left(\frac{2^{k_1} |\sin \theta|}{2^{r_2}}\right)^{\frac{2}{p'}}\right) \cdot \frac{|K_{\max,i}|}{2^{k_1+k_2}} \cdot 2^{2r_1(\frac{1}{2}+ \gamma_1)}2^{2r_2(\frac{1}{2}+ \gamma_2)} \nonumber\\
\leq &  |K_{\max,i}| \underbrace{\sum_{r_2, k_1, k_2, r_1} 2^{4r_2}\tan^2 \theta \cdot \left(\frac{2^{k_2}|\cos\theta|}{2^{r_2}}\right)^{\frac{2}{p'}} \cdot 2^{-2k_1-2k_2}2^{2\gamma_1r_1}2^{2\gamma_2r_2}}_{I_a^1} + \nonumber\\
& |K_{\max,i}| \underbrace{ \sum_{r_2, k_1, k_2, r_1} 2^{4r_2}\cot^2 \theta \cdot \left(\frac{2^{k_1} |\sin \theta|}{2^{r_2}}\right)^{\frac{2}{p'}} \cdot 2^{-2k_1-2k_2}2^{2\gamma_1r_1}2^{2\gamma_2r_2}}_{I_a^2}.
\end{align}
Now we will estimate both terms in details. We first consider
\begin{align}\label{I_a^1}
I_a^1 = & \sum_{r_2, k_1, k_2, r_1} 2^{4r_2}\tan^2 \theta \cdot \left(\frac{2^{k_2}|\cos\theta|}{2^{r_2}}\right)^{\frac{2}{p'}} \cdot 2^{-2k_1-2k_2}2^{2\gamma_1r_1}2^{2\gamma_2r_2} \nonumber\\
= & \sum_{r_2, k_1, k_2, r_1} \sin^2 \theta |\cos \theta|^{\frac{2}{p'} - 2} 2^{r_2(2\gamma_2 +4 - \frac{2}{p'})}2^{-2k_1}2^{k_2(\frac{2}{p'} - 2)} 2^{2\gamma_1r_1} \nonumber\\
\leq & \sum_{r_2, k_1, k_2, r_1} \left(\frac{2^{k_2}}{2^{r_2}}\right)^{2-\frac{2}{p'} } 2^{r_2(2\gamma_2 +4 - \frac{2}{p'})}2^{-2k_1}2^{k_2(\frac{2}{p'} - 2)} 2^{2\gamma_1r_1} 
\end{align}
where the last inequality follows from the condition $|\cos \theta| \geq \frac{2^{r_2}}{2^{k_2}}$. Therefore, (\ref{I_a^1}) can be majorized by 
\begin{equation} \label{geom_k_1}
\sum_{r_2, k_1, k_2, r_1} 2^{r_2(2\gamma_2+2)}2^{-2k_1}2^{2\gamma_1r_1} \lesssim   \sum_{k_1, k_2, r_1} 2^{2\gamma_2k_1}2^{2\gamma_1r_1}. 
\end{equation}
We recall that $2^{k_1} \lesssim 2^{-l}2^{r_1}$. Furthermore, $2^{k_1} \leq 2^{k_2}$ in this case. As a consequence, 
\begin{equation} \label{k_1_interpolate}
2^{k_1} \lesssim \min(2^{-l}2^{r_1}, 2^{k_2}) \leq (2^{-l}2^{r_1})^{\theta} 2^{k_2(1-\theta)}
\end{equation}
for any $0 \leq \theta \leq 1.$ By applying the condition (\ref{k_1_interpolate}) to the geometric sum (\ref{geom_k_1}) and choosing $\gamma_2 >0$, we obtain
\begin{equation*}
2^{-l \cdot 2\gamma_2\theta } \sum_{k_2, r_1} 2^{k_2 \cdot 2\gamma_2(1-\theta) }2^{r_1(2\gamma_1+\ 2 \gamma_2\theta) } \lesssim  2^{-l \cdot 2\gamma_2\theta}\sum_{ r_1} 2^{r_1(2\gamma_1+2\gamma_2)}.
\end{equation*}
Similarly, we can estimate $I_a^2$ by
\begin{equation}\label{I_a^2}
\sum_{r_2, k_1, k_2, r_1} |\sin \theta|^{\frac{2}{p'}-2} 2^{4r_2} \cdot \left(\frac{2^{k_1}}{2^{r_2}}\right)^{\frac{2}{p'}} \cdot 2^{-2k_1-2k_2}2^{2\gamma_1r_1}2^{2\gamma_2r_2} \leq  \sum_{r_2, k_1, k_2, r_1} \left(\frac{2^{k_1}}{2^{r_2}}\right)^{2-\frac{2}{p'} } 2^{r_2(2\gamma_2 +4 - \frac{2}{p'})}2^{k_1(\frac{2}{p'} -2)}2^ {- 2k_2} 2^{2\gamma_1r_1} 
\end{equation}
since $|\sin\theta| \geq \frac{2^{r_2}}{2^{k_1}}$. Simplification of (\ref{I_a^2}) yields
\begin{equation*}
\sum_{r_2, k_1, k_2, r_1} 2^{r_2(2\gamma_2+2)}2^{-2k_2}2^{2\gamma_1r_1} \lesssim   \sum_{k_1, k_2, r_1} 2^{k_1(2\gamma_2+2)}2^{-2k_2}2^{2\gamma_1r_1} \lesssim  \sum_{k_1, r_1} 2^{2\gamma_2k_1}2^{2\gamma_1r_1} 
\lesssim   2^{-2\gamma_2l}\sum_{ r_1} 2^{r_1(2\gamma_1+2\gamma_2)}
\end{equation*}
where the last inequality holds for any $\gamma_2 > 0$. One can then choose appropriate $\gamma_1$ such that both series (\ref{I_a^1}) and (\ref{I_a^2}) converge. In conclusion, for $\mu := \min(2\gamma_2 \theta, 2\gamma_2) = 2\gamma_2 \theta$ such that $0 < \gamma_2 < \min(s-\frac{2}{p}, \frac{1}{p}) $ and $0 \leq \theta \leq 1$,
\begin{align} \label{I_final}
\sum_{r_2, k_1, k_2, r_1} 2^{2r_1(\frac{1}{2}+ \gamma_1)}2^{2r_2(\frac{1}{2}+ \gamma_2)} \Gamma(K_{\max,i}, r_1, r_2, k_1,k_2) \lesssim 2^{-l\mu}|K_{\max,i}|
\end{align}
as desired.
\item
$2^{k_2} |\cos \theta| \geq 2^{r_2}$ and $2^{k_1} |\sin \theta| < 2^{r_2}$ \\
We refer to the Cancellation Estimates Table and apply Lemma \ref{count_Delta_R} where
\begin{align*}
&\min\left( 2^{2r_{2}} |\tan \theta |,2^{2r_{1}} |\cot \theta |\right) = 2^{2r_2}|\tan \theta| \\
&\displaystyle \min\left( 2^{2r_{1}} |\tan \theta |,2^{2r_{2}} |\cot \theta |\right) = \displaystyle 2^{2r_{2}} |\cot \theta | \\
&\displaystyle \mathcal{N}^{v}_{r_{1} ,r_{2}}( \phi ( K)) - \#\text{top}(\phi(K))  \lesssim \frac{2^{k_2}|\cos \theta|}{2^{r_2}} \\
& \displaystyle \mathcal{N}^{h}_{r_{1} ,r_{2}}( \phi ( K)) -  \#\text{top}(\phi(K))  = 0 \\
& \Gamma^1  \lesssim |K_{\max,i}|2^{-r_1}2^{-r_2}.
\end{align*}
By plugging the above estimates into Lemma \ref{count_Delta_R}, we have
\begin{align} \label{II}
&\sum_{r_2,k_1,k_2,r_1}  2^{2r_1(\frac{1}{2}+ \gamma_1)}2^{2r_2(\frac{1}{2}+ \gamma_2)}\Gamma(K_{\max,i}, r_1, r_2, k_1, k_2)\nonumber \\
\lesssim & |K_{\max,i}|\sum_{r_2,k_1,k_2,r_1} 2^{4r_2}\tan^2 \theta \cdot \left(\frac{2^{k_2}|\cos\theta|}{2^{r_2}}\right)^{\frac{2}{p'}} 2^{-2k_1-2k_2} 2^{2\gamma_1r_1}2^{2\gamma_2r_2}  +  |K_{\max,i}| \underbrace{\sum_{r_2,k_1,k_2,r_1}2^{2\gamma_1r_1}2^{2\gamma_2r_2}}_{I_b}
\end{align}
where the first summand is the same as $I_a^1$ and the second term can be estimated as follows:
\begin{equation*}
I_b \lesssim \sum_{r_2,k_1,k_2,r_1}2^{2\gamma_2 r_2} 2^{2\gamma_1r_1} \lesssim  \sum_{k_1, k_2,,r_1}2^{2\gamma_2k_1}2^{2\gamma_1r_1}.
\end{equation*}
We notice that we have deduced the same expression as (\ref{geom_k_1}) so that we can apply the same argument to conclude
$$
(\ref{II}) \lesssim 2^{-l\mu}|K_{\max,i}|.
$$
for $\mu:= 2\gamma_2 \theta$ where $\gamma_2$ and $\theta$ satisfy the conditions $0 < \gamma_2 < \min(s-\frac{2}{p}, \frac{1}{p}) $ and $0 \leq \theta \leq 1$ as in Case $\mathcal{I}$(a). 
\item
$2^{k_2}|\cos \theta| < 2^{r_2}$ and $2^{k_1} |\sin \theta| \geq 2^{r_2}$ \\
As indicated by the Cancellation Estimates Table, we apply Lemma \ref{count_Delta_R} where
\begin{align*}
&\min\left( 2^{2r_{2}} |\tan \theta |,2^{2r_{1}} |\cot \theta |\right) = 2^{2r_2}|\tan \theta| \\
&\displaystyle \min\left( 2^{2r_{1}} |\tan \theta |,2^{2r_{2}} |\cot \theta |\right) = \displaystyle 2^{2r_{2}} |\cot \theta | \\
&\displaystyle \mathcal{N}^{v}_{r_{1} ,r_{2}}( \phi ( K)) - \#\text{top}(\phi(K))  = 0  \\
& \displaystyle \mathcal{N}^{h}_{r_{1} ,r_{2}}( \phi ( K)) -  \#\text{top}(\phi(K)) \lesssim \frac{2^{k_1}|\sin \theta|}{2^{r_2}} \\
& \Gamma^1  \lesssim |K_{\max,i}|2^{-r_1}2^{-r_2},
\end{align*}
which yields the following estimate:
\begin{align}
&\sum_{r_2,k_1,k_2,r_1}  2^{2r_1(\frac{1}{2}+ \gamma_1)}2^{2r_2(\frac{1}{2}+ \gamma_2)}\Gamma(K_{\max,i}, r_1, r_2, k_1, k_2)\nonumber \\
\lesssim &  |K_{\max,i}|\sum_{r_2,k_1,k_2,r_1} 2^{2\gamma_1r_1}2^{2\gamma_2 r_2} +|K_{\max,i}| \sum_{r_2,k_1,k_2,r_1}2^{4r_2}\cot^2 \theta \cdot \left(\frac{2^{k_1} |\sin \theta|}{2^{r_2}}\right)^{\frac{2}{p'}} \cdot 2^{-2k_1-2k_2}2^{2\gamma_1r_1}2^{2\gamma_2 r_2}.
\end{align}
The second term is the same as $I_a^2$ and the first term can be estimated similarly as $I_b$ so that (\ref{I_final}) would be achieved.
\end{enumerate}

\item[Case $\mathcal{II}$:]
$2^{k_1} \leq 2^{k_2} \leq 2^{r_2} \leq 2^{r_1}$ and $|\tan \theta| > \frac{2^{k_2}}{2^{k_1}}$ \\
This is the sparse case and the Cancellation Estimates Table suggests that we apply Lemma \ref{lemma_sparse} with
\begin{align*}
& \min\left( 2^{2k_{1}} |\tan \theta |,2^{2k_{2}} |\cot \theta |\right) = 2^{2k_{2}} |\cot \theta| \\
&  \mathcal{L}^{h}_{k_{1} ,k_{2}}\left( \phi ^{-1}( R)\right)\mathcal{M}^{h}_{r_{1} ,r_{2}}( \phi ( K_{\max,i})) \lesssim |K_{\max,i}|\frac{|\sin \theta |}{2^{r_{2}} 2^{k_{2}}},
\end{align*}
which generates the following estimates:
\begin{align} \label{II_a}
&\sum_{k_1,k_2,r_2, r_1}  2^{2r_1(\frac{1}{2}+ \gamma_1)}2^{2r_2(\frac{1}{2}+ \gamma_2)}\Gamma(K_{\max,i},r_1, r_2, k_1, k_2) \nonumber \\
\lesssim & \sum_{k_1,k_2, r_2,r_1}  \frac{(2^{2k_2}\cot \theta )^2}{2^{k_1+k_2}2^{r_1+r_2}} \cdot \frac{|K_{\max,i}||\sin \theta|}{2^{k_2}2^{r_2}}\cdot 2^{2r_1(\frac{1}{2}+ \gamma_1)}2^{2r_2(\frac{1}{2}+ \gamma_2)}\nonumber \\
\leq &  \sum_{k_1,k_2, r_2,r_1}  \frac{(2^{k_1}2^{k_2})^2}{2^{k_1+k_2}2^{r_1+r_2}} \cdot \frac{|K_{\max,i}||\sin \theta|}{2^{k_2}2^{r_2}}\cdot 2^{2r_1(\frac{1}{2}+ \gamma_1)}2^{2r_2(\frac{1}{2}+ \gamma_2)}\nonumber \\
\lesssim & |K_{\max,i}| \sum_{k_1,k_2, r_2,r_1} 2^{k_1}2^{r_2(2\gamma_2-1)}2^{2\gamma_1r_1} 
\end{align}
where the second to last inequality follows from the condition $|\tan \theta| > \frac{2^{k_2}}{2^{k_1}}$. 
We recall that $\gamma_2 \in (-\delta, \delta)$ with $\delta:= \min(s-\frac{2}{p}, \frac{1}{p}) \leq \frac{1}{p} < \frac{1}{2}$ since $p \in (2,\infty)$. Thus $2\gamma_2 -1 < 0$, which yields
\begin{equation}\label{II_a^1}
(\ref{II_a})  \lesssim |K_{\max,i}| \sum_{k_1,k_2,r_1} 2^{k_1}2^{k_2(2\gamma_2-1)}2^{2\gamma_1r_1} \lesssim  |K_{\max,i}| \sum_{k_1,r_1}2^{2\gamma_2k_1}2^{2\gamma_1r_1}.
\end{equation}
By choosing $\gamma_2 > 0$, we deduce that
\begin{equation} \label{II_a^1_final}
(\ref{II_a^1}) \lesssim 2^{-l \cdot 2\gamma_2}\sum_{r_1}2^{r_1(2\gamma_1 + \gamma_2)}
 \end{equation}
 One can finally choose $\gamma_1$ such that the series (\ref{II_a^1_final}) converges. As a result, one obtains
 \begin{equation} \label{II_desired}
 \sum_{k_1,k_2,r_2, r_1}  2^{2r_1(\frac{1}{2}+ \gamma_1)}2^{2r_2(\frac{1}{2}+ \gamma_2)}\Gamma(K_{\max,i},r_1, r_2, k_1, k_2) \lesssim 2^{-\mu l}|K_{\max,i}|
  \end{equation}
 for $\mu := 2\gamma_2$ with $0 < \gamma_2 < \min(s-\frac{2}{p}, \frac{1}{p})$.

 \item[Case $\mathcal{III}$:]
$2^{k_1} \leq 2^{k_2} \leq 2^{r_2} \leq 2^{r_1}$ and $|\tan \theta| \leq \frac{2^{k_2}}{2^{k_1}}$ \\
We are in the sparse case and need to apply Lemma \ref{lemma_sparse} with the following estimates indicated in the Cancellation Estimates Table:
\begin{align*}
& \min\left( 2^{2k_{1}} |\tan \theta |,2^{2k_{2}} |\cot \theta |\right) = 2^{2k_{1}} |\tan \theta| \\
&  \mathcal{L}^{h}_{k_{1} ,k_{2}}\left( \phi ^{-1}( R)\right)\mathcal{M}^{h}_{r_{1} ,r_{2}}( \phi ( K_{\max,i})) \lesssim \frac{|\cos \theta |}{2^{r_{2}} 2^{k_{1}}}.
\end{align*}
As a consequence, we have
 \begin{align}\label{III}
 &\sum_{k_1,k_2,r_2, r_1}  2^{2r_1(\frac{1}{2}+ \gamma_1)}2^{2r_2(\frac{1}{2}+ \gamma_2)}\sum_{\substack{K \subseteq K_{\max,i} \\ |K|= 2^{k_1} \times 2^{k_2}}}\left(\sum_{\substack{R \nsubseteq \widetilde{\phi(\Omega)} \\ |R| = 2^{r_1} \times 2^{r_2}}} |\langle h_{\tilde{R}_1}, h_K \rangle|^{p'} \right)^{\frac{2}{p'}} \nonumber \\
& \sum_{k_1,k_2,r_2, r_1} \frac{(2^{2k_1}\tan \theta )^2}{2^{k_1+k_2}2^{r_1+r_2}} \cdot \frac{|K_{\max,i}| |\cos \theta|}{2^{k_1}2^{r_2}} \cdot 2^{2r_1(\frac{1}{2}+ \gamma_1)}2^{2r_2(\frac{1}{2}+ \gamma_2)} \nonumber\\
= &  \sum_{k_1,k_2, r_2,r_1} |\tan \theta| |\sin \theta|  \frac{2^{4k_1}}{2^{k_1+k_2}2^{r_1+r_2}} \cdot \frac{|K_{\max,i}|}{2^{k_1}2^{r_2}}\cdot 2^{2r_1(\frac{1}{2}+ \gamma_1)}2^{2r_2(\frac{1}{2}+ \gamma_2)}\nonumber \\
\leq & \sum_{k_1,k_2, r_2,r_1} \frac{2^{k_2}}{2^{k_1}} \cdot \frac{2^{4k_1}}{2^{k_1+k_2}2^{r_1+r_2}} \cdot \frac{|K_{\max,i}|}{2^{k_1}2^{r_2}}\cdot 2^{2r_1(\frac{1}{2}+ \gamma_1)}2^{2r_2(\frac{1}{2}+ \gamma_2)}
\end{align}
where the last inequality follows from the condition $|\tan \theta| \leq \frac{2^{k_2}}{2^{k_1}}$. As a consequence, 
\begin{equation*}
(\ref{III}) =  |K_{\max,i}| \sum_{k_1, k_2, r_2,r_1} 2^{k_1}2^{r_2(2\gamma_2-1)}2^{2\gamma_1r_1} 
\end{equation*}
which agrees with the expression (\ref{II_a}) and can be estimated by the exactly same argument. 
Therefore, (\ref{II_desired}) can be attained with $\mu:= 2\gamma_2$ with $0 < \gamma_2 < \min(s-\frac{2}{p}, \frac{1}{p})$.
\item[Case $\mathcal{IV}$:]
$2^{k_1} \leq 2^{r_2} \leq 2^{k_2} \leq 2^{r_1}$ and $2^{k_2}|\cos \theta| \geq 2^{r_2}$ \\
In this case, we observe that a necessary condition imposed by $2^{k_1} \leq 2^{r_2}$ and $2^{k_2}|\cos \theta| \geq 2^{r_2}$ is 
$$
|\tan \theta|  \leq \frac{2^{k_2}}{2^{k_1}}
$$
because $2^{k_1}|\sin \theta| \leq 2^{r_2} \leq 2^{k_2}|\cos \theta|$. Hence we apply Lemma \ref{count_Delta_K} with 
\begin{align*}
& \min\left( 2^{2k_{1}} |\tan \theta |,2^{2k_{2}} |\cot \theta |\right) = 2^{2k_{1}} |\tan \theta |\\
&\mathcal{N}^{v}_{r_{1} ,r_{2}}( \phi ( K))+ \mathcal{N}^{h}_{r_{1} ,r_{2}}( \phi ( K)) \lesssim \frac{2^{k_{2}} |\cos \theta |}{2^{r_{2}}},
\end{align*}
which gives
 \begin{align} \label{IV}
&\sum_{k_1,r_2, k_2, r_1}  2^{2r_1(\frac{1}{2}+ \gamma_1)}2^{2r_2(\frac{1}{2}+ \gamma_2)}\Gamma(K_{\max,i}, r_1. r_2, k_1, k_2)\nonumber \\
\lesssim &\sum_{k_1,r_2, k_2, r_1}  2^{2r_1(\frac{1}{2}+ \gamma_1)}2^{2r_2(\frac{1}{2}+ \gamma_2)}  \cdot \frac{(2^{2k_1}\tan \theta )^2}{2^{k_1+k_2}2^{r_1+r_2}} \cdot \left(\frac{2^{k_2} |\cos \theta|}{2^{r_2}} \right)^{\frac{2}{p'}}\frac{|K_{\max,i}|}{2^{k_1+k_2}} \nonumber\\
= & |K_{\max,i}|\sum_{k_1,r_2, k_2, r_1} |\tan \theta|^{2-\frac{2}{p'}}|\sin\theta|^{\frac{2}{p'}}2^{2k_1}2^{r_2(2\gamma_2-\frac{2}{p'})}2^{k_2(\frac{2}{p'}-2)}2^{2\gamma_1r_1} \nonumber\\
\leq & |K_{\max,i}|\sum_{k_1,r_2, k_2, r_1} \left(\frac{2^{k_2}}{2^{k_1}}\right)^{2-\frac{2}{p'}}2^{2k_1}2^{r_2(2\gamma_2-\frac{2}{p'})}2^{k_2(\frac{2}{p'}-2)}2^{2\gamma_1r_1}
\end{align}
due to the condition $|\tan \theta|  \leq \frac{2^{k_2}}{2^{k_1}}$. After simplifications, we obtain
\begin{equation*}
(\ref{IV}) = |K_{\max,i}|\sum_{k_1,r_2, k_2, r_1} 2^{\frac{2}{p'}k_1}2^{r_2(2\gamma_2-\frac{2}{p'})}2^{2\gamma_1r_1}.
\end{equation*}
We notice that $\gamma_2 \in (-\delta, \delta)$ with $\delta:= \min(s-\frac{2}{p}, \frac{1}{p}) < \frac{1}{2} \leq \frac{1}{p'}$ since $\frac{1}{p} + \frac{1}{p'} =1$ and $p >2$. Therefore, $2\gamma_2 - \frac{2}{p'} < 0$, which implies
\begin{equation*}
(\ref{IV}) \lesssim |K_{\max,i}|\sum_{k_1, r_1} 2^{2\gamma_2 k_1}2^{2\gamma_1r_1}
\lesssim  |K_{\max,i}|2^{-2\gamma_2l}\sum_{ r_1} 2^{r_1(2\gamma_1+2\gamma_2)} 
 \end{equation*}
 where the last two inequalities follow from $\gamma_2 >0$. We finally select $\gamma_1$ so that the series converges and thus conclude that
\begin{equation}
 \sum_{k_1,k_2,r_2, r_1}  2^{2r_1(\frac{1}{2}+ \gamma_1)}2^{2r_2(\frac{1}{2}+ \gamma_2)}\Gamma(K_{\max,i},r_1, r_2, k_1, k_2) \lesssim 2^{-\mu l}|K_{\max,i}|
\end{equation}
with $\mu := 2 \gamma_2$ with $0 < \gamma_2 < \min(s- \frac{2}{p}, \frac{1}{p})$.
\end{enumerate}

To sum up, we have showed that for any $l \in \mathbb{N}$ with $2^{-l} \lesssim \epsilon^{\frac{1}{2}}$, \eqref{subpart} holds
with $\mu := 2\theta\gamma_2 $ where $0 < \gamma_2 < \min(s-\frac{2}{p}, \frac{1}{p}) $ and $0 \leq \theta \leq 1$. Then we have seen how to deduce (\ref{E_2_to_fill}), which implies
\begin{equation} \label{E_2_result}
\mathcal{E}_2 \lesssim \epsilon^{\frac{\mu}{4}}\|F\|_{W^{s,p}}|\Omega|^{\frac{1}{2}} = \epsilon^{\frac{\theta\gamma_2}{2}}\|F\|_{W^{s,p}}|\Omega|^{\frac{1}{2}}.
\end{equation}

We refer the reader to Figure \ref{outline_proof} and combine the estimates for $\mathcal{E}_1$, namely (\ref{error1}) and (\ref{error2}), $\mathcal{M}$ (\ref{main_term_result}) and $\mathcal{E}_2$ \eqref{E_2_result}, from which we can conclude that
\begin{equation*}
\sup_{\Omega \subseteq \mathbb{R}^2 \text{open}} |\Omega|^{-\frac{1}{2}} osc_{\Omega} (F \circ \phi) \lesssim \sup_{\Omega \subseteq \mathbb{R}^2 \text{open}} |\Omega|^{-\frac{1}{2}}(\mathcal{E}_1 + \mathcal{M} + \mathcal{E}_2) \lesssim \epsilon^{-1}\log\epsilon^{-1}\|F\|_{\BMO} + \epsilon^{\frac{\theta\gamma_2}{2}}\|F\|_{W^{s,p}}, 
\end{equation*}
which completes the proof of Theorem \ref{main_thm_pos}.

\end{document}